\theoremstyle{definition}
\newtheorem{thm}{Theorem}[subsection]
\newtheorem*{thm*}{Theorem}
\newtheorem{defi}[thm]{Definition}
\newtheorem*{defi*}{Definition}
\newtheorem*{acknowledge}{Acknowledgement}
\newtheorem{cor}[thm]{Corollary}
\newtheorem*{cor*}{Corollary}
\newtheorem{prop}[thm]{Proposition}
\newtheorem*{prop*}{Proposition}
\newtheorem{lem}[thm]{Lemma}
\newtheorem*{lem*}{Lemma}
\newtheorem{ex}[thm]{Example}
\newtheorem*{ex*}{Example}
\newtheorem{rem}[thm]{Remark}
\newtheorem*{rem*}{Remark}
\newtheorem*{claim*}{Claim}
\newtheorem*{hw*}{Homework}
\DeclareMathOperator{\Ind}{Ind}
\newcommand{\C}{\mathbb{C}}
\newcommand{\R}{\mathbb{R}}
\newcommand{\Z}{\mathbb{Z}}
\newcommand{\N}{\mathbb{N}}
\newcommand{\T}{\mathbb{T}}
\newcommand{\ad}{\mathrm{ad}}
\newcommand{\Bis}{\mathrm{Bis}}
\DeclareMathOperator{\Iso}{\mathrm{Iso}}
\DeclareMathOperator{\Span}{\mathrm{span}}
\DeclareMathOperator{\dom}{\mathrm{dom}}
\DeclareMathOperator{\ab}{\mathrm{ab}}
\DeclareMathOperator{\id}{\mathrm{id}}
\DeclareMathOperator{\Aut}{\mathrm{Aut}}
\def\i<#1>{\langle #1 \rangle}
\def\l<#1>{\left\langle #1 \right\rangle}
\renewenvironment{proof}[1][\proofname]{\par
  \normalfont
  \topsep6\p@\@plus6\p@ \trivlist
  \item[\hskip\labelsep{\bfseries #1}\@addpunct{.}]\ignorespaces
}{
  \endtrivlist
}
\renewcommand{\proofname}{\sc{Proof}}
\newcommand*{\defeq}{\mathrel{\rlap{%
                     \raisebox{0.3ex}{$\m@th\cdot$}}%
                     \raisebox{-0.3ex}{$\m@th\cdot$}}%
                     =}
\title[]{Weyl groups of groupoid C*-algebras}
\author{Fuyuta Komura}
\address{Center for Advanced Intelligence Project, RIKEN,
	3-14-1 Hiyoshi, Kohoku-ku, Yokohama, 223-8522, Japan}
\address{Phone: +81-45-566-1641+42706}
\address{Fax: +81-45-566-1642}
\email{fuyuta.k@keio.jp}
\subjclass[2020]{20M18, 22A22, 37A55, 37B02, 46L05, 46L40}
\begin{document}
\maketitle
\begin{abstract}
	
	In the theory of C*-algebras,
	the Weyl groups were defined for the Cuntz algebras and graph algebras by Cuntz and Conti et al.\ respectively.	
	In this paper,
	we introduce and investigate the Weyl groups of groupoid C*-algebras as a natural generalization of the existing Weyl groups.
	Then we analyse several groups of automorphisms on groupoid C*-algebras.
	Finally,
	we apply our results to Cuntz algebras,
	graph algebras and C*-algebras associated with Deaconu-Renault systems.

\end{abstract}

\tableofcontents

\section{Introduction}

	In the theory of C*-algebras,
	the study of Weyl groups was initiated by Cuntz for the Cuntz algebras $\mathcal{O}_n$ in \cite{Cuntz1980}.
	In \cite{Cuntz1980},
	Cuntz investigated automorphisms on $\mathcal{O}_n$ which preserve the canonical MASA $D_n\subset \mathcal{O}_n$.
	Cuntz computed the group of $D_n$-preserving automorphisms on $\mathcal{O}_n$ and analysed its algebraic and topological properties.
	For example,
	Cuntz proved that the Weyl group
	\[
	\Aut(\mathcal{O}_n;D_n)/\Aut_{D_n}(\mathcal{O}_n)
	\]
	becomes a discrete group,
	where $\Aut(\mathcal{O}_n;D_n)$ and $\Aut_{D_n}(\mathcal{O}_n)$ denote the groups of automorphisms on $\mathcal{O}_n$ which preserve $D_n$ globally and pointwisely respectively.
	In \cite{ContiHongSzymaski},
	the authors investigated automorphisms on $\mathcal{O}_n$ which preserve both of $D_n$ and the gauge invariant subalgebra $\mathcal{O}_n^{\T}$,
	which is a problem proposed by Cuntz in \cite{Cuntz1980}.
	The celebrated result in \cite{ContiHongSzymaski} asserts that the restricted Weyl group
	\[
	\Aut(\mathcal{O}_n;\mathcal{O}_n^{\T}, D_n)/\Aut_{D_n}(\mathcal{O}_n)
	\]
	is isomorphic to the group of homeomorphisms on $\{1,2,\cdots,n\}^{\N}$ which eventually commute with the shift on $\{1,2,\cdots,n\}^{\N}$,
	where \[\Aut(\mathcal{O}_n;\mathcal{O}_n^{\T}, D_n)\] denotes the group of automorphisms on $\mathcal{O}_n$ which preserves $\mathcal{O}_n^{\T}$ and $D_n$ globally.
	We remark that $\Aut_{D_n}(\mathcal{O}_n)$ is automatically contained in \[\Aut(\mathcal{O}_n;\mathcal{O}_n^{\T}, D_n)\]
	as a normal subgroup.
	In \cite{CONTI20122529},
	these results were generalized to graph algebras.
	Under some assumptions,
	the authors showed in \cite[Theorem 4.13]{CONTI20122529} that the restricted Weyl group of a graph algebra $C^*(E)$ can be embedded into the group of homeomorphisms on $E^{(\infty)}$ which eventually commute with the shift on $E^{(\infty)}$,
	where $E^{(\infty)}$ denotes the infinite path space on $E$.
	The authors in \cite{CONTI20122529} ask if this embedding is an isomorphism and this seems to be still an open problem.
	In any case,
	the authors in \cite{ContiHongSzymaski} and \cite{CONTI20122529} characterized automorphisms which preserve some subalgebras in terms of symbolic dynamical systems.
	These results revealed the new relation between C*-algebras and symbolic dynamical systems.
	
	We explain other existing researches which deal with both of C*-algebras and dynamical systems.
	In \cite{Giordano1995},
	the authors investigated the relation between Cantor minimal systems and associated crossed product C*-algebras.
	Since then,
	the works in \cite{Giordano1995} are generalized in various ways.
	For example,
	in \cite[Theorem 8.2]{CARLSEN2021107923},
	the authors characterised the continuous orbit equivalence between Deaconu-Renault systems,
	which are kinds of dynamical systems,
	in terms of C*-algebras associated with Deaconu-Renault systems.
	In addition,
	in \cite[Theorem 8.10]{CARLSEN2021107923},
	the authors obtained a characterization of the eventual conjugacy between Deaconu-Renault systems.
	In \cite[Theorem 3.1]{ARMSTRONG_BRIX_CARLSEN_EILERS_2023},
	the authors characterised the conjugacy between Deaconu-Renault systems.
	We note that the eventual conjugacy is a stronger notion than the continuous orbit equivalence and 
	the conjugacy is a stronger notion than the eventual conjugacy.
	One of the key steps in \cite{CARLSEN2021107923} and \cite{ARMSTRONG_BRIX_CARLSEN_EILERS_2023} is to recover the information about underlying Deaconu-Renault systems from associated C*-algebras.
	Renault's celebrated work about Cartan C*-subalgebras in \cite{renault} provides us a technique to overcome this step and allows us to recover the information about the continuous orbit equivalence of Deaconu-Renault systems.
	To recover the eventual conjugacy,
	the authors in \cite{CARLSEN2021107923} utilized *-isomorphisms which are compatible with gauge actions on associated C*-algebras.
	Similarly,
	the authors in \cite{ARMSTRONG_BRIX_CARLSEN_EILERS_2023} utilized *-isomorphisms compatible with more various actions on associated C*-algebras to recover the conjugacy of Deaconu-Renault systems.
	As we can see from their works,
	we can recover more detailed information by analysing *-homomorphisms which are compatible with additional structures like gauge actions.
	
	In this paper,
	we aim to generalize the above results to groupoid C*-algebras.
	Since graph algebras and C*-algebras associated with Deaconu-Renault systems can be realized as groupoid C*-algebras by \cite{Paterson2002} and \cite{Deaconu1995groupoidendomorphism} respectively,
	it is natural to expect that groupoid C*-algebras provide us a natural framework to generalize the above results.
	Indeed,
	we investigate automorphisms on groupoid C*-algebras which are compatible with additional structures like actions on groupoid C*-algebras in this paper.
	Now,
	we explain our purpose more precisely.
	
	Let $G$ be a locally compact Hausdorff \'etale groupoid.
	Following \cite{renault1980groupoid},
	we associate an inclusion of C*-algebras $C_0(G^{(0)})\subset C^*_r(G)$,
	where $C^*_r(G)$ denotes the reduced groupoid C*-algebra of $G$.
	In the similar way as \cite{Cuntz1980},
	we define the Weyl group of $G$ in Definition \ref{def: Weyl groups of groupoid C*-algebras} as
	\[
	\mathfrak{W}_G\defeq \Aut(C^*_r(G); C_0(G^{(0)}))/\Aut_{C_0(G^{(0)})}C^*_r(G).
	\]
	This definition generalizes the existing Weyl groups as explained in Remark \ref{remark: Weyl group generalize existing one}.
	The main aim in Section \ref{Section: weyl groups of groupoid C*-algebras} is to investigate the algebraic and topological properties of the topological groups
	\[
	\Aut(C^*_r(G); C_0(G^{(0)})), \Aut_{C_0(G^{(0)})}C^*_r(G)\text{ and } \mathfrak{W}_G
	\]
	for an effective \'etale groupoid $G$.
	First,
	we point out that the Weyl group $\mathfrak{W}_G$ is nothing but the automorphism group $\Aut(G)$ if $G$ is effective.
	Indeed,
	in \cite[Proposition 5.7]{Matui2011} and \cite[Corollary 2.2.2]{komura2023homomorphisms},
	it is shown that there exists a group isomorphism
	\begin{align*}
	\Aut(C^*_r(G); C_0(G^{(0)}))&\simeq \Aut(G)\ltimes Z(G)
	\end{align*}
	which induces 
	\begin{align*}
	\Aut_{C_0(G^{(0)})}C^*_r(G)& \simeq Z(G),
	\end{align*}
	where $Z(G)$ denotes the group of $\T$-valued cocycles on $G$ and $\Aut(G)\ltimes Z(G)$ denotes the semidirect product with respect to the natural action (see Theorem \ref{thm: varphi is a group hom} for details).
	Then we investigate the Weyl group using this isomorphism.
	Our main results in Section \ref{Section: weyl groups of groupoid C*-algebras} are Theorem \ref{thm: Z(G) is maximal abelian group} and Theorem \ref{thm: Weyl group is discrete countable if G is effective and expansive}.
	Theorem \ref{thm: Z(G) is maximal abelian group} asserts that $\Aut_{C_0(G^{(0)})}C^*_r(G)$ is a maximal abelian subgroup in $\Aut(C^*_r(G))$ under the mild assumptions.
	Theorem \ref{thm: Weyl group is discrete countable if G is effective and expansive} asserts that the Weyl group $\mathfrak{W}_G$ is a discrete countable group if $G$ is expansive in the sense of Nekrashevych \cite[Definition 5.3]{NEKRASHEVYCH_2019} (see also Definition \ref{def: expansive}).
	We note that these results were known for Cuntz algebras in \cite{Cuntz1980} and we generalize to groupoid C*-algebras.
	
	In Section \ref{Section: Restricted Weyl groups of groupoid C*-algebras},
	we introduce the restricted Weyl groups of groupoid C*-algebras.
	For an \'etale effective groupoid $G$ and open subgroupoid $G^{(0)}\subset H\subset G$,
	we have an inclusion $C_0(G^{(0)})\subset C^*_r(H)\subset C^*_r(G)$.
	Then the restricted Weyl group of $H\subset G$ is defined as
	\[
	\mathfrak{RW}_{G,H}\defeq\Aut(C^*_r(G); C^*_r(H),C_0(G^{(0)}))/\Aut_{C_0(G^{(0)})}C^*_r(G).
	\]
	Note that,
	if $G$ is effective,
	an element in $\Aut_{C_0(G^{(0)})}C^*_r(G)$ globally preserves $C^*_r(H)$ by Theorem \ref{thm: varphi is a group hom} and we may take this quotient group.
	As in the case of Weyl groups,
	this definition generalizes the existing restricted Weyl groups investigated in \cite{ContiHongSzymaski} and \cite{CONTI20122529}.
	Our main purpose in Section \ref{Section: Restricted Weyl groups of groupoid C*-algebras} is to analyse the groups
	\[
	\Aut(C^*_r(G); C^*_r(H), C_0(G^{(0)})), \Aut_{C^*_r(H)}C^*_r(G) \text{ and }\mathfrak{RW}_{G,H}.
	\]
	First,
	we characterise these groups in terms of the underlying \'etale groupoids $H\subset G$.	
	Then we treat the case that $H$ is given as the kernel $\ker\sigma$ of a discrete group cocycle $\sigma\colon G\to \Gamma$.
	Our main results in Section \ref{Section: Restricted Weyl groups of groupoid C*-algebras} is Corollary \ref{cor: Galois group is dual of Gamma ab} and Corollary \ref{cor: restricted Weyl isomorphic to restricted groupoid Weyl}.
	Corollary \ref{cor: Galois group is dual of Gamma ab} asserts that $\Aut_{C^*_r(\ker\sigma)}C^*_r(G)$ is isomorphic to $\widehat{\Gamma^{\ab}}$ under some assumptions,
	where $\Gamma^{\ab}$ denotes the abelianization of $\Gamma$ and $\widehat{\Gamma^{\ab}}$ denotes the Pontryagin dual.
	This result indicates that the ``Galois group'' $\Aut_{C^*_r(\ker\sigma)}C^*_r(G)$ only remembers the abelianization of $\Gamma$ and we can rarely recover $\Gamma$ from $\Aut_{C^*_r(\ker\sigma)}C^*_r(G)$.
	Corollary \ref{cor: restricted Weyl isomorphic to restricted groupoid Weyl} asserts that the restricted Weyl group $\mathfrak{RW}_{G,\ker\sigma}$ is isomorphic to the group of automorphisms on $G$ compatible with the cocycle $\sigma$.
	We apply this result to compute the restricted Weyl groups of graph algebras and C*-algebras associated with Deaconu-Renault systems in Subsection\ \ref{subsection: Restricted Weyl group of graph algebras} and \ref{subsection: Restricted Weyl group of Deaconu-Renault system} respectively.
	
	In Section \ref{section: Examples and applications},
	we apply results in Section \ref{Section: weyl groups of groupoid C*-algebras} and \ref{Section: Restricted Weyl groups of groupoid C*-algebras} to examples.
	In Subsection \ref{subsection : Examples of coactions on Cuntz algebras and graph algebras},
	we apply our results to the Cuntz algebra $\mathcal{O}_n$.
	As an application,
	we construct simple C*-subalgebras $B_1, B_2\subset \mathcal{O}_n$ of finite Watatani indices such that $B_1\subsetneq B_2$ and $\Aut_{B_1}\mathcal{O}_n=\Aut_{B_2}\mathcal{O}_n$ holds.
	This shows that the ``Galois group'' $\Aut_B(A)$ rarely woks well for an inclusion of C*-algebras $B\subset A$.
	In Subsection \ref{Subsection: Groupoid model of the Cuntz algebra of infinite degree},
	we show
	\[
	\Aut_{\mathcal{O}_{\infty}^{\T}}\mathcal{O}_{\infty}\simeq \T.
	\]
	While the isomorphism 
	\[
		\Aut_{\mathcal{O}_{n}^{\T}}\mathcal{O}_{n}\simeq \T.
	\] is known for $n\in\N_{\geq 2}$,
	this isomorphism for $n=\infty$ seems to be a new result.
	Indeed,
	the existing proof in \cite[Proposition 4.4]{CONTI20122529} relies on a variant of the Cuntz-Takesaki correspondence,
	which is a one-to-one correspondence between the set of unital endomorphisms on $\mathcal{O}_n$ and the set of unitary elements in $\mathcal{O}_n$ (see \cite{Cuntz1980}, for example).
	Since the Cuntz-Takesaki correspondence is not available for $\mathcal{O}_{\infty}$,
	the existing proof of
	\[
	\Aut_{\mathcal{O}_{n}^{\T}}\mathcal{O}_{n}\simeq \T
	\]
	does not work for $\mathcal{O}_{\infty}$.
	In Subsection \ref{subsection: Deaconu-Renault systems},
	we define the flip eventual conjugacy between Deaconu-Renault systems.
	Then we characterize the flip eventual conjugacy in terms of \'etale groupoids and C*-algebras in Theorem \ref{theorem: characterization of flip eventually conjugate}.
	In Subsection \ref{subsection: Restricted Weyl group of Deaconu-Renault system},
	we investigate the restricted Weyl group $\mathfrak{RW}_{G(X,T),\ker\sigma_X}$ of an \'etale groupoid associated with a topologically free Deaconu-Renault system $(X,T)$.
	We show that $\mathfrak{RW}_{G(X,T),\ker\sigma_X}$ is isomorphic to the group of the eventually conjugate automorphisms on $(X,T)$ under some assumptions in Corollary \ref{cor: eventually conjugate automorphism is isomorphic to restricted Weyl of DR system}.
	It seems an interesting phenomena that ``flip'' cannot occur if a Deaconu-Renault system $T$ is far from injective (see Proposition \ref{prop: flip cannot occur if TU=X}).
	In Subsection \ref{subsection: Restricted Weyl group of graph algebras},
	we apply the results in Subsection \ref{subsection: Restricted Weyl group of Deaconu-Renault system} to graph algebras.
	As a consequence,
	in Corollary \ref{cor: answer to open problem of Conti},
	we obtain an affirmative answer of the open problem mentioned under \cite[Theorem 4.13]{CONTI20122529},
	which asks if the restricted Weyl group of a graph algebra is isomorphic to the group of eventually shift commuting automorphisms on the infinite path space.
	
	We apply our results in this paper to Cuntz algebras,
	C*-algebras associated with Deaconu-Renault systems and graph algebras.
	As a future work,
	the author hopes that the results in this paper are applied to other C*-algebras coming from \'etale groupoids.

	This paper is organized as follows.
	In Section \ref{section: Preliminaries},
	we introduce fundamental notions in this paper.
	In Section \ref{Section: weyl groups of groupoid C*-algebras} and \ref{Section: Restricted Weyl groups of groupoid C*-algebras},
	we introduce and investigate (restricted) Weyl groups of groupoid C*-algebras.
	In Section \ref{section: Examples and applications},
	we apply the results in Section \ref{Section: weyl groups of groupoid C*-algebras} and \ref{Section: Restricted Weyl groups of groupoid C*-algebras} to Cuntz algebras,
	C*-algebras associated with Deaconu-Renault systems and graph algebras.
	
	\begin{acknowledge}
		The author is grateful to Takeshi Katsura, Yuki Arano and Taro Sogabe for fruitful discussions.
		This work was supported by JST CREST Grant Number JPMJCR1913 and RIKEN Special Postdoctoral Researcher Program.
	\end{acknowledge}
		
\section{Preliminaries}\label{section: Preliminaries}
	
	In this section,
	we introduce fundamental notions about \'etale groupoids,
	groupoid C*-algebras and inverse semigroups.
	
	\subsection{\'Etale groupoids}
	
	In this subsection,
	we recall the fundamental notions about \'etale groupoids to fix the notations.
	See \cite{asims} and \cite{renault1980groupoid} for more details.
			
	A groupoid is a set $G$ together with a distinguished subset $G^{(0)}\subset G$,
	domain and range maps $d,r\colon G\to G^{(0)}$ and a multiplication 
	\[
	G^{(2)}\defeq \{(\alpha,\beta)\in G\times G\mid d(\alpha)=r(\beta)\}\ni (\alpha,\beta)\mapsto \alpha\beta \in G
	\]
	such that
	\begin{enumerate}
		\item for all $x\in G^{(0)}$, $d(x)=x$ and $r(x)=x$ hold,
		\item for all $\alpha\in G$, $\alpha d(\alpha)=r(\alpha)\alpha=\alpha$ holds,
		\item for all $(\alpha,\beta)\in G^{(2)}$, $d(\alpha\beta)=d(\beta)$ and $r(\alpha\beta)=r(\alpha)$ hold,
		\item if $(\alpha,\beta),(\beta,\gamma)\in G^{(2)}$,
		we have $(\alpha\beta)\gamma=\alpha(\beta\gamma)$,
		\item\label{inverse} every $\gamma \in G$,
		there exists $\gamma'\in G$ which satisfies $(\gamma',\gamma), (\gamma,\gamma')\in G^{(2)}$ and $d(\gamma)=\gamma'\gamma$ and $r(\gamma)=\gamma\gamma'$.   
	\end{enumerate}
	Since the element $\gamma'$ in (\ref{inverse}) is uniquely determined by $\gamma$,
	$\gamma'$ is called the inverse of $\gamma$ and denoted by $\gamma^{-1}$.
	We call $G^{(0)}$ the unit space of $G$.
	A subgroupoid of $G$ is a subset of $G$ which is closed under the inversion and multiplication. 
	We define $G_x\defeq G_{\{x\}}$, $G^x\defeq G^{\{x\}}$ and $G(x)\defeq G_x\cap G^x$ for $x\in G^{(0)}$.
	Then $G(x)$ is a group and called an isotropy group at $x\in G^{(0)}$.
	A subset $U\subset G^{(0)}$ is said to be invariant if $d(\alpha)\in U$ implies $r(\alpha)\in U$ for all $\alpha\in G$.
	Let $H$ be a groupoid.
	A map $\Phi\colon G\to H$ is called a groupoid homomorphism if for a pair $\alpha$ and $\beta\in G$ with $d(\alpha)=r(\beta)$,
	$\Phi(\alpha)$ and $\Phi(\beta)$ are composable and $\Phi(\alpha\beta)=\Phi(\alpha)\Phi(\beta)$ holds.
	For a groupoid homomorphism $\Phi\colon G\to H$,
	we write $\ker\Phi\defeq \Phi^{-1}(H^{(0)})$.
	
	A topological groupoid is a groupoid equipped with a topology where the multiplication and the inverse are continuous.
	A topological groupoid is said to be \'etale if the domain map is a local homeomorphism.
	Note that the range map of an \'etale groupoid is also a local homeomorphism.
	In this paper,
	we will always assume that \'etale groupoids are locally compact Hausdorff unless otherwise stated in the following sections.	
	A locally compact Hausdorff \'etale groupoid is said to be ample if it is totally disconnected.
	
	A subset $U$ of an \'etale groupoid $G$ is called a bisection if the restrictions $d|_U$ and $r|_U$ are injective.
	It follows that $d|_U$ and $r|_U$ are homeomorphisms onto their images if $U$ is an open bisection since $d$ and $r$ are open maps.
	
	An \'etale groupoid $G$ is said to be effective if $G^{(0)}$ coincides with the interior of $\Iso(G)$,
	where 
	\[
	\Iso(G)\defeq\{\alpha\in G\colon d(\alpha)=r(\alpha)\}
	\]
	is the isotropy of $G$.
	An \'etale groupoid $G$ is said to be topologically principal if 
	\[
	\{x\in G^{(0)}\mid G(x)=\{x\}\}
	\]
	is dense in $G^{(0)}$.
	If $G$ is Hausdorff and topologically principal,
	then $G$ is effective.
	If $G$ is second countable and effective,
	then $G$ is topologically principal (see \cite[Proposition 3.6]{renault}).
	
	An \'etale groupoid $G$ is said to be topologically transitive if $r(d^{-1}(U))$ is dense in $G^{(0)}$ for all non-empty open set $U\subset G^{(0)}$.
	Equivalently,
	$G$ is topologically transitive if and only if each non-empty open invariant subset $U\subset G^{(0)}$ is dense in $G^{(0)}$.
	If there exists $x\in G^{(0)}$ such that $r(d^{-1}(\{x\}))\subset G^{(0)}$ is dense,
	then $G$ is topologically transitive.
	The converse is true if $G$ is second countable by \cite[Lemma 3.4]{STEINBERG20192474}.
	
	Let $G$ be an \'etale groupoid and $\Gamma$ be a topological group.
	A groupoid homomorphism $\sigma \colon G\to \Gamma$ is called a cocycle.
	We let $Z(G)$ denote the set of all continuous cocycles $c\colon G\to \T$,
	where $\T$ denotes the circle group.
	Then $Z(G)$ is an abelian group with respect to the pointwise product.
	We let $Z(G^{(0)})$ denote the set of all continuous functions $f\colon G^{(0)}\to \T$.
	Then $Z(G^{(0)})$ is also an abelian group with respect to the pointwise product.
	We have a group homomorphism $\partial\colon Z(G^{(0)})\to Z(G)$ defined by
	\[
	\partial f(\alpha)\defeq f(r(\alpha))\overline{f(d(\alpha))}
	\]
	for $f\in Z(G^{(0)})$ and $\alpha\in G$.
	
	\subsection{Inverse semigroup actions}

	We recall the basic notions about inverse semigroups.
	See \cite{lawson1998inverse} or \cite{paterson2012groupoids} for more details.
	An inverse semigroup $S$ is a semigroup where for every $s\in S$ there exists a unique $s^*\in S$ such that $s=ss^*s$ and $s^*=s^*ss^*$.
	An element $s^*$ is called a generalized inverse of $s\in S$.
	By a subsemigroup of $S$,
	we mean a subset of $S$ which is closed under the product and generalized inverse of $S$.
	We denote the set of all idempotents in $S$ by $E(S)\defeq\{e\in S\mid e^2=e\}$.
	It is known that $E(S)$ is a commutative subsemigroup of $S$.
	An inverse semigroup which consists of idempotents is called a (meet) semilattice of idempotents.
	A zero element is a unique element $0\in S$ such that $0s=s0=0$ holds for all $s\in S$.
	An inverse semigroup with a unit is called an inverse monoid.
	A map $\varphi\colon S\to T$ between inverse semigroups $S$ and $T$ is called a semigroup homomorphism if $\varphi(st)=\varphi(s)\varphi(t)$ holds for all $s,t\in S$.
	Note that a semigroup homomorphism automatically preserves generalised inverses (i.e.\ $\varphi(s^*)=\varphi(s)^*$ holds for all $s\in S$).
	
	The set of all open bisections in a \'etale groupoid is an inverse semigroup as the following:
	
	\begin{ex}[{\cite[Proposition 2.2.4]{paterson2012groupoids}}]
		Let $G$ be a locally compact Hausdorff \'etale groupoid.
		The set of all open bisections in $G$ is denoted by $\mathrm{Bis}(G)$.
		For $U, V\in \mathrm{Bis}(G)$,
		their product is defined by
		\[
		UV\defeq \{\alpha\beta\in G\mid \alpha\in U, \beta\in V, d(\alpha)=r(\beta)\}.
		\]
		Then $UV\in\mathrm {Bis}(G)$ and $\mathrm{Bis}(G)$ is an inverse semigroup with respect to this product.
		Note that $U^*\in \mathrm{Bis}(G)$ is given by
		\[
		U^{-1}\defeq \{\alpha^{-1}\in G\mid \alpha\in U\}.
		\]
		In addition,
		let $\Bis^c(G)$ denote the set of all compact open bisections in $G$.
		Then $\Bis^c(G)$ is a subsemigroup of $\Bis(G)$.
	\end{ex}

	For a topological space $X$,
	we denote by $I_X$ the set of all homeomorphisms between open subsets in $X$.
	Then $I_X$ is an inverse semigroup with respect to the product defined by the composition of maps.
	For an inverse semigroup $S$,
	an inverse semigroup action $\alpha\colon S\curvearrowright X$ is a semigroup homomorphism $S\ni s\mapsto \alpha_s\in I_X$.
	In this paper, we always assume that every action $\alpha$ satisfies $\bigcup_{e\in E(S)}\dom(\alpha_e)=X$.
	If $S$ has a zero element,
	we assume that $\dom(\alpha_0)=\emptyset$.
	
	Next,
	we recall how to construct an \'etale groupoid from an inverse semigroup action.
	Let $X$ be a locally compact Hausdorff space.
	For an action $\alpha\colon S\curvearrowright X$,
	we associate an \'etale groupoid $S\ltimes_{\alpha}X$ as the following.
	First we put the set $S*X\defeq \{(s,x) \in S\times X \mid x\in \dom(\alpha_{s^*s})\}$.
	Then we define an equivalence relation $\sim$ on $S*X$ by declaring that $(s,x)\sim (t,y)$ holds if
	\[
	\text{$x=y$ and there exists $e\in E(S)$ such that $x\in \dom(\alpha_e)$ and $se=te$}.  
	\]
	Set $S\ltimes_{\alpha}X\defeq S*X/{\sim}$ and denote the equivalence class of $(s,x)\in S*X$ by $[s,x]$.
	The unit space of $S\ltimes_{\alpha}X$ is $X$, where $X$ is identified with the subset of $S\ltimes_{\alpha}X$ via the injective map
	\[
	X\ni x\mapsto [e,x] \in S\ltimes_{\alpha}X, x\in \dom(\alpha_e).
	\]
	The domain map and range maps are defined by
	\[
	d([s,x])=x, r([s,x])=\alpha_s(x)
	\]
	for $[s,x]\in S\ltimes_{\alpha}X$.
	The product of $[s,\alpha_t(x)],[t,x]\in S\ltimes_{\alpha}X$ is $[st,x]$.
	The inverse is $[s,x]^{-1}=[s^*,\alpha_s(x)]$.
	Then $S\ltimes_{\alpha}X$ is a groupoid in these operations.
	For $s\in S$ and an open set $U\subset \dom(\alpha_{s^*s})$,
	define 
	\[[s, U]\defeq \{[s,x]\in S\ltimes_{\alpha}X\mid x\in U\}.\]
	These sets form an open basis of $S\ltimes_{\alpha}X$.
	In these structures,
	$S\ltimes_{\alpha}X$ is a locally compact \'etale groupoid,
	although $S\ltimes_{\alpha}X$ is not necessarily Hausdorff.
	In this paper,
	we only treat inverse semigroup actions $\alpha\colon S\curvearrowright X$ such that $S\ltimes_{\alpha}X$ become Hausdorff.

	Let $S$ be an inverse semigroup with $0$ and $\Gamma$ be a discrete group.
	Put $S^{\times}\defeq S\setminus\{0\}$.
	A map $\theta\colon S^{\times}\to \Gamma$ is called a partial homomorphism if $\theta(st)=\theta(s)\theta(t)$ holds for all $s,t\in S^{\times}$ with $st\not=0$.
	Assume that $\theta\colon S^{\times}\to \Gamma$ is a partial homomorphism and $\alpha\colon S\curvearrowright X$ is an action on a topological space $X$.
	Then we associate a continuous cocycle $\widetilde{\theta}\colon S\ltimes_{\alpha} X \to \Gamma$ defined by
	\[
	\widetilde{\theta}([s,x])\defeq \theta(s)
	\]
	for all $[s,x]\in S\ltimes_{\alpha}X$.
	\subsection{Groupoid C*-algebras}
	
	We recall the definition of groupoid C*-algebras.
	
	Let $G$ be a locally compact Hausdorff \'etale groupoid.
	Then $C_c(G)$, the vector space of compactly supported continuous $\C$-valued functions on $G$, is a *-algebra with respect to the multiplication and the involution defined by
	\[
	f*g(\gamma)\defeq\sum_{\alpha\beta=\gamma}f(\alpha)g(\beta), f^*(\gamma)\defeq\overline{f(\gamma^{-1})},
	\]
	where $f,g\in C_c(G)$ and $\gamma\in G$.
	The left regular representation $\lambda_x\colon C_c(G)\to B(\ell^2(G_x))$ at $x\in G^{(0)}$ is defined by
	\[
	\lambda_x(f)\delta_{\alpha}\defeq \sum_{\beta\in G_{r(\alpha)}}f(\alpha)\delta_{\alpha\beta},
	\]
	where $f\in C_c(G)$, $\alpha\in G_x$ and $\{\delta_{\alpha}\}_{\alpha\in G_x}\subset \ell^2(G_x)$ denotes the standard complete orthonormal system of $\ell^2(G_x)$.
	The reduced norm $\lVert\cdot\rVert_{r}$ on $C_c(G)$ is defined by
	\[
	\lVert f\rVert_r\defeq \sup_{x\in G^{(0)}} \lVert \lambda_x(f)\rVert
	\]
	for $f\in C_c(G)$.
	We often omit the subscript `$r$' of $\lVert\cdot\rVert_r$ if there is no chance to confuse.
	The reduced groupoid C*-algebra $C^*_r(G)$ is defined to be the completion of $C_c(G)$ with respect to the reduced norm.
	Note that $C_c(G^{(0)})\subset C_c(G)$ is a *-subalgebra and this inclusion extends to the inclusion  $C_0(G^{(0)})\subset C^*_r(G)$.
	In addition,
	we have a faithful conditional expectation $E\colon C^*_r(G)\to C_0(G^{(0)})$ defined by
	\[
	E(f)=f|_{G^{(0)}}
	\]
	for all $f\in C_c(G)$ (see \cite[Proposition 10.2.6]{asims} for example).
	
	The reduced groupoid C*-algebra $C^*_r(G)$ can be embedded into $C_0(G)$ as in the following,
	which was originally proved by Renault.
	See also \cite[Proposition 9.3.3]{asims} for the proof.

	\begin{prop}[{\cite[Proposition II 4.2]{renault1980groupoid}}] \label{prop evaluation}
		Let $G$ be a locally compact Hausdorff \'etale groupoid.
		For $a\in C^*_r(G)$,
		$j(a)\in C_0(G)$ is defined by
		\[
		j(a)(\alpha)\defeq\i<\delta_{\alpha}|\lambda_{d(\alpha)}(a)\delta_{d(\alpha)}>
		\]
		for $\alpha\in G$\footnote{In this paper, inner products of Hilbert spaces are linear with respect to the right variables.}.
		Then $j\colon C^*_r(G)\to C_0(G)$ is a norm decreasing injective linear map.
		Moreover, $j$ is an identity map on $C_c(G)$.
		
	\end{prop}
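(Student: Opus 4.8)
The plan is to establish the four assertions in turn: that $j$ is a well-defined norm-decreasing linear map into bounded functions on $G$, that it restricts to the identity on $C_c(G)$, that its image lies in $C_0(G)$, and that it is injective. Since the reduced norm satisfies $\lVert\lambda_x(f)\rVert\le\lVert f\rVert_r$ for $f\in C_c(G)$ and $x\in G^{(0)}$, each left regular representation $\lambda_x$ extends to a representation of $C^*_r(G)$ on $\ell^2(G_x)$. For $a\in C^*_r(G)$ and $\alpha\in G$, writing $x\defeq d(\alpha)$ and using that $\delta_\alpha$ and $\delta_x$ are unit vectors, the Cauchy--Schwarz inequality gives
\[
|j(a)(\alpha)|=\bigl|\i<\delta_\alpha|\lambda_x(a)\delta_x>\bigr|\le\lVert\lambda_x(a)\rVert\le\lVert a\rVert_r ,
\]
so $j(a)$ is a bounded function on $G$ with $\sup_{\alpha\in G}|j(a)(\alpha)|\le\lVert a\rVert_r$. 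Linearity of $j$ is immediate, since each $\lambda_x$ is linear and the pairing is linear in the second variable.

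Next I would compute $j$ on $C_c(G)$. For $f\in C_c(G)$ and $\alpha\in G$ with $x\defeq d(\alpha)$, the defining formula for $\lambda_x$ evaluated at the unit vector $\delta_x$ simplifies, because $\beta x=\beta$ for every $\beta\in G_x$, to $\lambda_x(f)\delta_x=\sum_{\beta\in G_x}f(\beta)\delta_\beta$; pairing with $\delta_\alpha$ gives $j(f)(\alpha)=f(\alpha)$. Hence $j|_{C_c(G)}=\id$, and in particular $j$ maps $C_c(G)$ into $C_0(G)$. To deduce $j(C^*_r(G))\subseteq C_0(G)$ in general, I would combine this with a density argument: $C_c(G)$ is $\lVert\cdot\rVert_r$-dense in $C^*_r(G)$, the map $j$ is $\lVert\cdot\rVert_r$-to-$\lVert\cdot\rVert_\infty$ continuous by the estimate above, and $C_0(G)$ is precisely the uniform closure of $C_c(G)$ since $G$ is locally compact Hausdorff; therefore $j(C^*_r(G))\subseteq\overline{j(C_c(G))}^{\lVert\cdot\rVert_\infty}=\overline{C_c(G)}^{\lVert\cdot\rVert_\infty}=C_0(G)$, and the norm bound passes to the limit.

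Finally, for injectivity I would use the faithful conditional expectation $E\colon C^*_r(G)\to C_0(G^{(0)})$. Two facts are needed. First, $j(b)|_{G^{(0)}}=E(b)$ for every $b\in C^*_r(G)$: this holds for $b=f\in C_c(G)$ because $j(f)|_{G^{(0)}}=f|_{G^{(0)}}=E(f)$, and both sides depend continuously on $b$, so it holds in general. Second, since $\lambda_x$ is a $*$-representation, expanding $\lambda_x(a)\delta_x$ in the orthonormal basis $\{\delta_\alpha\}_{\alpha\in G_x}$ gives $\lambda_x(a)\delta_x=\sum_{\alpha\in G_x}j(a)(\alpha)\,\delta_\alpha$, whence
\[
j(a^*a)(x)=\i<\delta_x|\lambda_x(a)^*\lambda_x(a)\delta_x>=\lVert\lambda_x(a)\delta_x\rVert^2=\sum_{\alpha\in G_x}|j(a)(\alpha)|^2 .
\]
Now suppose $j(a)=0$. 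The last display shows $j(a^*a)(x)=0$ for every $x\in G^{(0)}$, hence $E(a^*a)=0$ by the first fact, hence $a^*a=0$ by faithfulness of $E$, hence $a=0$. I expect this passage from $j$ to $E$, together with the invocation of faithfulness, to be the only step requiring real care; everything else is a direct computation or a soft density argument. (Alternatively, injectivity follows from the observation that the right-translation unitaries $\rho_\gamma\colon\ell^2(G_{r(\gamma)})\to\ell^2(G_{d(\gamma)})$, $\delta_\eta\mapsto\delta_{\eta\gamma}$, intertwine $\lambda_{r(\gamma)}$ and $\lambda_{d(\gamma)}$, so that $\i<\delta_\alpha|\lambda_x(a)\delta_\gamma>=j(a)(\alpha\gamma^{-1})$ whenever $\alpha,\gamma\in G_x$; thus $j(a)=0$ forces $\lambda_x(a)=0$ for all $x\in G^{(0)}$ and therefore $\lVert a\rVert_r=0$.)
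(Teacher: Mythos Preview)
The paper does not supply its own proof of this proposition; it is quoted from Renault's monograph, with a pointer to \cite[Proposition 9.3.3]{asims} for details. Your argument is correct and is essentially the standard one found in those references: the Cauchy--Schwarz bound gives $\lVert j(a)\rVert_\infty\le\lVert a\rVert_r$, the direct computation on $C_c(G)$ identifies $j$ with the inclusion there, density pushes the image into $C_0(G)$, and injectivity follows either from faithfulness of $E$ or from the right-translation intertwining.

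One small logical caution: your primary injectivity argument invokes the faithfulness of the conditional expectation $E$, which the paper records immediately \emph{after} this proposition. In many treatments (including the one in \cite{asims}) the faithfulness of $E$ is itself proved via exactly the right-translation computation you sketch parenthetically, so depending on which fact one takes as prior there is a risk of circularity. Your alternative argument---that $\i<\delta_\alpha|\lambda_x(a)\delta_\gamma>=j(a)(\alpha\gamma^{-1})$ for all $\alpha,\gamma\in G_x$, hence $j(a)=0$ forces every $\lambda_x(a)=0$---is self-contained and is the logically cleaner route here.
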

	
	\begin{rem}
		Since $j\colon C^*_r(G)\to C_0(G^{(0)})$ is injective,
		we may identify $j(a)$ with $a$.
		Hence, we often regard $a$ as a function on $G$ and simply denote $j(a)$ by $a$.
		We often use the inequality
		\[\lVert a\rVert_{\infty}\leq \lVert a\rVert_{r}\]
		for $a\in C^*_r(G)$,
		where $\lVert\cdot\rVert_{\infty}$ and $\lVert\cdot\rVert_r$ denote the supremum and reduced norm respectively. 
	\end{rem}

	\subsection{Automorphism groups}
		
	For a C*-algebra $A$,
	$\Aut(A)$ denotes the group of *-automorphism on $A$.
	We equip $\Aut(A)$ with the strong norm topology.
	Namely,
	$\Aut(A)$ is equipped with the weakest topology where the map
	\[
	\Aut(A)\ni \varphi\mapsto \varphi(a)\in A
	\]
	is continuous for all $a\in A$.
	
	Let $H$ be a group and $A$ be a C*-algebra.
	An action $\tau\colon H\curvearrowright A$ is a group homomorphism $\tau\colon H\to \Aut(A)$.
	We denote the invariant subalgebra of $\tau$ by
	\[
	A^{\tau}\defeq \{a\in A\mid \tau_{\chi}(a)=a\text{ for all $\chi\in H$}\}.
	\]
	If there is no chance to confuse,
	we simply represent $A^{\tau}$ as $A^H$.
	For a topological group $H$,
	we say that an action $\tau\colon H\curvearrowright A$ is strongly continuous if $\tau\colon H\to \Aut(A)$ is continuous,
	where $\Aut(A)$ is equipped with the strong norm topology.

	\begin{defi}\label{defi: definitions of automorphism groups}
		Let $A$ be a C*-algebra,
		$B\subset A$ and $B_i\subset A$ be C*-subalgebras of $A$ for $i=1,2$.
		We define
		\begin{align*}
		\Aut(A; B)&\defeq \{\varphi\in\Aut(A)\mid \varphi(B)=B\},\\
		\Aut(A; B_1, B_2)&\defeq \{\varphi\in\Aut(A)\mid \varphi(B_i)=B_i \text{ for $i=1,2$}\},\\
		\Aut_B(A)&\defeq \{\varphi\in \Aut(A)\mid \varphi(b)=b \text{ for all $b\in B$}\}.
		\end{align*}
		We equip these groups with the relative topology of the strong norm topology of $\Aut(A)$.
	\end{defi}
	
	Note that $\Aut_B(A)$ is a normal subgroup of $\Aut(A;B)$.
	
	\begin{rem}\label{remark: conjugate action on AutBA}
		Let $A$ be a C*-algebra and $B\subset A$ be a C*-subalgebra.
		Then $\Aut(A,B)$ acts on $\Aut_BA$ by the conjugation since $\Aut_BA$ is a normal subgroup in $\Aut(A; B)$.
		Namely,
		we have the action $\ad\colon \Aut(A;B)\curvearrowright \Aut_BA$ defined by
		\[
		\ad_{\varphi}(\psi)\defeq \varphi\circ\psi\circ \varphi^{-1}
		\]
		for all $\varphi\in\Aut(A;B)$ and $\psi\in\Aut_BA$.
		Note that we have
		\[
		\varphi\circ\psi=\ad_{\varphi}(\psi)\circ\varphi
		\]
		for all $\varphi\in\Aut(A;B)$ and $\psi\in\Aut_BA$.
		
	\end{rem}
		
	Although the proof of the following proposition is straightforward,
	we include a proof for the reader's convenience.
	
	\begin{prop}\label{proposition: centralizer of AutBA is Aut(A,B)}
		Let $A$ be a C*-algebra and $B\subset A$ be a C*-subalgebra.
		Assume that $\varphi\in \Aut(A)$ and there exists $\tau_{\varphi}\in\Aut(\Aut_BA)$ such that
		\[
		\varphi\circ \psi=\tau_{\varphi}(\psi)\circ \varphi
		\]
		for all $\psi\in\Aut_BA$.
		In addition,
		assume that $B=A^{\Aut_BA}$ holds.
		Then $\varphi\in \Aut(A;B)$.
	\end{prop}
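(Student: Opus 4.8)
The plan is to show $\varphi(B) = B$ by exploiting the hypothesis $B = A^{\Aut_B A}$, i.e., that $B$ is exactly the fixed-point algebra of the $\Aut_B A$-action on $A$. The intertwining relation $\varphi \circ \psi = \tau_\varphi(\psi) \circ \varphi$ tells us that conjugation by $\varphi$ carries $\Aut_B A$ into itself (it lands inside $\Aut_B A$ because $\tau_\varphi$ takes values there). So I would first establish the containment $\varphi(B) \subseteq B$, then derive the reverse containment from the fact that $\tau_\varphi$ is surjective.

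First I would take $b \in B = A^{\Aut_B A}$ and let $\psi \in \Aut_B A$ be arbitrary. Applying the intertwining relation to the element $b$: since $\tau_\varphi(\psi) \in \Aut_B A$ and $b$ is fixed by every element of $\Aut_B A$, we get $\tau_\varphi(\psi)(b) = b$. Hence
\[
\psi(\varphi(b)) = \varphi(\tau_\varphi^{-1}(\psi)(b)) = \varphi(b),
\]
using the intertwining relation rewritten as $\psi \circ \varphi = \varphi \circ \tau_\varphi^{-1}(\psi)$ (valid because $\tau_\varphi$ is an automorphism of $\Aut_B A$, so $\tau_\varphi^{-1}(\psi) \in \Aut_B A$ and $\tau_\varphi^{-1}(\psi)(b) = b$). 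Since $\psi \in \Aut_B A$ was arbitrary, $\varphi(b) \in A^{\Aut_B A} = B$. This gives $\varphi(B) \subseteq B$.

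For the reverse inclusion, I would run the same argument with $\varphi^{-1}$ in place of $\varphi$. From $\varphi \circ \psi = \tau_\varphi(\psi) \circ \varphi$ one obtains, after composing with $\varphi^{-1}$ on both sides and reindexing $\psi \mapsto \tau_\varphi^{-1}(\psi)$, the relation $\varphi^{-1} \circ \psi = \tau_\varphi^{-1}(\psi) \circ \varphi^{-1}$ for all $\psi \in \Aut_B A$; here surjectivity of $\tau_\varphi$ is what guarantees that every $\psi \in \Aut_B A$ arises this way. So $\varphi^{-1}$ satisfies the same hypotheses as $\varphi$ (with $\tau_{\varphi^{-1}} = \tau_\varphi^{-1}$), and the first part of the argument yields $\varphi^{-1}(B) \subseteq B$, hence $B \subseteq \varphi(B)$. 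Combining the two containments gives $\varphi(B) = B$, i.e., $\varphi \in \Aut(A; B)$.

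I don't expect a serious obstacle here — the argument is a straightforward unwinding of the definitions. The only point requiring a little care is bookkeeping with the direction of the intertwining identity and making sure surjectivity of $\tau_\varphi$ is invoked precisely where the reverse inclusion needs it; the relation $B = A^{\Aut_B A}$ is used twice (once to land $\varphi(b)$ back in $B$, once in the mirrored argument for $\varphi^{-1}$), and it is essential that $\tau_\varphi$ is an automorphism rather than merely an endomorphism of $\Aut_B A$.
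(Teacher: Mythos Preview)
Your proof is correct and follows essentially the same argument as the paper: rewrite the intertwining relation as $\psi\circ\varphi=\varphi\circ\tau_\varphi^{-1}(\psi)$, apply it to $b\in B$, and use $B=A^{\Aut_BA}$ to conclude $\varphi(b)\in B$. The paper's proof stops after showing $\varphi(B)\subseteq B$ (and in fact ends with what appears to be a typo, writing $\varphi\in\Aut_BA$ rather than $\varphi\in\Aut(A;B)$), whereas you are more careful in spelling out the reverse inclusion via $\varphi^{-1}$; this extra paragraph is a welcome addition rather than a deviation.
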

	\begin{proof}
		
		Take $b\in B$.
		For all $\psi\in\Aut_BA$,
		we have $\psi\circ \varphi=\varphi\circ\tau_{\varphi}^{-1}(\psi)$ and
		\[
		\psi(\varphi(b))=\varphi(\tau_{\varphi}^{-1}(\psi)(b))=\varphi(b).
		\]
		Hence we obtain $\varphi(b)\in A^{\Aut_BA}=B$ and $\varphi\in\Aut_BA$.
		\qed
	\end{proof}

	\subsection{Miscellaneous facts}
	
	In this subsection,
	we collect facts about groupoid C*-algebras which we will use in the following sections.
	We include proofs of propositions which the author can not find in literatures.
	
	We characterize the primeness of groupoid C*-algebras in terms of the underlying \'etale groupoids.
	Recall that a (two-sided closed) ideal $I\subset A$ of a C*-algebra $A$ is essential if $aI=\{0\}$ implies $a=0$ for all $a\in A$.
	A C*-algebra $A$ is said to be prime if every nonzero ideal of $A$ is essential.
	
\begin{prop}\label{prop: topologically transitive and prime}
	Let $G$ be a locally compact Hausdorff \'etale groupoid.
	If $C^*_r(G)$ is prime,
	then $G$ is topologically transitive.
	Conversely,
	if $G$ is topologically principal and topologically transitive,
	then $C^*_r(G)$ is prime.
\end{prop}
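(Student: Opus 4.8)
\noindent The plan is to prove the two implications separately, in each case exploiting the standard correspondence between non-empty open invariant subsets $W\subseteq G^{(0)}$ and non-zero (closed, two-sided) ideals of $C^*_r(G)$: to such a $W$ one associates the ideal $I_W\defeq\overline{C_c(r^{-1}(W))}$, where $C_c(r^{-1}(W))\subseteq C_c(G)$ is a $*$-ideal because $W$ is invariant.

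\noindent \textbf{Primeness implies topological transitivity.} I would argue by contraposition. If $G$ is not topologically transitive, there is a non-empty open invariant $U\subseteq G^{(0)}$ that fails to be dense. Lifting convergent nets from $U$ through open bisections shows that $\overline U$, and hence also $V\defeq G^{(0)}\setminus\overline U$, is invariant; thus $U$ and $V$ are non-empty, open, invariant, and disjoint. A direct convolution computation gives $I_{W_1}I_{W_2}\subseteq I_{W_1\cap W_2}$ for open invariant $W_1,W_2$, so $I_UI_V\subseteq I_\emptyset=\{0\}$ while $I_U,I_V\neq\{0\}$; hence $C^*_r(G)$ is not prime.

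\noindent \textbf{Topological principality and transitivity imply primeness.} Using that $C^*_r(G)$ is prime exactly when $IJ\neq\{0\}$ for every pair of non-zero ideals $I,J$, fix such $I$ and $J$. Since $G$ is topologically principal it is effective, and the essential input is the well-known fact that for an effective Hausdorff \'etale groupoid every non-zero ideal of $C^*_r(G)$ intersects the diagonal $C_0(G^{(0)})$ non-trivially --- a consequence of the faithfulness of the conditional expectation $E$ together with effectiveness; see \cite{renault} and \cite{asims}. Hence the sets $U_I\defeq\bigcup\{\,\{x\in G^{(0)}:f(x)\neq0\}\mid f\in I\cap C_0(G^{(0)})\,\}$ and, analogously, $U_J$ are non-empty and open; conjugating elements of $I\cap C_0(G^{(0)})$ by functions supported in open bisections (and using that $I$ is an ideal) shows that $U_I$ is invariant, and likewise $U_J$. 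By topological transitivity both $U_I$ and $U_J$ are dense, so $U_I\cap U_J\neq\emptyset$; picking $x_0$ in this intersection together with $f\in I\cap C_0(G^{(0)})$ and $g\in J\cap C_0(G^{(0)})$ with $f(x_0)\neq0$ and $g(x_0)\neq0$, we obtain $fg\in IJ$ with $(fg)(x_0)\neq0$, so $IJ\neq\{0\}$ and $C^*_r(G)$ is prime.

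\noindent \textbf{Main obstacle.} Once the dictionary $W\leftrightarrow I_W$ and the effectiveness lemma are available, the remainder is routine: the convolution inclusions $I_{W_1}I_{W_2}\subseteq I_{W_1\cap W_2}$, the invariance of $\overline U$ and of $U_I$, and the two equivalent descriptions of primeness. The single non-formal ingredient is the statement that non-zero ideals meet $C_0(G^{(0)})$; this is exactly where ``topologically principal'' (equivalently, effectiveness) is used, and I would quote it from the literature rather than reprove it. The one point to state with care is the invariance of $\overline U$ (and dually of $V=G^{(0)}\setminus\overline U$), which relies on the domain and range maps being local homeomorphisms.
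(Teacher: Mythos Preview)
Your argument is correct and follows essentially the same route as the paper: both directions rest on the dictionary between non-empty open invariant subsets of $G^{(0)}$ and non-zero ideals of $C^*_r(G)$, together with the fact (quoted from \cite{asims}) that in the effective case every non-zero ideal meets $C_0(G^{(0)})$. The paper's version is marginally more economical in two places: for the forward direction it works directly with the ``essential ideal'' formulation of primeness, so instead of producing a second invariant open set $V=G^{(0)}\setminus\overline U$ it simply picks any non-zero $f\in C_c(V)$ for an open $V$ disjoint from $U$ and observes $fI=\{0\}$, avoiding the invariance-of-closure argument; for the converse it again checks essentiality directly via the faithful expectation $E$ (if $aI=\{0\}$ then $a^*aC_0(U)=\{0\}$ with $U$ dense, hence $E(a^*a)=0$), rather than exhibiting a non-zero product $fg\in IJ$.
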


\begin{proof}
	First, assume that $C^*_r(G)$ is prime.
	Let $U\subset G^{(0)}$ be a non-empty open invariant subset.
	Suppose that $U$ is not dense in $G^{(0)}$.
	Then there exists a non-empty open subset $V\subset G^{(0)}$ such that $U\cap V=\emptyset$.
	Put $I\defeq C^*_r(G_{U})$.
	Then $I$ is a non-zero ideal of $C^*_r(G)$.
	Take a non-zero element $f\in C_c(V)$.
	Then $fI=\{0\}$ holds.
	This contradicts to the assumption that $C^*_r(G)$ is prime.
	
	Second,
	assume that $G$ is topologically principal and topologically transitive.
	Assume that $I\subset C^*_r(G)$ is a non-zero ideal.
	Then,
	by \cite[Remark 10.2.8, Lemma 10.3.1]{asims}\footnote{We may apply \cite[Theorem 10.2.7, Remark 10.2.8]{asims} since we assume that $G$ is topologically principal.
	Note that we do not need the second countability of $G$ as one can observe in the proof of \cite[Theorem 10.2.7]{asims}.},
	there exists a non-empty open invariant subset $U\subset G^{(0)}$ such that $I\cap C_0(G^{(0)})=C_0(U)$.	
	Since $G$ is  topologically transitive,
	$U\subset G^{(0)}$ is dense in $G^{(0)}$.
	Now,
	assume that $a\in C^*_r(G)$ satisfies $aI=\{0\}$.
	Since $a^*a C_0(U)=\{0\}$,
	we have $a^*a(x)=0$ for all $x\in U$. %use a^*af(x)=a^*a(x)f(x)
	Since $U$ is dense in $G^{(0)}$,
	we obtain $E(a^*a)=0$,
	where $E\colon C^*_r(G)\to C_0(G^{(0)})$ denotes the standard conditional expectation.
	Since $E$ is faithful,
	we obtain $a=0$ and this completes the proof.
	\qed
\end{proof}

\begin{prop}\label{prop: relative commutant is trivial if H is topo transitive}
	Let $G$ be a locally compact Hausdorff \'etale groupoid and $H\subset G$ be an open subgroupoid with $G^{(0)}\subset H$.
	Assume that $G$ is effective and $H$ is topologically transitive.
	Then the relative commutant $C^*_r(H)'\cap C^*_r(G)$ of $C^*_r(H)$ in $C^*_r(G)$ is trivial in the sense that
	\begin{align*}
	C^*_r(H)'\cap C^*_r(G)=
	\begin{cases}
	\C1_{C^*_r(G)} & ( \text{$G^{(0)}$ is compact}) \\
	\{0\} & (\text{otherwise})
	\end{cases}
	\end{align*}
	holds.
\end{prop}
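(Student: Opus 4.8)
The plan is to represent an element $a$ of the relative commutant $C^*_r(H)'\cap C^*_r(G)$ as a continuous function on $G$ via the injection $j$ of Proposition \ref{prop evaluation}, and then to cut down its support in two stages. In the first stage I would use only that $a$ commutes with $C_0(G^{(0)})$, which is automatic since $C_0(G^{(0)})\subset C^*_r(H)$. Working first on the dense $*$-subalgebra $C_c(G)$ and then passing to $C^*_r(G)$ by the inequality $\lVert\cdot\rVert_\infty\le\lVert\cdot\rVert_r$, one obtains the pointwise identities $(f*a)(\gamma)=f(r(\gamma))\,a(\gamma)$ and $(a*f)(\gamma)=a(\gamma)\,f(d(\gamma))$ for $f\in C_0(G^{(0)})$ and $\gamma\in G$. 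Hence $a*f=f*a$ forces $a$ to vanish off $\Iso(G)$. Since $G$ is Hausdorff, $\Iso(G)$ is closed, so the open set $\{\gamma\in G:a(\gamma)\neq0\}$ is contained in $\Iso(G)$, therefore in $\Int(\Iso(G))=G^{(0)}$, the latter equality being exactly effectiveness of $G$. Thus $a=E(a)\in C_0(G^{(0)})$. (Equivalently, this stage merely records that $C_0(G^{(0)})$ is maximal abelian in $C^*_r(G)$ when $G$ is effective, which one could also quote.)

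In the second stage, $a$ is now a continuous function on $G^{(0)}$ that still commutes with $C_c(H)$. For $f\in C_c(H)$ the same kind of computation gives $(a*f)(\gamma)=a(r(\gamma))\,f(\gamma)$ and $(f*a)(\gamma)=f(\gamma)\,a(d(\gamma))$ for $\gamma\in H$, so picking $f$ with $f(\gamma)\neq0$ yields $a(r(\gamma))=a(d(\gamma))$ for every $\gamma\in H$; that is, $a$ is constant on $H$-orbits. Given $x_0\in G^{(0)}$ and $\varepsilon>0$, the set $\{x\in G^{(0)}:|a(x)-a(x_0)|<\varepsilon\}$ is open, non-empty and $H$-invariant, hence dense in $G^{(0)}$ because $H$ is topologically transitive; letting $\varepsilon\to0$ and using continuity of $a$ shows that $a$ is the constant function with value $a(x_0)$.

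It then only remains to distinguish the two cases. If $G^{(0)}$ is compact, then $C^*_r(G)$ is unital and $1_{C^*_r(G)}$ is the constant function $1$ on $G^{(0)}$, so $a=a(x_0)\,1_{C^*_r(G)}$; since $\C1_{C^*_r(G)}$ clearly lies in the relative commutant, this gives $C^*_r(H)'\cap C^*_r(G)=\C1_{C^*_r(G)}$. If $G^{(0)}$ is non-compact, a constant function lying in $C_0(G^{(0)})$ must be $0$, so $a=0$ and $C^*_r(H)'\cap C^*_r(G)=\{0\}$.

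I expect the first stage to be the main obstacle: one must transfer the pointwise convolution formulas from $C_c(G)$ to all of $C^*_r(G)$ carefully through the continuous injection $j$, and then invoke effectiveness at precisely the right spot, namely the identity $\Int(\Iso(G))=G^{(0)}$. The orbit-constancy step, the density argument coming from topological transitivity of $H$, and the final case distinction are all routine.
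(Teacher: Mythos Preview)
Your proposal is correct and follows essentially the same route as the paper. The paper quotes the maximal abelianness of $C_0(G^{(0)})$ in $C^*_r(G)$ directly (your parenthetical alternative), and for the second stage it argues by contradiction---choosing disjoint open sets $U,V\subset G^{(0)}$ with $a(U)\cap a(V)=\emptyset$ and a single $\alpha\in H$ with $d(\alpha)\in U$, $r(\alpha)\in V$---rather than first establishing orbit-constancy and then invoking density of invariant open sets; but the underlying computation $a(r(\alpha))=a(d(\alpha))$ from $af=fa$ is identical.
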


\begin{proof}
	Take $a\in C^*_r(H)'\cap C^*_r(G)$.
	It suffices to show that $a$ is a constant function on $G^{(0)}$.
	Since $C_0(G^{(0)})$ is maximal abelian in $C^*_r(G)$ by \cite[Proposition 11.1.14]{asims},
	we have
	\[a\in C^*_r(H)'\cap C^*_r(G)\subset C_0(G^{(0)})'\cap C^*_r(G)=C_0(G^{(0)}). \]
	Suppose that $a$ is not a constant function on $G^{(0)}$.
	Then there exists disjoint non-empty open sets $U,V\subset G^{(0)}$ such that $a(U)\cap a(V)=\emptyset$.
	Since $H$ is topologically transitive,
	there exists $\alpha\in H$ such that $d(\alpha)\in U$ and $r(\alpha)\in V$.
	Take $f\in C_c(H)$ such that $f(\alpha)=1$.
	We have
	\[
	a(r(\alpha))=a(r(\alpha))f(\alpha)=af(\alpha)=f a(\alpha)=f(\alpha)a(d(\alpha))=a(d(\alpha)).
	\]
	This contradicts to $a(U)\cap a(V)=\emptyset$.
	Hence $a$ is a constant function on $G^{(0)}$ and this completes the proof.
	\qed
\end{proof}

\begin{prop}\label{prop: all we need is effectiveness}
	Let $G$ be an \'etale groupoid and $\Phi\in\Aut(G)$.
	Assume that $G$ is effective.
	If $\Phi|_{G^{(0)}}=\id_{G^{(0)}}$,
	then $\Phi=\id_G$ holds.
\end{prop}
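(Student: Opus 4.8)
The plan is to fix an arbitrary $\alpha\in G$ and prove $\Phi(\alpha)=\alpha$; the idea is to realize $\Phi(\alpha)\alpha^{-1}$ as a point of an open bisection lying entirely inside the isotropy $\Iso(G)$, and then to invoke effectiveness.

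First I would record the two elementary consequences of $\Phi|_{G^{(0)}}=\id_{G^{(0)}}$: since $\Phi$ is a groupoid homomorphism, $d(\Phi(\gamma))=\Phi(d(\gamma))=d(\gamma)$ and $r(\Phi(\gamma))=\Phi(r(\gamma))=r(\gamma)$ for every $\gamma\in G$. In particular $\Phi(\alpha)$ and $\alpha$ have the same source and range, so the product $\beta\defeq\Phi(\alpha)\alpha^{-1}$ is defined and satisfies $d(\beta)=d(\alpha^{-1})=r(\alpha)$ and $r(\beta)=r(\Phi(\alpha))=r(\alpha)$, i.e.\ $\beta\in\Iso(G)$.

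Next, choose an open bisection $U\in\Bis(G)$ with $\alpha\in U$. Recall that $\Phi\in\Aut(G)$ is in particular a homeomorphism, so $\Phi(U)$ is again an open bisection (injectivity of $d|_{\Phi(U)}$ and $r|_{\Phi(U)}$ follows from injectivity of $\Phi$ together with $d\circ\Phi=\Phi\circ d$ and $r\circ\Phi=\Phi\circ r$). I would then form the open bisection $V\defeq\Phi(U)\,U^{-1}\in\Bis(G)$. One checks $\beta\in V$ directly, using the representatives $\Phi(\alpha)\in\Phi(U)$ and $\alpha^{-1}\in U^{-1}$, which are composable since $d(\Phi(\alpha))=d(\alpha)=r(\alpha^{-1})$. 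The crucial — and, I expect, the only delicate — step is to show $V\subseteq\Iso(G)$: an arbitrary element of $V$ has the form $\eta=\Phi(\mu)\nu^{-1}$ with $\mu,\nu\in U$ and $d(\Phi(\mu))=r(\nu^{-1})$; since $d(\Phi(\mu))=d(\mu)$ and $r(\nu^{-1})=d(\nu)$ this forces $d(\mu)=d(\nu)$, hence $\mu=\nu$ because $U$ is a bisection, and therefore $d(\eta)=d(\mu^{-1})=r(\mu)=r(\Phi(\mu))=r(\eta)$, so $\eta\in\Iso(G)$.

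Finally, since $G$ is effective, $G^{(0)}=\Int(\Iso(G))$, so the open subset $V\subseteq\Iso(G)$ is contained in $G^{(0)}$. Hence $\beta\in G^{(0)}$, and combined with $d(\beta)=r(\alpha)$ this gives $\beta=r(\alpha)$; consequently $\Phi(\alpha)=\beta\alpha=r(\alpha)\alpha=\alpha$. As $\alpha\in G$ was arbitrary, $\Phi=\id_G$. Everything outside the verification $V\subseteq\Iso(G)$ is just bookkeeping with the maps $d$ and $r$; the one point that genuinely uses the hypotheses is the interplay of the bisection property of $U$ (to pin down $\mu=\nu$) with effectiveness (to pass from $V\subseteq\Iso(G)$ to $V\subseteq G^{(0)}$).
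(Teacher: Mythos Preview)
Your proof is correct and follows essentially the same approach as the paper's: both arguments show that for any open bisection $U$ containing $\alpha$, the open set $\Phi(U)U^{-1}$ (the paper uses $U\Phi(U)^{-1}$, a cosmetic difference) lies in $\Iso(G)$ by the bisection argument you give, and then effectiveness forces $\Phi(\alpha)\alpha^{-1}\in G^{(0)}$.
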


\begin{proof}
	
	Take an open bisection $U\subset G$ arbitrarily.
	We claim that $U\Phi(U)^{-1}\subset \Iso(G)$ holds.
	Take $\delta\in U\Phi(U)^{-1}$.
	Then there exists $\alpha,\beta\in U$ such that $\delta=\alpha\Phi(\beta)^{-1}$.
	Since $\alpha$ and $\Phi(\beta)^{-1}$ are composable,
	we have
	\[
	d(\alpha)=d(\Phi(\beta))=\Phi(d(\beta))=d(\beta).
	\]
	Since $U$ is bisection,
	we have $\alpha=\beta$.
	Then we obtain $\delta=\alpha\Phi(\alpha)^{-1}$ and
	\[
	r(\delta)=r(\alpha)=\Phi(r(\alpha))=r(\Phi(\alpha))=d(\delta).
	\]
	Hence $\delta\in \Iso(G)$ and $U\Phi(U)^{-1}\subset \Iso(G)$ holds.
	Since $U\Phi(U)^{-1}$ is open and $G$ is effective,
	we obtain $U\Phi(U)^{-1}\subset G^{(0)}$.
	Therefore,
	$\alpha=\Phi(\alpha)$ holds for all $\alpha\in G$.
	\qed
\end{proof}

\begin{rem}
	In \cite[Proposition 2.2.4]{komura2023homomorphisms},
	the author proved the same assertion for a topologically principal \'etale groupoid.
	We relax the assumption from ``topologically principal'' to ``effective'' in Proposition \ref{prop: all we need is effectiveness}.
	Note that topologically principal Hausdorff \'etale groupoid is effective.
\end{rem}

\begin{defi}\label{defi: def of varphi_Phi,c}
	Let $G$ be a locally compact Hausdorff \'etale groupoid,
	$\Phi\in\Aut(G)$ and $c\in Z(G)$.
	For $f\in C_c(G)$,
	define a function $\varphi_{\Phi,c}(f)\colon G\to\C$ by
	\[
	\varphi_{\Phi,c}(f)(\alpha)\defeq c(\Phi^{-1}(\alpha))f(\Phi^{-1}(\alpha))
	\]
	for $\alpha\in G$.
\end{defi}
	
	The proof of the next Proposition is straightforward and hence omitted.
\begin{prop} \label{prop: var_Phi,c give rise to Aut}
	Let $G$ be a locally compact Hausdorff \'etale groupoid,
	$\Phi\in\Aut(G)$ and $c\in Z(G)$.
	For all $f\in C_c(G)$,
	$\varphi_{\Phi,c}(f)$ in Definition \ref{defi: def of varphi_Phi,c} belongs to $C_c(G)$.
	In addition,
	the map 
	\[\varphi_{\Phi,c}\colon C_c(G)\ni f\mapsto \varphi_{\Phi,c}(f)\in C_c(G)\]
	is a *-isomorphism on $C_c(G)$ and extended to the automorphism $\varphi_{\Phi,c}\in \Aut(C^*_r(G))$.
	Moreover,
	$\varphi_{\Phi,c}\in \Aut_{C_0(G^{(0)})}C^*_r(G)$ holds.
\end{prop}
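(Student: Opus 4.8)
The plan is to check the purely algebraic assertions by a direct computation that uses only the cocycle identity for $c$ together with the fact that $\Phi$ preserves composability, domains, ranges, inversion and multiplication, and then to obtain the extension to $C^*_r(G)$ by producing explicit unitaries intertwining the left regular representations. For the first claim, note that $\varphi_{\Phi,c}(f) = (c\cdot f)\circ\Phi^{-1}$ is continuous (as $\Phi$ is a homeomorphism and $c$ is continuous) and supported on the compact set $\Phi(\supp f)$, so $\varphi_{\Phi,c}(f)\in C_c(G)$; linearity of $\varphi_{\Phi,c}$ is immediate. For multiplicativity I would substitute $\alpha = \Phi^{-1}(\alpha')$ and $\beta = \Phi^{-1}(\beta')$ in the (finite) convolution sum defining $\bigl(\varphi_{\Phi,c}(f)*\varphi_{\Phi,c}(g)\bigr)(\gamma)$: since $\Phi^{-1}$ restricts to a bijection from the composable pairs with product $\gamma$ onto the composable pairs with product $\Phi^{-1}(\gamma)$, and $c(\alpha)c(\beta) = c(\alpha\beta) = c(\Phi^{-1}(\gamma))$ on such a pair, the sum collapses to $c(\Phi^{-1}(\gamma))\,(f*g)(\Phi^{-1}(\gamma)) = \varphi_{\Phi,c}(f*g)(\gamma)$.

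For compatibility with the involution I would use that $\Phi^{-1}$ commutes with inversion and that $c(\delta^{-1}) = \overline{c(\delta)}$ for all $\delta$ (since $c$ is $\T$-valued), which together give $\varphi_{\Phi,c}(f^{*}) = \varphi_{\Phi,c}(f)^{*}$. To see bijectivity on $C_c(G)$ one checks directly that the map $f\mapsto \overline{c}\cdot(f\circ\Phi)$ is a two-sided inverse; in fact this inverse coincides with $\varphi_{\Phi^{-1},\,d}$ for $d = \overline{c\circ\Phi^{-1}}\in Z(G)$, so it is again of the same form and hence is also a $*$-homomorphism. Thus $\varphi_{\Phi,c}$ is a $*$-isomorphism of $C_c(G)$.

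The main step---and the only one I expect not to be pure bookkeeping---is the passage to $C^*_r(G)$. Here the idea is to show that for each $x\in G^{(0)}$ the representations $\lambda_x$ and $\lambda_{\Phi(x)}\circ\varphi_{\Phi,c}$ are unitarily equivalent. Since $d\circ\Phi = (\Phi|_{G^{(0)}})\circ d$, the map $\Phi$ restricts to a bijection $G_x\to G_{\Phi(x)}$, and because $\lvert c\rvert\equiv 1$ the formula $U_x\delta_\alpha \defeq c(\alpha)\,\delta_{\Phi(\alpha)}$ (for $\alpha\in G_x$) defines a unitary $U_x\colon\ell^2(G_x)\to\ell^2(G_{\Phi(x)})$; a short computation using $\overline{c(\gamma)}\,c(\beta\gamma) = c(\beta)$ then verifies $U_x\lambda_x(f)U_x^{*} = \lambda_{\Phi(x)}(\varphi_{\Phi,c}(f))$ for every $f\in C_c(G)$. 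Consequently $\lVert\lambda_{\Phi(x)}(\varphi_{\Phi,c}(f))\rVert = \lVert\lambda_x(f)\rVert$ for all $x$, and since $\Phi$ permutes $G^{(0)}$, taking the supremum over $x\in G^{(0)}$ gives $\lVert\varphi_{\Phi,c}(f)\rVert_r = \lVert f\rVert_r$. Hence $\varphi_{\Phi,c}$ is isometric on the dense $*$-subalgebra $C_c(G)\subset C^*_r(G)$, so it extends to an isometric $*$-endomorphism of $C^*_r(G)$; its range is closed and contains $C_c(G)$ (because $\varphi_{\Phi,c}$ already maps $C_c(G)$ onto $C_c(G)$), hence equals $C^*_r(G)$. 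Thus $\varphi_{\Phi,c}\in\Aut(C^*_r(G))$.

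For the last assertion I would restrict to $C_c(G^{(0)})$. If $f$ is supported on $G^{(0)}$, then so is $\varphi_{\Phi,c}(f)$ because $\Phi$ preserves $G^{(0)}$, and for $x\in G^{(0)}$ one has $\varphi_{\Phi,c}(f)(x) = c(\Phi^{-1}(x))\,f(\Phi^{-1}(x)) = f(\Phi^{-1}(x))$, using that every $\T$-valued cocycle is trivial on units (from $c(y) = c(y)^{2}$ for $y\in G^{(0)}$). Since $\Phi$ is the identity on $G^{(0)}$ this equals $f(x)$, so $\varphi_{\Phi,c}$ fixes $C_c(G^{(0)})$, and hence $C_0(G^{(0)})$, pointwise; that is, $\varphi_{\Phi,c}\in\Aut_{C_0(G^{(0)})}C^*_r(G)$. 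In summary, the construction and verification of the unitaries $U_x$ is the crux of the argument, and everything else reduces to routine manipulations with the cocycle identity and with $\Phi$ being a topological groupoid automorphism.
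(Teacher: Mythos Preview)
The paper omits the proof entirely, calling it straightforward, so there is no paper argument to compare against for the algebraic and analytic parts; your treatment of those---the convolution computation using the cocycle identity, the explicit inverse $\varphi_{\Phi^{-1},\overline{c\circ\Phi^{-1}}}$, and the unitary intertwiners $U_x\colon\ell^2(G_x)\to\ell^2(G_{\Phi(x)})$ implementing the isometry for the reduced norm---is correct and is exactly the standard route.

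Your final paragraph, however, contains a genuine error. You write ``Since $\Phi$ is the identity on $G^{(0)}$'', but this is nowhere assumed: a general $\Phi\in\Aut(G)$ only satisfies $\Phi(G^{(0)})=G^{(0)}$, not $\Phi|_{G^{(0)}}=\id_{G^{(0)}}$. Your own computation gives $\varphi_{\Phi,c}(f)(x)=f(\Phi^{-1}(x))$ for $f\in C_c(G^{(0)})$ and $x\in G^{(0)}$, which in general differs from $f(x)$. In fact the conclusion $\varphi_{\Phi,c}\in\Aut_{C_0(G^{(0)})}C^*_r(G)$ as literally written is false whenever $\Phi|_{G^{(0)}}\neq\id_{G^{(0)}}$; the subsequent theorem in the paper makes clear that the intended conclusion is $\varphi_{\Phi,c}\in\Aut(C^*_r(G);C_0(G^{(0)}))$, i.e.\ that $C_0(G^{(0)})$ is preserved \emph{globally}, and indeed it is shown there that $\Psi(Z(G))=\Aut_{C_0(G^{(0)})}C^*_r(G)$, so the pointwise-fixing property characterises precisely the case $\Phi=\id_G$. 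The global statement follows immediately from your computation, since $\Phi^{-1}$ restricts to a homeomorphism of $G^{(0)}$, so $f\mapsto f\circ\Phi^{-1}|_{G^{(0)}}$ maps $C_c(G^{(0)})$ onto itself.
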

	
	Note that $\Aut(G)$ acts on $Z(G)$ by
	\[
	\Phi.c(\alpha)\defeq c(\Phi^{-1}(\alpha)),
	\]
	where $\Phi\in \Aut(G)$, $c\in Z(G)$ and $\alpha\in G$.
	The semidirect product $\Aut(G)\ltimes Z(G)$ in the following proposition is taken with respect to this action.
	
\begin{thm}[{\cite[Proposition 5.7]{Matui2011}},{\cite[Corollary 2.2.2, 2.2.6]{komura2023homomorphisms}}] \label{thm: varphi is a group hom}
	Let $G$ be a locally compact Hausdorff \'etale groupoid.
	The map
	\[
	\Psi\colon \Aut(G)\ltimes Z(G)\ni (\Phi, c)\mapsto \varphi_{\Phi,c}\in \Aut (C^*_r(G); C_0(G^{(0)}))
	\]
	is a group homomorphism,
	where $\varphi_{\Phi,c}$ is the *-isomorphism appeared in Proposition \ref{prop: var_Phi,c give rise to Aut}.
	In addition,
	assume that $G$ is effective.
	Then $\Psi$ is surjective and therefore a group isomorphism.
	Moreover,
	\[\Psi(Z(G))=\Aut_{C_0(G^{(0)})}(C^*_r(G))\]
	holds.
\end{thm}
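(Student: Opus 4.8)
The plan is to establish the algebraic part for an arbitrary locally compact Hausdorff \'etale groupoid $G$, and then, assuming $G$ effective, to prove that $\Psi$ is injective and surjective. For the homomorphism property I would use the factorization $\varphi_{\Phi,c}=\varphi_{\Phi,\mathbf 1}\circ\varphi_{\id,c}$, where $\mathbf 1\in Z(G)$ is the trivial cocycle, $\varphi_{\Phi,\mathbf 1}(f)(\alpha)=f(\Phi^{-1}(\alpha))$ and $\varphi_{\id,c}(f)(\alpha)=c(\alpha)f(\alpha)$ for $f\in C_c(G)$. Direct computations on $C_c(G)$, which only ever involve the finite sums defining convolution and so carry no analytic subtlety, show that $\Phi\mapsto\varphi_{\Phi,\mathbf 1}$ and $c\mapsto\varphi_{\id,c}$ are group homomorphisms into $\Aut(C^*_r(G))$ and that $\varphi_{\Phi,\mathbf 1}\circ\varphi_{\id,c}\circ\varphi_{\Phi,\mathbf 1}^{-1}=\varphi_{\id,\Phi.c}$; since $C_c(G)$ is dense and these maps are isometric by Proposition \ref{prop: var_Phi,c give rise to Aut}, the identities extend to $C^*_r(G)$, and together they express exactly that $\Psi$ converts the product of $\Aut(G)\ltimes Z(G)$ into composition, the range lying in $\Aut(C^*_r(G);C_0(G^{(0)}))$ again by Proposition \ref{prop: var_Phi,c give rise to Aut}. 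For injectivity, assume $G$ effective and $\varphi_{\Phi,c}=\id$; evaluating on $C_c(G^{(0)})$, on which $\varphi_{\Phi,c}$ acts by $f\mapsto f\circ(\Phi|_{G^{(0)}})^{-1}$, forces $\Phi|_{G^{(0)}}=\id_{G^{(0)}}$, hence $\Phi=\id_G$ by Proposition \ref{prop: all we need is effectiveness}, and then $\varphi_{\id,c}=\id$ forces $c\equiv1$.

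For surjectivity, fix $\varphi\in\Aut(C^*_r(G);C_0(G^{(0)}))$; by Gelfand duality its restriction to $C_0(G^{(0)})$ has the form $f\mapsto f\circ\phi_0^{-1}$ for a unique homeomorphism $\phi_0$ of $G^{(0)}$. First I would extract a groupoid automorphism. The automorphism $\varphi$ preserves the set $N$ of normalizers of $C_0(G^{(0)})$ in $C^*_r(G)$, and here the essential use of effectiveness enters: for effective $G$ the inclusion $C_0(G^{(0)})\subseteq C^*_r(G)$ is rigid enough that, under the embedding $j\colon C^*_r(G)\hookrightarrow C_0(G)$ of Proposition \ref{prop evaluation}, every element of $N$ is supported on an open bisection, $C_c(U)\subseteq N$ for every open bisection $U$, and the Weyl groupoid --- the groupoid of germs of the partial homeomorphisms of $G^{(0)}$ implemented by the elements of $N$ --- is canonically isomorphic to $G$. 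Transporting this structure through $\varphi$ yields $\Phi\in\Aut(G)$ with $\Phi|_{G^{(0)}}=\phi_0$ and $\varphi(C_c(U))\subseteq C_c(\Phi(U))$ for every open bisection $U$. Since $\varphi_{\Phi,\mathbf 1}$ acts on $C_0(G^{(0)})$ in the same way as $\varphi$ and carries $C_c(U)$ onto $C_c(\Phi(U))$, the automorphism $\psi\defeq\varphi_{\Phi,\mathbf 1}^{-1}\circ\varphi$ fixes $C_0(G^{(0)})$ pointwise and maps each $C_c(U)$ into itself.

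It then suffices to show that any such $\psi$ equals $\varphi_{\id,c}$ for some $c\in Z(G)$; this simultaneously yields the ``moreover'' inclusion $\Aut_{C_0(G^{(0)})}C^*_r(G)\subseteq\Psi(Z(G))$, the reverse inclusion being Proposition \ref{prop: var_Phi,c give rise to Aut}, and then $\varphi=\varphi_{\Phi,\mathbf 1}\circ\varphi_{\id,c}=\varphi_{\Phi,c}=\Psi(\Phi,c)$. For $\alpha\in G$ choose an open bisection $U\ni\alpha$ and $g\in C_c(U)$ with $g(\alpha)\neq0$, and set $c(\alpha)\defeq\psi(g)(\alpha)/g(\alpha)$. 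I would then verify: $c(\alpha)$ does not depend on $U$ or $g$, because two admissible choices differ near $\alpha$ by multiplication by an element of $C_0(G^{(0)})$ (which $\psi$ fixes) plus a term whose $\psi$-image likewise vanishes near $\alpha$; $|c(\alpha)|=1$, from $\psi(g^{*}*g)=g^{*}*g$ and $\psi(g^{*}*g)=\psi(g)^{*}*\psi(g)$ evaluated at $d(\alpha)$, using that $\psi(g)$ is supported on $U$; $c$ is continuous, since locally $c=\psi(g)/g$ on the open set $\{g\neq0\}$; and $c$ is a cocycle, from $\psi(g_1*g_2)=\psi(g_1)*\psi(g_2)$ evaluated at $\alpha\beta$ for bisection-supported $g_1,g_2$ with $g_1(\alpha)\neq0$ and $g_2(\beta)\neq0$. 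Hence $c\in Z(G)$ and $\psi(g)=c\cdot g=\varphi_{\id,c}(g)$ for every $g$ supported on an open bisection; such $g$ span a dense subspace of $C^*_r(G)$, so $\psi=\varphi_{\id,c}$.

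The step I expect to be the main obstacle is the rigidity statement used in the second paragraph: that for an effective groupoid the normalizers of $C_0(G^{(0)})$ are precisely the elements supported on open bisections and that they faithfully recover $G$ as a Weyl groupoid. This is the real content of the surjectivity claim --- it is exactly what fails when $G$ is not effective, for then $\Aut(C^*_r(G);C_0(G^{(0)}))$ is strictly larger than $\Aut(G)\ltimes Z(G)$ --- whereas, once it is in place, the remaining points (independence of $c(\alpha)$ of the choices, continuity, the cocycle identity, and the density argument) are routine. Alternatively one may simply invoke \cite[Proposition 5.7]{Matui2011} for ample $G$ and \cite[Corollary 2.2.2, 2.2.6]{komura2023homomorphisms} in general, as the statement does.
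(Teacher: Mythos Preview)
Your proposal is correct; what differs is scope and the handling of the ``moreover'' clause. The paper's own proof is almost entirely a citation: it invokes \cite[Corollary 2.2.2]{komura2023homomorphisms} for the facts that $\Psi$ is an injective group homomorphism in general and surjective when $G$ is effective, and then proves only the final assertion $\Psi(Z(G))=\Aut_{C_0(G^{(0)})}C^*_r(G)$. For that last step the paper takes a shorter route than you do: given $\varphi\in\Aut_{C_0(G^{(0)})}C^*_r(G)$, it applies the already-established surjectivity to write $\varphi=\varphi_{\Phi,c}$ for some $(\Phi,c)$, observes that $\varphi|_{C_0(G^{(0)})}=\id$ forces $\Phi|_{G^{(0)}}=\id_{G^{(0)}}$, and concludes $\Phi=\id_G$ via Proposition \ref{prop: all we need is effectiveness}. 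Your approach instead constructs the cocycle $c$ directly from $\psi$, which is more work but has the virtue of being self-contained and of making explicit the Weyl-groupoid reconstruction that lies behind the cited references; the paper treats that reconstruction as a black box. One small point: injectivity of $\Psi$ actually holds for arbitrary $G$, not just effective $G$ (if $\varphi_{\Phi,c}=\id$ then comparing values at $\alpha$ and $\Phi^{-1}(\alpha)$ for a bump function supported near $\Phi^{-1}(\alpha)$ forces $\Phi(\alpha)=\alpha$ by Hausdorffness); your argument via Proposition \ref{prop: all we need is effectiveness} suffices for the isomorphism claim as stated, but is slightly weaker than what the paper's citation covers.
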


\begin{proof}
	In \cite[Corollary 2.2.2]{komura2023homomorphisms},
	it is shown that
	\begin{itemize}
		\item $\Psi$ is an injective group homomorphism,
		\item if $G$ is effective,
		then $\Psi$ is surjective.
	\end{itemize}
	It remains to show $\Psi(Z(G))=\Aut_{C_0(G^{(0)})}C^*_r(G)$ under the assumption that $G$ is effective.
	We show \[\Psi(Z(G))\supset \Aut_{C_0(G^{(0)})}C^*_r(G)\] since the reverse inclusion is obvious.
	Take $\varphi\in\Aut_{C_0(G^{(0)})}C^*_r(G)$.
	Then there exists $\Phi\in\Aut(G)$ and $c\in Z(G)$ such that $\varphi=\varphi_{\Phi,c}$.
	Since we have $\varphi(f)=f$ for all $f\in C_c(G^{(0)})$,
	we obtain $\Phi(x)=x$ for all $x\in G^{(0)}$.
	By Proposition \ref{prop: all we need is effectiveness},
	we obtain $\Phi=\id_G$.
	Hence, we obtain $\varphi=\varphi_{\id_G,c}\in \Psi(Z(G))$.
	This completes the proof.
	\qed
\end{proof}

\begin{rem}
	In \cite[Corollary 2.2.6]{komura2023homomorphisms},
	the author assumed that $G$ is topologically principal to show $\Psi(Z(G))=\Aut_{C_0(G^{(0)})}C^*_r(G)$.
	By Proposition \ref{prop: all we need is effectiveness},
	we may relax the assumption from ``topologically principal'' to ``effective''.
\end{rem}

\begin{rem}
	Remark that the adjoint action
	\[\ad\colon\Aut_{C_0(G^{(0)})}(C^*_r(G)) \curvearrowright \Aut (C^*_r(G); C_0(G^{(0)}))\]
	in Remark \ref{remark: conjugate action on AutBA} is conjugate to the action $\Aut(G)\curvearrowright Z(G)$ via the isomorphism in Theorem \ref{thm: varphi is a group hom} if $G$ is effective.
\end{rem}

\section{Weyl groups of groupoid C*-algebras} \label{Section: weyl groups of groupoid C*-algebras}

\subsection{Basic properties of Weyl groups}

In this section,
we introduce Weyl groups of groupoid C*-algebras.
Then we investigate the both of topological and algebraic properties of Weyl groups.

\begin{defi}\label{def: Weyl groups of groupoid C*-algebras}
	Let $G$ be a locally compact Hausdorff \'etale groupoid.
	The Weyl group $\mathfrak{W}_G$ of $G$ is defined to be
	\[
	\mathfrak{W}_G\defeq \Aut(C^*_r(G); C_0(G^{(0)}))/\Aut_{C_0(G^{(0)})}C^*_r(G).
	\]
\end{defi}

\begin{rem} \label{remark: Weyl group generalize existing one}
	We justify our definition of the Weyl groups of groupoid C*-algebras (Definition \ref{def: Weyl groups of groupoid C*-algebras}) here.
	In the context of C*-algebras,
	the study of the Weyl groups was initiated by Cuntz in \cite{Cuntz1980}.
	In \cite{Cuntz1980},
	Cuntz defined and investigated the Weyl groups for Cuntz algebras.
	Then,
	in \cite{CONTI20122529},
	the authors defined the Weyl groups for graph C*-algebras as a natural generalization of Cuntz's definition in  \cite{Cuntz1980}.
	Note that the class of graph C*-algebras includes the Cuntz algebras.	
	Now,
	we observe that our definition of the Weyl groups of groupoid C*-algebras (Definition \ref{def: Weyl groups of groupoid C*-algebras}) can be seen as a natural generalization of the Weyl groups of graph C*-algebras studied in \cite{CONTI20122529}.
	Let $E$ be a directed graph and $C^*(E)$ denotes its graph C*-algebra.
	In \cite{CONTI20122529},
	the authors defined the Weyl group $\mathfrak{W}_E$ of $C^*(E)$ as
	\[
	\mathfrak{W}_E\defeq \Aut(C^*(E);D_E)/\Aut_{D_E}C^*(E),
	\]
	where $D_E\subset C^*(E)$ denotes the diagonal commutative subalgebra of $C^*(E)$ (see \cite[Section 2.1]{CONTI20122529} for the precise definition of $C^*(E)$, $D_E$ and $\mathfrak{W}_E$).
	Besides,
	by \cite{Paterson2002}, %\cite[Corollary 3.9]{Paterson2002}
	one can construct an \'etale groupoid $G_E$ so that $C^*(E)$ is isomorphic to $C^*_r(G_E)$ via the isomorphism which maps $D_E$ to $C_0(G_E^{(0)})$.
	Hence,
	we have $\mathfrak{W}_E\simeq \mathfrak{W}_{G_E}$ for all directed graph $E$ and this implies that our definition of Weyl groups (Definition \ref{def: Weyl groups of groupoid C*-algebras}) is a generalization of the existing Weyl groups.
	In Subsection \ref{subsection: Restricted Weyl group of graph algebras},
	we will explain these statements more precisely.
\end{rem}

Now,
we point out that the Weyl group $\mathfrak{W}_G$ is nothing but the automorphism group $\Aut(G)$ if $G$ is effective.

\begin{prop}\label{prop: Weyl group is Aut(G)} %\label{thm: Aut(C*_r(G), C_0(G^0))is isomorphic to semidirect product}
	Let $G$ be a locally compact Hausdorff \'etale groupoid.
	Assume that $G$ is effective.
	Then $\mathfrak{W}_G$ is isomorphic to $\Aut(G)$ as a group.
\end{prop}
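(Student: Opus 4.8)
The plan is to push the quotient defining $\mathfrak{W}_G$ through the explicit model for $\Aut(C^*_r(G);C_0(G^{(0)}))$ supplied by Theorem \ref{thm: varphi is a group hom}. Since $G$ is effective, that theorem provides a group isomorphism
\[
\Psi\colon \Aut(G)\ltimes Z(G)\xrightarrow{\ \simeq\ }\Aut(C^*_r(G);C_0(G^{(0)})),
\]
and, crucially, identifies the image of the normal factor: $\Psi(Z(G))=\Aut_{C_0(G^{(0)})}C^*_r(G)$, where $Z(G)$ is viewed inside $\Aut(G)\ltimes Z(G)$ as the subgroup $\{\id_G\}\ltimes Z(G)$.

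First I would form the composite homomorphism
\[
q\colon \Aut(C^*_r(G);C_0(G^{(0)}))\xrightarrow{\ \Psi^{-1}\ }\Aut(G)\ltimes Z(G)\xrightarrow{\ \mathrm{pr}_1\ }\Aut(G),
\]
where $\mathrm{pr}_1$ is the canonical projection onto the first factor of the semidirect product. As the projection of a semidirect product onto its acting factor, $\mathrm{pr}_1$ is a surjective group homomorphism whose kernel is the normal factor $\{\id_G\}\ltimes Z(G)$; hence $q$ is surjective with
\[
\ker q=\Psi(\{\id_G\}\ltimes Z(G))=\Psi(Z(G))=\Aut_{C_0(G^{(0)})}C^*_r(G).
\]

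Then I would invoke the first isomorphism theorem for groups: $q$ descends to an isomorphism
\[
\mathfrak{W}_G=\Aut(C^*_r(G);C_0(G^{(0)}))/\Aut_{C_0(G^{(0)})}C^*_r(G)\xrightarrow{\ \simeq\ }\Aut(G).
\]

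I do not anticipate any real obstacle: the argument is purely algebraic once Theorem \ref{thm: varphi is a group hom} is in hand, and the only point requiring a moment's care is that the subgroup of $\Aut(G)\ltimes Z(G)$ matched with $\Aut_{C_0(G^{(0)})}C^*_r(G)$ under $\Psi$ is exactly the normal factor $Z(G)$, so that the quotient is well defined and canonically isomorphic to $\Aut(G)$ via $\mathrm{pr}_1$. The topology on these automorphism groups plays no role in the stated (purely group-theoretic) isomorphism.
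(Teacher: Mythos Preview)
Your proof is correct and follows essentially the same approach as the paper: the paper simply states that the result is an immediate consequence of Theorem \ref{thm: varphi is a group hom}, and your argument is precisely the unpacking of that consequence via the semidirect product projection and the first isomorphism theorem.
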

\begin{proof}
	This is an immediate consequence of Theorem \ref{thm: varphi is a group hom}.
	\qed
\end{proof}

For a locally compact Hausdorff \'etale groupoid $G$,
we have a group homomorphism
\[
\Psi\colon \Aut(G)\ltimes Z(G)\ni (\Phi, c)\mapsto \varphi_{\Phi,c}\in \Aut(C^*_r(G), C_0(G^{(0)}))
\]
as in Theorem \ref{thm: varphi is a group hom}.
Using this homomorphism $\Psi$,
we equip $\Aut(G)\ltimes Z(G)$ with the initial topology of $\Psi$.
Namely,
$\Aut(G)\ltimes Z(G)$ is equipped with the weakest topology where $\Psi$ becomes continuous.
We investigate topological properties of $\Aut(G)\ltimes Z(G)$.
First,
we begin with the topology of $Z(G)$.

\begin{prop}\label{prop: topology of Z(G) is the compact open topology}
	Let $G$ be a locally compact Hausdorff \'etale groupoid.
	Then the relative topology of $Z(G)$ in $\Aut(G)\ltimes Z(G)$ coincides with the topology of the uniform convergence on compact sets.
\end{prop}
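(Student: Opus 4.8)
The plan is to unwind the definition of the initial topology carried by $\Aut(G)\ltimes Z(G)$ and then to compare the two topologies on $Z(G)$ using the elementary description of $\varphi_{\id_G,c}$ on $C_c(G)$. Recall that $Z(G)$ is identified with $\{\id_G\}\times Z(G)\subset\Aut(G)\ltimes Z(G)$, and that $\Aut(G)\ltimes Z(G)$ carries the initial topology of the map $\Psi$ of Theorem \ref{thm: varphi is a group hom}, where $\Aut(C^*_r(G))$ has the strong (point--norm) topology. Since the restriction of an initial topology is again an initial topology, the relative topology on $Z(G)$ is the initial topology of the family of maps $Z(G)\ni c\mapsto\varphi_{\id_G,c}(a)\in C^*_r(G)$ indexed by $a\in C^*_r(G)$; equivalently, a net $(c_\lambda)$ converges to $c$ in it if and only if $\lVert\varphi_{\id_G,c_\lambda}(a)-\varphi_{\id_G,c}(a)\rVert_r\to 0$ for every $a\in C^*_r(G)$. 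The topology of uniform convergence on compacta on $Z(G)$ is likewise determined by its convergent nets (namely $c_\lambda\to c$ iff $c_\lambda\to c$ uniformly on each compact subset of $G$), so it suffices to show that these two notions of convergence agree. Throughout I will use the single computational input, immediate from Definition \ref{defi: def of varphi_Phi,c}, that $\varphi_{\id_G,c}(f)$ is the pointwise product $\alpha\mapsto c(\alpha)f(\alpha)$ for $f\in C_c(G)$; in particular $\varphi_{\id_G,c_\lambda}(f)-\varphi_{\id_G,c}(f)$ is the element $(c_\lambda-c)f\in C_c(G)$, supported in the compact set $\supp f$.

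For the implication ``uniform convergence on compacta $\Rightarrow$ convergence in the relative topology'', assume $c_\lambda\to c$ uniformly on every compact subset of $G$ and fix $a\in C^*_r(G)$ and $\varepsilon>0$. I would choose $f\in C_c(G)$ with $\lVert a-f\rVert_r<\varepsilon$; as each $\varphi_{\id_G,c}$ is isometric, the problem reduces to estimating $\lVert\varphi_{\id_G,c_\lambda}(f)-\varphi_{\id_G,c}(f)\rVert_r=\lVert(c_\lambda-c)f\rVert_r$. Covering $\supp f$ by finitely many open bisections $U_1,\dots,U_n$ and taking a partition of unity subordinate to them, one writes $f=\sum_{i=1}^n g_i$ with $g_i\in C_c(U_i)$. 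Using the standard fact that the reduced norm of a function supported in an open bisection equals its supremum norm, one gets
\[
\lVert(c_\lambda-c)g_i\rVert_r=\lVert(c_\lambda-c)g_i\rVert_\infty\le\lVert g_i\rVert_\infty\cdot\sup_{\alpha\in\supp f}\lvert c_\lambda(\alpha)-c(\alpha)\rvert ,
\]
whose right-hand side tends to $0$ by hypothesis; hence $\lVert(c_\lambda-c)f\rVert_r\to 0$, so $\limsup_\lambda\lVert\varphi_{\id_G,c_\lambda}(a)-\varphi_{\id_G,c}(a)\rVert_r\le 2\varepsilon$, and letting $\varepsilon\downarrow 0$ finishes this direction. (Alternatively one can bound $\lVert(c_\lambda-c)f\rVert_r$ by $\lVert f\rVert_I\cdot\sup_{\supp f}\lvert c_\lambda-c\rvert$ via the groupoid $I$-norm, avoiding the partition of unity.)

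For the converse, assume $\lVert\varphi_{\id_G,c_\lambda}(a)-\varphi_{\id_G,c}(a)\rVert_r\to 0$ for all $a$ and fix a compact $K\subset G$. Here I would use the embedding $j\colon C^*_r(G)\to C_0(G)$ of Proposition \ref{prop evaluation} and the inequality $\lVert\cdot\rVert_\infty\le\lVert\cdot\rVert_r$. For each $\alpha\in K$ pick, using local compactness and Urysohn's lemma, an open set $W_\alpha\ni\alpha$ and $f_\alpha\in C_c(G)$ with $0\le f_\alpha\le 1$ and $f_\alpha\equiv 1$ on $W_\alpha$; by compactness finitely many $W_{\alpha_1},\dots,W_{\alpha_m}$ cover $K$. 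If $\alpha\in K$ lies in $W_{\alpha_i}$ then $f_{\alpha_i}(\alpha)=1$, so, regarding elements of $C^*_r(G)$ as functions on $G$,
\[
\lvert c_\lambda(\alpha)-c(\alpha)\rvert=\bigl\lvert\varphi_{\id_G,c_\lambda}(f_{\alpha_i})(\alpha)-\varphi_{\id_G,c}(f_{\alpha_i})(\alpha)\bigr\rvert\le\bigl\lVert\varphi_{\id_G,c_\lambda}(f_{\alpha_i})-\varphi_{\id_G,c}(f_{\alpha_i})\bigr\rVert_r .
\]
Taking the supremum over $\alpha\in K$ gives $\sup_{\alpha\in K}\lvert c_\lambda(\alpha)-c(\alpha)\rvert\le\max_{1\le i\le m}\lVert\varphi_{\id_G,c_\lambda}(f_{\alpha_i})-\varphi_{\id_G,c}(f_{\alpha_i})\rVert_r\to 0$, i.e.\ $c_\lambda\to c$ uniformly on $K$. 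Combining the two implications identifies the convergent nets of the two topologies, hence the topologies themselves.

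The only step I expect to require care is the reduced-norm estimate in the first implication: uniform smallness of $c_\lambda-c$ on $\supp f$ controls the supremum norm of $(c_\lambda-c)f$ but not a priori its reduced norm, which forces the passage to functions supported in bisections (or the $I$-norm bound). The remaining ingredients --- unwinding the initial topology, the pointwise formula for $\varphi_{\id_G,c}$ on $C_c(G)$, the inequality $\lVert\cdot\rVert_\infty\le\lVert\cdot\rVert_r$, and the Urysohn/compactness argument --- are routine.
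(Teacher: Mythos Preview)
Your proof is correct and follows essentially the same approach as the paper's own proof: both directions use the same key ingredients (reduction to $C_c(G)$ and the equality of reduced and supremum norms on bisections for one implication, Urysohn's lemma together with $\lVert\cdot\rVert_\infty\le\lVert\cdot\rVert_r$ for the other). The only cosmetic difference is that in the converse implication the paper takes a single $f\in C_c(G)$ with $f|_K=1$, whereas you cover $K$ by finitely many sets and use one Urysohn function per set; both lead to the same estimate.
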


\begin{proof}
	In this proof,
	$\lVert\cdot \rVert_{r}$ and $\lVert\cdot\rVert_{\infty}$ denote the reduced norm and supremum norm of $C_c(G)$ respectively.
	First,
	assume that a net $\{c_{\lambda}\}_{\lambda\in\Lambda}$ converges to $c\in Z(G)$ in the uniform convergence topology on compact sets.
	We show the net $\{\varphi_{\id_G, c_{\lambda}}(f)\}_{\lambda\in\Lambda}\subset C^*_r(G)$ converges to $\varphi_{\id_G, c}(f)$ in the reduced norm of $C^*_r(G)$ for all $f\in C_c(G)$.
	We may assume $f\in C_c(U)$ for some open bisection $U\subset G$ by the partition of unity.
	Then,
	since $\varphi_{\id_G, c_{\lambda}}(f)-\varphi_{\id_G,c}(f)$ is supported on the bisection $U$,
	we have
	\[
	\lVert \varphi_{\id_G, c_{\lambda}}(f)-\varphi_{\id_G,c}(f)\rVert_{r}=\lVert \varphi_{\id_G, c_{\lambda}}(f)-\varphi_{\id_G,c}(f)\rVert_{\infty}
	\]
	by Proposition \cite[Proposition 9.2.1]{asims}.
	Since $f$ has compact support,
	the right hand side converges to $0$.
	Hence $\{\varphi_{\id_G, c_{\lambda}}(f)\}_{\lambda\in\Lambda}$ converges to $\varphi_{\id_G, c}(f)$ for all $f\in C_c(G)$ in the reduced norm.
	By the approximation argument,
	one can check that $\{\varphi_{\id_G, c_{\lambda}}(a)\}_{\lambda\in\Lambda}$ converges to $\varphi_{\id_G, c}(a)$ for all $a\in C^*_r(G)$.
	Therefore,
	$\{c_{\lambda}\}_{\lambda\in\Lambda}$ converges to $c$ in the relative topology of $Z(G)$ in $\Aut(G)\ltimes Z(G)$.
	
	Next,
	assume that a net $\{c_{\lambda}\}_{\lambda\in\Lambda}$ converges to $c\in Z(G)$ in the relative topology of $Z(G)$ in $\Aut(G)\ltimes Z(G)$.
	Take a compact set $K\subset G$.
	By Urysohn's lemma,
	there exists $f\in C_c(G)$ such that $f|_K=1$ holds.
	Then we have
	\begin{align*}
	\sup_{\alpha\in K}\lvert c_{\lambda}(\alpha)-c(\alpha)\rvert&\leq \lVert \varphi_{\id_G, c_{\lambda}}(f)-\varphi_{\id_G,c}(f)\rVert_{\infty} \\
	&\leq \lVert \varphi_{\id_G, c_{\lambda}}(f)-\varphi_{\id_G,c}(f)\rVert_{r}. 
	\end{align*}
	Since the last term converges to $0$,
	we have proved that $\{c_{\lambda}\}_{\lambda}$ uniformly converges to $c$ on any compact sets $K\subset G$.
	Therefore,
	we have finished the proof.
	\qed
\end{proof}

In \cite[Proposition 3.3]{CONTI20122529},
the authors showed that $\Aut_{D_E}C^*(E)$ is a maximal abelian subgroup in $\Aut(C^*(E))$ under some assumptions for a graph algebra $C^*(E)$.
We prove the analogue of this result for groupoid C*-algebras.
For this purpose,
we prepare some propositions and terminologies.
Recall that $Z(G)$ acts on $C^*_r(G)$ by
\[
\varphi_{\id_G,\chi}(f)(\alpha)=\chi(\alpha)f(\alpha)
\]
for $\alpha\in G$, $f\in C_c(G)$ and $\chi\in Z(G)$.

\begin{prop}\label{prop: fixed point algebra of Z(G) is C_0(G0)}
	Let $G$ be a locally compact Hausdorff \'etale groupoid.
	Assume that $G$ is effective.
	Then the fixed point algebra $C^*_r(G)^{\partial Z(G^{(0)})}$ coincides with $C_0(G^{(0)})$.
	In particular,
	$C^*_r(G)^{Z(G)}=C_0(G^{(0)})$ holds.
\end{prop}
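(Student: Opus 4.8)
The plan is to establish the two inclusions $C_0(G^{(0)}) \subseteq C^*_r(G)^{\partial Z(G^{(0)})} \subseteq C_0(G^{(0)})$ and then read off the ``in particular'' clause. The first inclusion is immediate: for $f \in Z(G^{(0)})$ and $x \in G^{(0)}$ one has $\partial f(x) = f(x)\overline{f(x)} = 1$ since $f$ is $\T$-valued, and any $a \in C_0(G^{(0)})$, viewed through $j$ of Proposition \ref{prop evaluation} as a function on $G$, is supported on $G^{(0)}$; hence $\varphi_{\id_G,\partial f}(a) = a$ for every $f \in Z(G^{(0)})$.

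For the reverse inclusion, I would take $a \in C^*_r(G)^{\partial Z(G^{(0)})}$ and regard it as the function $j(a) \in C_0(G)$. Using that $j$ is the identity on $C_c(G)$ and a density/continuity argument, the fixed-point condition $\varphi_{\id_G,\partial f}(a) = a$ translates into $f(r(\alpha))\overline{f(d(\alpha))}\, j(a)(\alpha) = j(a)(\alpha)$ for every $\alpha \in G$ and every $f \in Z(G^{(0)})$. Now suppose $\alpha \notin \Iso(G)$, so $d(\alpha) \neq r(\alpha)$. Since $G^{(0)}$ is locally compact Hausdorff, Urysohn's lemma provides a continuous $g \colon G^{(0)} \to [0,1]$ with $g(d(\alpha)) = 1$ and $g(r(\alpha)) = 0$; then $f \defeq \exp(i\pi g) \in Z(G^{(0)})$ and $f(r(\alpha))\overline{f(d(\alpha))} = \overline{\exp(i\pi)} = -1 \neq 1$, which forces $j(a)(\alpha) = 0$. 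Hence $j(a)$ vanishes on $G \setminus \Iso(G)$.

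At this point effectiveness enters: the set $\{\alpha \in G \mid j(a)(\alpha) \neq 0\}$ is open and contained in $\Iso(G)$, so it is contained in $\Int \Iso(G) = G^{(0)}$; thus $j(a)$ is supported on $G^{(0)}$. To conclude I would compare $a$ with $b \defeq E(a) \in C_0(G^{(0)})$, where $E$ is the canonical conditional expectation: a direct computation with the definition of $j$ shows $j(b)$ is also supported on $G^{(0)}$, and both $j(a)$ and $j(b)$ restrict on $G^{(0)}$ to $E(a)$, so $j(a) = j(b)$; injectivity of $j$ then gives $a = b \in C_0(G^{(0)})$. Finally, since $\partial Z(G^{(0)}) \subseteq Z(G)$, we obtain $C_0(G^{(0)}) \subseteq C^*_r(G)^{Z(G)} \subseteq C^*_r(G)^{\partial Z(G^{(0)})} = C_0(G^{(0)})$, which is the ``in particular'' assertion.

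The main obstacle is not computational but conceptual: one must pin down that effectiveness is exactly what upgrades ``$j(a)$ supported on the isotropy'' to ``$j(a)$ supported on the unit space'', and that the identification of $a$ with $E(a)$ is legitimate, i.e.\ that $j$ is injective and agrees with $E$ on $G^{(0)}$. The production of a separating $f \in Z(G^{(0)})$ with $\partial f(\alpha) \neq 1$ is routine once one notes that membership in $Z(G^{(0)})$ requires only continuity, so exponentiating a real-valued Urysohn function suffices even when $G^{(0)}$ is non-compact.
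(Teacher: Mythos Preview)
Your proof is correct and follows essentially the same approach as the paper: both use Urysohn's lemma to build an $f\in Z(G^{(0)})$ with $\partial f(\alpha)=-1$ at a point $\alpha$ with $d(\alpha)\neq r(\alpha)$, and both invoke effectiveness to pass from ``support in $\Iso(G)$'' to ``support in $G^{(0)}$''. The only cosmetic difference is that the paper argues by contradiction (picking a single $\alpha'$ in the open set $a^{-1}(\C\setminus\{0\})$ and deriving $\varphi_{\id_G,\chi}(a)(\alpha')=-a(\alpha')$), whereas you argue directly and make the final identification $a=E(a)$ via injectivity of $j$ explicit---a step the paper uses implicitly when it asserts that $a\notin C_0(G^{(0)})$ forces $a(\alpha)\neq 0$ for some $\alpha\notin G^{(0)}$.
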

\begin{proof}
	Since $Z(G)$ fixes $C_0(G^{(0)})$ pointwisely,
	$C_0(G^{(0)})\subset C^*_r(G)^{\partial Z(G^{(0)})}$ is obvious.
	We show $C^*_r(G)^{\partial Z(G^{(0)})}\subset C_0(G^{(0)})$.
	Assume that $a\in C^*_r(G)^{\partial Z(G^{(0)})}$ and $a\not\in C_0(G^{(0)})$.
	Then there exists $\alpha\in G\setminus G^{(0)}$ such that $a(\alpha)\not=0$.
	Since we assume that $G$ is effective,
	there exists $\alpha'\in a^{-1}(\C\setminus\{0\})$ such that $d(\alpha')\not=r(\alpha')$.
	Indeed,
	if not,
	\[\alpha\in a^{-1}(\C\setminus\{0\})\subset \Iso(G)^{\circ}=G^{(0)}\] holds and this contradicts to $\alpha\not\in G^{(0)}$.
	By Urysohn's lemma,
	there exists a continuous function $f\in C_c(G^{(0)})$ such that $f(r(\alpha'))=1$, $f(d(\alpha'))=0$ and $0\leq f \leq 1$.
	Putting $h\defeq e^{i\pi f}\in Z(G^{(0)})$ and $\chi\defeq \partial h\in Z(G)$,
	we have
	\[
	\varphi_{\id_G,\chi}(a)(\alpha')=\chi(\alpha')a(\alpha')=e^{i\pi(f(r(\alpha'))-f(d(\alpha')))}a(\alpha')=-a(\alpha').
	\]
	This contradicts to $a\in C^*_r(G)^{\partial Z(G^{(0)})}$.
	The last assertion is obvious.
	\qed
\end{proof}

\begin{defi}
	Let $G$ be a locally compact Hausdorff \'etale groupoid.
	Then $G$ is said to have enough arrows if the following property holds:
	for every nonempty open set $U\subset G^{(0)}$,
	there exists $\alpha\in G\setminus G^{(0)}$ with $d(\alpha)\in U$.
	Moreover,
	$G$ is said to have enough exits if the following property holds:
	for every nonempty open set $U\subset G^{(0)}$,
	there exists $\alpha\in G\setminus G^{(0)}$ with $d(\alpha)\in U$ and $d(\alpha)\not=r(\alpha)$.
\end{defi}

\begin{lem}\label{lem: effective and enough arrows imply enough exits}
	Let $G$ be a locally compact Hausdorff \'etale groupoid.
	Assume that $G$ is effective and has enough arrows.
	Then $G$ has enough exits.
\end{lem}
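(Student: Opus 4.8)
The plan is to argue by contradiction, using effectiveness to upgrade a single ``arrow'' coming out of $U$ into an ``exit''. Fix a nonempty open set $U\subseteq G^{(0)}$. Since $G$ has enough arrows, there is some $\alpha_0\in G\setminus G^{(0)}$ with $d(\alpha_0)\in U$. I would then suppose, towards a contradiction, that no $\alpha\in G\setminus G^{(0)}$ with $d(\alpha)\in U$ satisfies $d(\alpha)\neq r(\alpha)$; in other words, every $\alpha\in G\setminus G^{(0)}$ with $d(\alpha)\in U$ lies in $\Iso(G)$.

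The key step is to exhibit an open neighbourhood of $\alpha_0$ that is entirely contained in $\Iso(G)$. Since $d\colon G\to G^{(0)}$ is continuous, the set $W\defeq d^{-1}(U)$ is open in $G$ and contains $\alpha_0$. For any $\beta\in W$ we have $d(\beta)\in U$: if $\beta\in G^{(0)}$ then trivially $\beta\in\Iso(G)$, while if $\beta\in G\setminus G^{(0)}$ then the contradiction hypothesis forces $d(\beta)=r(\beta)$, so again $\beta\in\Iso(G)$. Hence $W\subseteq\Iso(G)$.

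Effectiveness then finishes the argument: $W$ is open and contained in $\Iso(G)$, so $W\subseteq\Int(\Iso(G))=G^{(0)}$, contradicting $\alpha_0\in W\setminus G^{(0)}$. Therefore some $\alpha\in G\setminus G^{(0)}$ with $d(\alpha)\in U$ has $d(\alpha)\neq r(\alpha)$, which is exactly an exit over $U$; as $U$ was arbitrary, $G$ has enough exits. I do not expect any real obstacle in this proof: the only point demanding a little care is noticing that the whole preimage $d^{-1}(U)$, not merely the chosen arrow $\alpha_0$, is pushed into $\Iso(G)$ by the hypothesis, which is precisely what makes effectiveness applicable.
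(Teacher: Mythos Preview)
Your proof is correct and follows essentially the same idea as the paper's: both exploit effectiveness by exhibiting an open neighbourhood of the chosen arrow that would have to lie inside $\Iso(G)$, contradicting $\alpha_0\notin G^{(0)}$. The only cosmetic difference is that the paper picks an open \emph{bisection} $W$ with $\alpha\in W$ and $d(W)\subset U$ and argues directly that $W\not\subset\Iso(G)$ (hence some $\beta\in W\setminus\Iso(G)$ gives the exit), whereas you take the full preimage $d^{-1}(U)$ and run a contradiction; your choice is marginally cleaner since the bisection structure is never actually used.
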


\begin{proof}
	Let $U\subset G^{(0)}$ be a nonempty open set.
	Since we assume that $G$ has enough arrows,
	there exists $\alpha\in G\setminus G^{(0)}$ with $d(\alpha)\in U$.
	Take an open bisection $W\subset G$ so that $\alpha\in W$ and $d(W)\subset U$ hold.
	If $W\subset \Iso(G)$,
	then $\alpha\in \Iso(G)^{\circ}=G^{(0)}$ and this contradicts to $\alpha\not\in G^{(0)}$.
	Therefore there exists $\beta\in W\setminus \Iso(G)$.
	Now we have $d(\beta)\in d(W)\subset U$ and $d(\beta)\not=r(\beta)$.
	\qed
\end{proof}

The following lemma is a key lemma to show that $\Aut_{C_0(G^{(0)})}(C^*_r(G))$ is a maximal abelian subgroup of $\Aut(C^*_r(G))$.

\begin{lem}\label{lem: Z(G) is maximal abelian in semidirect product}
	Let $G$ be a locally compact Hausdorff \'etale groupoid.
	Assume that $G$ is effective and has enough arrows.
	Then $Z(G)$ is a maximal abelian subgroup of $\Aut(G)\ltimes Z(G)$.
\end{lem}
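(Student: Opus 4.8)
The plan is to show that $Z(G)$ is abelian (which is clear, since it is already noted that $Z(G)$ is abelian under pointwise product and the action of $\Aut(G)$ on it restricts trivially to $Z(G)$ itself), and then to prove maximality: if $(\Phi, c) \in \Aut(G) \ltimes Z(G)$ commutes with every element of $Z(G)$, then $\Phi = \id_G$. Once we know $\Phi = \id_G$, the element $(\Phi,c) = (\id_G, c)$ lies in $Z(G)$, so $Z(G)$ is its own centralizer and hence maximal abelian.

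To prove $\Phi = \id_G$, I would first extract the constraint imposed by commutativity. Writing the group law in $\Aut(G) \ltimes Z(G)$, the commutation $(\Phi, c)(\id_G, \chi) = (\id_G, \chi)(\Phi, c)$ for all $\chi \in Z(G)$ forces $\Phi.\chi = \chi$ for every $\chi \in Z(G)$; that is, $\chi(\Phi^{-1}(\alpha)) = \chi(\alpha)$ for all $\alpha \in G$ and all $\chi \in Z(G)$. In particular this holds for the coboundaries $\chi = \partial h$ with $h \in Z(G^{(0)})$, giving
\[
h(r(\Phi^{-1}(\alpha)))\,\overline{h(d(\Phi^{-1}(\alpha)))} = h(r(\alpha))\,\overline{h(d(\alpha))}
\]
for all $\alpha \in G$ and all $h \in Z(G^{(0)})$. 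Since $\Phi$ is a groupoid automorphism, $r(\Phi^{-1}(\alpha)) = \Phi^{-1}(r(\alpha))$ and $d(\Phi^{-1}(\alpha)) = \Phi^{-1}(d(\alpha))$, so this rearranges to
\[
h(\Phi^{-1}(r(\alpha)))\,\overline{h(r(\alpha))} = h(\Phi^{-1}(d(\alpha)))\,\overline{h(d(\alpha))}.
\]
The key observation is then: because $G$ has enough arrows and is effective (hence has enough exits by Lemma \ref{lem: effective and enough arrows imply enough exits}), the range and domain maps are "rich enough" that this identity, holding for all $\alpha$, forces $\Phi|_{G^{(0)}}$ to be the identity. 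More precisely, I would argue by contradiction: if $\Phi(x) \neq x$ for some $x \in G^{(0)}$, pick disjoint open neighborhoods of $x$ and $\Phi(x)$ (using that $G^{(0)}$ is Hausdorff) and choose $h \in Z(G^{(0)})$, e.g.\ of the form $e^{i\pi f}$ with $f$ a Urysohn function, so that $h(\Phi(x)) \overline{h(x)}$ is not $1$. Now I need an arrow $\alpha$ with, say, $r(\alpha) = x$ (or $d(\alpha) = x$) realizing a contradiction; the enough-arrows hypothesis, applied to a small neighborhood of $x$, supplies a genuine arrow through which the two sides of the displayed identity are forced to differ — after possibly also using enough exits to ensure the arrow is not a unit and the two endpoints involved are handled independently. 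Once $\Phi|_{G^{(0)}} = \id_{G^{(0)}}$, Proposition \ref{prop: all we need is effectiveness} immediately gives $\Phi = \id_G$, since $G$ is effective.

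The main obstacle I anticipate is the bookkeeping in the contradiction argument: the coboundary identity couples the values of $h \circ \Phi^{-1}$ against $h$ at \emph{both} $r(\alpha)$ and $d(\alpha)$ simultaneously, so one cannot naively "localize at one point." The enough-arrows / enough-exits hypotheses are precisely what let us produce an arrow $\alpha$ with one endpoint pinned at (a point near) $x$ while the other endpoint lies in a region where $h$ can be chosen constant, decoupling the two sides; making this decoupling precise — choosing $f$ supported near $x$ but away from $d(\alpha)$, using that $d$ is a local homeomorphism and $G$ is Hausdorff so the two endpoints of a short arrow can be separated — is the delicate part. The rest (verifying $Z(G)$ is abelian, unwinding the semidirect-product commutator, invoking the cited propositions) is routine.
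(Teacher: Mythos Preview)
Your overall strategy matches the paper's exactly: reduce to showing that $\chi\circ\Phi=\chi$ for all $\chi\in Z(G)$ forces $\Phi|_{G^{(0)}}=\id$, then invoke Proposition~\ref{prop: all we need is effectiveness}. The gap is precisely where you flag it, and your suggested fix (``$f$ supported near $x$ but away from $d(\alpha)$'') does not work as written, since you have arranged $d(\alpha)$ to lie in a neighborhood of $x$. The real obstruction is that, having produced $\alpha$ with $d(\alpha)\neq r(\alpha)$ and $\Phi(d(\alpha))\neq d(\alpha)$, you have no control over where $r(\alpha)$ sits relative to $\Phi(d(\alpha))$ and $\Phi(r(\alpha))$; in particular $r(\alpha)=\Phi(d(\alpha))$ is possible, so a single Urysohn function separating ``near $x$'' from ``far from $x$'' need not suffice.

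The paper resolves this with a two-case split. Since $d(\alpha)\neq r(\alpha)$ and $\Phi$ is injective, $\Phi(d(\alpha))\neq\Phi(r(\alpha))$, so at least one of $r(\alpha)\neq\Phi(d(\alpha))$ or $r(\alpha)\neq\Phi(r(\alpha))$ holds. In the first case one chooses $f$ with $f\equiv 1$ on $\{d(\alpha),r(\alpha),\Phi(r(\alpha))\}$ and $f(\Phi(d(\alpha)))=0$; then $\chi=\partial(e^{i\pi f})$ satisfies $\chi(\alpha)=1$ but $\chi(\Phi(\alpha))=-1$. In the second case one chooses $f$ with $f\equiv 1/2$ on $\{d(\alpha),\Phi(r(\alpha))\}$ and $f\equiv 0$ on $\{r(\alpha),\Phi(d(\alpha))\}$; then $\chi(\alpha)=-i$ but $\chi(\Phi(\alpha))=i$. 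Either way $\chi\circ\Phi\neq\chi$, the desired contradiction. This explicit case analysis is the missing ingredient in your sketch; once you insert it, your argument is complete and identical to the paper's.
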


\begin{proof}
	We show that the centralizer of $Z(G)$ in $\Aut(G)\ltimes Z(G)$ coincides with $Z(G)$ itself.
	Assume that $(\Phi, c)\in \Aut(G)\ltimes Z(G)$ commutes with every elements in $Z(G)$.
	It suffices to show $\Phi=\id_G$.
	Since $(\Phi,c)$ commutes with $Z(G)$,
	we have
	\[
	(\Phi, c\times \chi)=(\Phi, c)(\id_G, \chi )=(\id_G, \chi)(\Phi, c)=(\Phi, (\chi\circ \Phi)\times c)
	\]
	for all $\chi\in Z(G)$.
	Hence we have $\chi=\chi\circ \Phi$ for all $\chi\in Z(G)$.
	We observe that this condition implies $\Phi=\id_G$.
	
	Since we assume that $G$ is effective,
	it suffices to show $\Phi|_{G^{(0)}}=\id_G^{(0)}$ by Proposition \ref{prop: all we need is effectiveness}.
	Assume that there exists $x\in G^{(0)}$ with $\Phi(x)\not=x$.
	We claim that there exists $\alpha\in G$ such that $\Phi(d(\alpha))\not=d(\alpha)$ and $d(\alpha)\not= r(\alpha)$.
	Since $\Phi(x)\not=x$,
	there exists an open set $U\subset G^{(0)}$ such that $x\in U$ and $U\cap\Phi(U)=\emptyset$.
	Since $G$ has enough exits by Lemma \ref{lem: effective and enough arrows imply enough exits},
	there exists $\alpha\in U$ such that $d(\alpha)\in U$ and $d(\alpha)\not=r(\alpha)$.
	In addition, it follows $d(\alpha)\not=\Phi(d(\alpha))$ from $U\cap\Phi(U)=\emptyset$.
	
	Now, we obtain an element $\alpha\in G$ such that $d(\alpha)\not=r(\alpha)$ and $\Phi(d(\alpha))\not=d(\alpha)$.
	Since $\Phi$ is injective,
	we have $\Phi(d(\alpha))\not=\Phi(r(\alpha))$.
	In addition,
	we have $r(\alpha)\not=\Phi(d(\alpha))$ or $r(\alpha)\not=\Phi(r(\alpha))$.
	Indeed, if not,
	we obtain $\Phi(d(\alpha))=\Phi(r(\alpha))$ and this is a contradiction.
	First,
	assume that $r(\alpha)\not=\Phi(d(\alpha))$ holds.
	In this case,
	\[F\defeq \{d(\alpha), r(\alpha),\Phi(r(\alpha))\}\] and $\{\Phi(d(\alpha))\}$ are disjoint compact sets in $G^{(0)}$.
	Then,
	by Urysohn's lemma,
	there exists a continuous function $f\colon G^{(0)}\to [0,1]$ such that $f|_F=1$ and $f(\Phi(d(\alpha)))=0$.
	Put $h\defeq e^{i\pi f}\in Z(G^{(0)})$ and $\chi \defeq \partial h\in Z(G)$.
	Then we have
	\[
	\chi(\alpha)=e^{i\pi(f(r(\alpha))-f(d(\alpha)))}=1
	\]
	and
	\[
	\chi(\Phi(\alpha))=e^{i\pi(f(\Phi(r(\alpha)))-f(\Phi(d(\alpha))))}=e^{i\pi}=-1.
	\]
	This contradicts to $\chi=\chi\circ \Phi$ for all $\chi\in Z(G)$.
	Next, assume that $r(\alpha)\not=\Phi(r(\alpha))$ holds.
	Then
	\[
	F\defeq \{d(\alpha), \Phi(r(\alpha))\} \text{ and } H\defeq \{r(\alpha), \Phi(d(\alpha))\}
	\]
	are disjoint.
	By Urysohn's lemma,
	take a continuous function $f\colon G\to \R$ such that $f|_F=1/2$ and $f|_H=0$.
	Put $h\defeq e^{i\pi f}\in Z(G^{(0)})$ and $\chi\defeq \partial h\in Z(G)$.
	Then we have
	\[
	\chi(\alpha)=e^{i\pi(f(r(\alpha))-f(d(\alpha)))}=e^{-i\pi/2}=-i
	\]
	and
	\[
	\chi(\Phi(\alpha))=e^{i\pi(f(\Phi(r(\alpha)))-f(\Phi(d(\alpha))))}=e^{i\pi/2}=i.
	\]
	This also contradicts to $\chi\circ\Phi=\chi$.
	Hence, we obtain $\Phi(x)=x$ for all $x\in G^{(0)}$
	\qed
\end{proof}

Now,
we are ready to show that $\Aut_{C_0(G^{(0)})}(C^*_r(G))$ is a maximal abelian group of $\Aut(C^*_r(G))$ under the assumption that $G$ is effective and has enough arrows.

\begin{thm}\label{thm: Z(G) is maximal abelian group}
	Let $G$ be a locally compact Hausdorff \'etale groupoid.
	Assume that $G$ is effective.
	Then the centralizer of $\Aut_{C_0(G^{(0)})}(C^*_r(G))$ in $\Aut(C^*_r(G))$ is contained in $\Aut(C^*_r(G), C_0(G^{(0)}))$.
	In particular,
	if $G$ is effective and $G$ has enough arrows,
	then $\Aut_{C_0(G^{(0)})}(C^*_r(G))$ is a maximal abelian group of $\Aut(C^*_r(G))$.
\end{thm}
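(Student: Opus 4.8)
The plan is to derive the theorem as a formal consequence of Proposition \ref{proposition: centralizer of AutBA is Aut(A,B)}, the identification $\Psi$ of Theorem \ref{thm: varphi is a group hom}, and the two preparatory facts, Proposition \ref{prop: fixed point algebra of Z(G) is C_0(G0)} and Lemma \ref{lem: Z(G) is maximal abelian in semidirect product}. Throughout I write $A\defeq C^*_r(G)$ and $B\defeq C_0(G^{(0)})$.

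For the first assertion I would take $\varphi\in\Aut(A)$ in the centralizer of $\Aut_BA$, so that $\varphi\circ\psi=\psi\circ\varphi$ for every $\psi\in\Aut_BA$. This is precisely the hypothesis of Proposition \ref{proposition: centralizer of AutBA is Aut(A,B)} with $\tau_\varphi=\id_{\Aut_BA}$, so it remains only to check $B=A^{\Aut_BA}$. Here I would invoke Theorem \ref{thm: varphi is a group hom}: since $G$ is effective, $\Aut_BA=\Psi(Z(G))$, and the induced action on $A$ is the cocycle action $\varphi_{\id_G,\chi}(f)(\alpha)=\chi(\alpha)f(\alpha)$; hence $A^{\Aut_BA}=C^*_r(G)^{Z(G)}$, which equals $B$ by Proposition \ref{prop: fixed point algebra of Z(G) is C_0(G0)}. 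Proposition \ref{proposition: centralizer of AutBA is Aut(A,B)} then gives $\varphi\in\Aut(A;B)=\Aut(C^*_r(G);C_0(G^{(0)}))$, which is the first claim.

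For the ``in particular'' part I would use that a subgroup is maximal abelian exactly when it coincides with its own centralizer; since $\Aut_BA\simeq Z(G)$ is abelian, it suffices to show its centralizer in $\Aut(A)$ is contained in $\Aut_BA$. So let $\varphi\in\Aut(A)$ centralize $\Aut_BA$. By the first assertion, $\varphi\in\Aut(A;B)$, so Theorem \ref{thm: varphi is a group hom} provides a unique $(\Phi,c)\in\Aut(G)\ltimes Z(G)$ with $\varphi=\Psi(\Phi,c)$. Since $\Psi$ is a group isomorphism onto $\Aut(A;B)$ carrying $Z(G)$ onto $\Aut_BA$, the fact that $\varphi$ commutes with $\Aut_BA$ transports to the statement that $(\Phi,c)$ commutes with every element of $Z(G)$ inside $\Aut(G)\ltimes Z(G)$. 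Now Lemma \ref{lem: Z(G) is maximal abelian in semidirect product} (which needs the extra hypothesis that $G$ has enough arrows) says $Z(G)$ is maximal abelian in the semidirect product, hence $(\Phi,c)\in Z(G)$; that is, $\Phi=\id_G$ and $\varphi=\varphi_{\id_G,c}\in\Psi(Z(G))=\Aut_BA$. This shows $\Aut_BA$ is self-centralizing in $\Aut(A)$, hence maximal abelian.

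I do not expect a serious obstacle inside this argument itself: the only points needing care are (i) matching $\Aut_BA$ with $\Psi(Z(G))$ and its induced action with the natural cocycle action, so that the fixed-point computation of Proposition \ref{prop: fixed point algebra of Z(G) is C_0(G0)} applies, and (ii) the logical order — one must first establish that the centralizer lies in $\Aut(A;B)$ before one can push the commutation relation through $\Psi$, which is only defined on, and only surjects onto, $\Aut(A;B)$. The genuinely delicate step, namely that $Z(G)$ is maximal abelian in $\Aut(G)\ltimes Z(G)$, has already been handled in Lemma \ref{lem: Z(G) is maximal abelian in semidirect product}, where ``enough arrows'' (through ``enough exits'', Lemma \ref{lem: effective and enough arrows imply enough exits}) together with Urysohn's lemma are used to manufacture cocycles of the form $\partial(e^{i\pi f})$ obstructing any nontrivial $\Phi$.
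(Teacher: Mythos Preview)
Your proof is correct and follows essentially the same approach as the paper's own proof, which also invokes Proposition~\ref{prop: fixed point algebra of Z(G) is C_0(G0)} together with Proposition~\ref{proposition: centralizer of AutBA is Aut(A,B)} for the first assertion, and then Lemma~\ref{lem: Z(G) is maximal abelian in semidirect product} together with Theorem~\ref{thm: varphi is a group hom} for the maximal abelian statement. You have simply spelled out in more detail the logical steps (in particular the verification that $B=A^{\Aut_BA}$ and the transport of the commutation relation through $\Psi$) that the paper leaves implicit.
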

\begin{proof}
	Assume that $\varphi\in \Aut(C^*_r(G))$ commutes with the all elements in $\Aut_{C_0(G^{(0)})}(C^*_r(G))$.
	By Proposition \ref{prop: fixed point algebra of Z(G) is C_0(G0)} and Proposition \ref{proposition: centralizer of AutBA is Aut(A,B)},
	we obtain $\varphi\in\Aut(C^*_r(G); C_0(G^{(0)}))$.
	Now,
	the last assertion follows from Lemma \ref{lem: Z(G) is maximal abelian in semidirect product} and Theorem \ref{thm: varphi is a group hom}. 
	\qed
\end{proof}

In \cite[Proposition 3.5]{CONTI20122529},
it is proved that the Weyl groups of graph C*-algebras associated with finite directed graphs becomes countable groups.
We prove a groupoid analogue of this assertion.
First,
we prepare some terminologies.

\begin{defi}[{\cite[Definition 5.3]{NEKRASHEVYCH_2019}}]\label{def: expansive}
	Let $G$ be a locally compact Hausdorff ample \'etale groupoid.
	Then $G$ is said to be expansive if there exists a finite set $F\subset \Bis^c(G)$ such that the inverse subsemigroup generated by $F$ in $\Bis^c(G)$ forms an open basis of $G$.
\end{defi}

\begin{rem}
	In \cite[Definition 5.3]{NEKRASHEVYCH_2019},
	an \'etale groupoid $G$ is expansive if $G$ admits a finite set $F\subset\Bis^c(G)$ in Definition \ref{def: expansive} which covers a ``generating set'' (see \cite[Definition 5.3]{NEKRASHEVYCH_2019} for the precise definition).
	This definition is equivalent to Definition \ref{def: expansive} since $\bigcup F$ becomes a generating set if $F$ satisfies the condition in Definition \ref{def: expansive}.
\end{rem}

Remark that $G$ is second countable if $G$ is expansive.

\begin{ex}\label{ex: expansivity of finitely generated inverse semiroup action groupoid}
	Let $S$ be a finitely generated inverse semigroup,
	$X$ be a totally disconnected locally compact Hausdorff space and $\sigma\colon S\curvearrowright X$ be an action.
	Assume that $\dom(\sigma_e)$ is a compact open subset of $X$ for each $e\in E(S)$ and $\{\dom(\sigma_e)\}_{e\in E(S)}$ is a basis of $X$.
	Then the transformation groupoid $S\ltimes_{\sigma}X$ is expansive.
	Indeed,
	for $s\in S$,
	put
	\[
	\theta_s\defeq \{[s, x]\in S\ltimes_{\sigma}X\mid x\in \dom(\sigma_{s^*s}) \}.
	\]
	Then $\theta_s\in\Bis^c(S\ltimes_{\sigma}X)$ and the map
	\[
	S\ni s\mapsto \theta_s\in\Bis^c(S\ltimes_{\sigma}X)
	\]
	is a semigroup homomorphism (see \cite[Section 4]{ExelcombinatrialC*algebra} for details).
	In addition,
	it follows that $\{\theta_s\}_{s\in S}$ is a basis of $S\ltimes_{\sigma}X$ from the assumption that $\{\dom(\sigma_e)\}_{e\in E(S)}$ is a basis of $X$.
	Hence,
	if a finite subset $F\subset S$ is a generator of $S$,
	then the inverse semigroup generated by $\{\theta_s\}_{s\in F}$ in $\Bis^c(S\ltimes_{\sigma}X)$ coincides with
	\[
	\{\theta_s\in\Bis^c(S\ltimes_{\sigma}X)\mid s\in S\}
	\]
	and therefore a basis of $S\ltimes_{\sigma}X$.
	In particular,
	graph groupoids associated with finite directed graphs in \cite{Paterson2002} are expansive if the underlying graph has no sink.
	For example,
	the standard groupoid model of the Cuntz algebra $\mathcal{O}_n$ is expansive (see also Subsection \ref{subsection : Examples of coactions on Cuntz algebras and graph algebras}).
\end{ex}

\begin{prop}\label{prop embed AutG to Aut BisG}
	Let $G$ be a locally compact Hausdorff \'etale groupoid.
	For $\Phi\in\Aut(G)$,
	define $\widetilde{\Phi}\in\Aut(\Bis(G))$ by $\widetilde{\Phi}(U)\defeq \Phi(U)$ for $U\in\Bis(G)$.
	Then the map
	\[
	\iota\colon \Aut(G)\ni \Phi\mapsto \widetilde{\Phi}\in \Aut(\Bis(G))
	\]
	is an injective group homomorphism.
	In addition,
	if $G$ is ample,
	\[\iota'\colon \Aut(G)\ni \Phi\mapsto \widetilde{\Phi}|_{\Bis^c(G)}\in \Aut(\Bis^c(G))\]
	is also an injective group homomorphism.
\end{prop}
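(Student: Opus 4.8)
The plan is to verify directly that $\iota$ is a group homomorphism and then prove injectivity, after which the ample case follows by restriction. First I would check that $\widetilde\Phi$ is a well-defined automorphism of the inverse semigroup $\Bis(G)$: since $\Phi\in\Aut(G)$ is a homeomorphism of $G$ that commutes with $d$ and $r$ and preserves the multiplication, $\Phi$ sends open bisections to open bisections, so $\widetilde\Phi(U)=\Phi(U)\in\Bis(G)$ for each $U\in\Bis(G)$. The identity $\widetilde\Phi(UV)=\widetilde\Phi(U)\widetilde\Phi(V)$ is immediate from $\Phi(\alpha\beta)=\Phi(\alpha)\Phi(\beta)$ together with the fact that $\Phi$ preserves composability (because it intertwines $d$ and $r$), and $\widetilde\Phi(U^{-1})=\widetilde\Phi(U)^{-1}$ follows similarly from $\Phi(\alpha^{-1})=\Phi(\alpha)^{-1}$. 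Since $\Phi^{-1}\in\Aut(G)$ as well, $\widetilde{\Phi^{-1}}$ is a semigroup homomorphism inverse to $\widetilde\Phi$, so $\widetilde\Phi\in\Aut(\Bis(G))$. The functoriality $\widetilde{\Phi\circ\Psi}=\widetilde\Phi\circ\widetilde\Psi$ and $\widetilde{\id_G}=\id_{\Bis(G)}$ are routine, so $\iota$ is a group homomorphism.

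For injectivity, suppose $\widetilde\Phi=\id_{\Bis(G)}$, i.e.\ $\Phi(U)=U$ for every open bisection $U\subset G$. I would then recover $\Phi$ pointwise: for $\alpha\in G$, since $G$ is \'etale there is an open bisection $U$ with $\alpha\in U$, and $\Phi(\alpha)\in\Phi(U)=U$. But also $d(\Phi(\alpha))=\Phi(d(\alpha))$. Here I would use that singleton subsets of $G^{(0)}$ are obtained as $d(V)$ for suitable open bisections — more precisely, for any two distinct points $x,y\in G^{(0)}$ one can find an open bisection (indeed an open subset of $G^{(0)}$) containing $x$ but not $y$, so the condition $\Phi(W)=W$ for all open $W\subset G^{(0)}$ forces $\Phi|_{G^{(0)}}=\id_{G^{(0)}}$. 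Combined with $\Phi(\alpha)\in U$ and the fact that $d|_U$ is injective with $d(\Phi(\alpha))=\Phi(d(\alpha))=d(\alpha)=d(\alpha)$, we get $\Phi(\alpha)=\alpha$. Hence $\Phi=\id_G$ and $\iota$ is injective. Note this argument does not require effectiveness, only that $G$ is Hausdorff \'etale so that $G^{(0)}$ separates points and has enough open bisections.

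For the ample case, if $G$ is ample then $\Bis^c(G)$ is a subsemigroup of $\Bis(G)$ and, by the argument above, every open bisection is a union of compact open bisections; in particular every $\Phi\in\Aut(G)$ sends compact open bisections to compact open bisections (continuity and properness of the homeomorphism $\Phi$), so $\widetilde\Phi$ restricts to $\widetilde\Phi|_{\Bis^c(G)}\in\Aut(\Bis^c(G))$, and $\iota'$ is a group homomorphism by the same functoriality. For injectivity of $\iota'$ I would repeat the pointwise-recovery argument using compact open bisections instead of arbitrary open bisections: in an ample groupoid every point lies in a compact open bisection, and distinct points of $G^{(0)}$ can be separated by a compact open subset of $G^{(0)}$, so $\widetilde\Phi|_{\Bis^c(G)}=\id$ again forces $\Phi|_{G^{(0)}}=\id_{G^{(0)}}$ and then $\Phi=\id_G$. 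I do not anticipate a serious obstacle here; the only point requiring a little care is confirming that $G^{(0)}$ (equivalently, enough small bisections) separates points, which is exactly where Hausdorffness of $G$ (and total disconnectedness in the ample case) is used.
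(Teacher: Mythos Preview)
Your proof is correct, but the injectivity argument differs from the paper's. The paper shows injectivity directly: given $\Phi\neq\Psi$ in $\Aut(G)$, it picks $\alpha$ with $\Phi(\alpha)\neq\Psi(\alpha)$, separates these two points by disjoint open bisections $V,W$ (using Hausdorffness and that $\Bis(G)$ is a basis), and sets $U=\Phi^{-1}(V)\cap\Psi^{-1}(W)$; then $\Phi(U)\cap\Psi(U)=\emptyset$ with $U\ni\alpha$ nonempty, so $\widetilde\Phi(U)\neq\widetilde\Psi(U)$. Your route instead computes the kernel: from $\Phi(W)=W$ for all open $W\subset G^{(0)}$ you deduce $\Phi|_{G^{(0)}}=\id_{G^{(0)}}$, and then the injectivity of $d|_U$ on any bisection $U\ni\alpha$ with $\Phi(U)=U$ forces $\Phi(\alpha)=\alpha$. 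Both arguments are short and rely on the same ingredients (Hausdorffness and the basis property of bisections), but yours has the mild conceptual bonus of isolating the step $\Phi|_{G^{(0)}}=\id$, which echoes Proposition~\ref{prop: all we need is effectiveness} without needing effectiveness; the paper's argument is slightly more economical in that it handles arbitrary $\Phi,\Psi$ in one stroke without passing through the unit space. The ample case is handled identically in both, replacing $\Bis(G)$ by $\Bis^c(G)$ throughout.
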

\begin{proof}
	It is straightforward to check that $\iota$ and $\iota'$ are well-defined and group homomorphisms.
	It follows that $\iota$ is injective from the fact that $\Bis(G)$ is a basis of $G$.
	Indeed,
	assume that $\Phi\not=\Psi$ holds for some $\Phi,\Psi\in\Aut(G)$.
	Then there exists $\alpha\in G$ such that $\Phi(\alpha)\not=\Psi(\alpha)$.
	Since $G$ is Hausdorff and $\Bis(G)$ is a basis of $G$,
	there exists $V,W\in \Bis(G)$ such that $\Phi(\alpha)\in V,
	\Psi(\alpha)\in W$ and $V\cap W=\emptyset$.
	Putting $U\defeq \Phi^{-1}(V)\cap \Psi^{-1}(W)$,
	we have $U\in \Bis(G)$ and $\Phi(U)\cap \Psi(U)=\emptyset$.
	In particular,
	since $U$,
	containing $\alpha$, is not empty,
	we obtain $\widetilde{\Phi}\not=\widetilde{\Psi}$.
	Hence $\iota$ is injective.
	Similarly,
	if $G$ is ample,
	one can check that $\iota'$ is injective from the fact that $\Bis^c(G)$ is a basis of $G$.
	\qed

\end{proof}

Now,
we are ready to show that Weyl groups become discrete countable groups under some finiteness conditions.
This result is an analogue of \cite[Proposition 3.5]{CONTI20122529},
which asserts the Weyl groups of graph C*-algebras associated with finite directed graphs becomes countable groups.
We also remark that we cannot completely recover \cite[Proposition 3.5]{CONTI20122529} from Theorem \ref{thm: Weyl group is discrete countable if G is effective and expansive} as explained in Remark \ref{remark: we cannot recover discreteness of weyl group of graph algebra}.

\begin{thm}\label{thm: Weyl group is discrete countable if G is effective and expansive}
	Let $G$ be a locally compact Hausdorff ample \'etale groupoid.
	Assume that $G$ is effective and expansive.
	Then the Weyl group $\mathfrak{W}_G$ is a countable discrete group.
\end{thm}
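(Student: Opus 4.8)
The plan is to use the identification $\mathfrak{W}_G \simeq \Aut(G)$ from Proposition \ref{prop: Weyl group is Aut(G)}, which applies since $G$ is effective, and then to show $\Aut(G)$ is countable and that the quotient topology on $\mathfrak{W}_G$ is discrete. First I would settle countability. By expansivity, fix a finite set $F\subset\Bis^c(G)$ whose generated inverse subsemigroup $S\subset\Bis^c(G)$ is an open basis for $G$. By Proposition \ref{prop embed AutG to Aut BisG}, the restriction map $\iota'\colon\Aut(G)\to\Aut(\Bis^c(G))$ is injective, so an automorphism $\Phi$ of $G$ is determined by its action on $\Bis^c(G)$. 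I claim $\Phi$ is in fact determined by its values on the finite set $F$: since $S$ is generated by $F$ and $\widetilde\Phi$ is a semigroup homomorphism, $\widetilde\Phi|_S$ is determined by $\widetilde\Phi|_F$; and since $S$ is an open basis of $G$ and $\Phi$ is a homeomorphism (hence determined by its restriction to any basis, as in the injectivity argument for $\iota$), $\Phi$ itself is determined by $\widetilde\Phi|_S$. It remains to see that for each $U\in F$ there are only countably many possible values $\Phi(U)$; but $\Phi(U)$ must lie in $\Bis^c(G)$, and since $G$ is second countable (which follows from expansivity, as remarked after Definition \ref{def: expansive}) and ample, the collection $\Bis^c(G)$ is countable — every compact open bisection is a finite union of basic compact open bisections from the countable basis $S$, and there are only countably many such finite unions. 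Hence $\Aut(G)$ injects into the countable set of functions $F\to\Bis^c(G)$, so $\Aut(G)$ is countable.

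Next I would show discreteness of $\mathfrak{W}_G$. Since $\mathfrak{W}_G$ is the quotient of the topological group $\Aut(C^*_r(G);C_0(G^{(0)}))$ by the normal subgroup $\Aut_{C_0(G^{(0)})}(C^*_r(G))$, it suffices to show that $\Aut_{C_0(G^{(0)})}(C^*_r(G))$ is open in $\Aut(C^*_r(G);C_0(G^{(0)}))$, and for that it is enough to exhibit a neighbourhood of the identity in $\Aut(C^*_r(G);C_0(G^{(0)}))$ consisting entirely of elements of $\Aut_{C_0(G^{(0)})}(C^*_r(G))$. Via the isomorphism $\Psi\colon\Aut(G)\ltimes Z(G)\to\Aut(C^*_r(G);C_0(G^{(0)}))$ of Theorem \ref{thm: varphi is a group hom} (which carries $Z(G)$ onto $\Aut_{C_0(G^{(0)})}(C^*_r(G))$), this reduces to: the subgroup $\{\id_G\}\ltimes Z(G)$ is open in $\Aut(G)\ltimes Z(G)$, equivalently, there is a neighbourhood of $(\id_G,1)$ whose automorphism pairs all have first coordinate $\id_G$. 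For each $U\in F$ choose $f_U\in C_c(G)$ with $f_U$ supported in $U$ and equal to $1$ on some fixed $\alpha_U\in U$ (so $j(f_U)(\alpha_U)=1$). Consider the open neighbourhood $N$ of the identity defined by the finitely many conditions $\lVert\varphi_{\Phi,c}(f_U)-f_U\rVert_r<1/2$ for $U\in F$. Since $\varphi_{\Phi,c}(f_U)$ is supported on the bisection $\Phi(U)$ and $j(\varphi_{\Phi,c}(f_U))=c\cdot(f_U\circ\Phi^{-1})$, the bound on the reduced norm controls the supremum norm, so if $(\Phi,c)\in N$ then $j(\varphi_{\Phi,c}(f_U))$ agrees with $j(f_U)$ to within $1/2$ at every point; in particular $\lvert j(\varphi_{\Phi,c}(f_U))(\alpha_U)\rvert>1/2$, which forces $\alpha_U\in\Phi(U)$, i.e. $\Phi^{-1}(\alpha_U)\in U$.

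The remaining point — and the step I expect to be the main obstacle — is to upgrade ``$\Phi^{-1}(\alpha_U)\in U$ for all $U\in F$'' to ``$\Phi=\id_G$''. The idea is that by shrinking $U$ and $\alpha_U$ appropriately and using that $S$ (generated by $F$) is an open basis, the membership conditions pin down $\Phi$ on a basis and hence everywhere; more carefully, one should choose the test data so that for each basic bisection $V\in S$ one can read off $\Phi(V)$ from finitely many of the constraints, and then invoke the injectivity of $\iota'$ from Proposition \ref{prop embed AutG to Aut BisG} together with Proposition \ref{prop: all we need is effectiveness}. One clean way: enlarge the finite test family so that it detects, for each generator $U\in F$, not just one point of $\Phi(U)$ but enough points that $\Phi(U)$ is forced to equal $U$ (using that $\Phi(U)$ and $U$ are both compact open bisections and that the basis $S$ separates points); since $\Aut(G)$ embeds in the maps $F\to\Bis^c(G)$ and $\Bis^c(G)$ is discrete in a suitable sense relative to the topology on $\Aut(G)\ltimes Z(G)$, this gives $\widetilde\Phi|_F=\mathrm{id}$, hence $\Phi=\id_G$ by the generation and injectivity arguments above. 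Then $N\subset\Psi(\{\id_G\}\ltimes Z(G))=\Aut_{C_0(G^{(0)})}(C^*_r(G))$, so this subgroup is open and $\mathfrak{W}_G$ is discrete; combined with the countability of $\Aut(G)\simeq\mathfrak{W}_G$ already established, this completes the proof.
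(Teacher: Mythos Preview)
Your countability argument is essentially the paper's: an automorphism is determined by its action on the finite generating set $F\subset\Bis^c(G)$, and $\Bis^c(G)$ is countable because $G$ is second countable. That part is fine.

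The discreteness argument, however, has the gap you yourself flag. Pinning down a single point $\alpha_U\in\Phi(U)$ for each $U\in F$ is far too weak to force $\Phi(U)=U$, and your suggested remedy of ``enlarging the finite test family'' to detect ``enough points'' cannot stay finite: a compact open bisection generally has infinitely many points, and no finite collection of pointwise constraints will determine it. The vague appeal to $\Bis^c(G)$ being ``discrete in a suitable sense'' is exactly the statement you are trying to prove, not an input you can invoke.

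The fix is simple and is what the paper does: since each $U\in F$ is a \emph{compact open} bisection, its characteristic function $\chi_U$ lies in $C_c(G)$. Use these as your test functions. One computes $\varphi_{\Phi,c}(\chi_U)(\alpha)=c(\Phi^{-1}(\alpha))\chi_{\Phi(U)}(\alpha)$, so $\lvert\varphi_{\Phi,c}(\chi_U)\rvert=\chi_{\Phi(U)}$ pointwise. If $\lVert\varphi_{\Phi,c}(\chi_U)-\chi_U\rVert_r<1/2$, then the sup-norm bound gives $\lvert\varphi_{\Phi,c}(\chi_U)(\alpha)-\chi_U(\alpha)\rvert<1/2$ for every $\alpha$; but at any point of the symmetric difference $\Phi(U)\triangle U$ this difference has modulus $1$. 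Hence $\Phi(U)=U$ for every $U\in F$, and then $\Phi=\id_G$ by your own generation-plus-injectivity argument. (The paper phrases this as showing every singleton in $\Aut(G)$ is open, comparing $\varphi_\Phi(\chi_U)$ with $\varphi_\Psi(\chi_U)$ rather than with $\chi_U$, but it is the same computation.) Replacing your $f_U$ by $\chi_U$ closes the gap immediately.
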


\begin{proof}
	Take a finite set $F\subset \Bis^c(G)$ such that the inverse semigroup generated by $F$ is a basis of $G$.
	Then one can see that the map
	\[
	\Aut(\Bis^c(G))\ni \Psi\mapsto (\Psi(U))_{U\in F}\in \Bis^c(G)^{\lvert F\rvert}
	\]
	is an injective map.
	Since $G$ has a countable basis and each elements in $\Bis^c(G)$ are compact,
	$\Bis^c(G)^{\lvert F\rvert}$ is a countable set.
	Hence $\Aut(\Bis^c(G))$ is also countable set and $\mathfrak{W}_G$ is countable by Proposition \ref{prop: Weyl group is Aut(G)} and Proposition \ref{prop embed AutG to Aut BisG}.
	
	Next, we show that $\mathfrak{W}_G=\Aut(G)$ is discrete.
	We let $\lVert\cdot\rVert_r$ and $\lVert\cdot\rVert_{\infty}$ denote the reduced and supremum norm respectively.
	Take $\Phi\in\Aut(G)$ and define
	\[
	W\defeq \bigcap_{U\in F}\{\Psi\in \Aut(G)\mid \lVert \varphi_{\Phi}(\chi_U)-\varphi_{\Psi}(\chi_U)\rVert_r<1/2\},
	\]
	where $\chi_U\in C_c(G)$ denotes the characteristic function on $U$ and $\varphi_{\Phi}\in\Aut(C^*_r(G))$ denotes the *-automorphism defined by
	\[
	\varphi_{\Phi}(f)(\alpha)=f(\Phi^{-1}(\alpha))
	\]
	for $\alpha\in G$ and $f\in C_c(G)$.
	Then $W$ is an open set of $\Aut(G)$.
	We show that $W=\{\Phi\}$.
	Take $\Psi\in W$ and $U\in F$.
	Then we have
	\begin{align*}
	1/2>\lVert \varphi_{\Phi}(\chi_U)-\varphi_{\Psi}(\chi_U)\rVert_r&=\lVert \chi_{\Phi(U)}-\chi_{\Psi(U)}\rVert_r \\
	&\geq \lVert \chi_{\Phi(U)}-\chi_{\Psi(U)}\rVert_{\infty},
	\end{align*}
	where $\lVert \cdot\rVert_{\infty}$ denotes the supremum norm.
	Hence,
	we obtain
	\[
	\lVert \chi_{\Phi(U)}-\chi_{\Psi(U)}\rVert_{\infty}=0
	\]
	and therefore $\Phi(U)=\Psi(U)$ for all $U\in F$.
	Now,
	one can see that $\Phi(U)=\Psi(U)$ holds for all $U\in\Bis^c(G)$ since the inverse semigroup generated by $F$ is a basis of $\Bis^c(G)$.
	Thus,
	we obtain $\Phi=\Psi$ by Proposition \ref{prop embed AutG to Aut BisG}.
	Therefore $\mathfrak{W}_G=\Aut(G)$ is discrete and this completes the proof.
	\qed

\end{proof}

\begin{rem}\label{remark: we cannot recover discreteness of weyl group of graph algebra}
	We remark that we cannot completely recover \cite[Proposition 3.5]{CONTI20122529} from Theorem \ref{thm: Weyl group is discrete countable if G is effective and expansive}.
	This is because we assume that $G$ is effective and expansive in Theorem \ref{thm: Weyl group is discrete countable if G is effective and expansive} while \cite[Proposition 3.5]{CONTI20122529} does not require corresponding assumptions.
\end{rem}

\section{Restricted Weyl groups of groupoid C*-algebras}\label{Section: Restricted Weyl groups of groupoid C*-algebras}

In the previous section,
we investigated the groups of automorphisms on $C^*_r(G)$ which fix $C_0(G^{(0)})$.
In this section,
we study the groups of automorphisms on $C^*_r(G)$ which fix other subalgebras in $C^*_r(G)$.
The study of such groups,
known as restricted Weyl groups,
was initiated by Cuntz for the Cuntz algebras in \cite{Cuntz1980}.
Since then,
the restricted Weyl groups have been widely studied.
For example,
in \cite{ContiHongSzymaski},
the authors revealed many properties of the restricted Weyl groups of the Cuntz algebras.
In \cite{CONTI20122529}
the authors proposed and studied the restricted Weyl groups of graph algebras.
In this section,
we aim to formulate and investigate the restricted Weyl groups of groupoid C*-algebras.
Precisely,
we investigate the groups of automorphisms on groupoid C*-algebras $C^*_r(G)$ which fix C*-subalgebras $C_0(G^{(0)})$ and $C^*_r(H)\subset C^*_r(G)$ arising from open subgroupoids $G^{(0)}\subset H\subset G$.
Our purpose in this section is to characterize such automorphisms on $C^*_r(G)$ in terms of the underlying \'etale groupoids $H\subset G$.

\subsection{Restricted Weyl group (general case)}

Let $G$ be a locally compact Hausdorff \'etale effective groupoid and $H\subset G$ be an open subgroupoid with $G^{(0)}\subset H$.
Then we have the natural inclusion $C_0(G^{(0)})\subset C^*_r(H)\subset C^*_r(G)$ by \cite[Lemma 3.2]{BrownExelFuller2021}.
In this subsection,
we investigate automorphisms which fix $C^*_r(H)$.
More precisely,
we investigate the following groups
\[
\Aut(C^*_r(G); C^*_r(H), C_0(G^{(0)})) \text{ and }\Aut_{C^*_r(H)}C^*_r(G)
\]
in this subsection (see Definition \ref{defi: definitions of automorphism groups} and  for the definitions of these automorphism groups).
Then we investigate the restricted Weyl group $\mathfrak{RW}_{G,H}$.
Following \cite{ContiHongSzymaski} and \cite{CONTI20122529},
we define $\mathfrak{RW}_{G,H}$ for an inclusion of \'etale groupoids.

\begin{defi}\label{definition: restricted weyl group}
	Let $G$ be a locally compact Hausdorff \'etale effective groupoid and $H\subset G$ be an open subgroupoid with $G^{(0)}\subset H$.
	We define the restricted Weyl group $\mathfrak{RW}_{G,H}$ of the inclusion $H\subset G$ as
	\[
	\mathfrak{RW}_{G,H}\defeq \Aut(C^*_r(G); C^*_r(H),C_0(G^{(0)}))/\Aut_{C_0(G^{(0)})}C^*_r(G).
	\]
\end{defi}

\begin{rem}\label{remark: restricted Weyl is generalization of existing one}
	
	First,
	we remark that 
	\[
	\Aut_{C_0(G^{(0)})}C^*_r(G)\subset \Aut(C^*_r(G); C^*_r(H),C_0(G^{(0)}))
	\]
	holds if $G$ is effective.
	One can check this inclusion by Theorem \ref{thm: varphi is a group hom}.
	Hence,
	we may take the quotient group
	\[
	\mathfrak{RW}_{G,H}\defeq \Aut(C^*_r(G); C^*_r(H),C_0(G^{(0)}))/\Aut_{C_0(G^{(0)})}C^*_r(G).
	\]
	
	As in Remark \ref{remark: Weyl group generalize existing one},
	we observe that our definition of the restricted Weyl groups of groupoid C*-algebras (Definition \ref{definition: restricted weyl group}) can be seen as a natural generalization of existing restricted Weyl groups.
	In \cite{CONTI20122529},
	the authors defined the restricted Weyl groups for graph algebras.
	We briefly recall the definition of the restricted Weyl groups for graph algebras here.
	Let $E$ be a directed graph and $C^*(E)$ denotes its graph C*-algebra.
	In addition,
	let $C^*(E)^{\T}$ denote the fixed point subalgebra of the gauge action.
	Assume that $E$ has no sinks and all cycles have exits.
	In \cite{CONTI20122529},
	the authors defined the Weyl group $\mathfrak{RW}_E$ of $C^*(E)$ as
	\[
	\mathfrak{RW}_E\defeq \Aut(C^*(E);C^*(E)^{\T}, D_E)/\Aut_{D_E}C^*(E),
	\]
	where $D_E\subset C^*(E)$ denotes the diagonal commutative subalgebra of $C^*(E)$ (see \cite[Section 2.1]{CONTI20122529} for the precise definition of $C^*(E)$, $D_E$ and $\mathfrak{RW}_E$).
	Note that we may take this quotient group since $\Aut_{D_E}C^*(E)$ is contained in $\Aut(C^*(E);C^*(E)^{\T}, D_E)$ by \cite[Proposition 3.2]{CONTI20122529} if $E$ has no sinks and all cycles have exits.
	Besides,
	by \cite{Paterson2002}, %\cite[Corollary 3.9]{Paterson2002}
	one can construct a locally compact Hausdorff \'etale groupoid $G_E$ and its open subgroupoid $H\subset G_E$ so that $C^*(E)$ is isomorphic to $C^*_r(G_E)$ via the isomorphism which maps $D_E$ and $C^*(E)^{\T}$ to $C_0(G_E^{(0)})$ and $C^*_r(H)$ respectively.
	In addition,
	this $G_E$ is effective by \cite[Proposition 2.3]{BROWNLOWE_CARLSEN_WHITTAKER_2017} (see also Proposition \ref{prop: characterization that graph groupoid is topologically principal}).
	Hence,
	we have $\mathfrak{RW}_E\simeq \mathfrak{RW}_{G_E,H}$ and this implies that our definition of restricted Weyl groups (Definition \ref{def: Weyl groups of groupoid C*-algebras}) is a generalization of the existing restricted Weyl groups.
	In Subsection \ref{subsection: Restricted Weyl group of graph algebras},
	we will explain these statements more precisely.
	
\end{rem}

\begin{defi}
	Let $G$ a locally compact Hausdorff \'etale groupoid and $H\subset G$ be an open subgroupoid with $G^{(0)}\subset H$.
	We define a subgroup $\Aut(G;H)$ of $\Aut(G)$ as
	\[
	\Aut(G;H)\defeq \{\Phi\in \Aut(G)\mid \Phi(H)=H\}.
	\]
	In addition,
	we define a subgroup $Z_H(G)$ of $Z(G)$ as
	\[
	Z_H(G)\defeq \{c\in Z(G)\mid c|_H=1\}.
	\]
\end{defi}

As the Weyl groups are isomorphic to $\Aut(G)$ under some assumptions by Proposition \ref{prop: Weyl group is Aut(G)},
the restricted Weyl groups are isomorphic to $\Aut(G;H)$ if $G$ is effective.

\begin{prop}\label{prop: restricted Weyl group is restricted Weyl groupoid automorphisms}
	Let $G$ be a locally compact Hausdorff \'etale groupoid and $H\subset G$ be an open subgroupoid with $G^{(0)}\subset H$.
	Assume that $G$ is effective.
	Then the restriction of the isomorphism 
	\[
	\Aut(G)\ltimes Z(G)\ni (\Phi, c)\mapsto \varphi_{\Phi,c}\in \Aut(C^*_r(G); C_0(G^{(0)}))
	\]
	in Theorem \ref{thm: varphi is a group hom} induces an isomorphism
	\[
	\Aut(G;H)\ltimes Z(G)\simeq \Aut(C^*_r(G); C_0(G^{(0)}), C^*_r(H)).
	\]
\end{prop}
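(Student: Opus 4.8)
The plan is to use Theorem \ref{thm: varphi is a group hom}, which already provides the isomorphism $\Psi\colon\Aut(G)\ltimes Z(G)\to\Aut(C^*_r(G);C_0(G^{(0)}))$ with $\Psi(\Phi,c)=\varphi_{\Phi,c}$. Since $\Aut(C^*_r(G);C_0(G^{(0)}),C^*_r(H))$ is the subgroup of $\Aut(C^*_r(G);C_0(G^{(0)}))$ consisting of those automorphisms that additionally preserve $C^*_r(H)$, and $\Aut(G;H)\ltimes Z(G)$ is by definition the subgroup of $\Aut(G)\ltimes Z(G)$ with first coordinate in $\Aut(G;H)$ (this is a subgroup because $\Aut(G;H)$ is an $\Aut(G)$-stable condition in the first coordinate — note any $(\Phi,c)$ with $\Phi\in\Aut(G;H)$ times any $(\id_G,\chi)$ stays in the set, and $\Aut(G;H)$ is closed under the semidirect product structure), it suffices to prove the set-theoretic equality
\[
\Psi\bigl(\Aut(G;H)\ltimes Z(G)\bigr)=\Aut(C^*_r(G);C_0(G^{(0)}),C^*_r(H)).
\]
Equivalently, I must show that for $(\Phi,c)\in\Aut(G)\ltimes Z(G)$, the automorphism $\varphi_{\Phi,c}$ preserves $C^*_r(H)$ if and only if $\Phi$ preserves $H$.

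For the ``if'' direction, suppose $\Phi(H)=H$. Since $c\in Z(G)$ restricts to a cocycle on $H$ and $\Phi|_H\in\Aut(H)$, the formula $\varphi_{\Phi,c}(f)(\alpha)=c(\Phi^{-1}(\alpha))f(\Phi^{-1}(\alpha))$ shows that $\varphi_{\Phi,c}$ carries $C_c(H)$ onto $C_c(H)$: indeed $\supp(\varphi_{\Phi,c}(f))=\Phi(\supp f)\subset H$ whenever $\supp f\subset H$. By continuity of $\varphi_{\Phi,c}$ and the fact that $C^*_r(H)$ is the closure of $C_c(H)$ inside $C^*_r(G)$ (here I invoke \cite[Lemma 3.2]{BrownExelFuller2021}, which gives the inclusion $C^*_r(H)\subset C^*_r(G)$ and the compatibility of the reduced norms), we get $\varphi_{\Phi,c}(C^*_r(H))=C^*_r(H)$, so $\varphi_{\Phi,c}\in\Aut(C^*_r(G);C_0(G^{(0)}),C^*_r(H))$.

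For the ``only if'' direction, suppose $\varphi_{\Phi,c}(C^*_r(H))=C^*_r(H)$; I want $\Phi(H)=H$. The key point is that, since $G$ is effective and $H$ is open, $C^*_r(H)$ determines $H$ via its "support": for $a\in C^*_r(G)$ regarded as a function on $G$ through the injection $j$ of Proposition \ref{prop evaluation}, one has $\supp(a)\subset H$ for every $a\in C^*_r(H)$, and conversely every point of $H$ lies in $\supp(f)$ for some $f\in C_c(H)\subset C^*_r(H)$. Thus $H=\overline{\bigcup_{a\in C^*_r(H)}\supp(a)}$ — actually the open set $\bigcup\supp a$ over $a\in C^*_r(H)$ equals $H$ since $H$ is open and $C_c(H)$ separates points there. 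Applying $\varphi_{\Phi,c}$, which acts on the function picture by $\supp(\varphi_{\Phi,c}(a))=\Phi(\supp a)$, we conclude $\Phi(H)=\bigcup_{a\in C^*_r(H)}\Phi(\supp a)=\bigcup_{a\in C^*_r(H)}\supp(\varphi_{\Phi,c}(a))=H$, using $\varphi_{\Phi,c}(C^*_r(H))=C^*_r(H)$. This gives $\Phi\in\Aut(G;H)$ and hence $(\Phi,c)\in\Aut(G;H)\ltimes Z(G)$.

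Finally, the map obtained is a group isomorphism because it is the restriction of the group isomorphism $\Psi$ to a subgroup, mapping onto a subgroup; injectivity is inherited and surjectivity onto $\Aut(C^*_r(G);C_0(G^{(0)}),C^*_r(H))$ is exactly the set equality just established. The main obstacle I anticipate is the careful justification that the function-picture support of $\varphi_{\Phi,c}(a)$ is precisely $\Phi(\supp a)$ for general $a\in C^*_r(G)$ (not just $a\in C_c(G)$) — this requires knowing that $j\circ\varphi_{\Phi,c}=(\Phi_*)\circ j$ as maps $C^*_r(G)\to C_0(G)$, which should follow from the fact that $\varphi_{\Phi,c}$ is the continuous extension of its action on $C_c(G)$ together with the norm-decreasing property of $j$ and density. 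One should also double-check that "$\supp(a)\subset H$ for all $a\in C^*_r(H)$" genuinely holds — this is where openness of $H$ and the inclusion $C^*_r(H)\hookrightarrow C^*_r(G)$ being compatible with the respective Renault embeddings $j$ is used.
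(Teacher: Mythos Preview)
Your proof is correct and follows essentially the same approach as the paper's. Both the ``if'' direction and the ``only if'' direction rest on the same key facts: elements of $C^*_r(H)$, viewed as functions on $G$ via $j$, vanish outside $H$, and $\varphi_{\Phi,c}$ transforms these function-supports by $\Phi$. The only cosmetic difference is that the paper argues pointwise --- given $\alpha\in H$, pick $f\in C_c(H)$ with $f(\alpha)\neq 0$ via Urysohn, observe $\varphi_{\Phi,c}(f)\in C^*_r(H)$ and $\varphi_{\Phi,c}(f)(\Phi(\alpha))\neq 0$, hence $\Phi(\alpha)\in H$; then repeat for $\varphi_{\Phi,c}^{-1}=\varphi_{\Phi^{-1},\overline{\Phi.c}}$ to get the reverse inclusion --- whereas you package the same reasoning globally as $H=\bigcup_{a\in C^*_r(H)}\{\alpha:j(a)(\alpha)\neq 0\}$ and use the bijectivity of $\varphi_{\Phi,c}$ on $C^*_r(H)$ to get $\Phi(H)=H$ in one stroke. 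The two technical points you flag (compatibility of $j$ with $\varphi_{\Phi,c}$ on all of $C^*_r(G)$, and the support containment for $C^*_r(H)$) are indeed the places requiring care, but both follow by density from the $C_c$ case together with the norm-decreasing property of $j$; alternatively, as the paper implicitly does, you can sidestep them entirely by restricting attention to $a\in C_c(H)$, which already suffices to recover $H$.
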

\begin{proof}
	It is straightforward to check that \[\varphi_{\Phi, c}\in \Aut(C^*_r(G); C^*_r(H), C_0(G^{(0)}))\] holds for $\Phi\in\Aut(G;H)$ and $c\in Z(G)$.
	Now, it suffices to show that the map
	\[
	\Aut(G,H)\ltimes Z(G)\ni (\Phi, c)\mapsto \varphi_{\Phi, c}\in	\Aut(C^*_r(G); C^*_r(H), C_0(G^{(0)}))
	\]
	is surjective.
	Take $\varphi\in \Aut(C^*_r(G); C^*_r(H), C_0(G^{(0)}))$.
	Then there exists $\Phi\in \Aut(G)$ and $c\in Z(G)$ such that $\varphi=\varphi_{\Phi,c}$ by Theorem \ref{thm: varphi is a group hom}.
	First, we show $\Phi(H)\subset H$.
	Take $\alpha\in H$.
	By Urysohn's lemma,
	there exists $f\in C_c(H)$ such that $f(\alpha)\not=0$.
	Since we have $\varphi_{\Phi,c}(f)\in C_c(H)$ and
	\[
	\varphi_{\Phi,c}(f)(\Phi(\alpha))=c(\alpha)f(\alpha)\not=0,
	\]
	it follows $\Phi(\alpha)\in H$.
	Hence we obtain $\Phi(H)\subset H$.
	Since $\varphi_{\Phi,c}^{-1}=\varphi_{\Phi^{-1},\overline{\Phi.c}}$ is contained in $\Aut(C^*_r(G);C^*_r(H), C_0(G^{(0)}))$,
	the same argument implies $\Phi^{-1}(H)\subset H$.
	Therefore we obtain $\Phi(H)=H$ and this completes the proof.
	\qed
\end{proof}

\begin{cor}\label{cor Restricted Weyl group is restricted groupoid automorphisms}
Let $G$ be a locally compact Hausdorff \'etale groupoid and $H\subset G$ be an open subgroupoid with $G^{(0)}\subset H$.
Assume that $G$ is effective.
Then the restricted Weyl group $\mathfrak{RW}_{G,H}$ is isomorphic to $\Aut(G;H)$ as a topological group.
\end{cor}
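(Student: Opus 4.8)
The plan is to deduce this from Proposition \ref{prop: restricted Weyl group is restricted Weyl groupoid automorphisms} together with the identity $\Psi(Z(G))=\Aut_{C_0(G^{(0)})}C^*_r(G)$ of Theorem \ref{thm: varphi is a group hom}, simply by passing to quotients. First I would invoke Proposition \ref{prop: restricted Weyl group is restricted Weyl groupoid automorphisms}, which (since $\Psi$ is by construction a homeomorphism onto its image) gives an isomorphism of topological groups
\[
\Psi\colon \Aut(G;H)\ltimes Z(G)\xrightarrow{\ \sim\ }\Aut(C^*_r(G); C^*_r(H),C_0(G^{(0)})),
\]
where the domain carries the relative topology inherited from $\Aut(G)\ltimes Z(G)$ and the codomain the relative topology inherited from $\Aut(C^*_r(G))$. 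By Theorem \ref{thm: varphi is a group hom} the normal subgroup $\{\id_G\}\times Z(G)$ of the domain is carried onto $\Aut_{C_0(G^{(0)})}C^*_r(G)$; moreover $\Aut_{C_0(G^{(0)})}C^*_r(G)\subset\Aut(C^*_r(G); C^*_r(H),C_0(G^{(0)}))$ (Remark \ref{remark: restricted Weyl is generalization of existing one}), so both quotients appearing below make sense.

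Next I would quotient both sides by $Z(G)$. Since $\Psi$ is a homeomorphism and quotient maps of topological groups are open and continuous, $\Psi$ descends to an isomorphism of topological groups
\[
\bigl(\Aut(G;H)\ltimes Z(G)\bigr)/Z(G)\xrightarrow{\ \sim\ }\mathfrak{RW}_{G,H}.
\]
To finish, I would identify the left-hand side with $\Aut(G;H)$ through the semidirect-product projection $\pi\colon\Aut(G;H)\ltimes Z(G)\to\Aut(G;H)$, $(\Phi,c)\mapsto\Phi$. This $\pi$ is a continuous, open, surjective group homomorphism with kernel $\{\id_G\}\times Z(G)$ (continuity and openness being standard for projections of semidirect products of topological groups, and $\Aut(G;H)$ being topologized as a subspace of $\Aut(G)$ via $\Phi\mapsto(\Phi,1)$, where $1$ denotes the trivial cocycle), so it induces $\bigl(\Aut(G;H)\ltimes Z(G)\bigr)/Z(G)\cong\Aut(G;H)$. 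Composing the two isomorphisms yields $\mathfrak{RW}_{G,H}\cong\Aut(G;H)$ as topological groups.

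The algebraic content here is essentially formal; the one point I would be careful about is the topological bookkeeping, namely that the quotient topology on $\mathfrak{RW}_{G,H}$ coming from the strong topology on $\Aut(C^*_r(G))$ transports, under these identifications, to the subspace topology on $\Aut(G;H)\subset\Aut(G)$. This reduces to two facts already built into the set-up: $\Psi$ is a homeomorphism onto its image by the very definition of the topology on $\Aut(G)\ltimes Z(G)$, and passing to quotients commutes with homeomorphisms. No further groupoid-specific input is needed, so I expect no genuine obstacle here.
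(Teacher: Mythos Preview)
Your proposal is correct and follows essentially the same route as the paper: both invoke Proposition~\ref{prop: restricted Weyl group is restricted Weyl groupoid automorphisms} for the isomorphism $\Aut(G;H)\ltimes Z(G)\simeq \Aut(C^*_r(G); C^*_r(H), C_0(G^{(0)}))$, observe that $Z(G)$ is carried onto $\Aut_{C_0(G^{(0)})}C^*_r(G)$, and pass to quotients. The paper's proof is terser and does not spell out the topological bookkeeping you carefully work through, but the argument is the same.
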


\begin{proof}
	Recall that the isomorphism
	\[
	\Aut(G;H)\ltimes Z(G)\simeq \Aut(C^*_r(G); C_0(G^{(0)}), C^*_r(H))
	\]
	in Proposition \ref{prop: restricted Weyl group is restricted Weyl groupoid automorphisms} maps $Z(G)$ to $\Aut_{C_0(G^{(0)})}C^*_r(G)$.
	Now the assertion follows from the definition of the restricted Weyl group
	\[
	\mathfrak{RW}_{G,H}\defeq \Aut(C^*_r(G); C^*_r(H),C_0(G^{(0)}))/\Aut_{C_0(G^{(0)})}C^*_r(G).
	\]
	in Definition \ref{definition: restricted weyl group}.
	\qed
\end{proof}

\begin{prop}\label{proposition: automorphisms which fix C*r(H)}
	Let $G$ be a locally compact Hausdorff \'etale groupoid and $H\subset G$ be an open subgroupoid with $G^{(0)}\subset H$.
	Assume that $G$ is effective.
	Then the restriction of the isomorphism 
	\[
	Z(G)\ni c\mapsto \varphi_{\id_G, c}\in \Aut_{C_0(G^{(0)})}C^*_r(G)
	\]
	induces an isomorphism 
	\[
	Z_H(G)\simeq \Aut_{C^*_r(H)}C^*_r(G).
	\]
\end{prop}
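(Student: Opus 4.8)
The plan is to deduce everything from Theorem \ref{thm: varphi is a group hom}, which (since $G$ is effective) already supplies the group isomorphism $Z(G)\ni c\mapsto\varphi_{\id_G,c}\in\Aut_{C_0(G^{(0)})}C^*_r(G)$, and then merely to identify the preimage of the subgroup $\Aut_{C^*_r(H)}C^*_r(G)$. Because $C_0(G^{(0)})\subset C^*_r(H)$, we have $\Aut_{C^*_r(H)}C^*_r(G)\subset\Aut_{C_0(G^{(0)})}C^*_r(G)$, so every $\varphi\in\Aut_{C^*_r(H)}C^*_r(G)$ is of the form $\varphi_{\id_G,c}$ for a unique $c\in Z(G)$. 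Hence it suffices to prove, for $c\in Z(G)$, the equivalence
\[
\varphi_{\id_G,c}\in\Aut_{C^*_r(H)}C^*_r(G)\iff c\in Z_H(G),
\]
since the restriction of a group isomorphism to a subgroup is a group isomorphism onto its image; the induced map $Z_H(G)\to\Aut_{C^*_r(H)}C^*_r(G)$ is then the desired isomorphism (and a homeomorphism for the subspace topologies, as the ambient map is).

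For the forward implication, I would first check that $c\in Z_H(G)$ implies $\varphi_{\id_G,c}$ fixes the dense subalgebra $C_c(H)\subset C^*_r(H)$. For $f\in C_c(H)$, regarded as an element of $C_c(G)$ by extension by zero (legitimate since $H$ is open), Definition \ref{defi: def of varphi_Phi,c} with $\Phi=\id_G$ gives $\varphi_{\id_G,c}(f)(\alpha)=c(\alpha)f(\alpha)$ for $\alpha\in G$. If $\alpha\in H$ then $c(\alpha)=1$ by hypothesis; if $\alpha\notin H$ then $f(\alpha)=0$ since $\supp f\subset H$. In either case $\varphi_{\id_G,c}(f)(\alpha)=f(\alpha)$, so $\varphi_{\id_G,c}(f)=f$ in $C^*_r(G)$ by injectivity of $j$ (Proposition \ref{prop evaluation} and the ensuing remark). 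As $\varphi_{\id_G,c}$ is bounded and fixes the dense set $C_c(H)$, it fixes $C^*_r(H)=\overline{C_c(H)}$ pointwise.

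For the converse I would argue by contraposition: if $c\notin Z_H(G)$, pick $\alpha\in H$ with $c(\alpha)\neq 1$ and, by Urysohn's lemma, $f\in C_c(H)$ with $f(\alpha)\neq 0$; then $\varphi_{\id_G,c}(f)(\alpha)=c(\alpha)f(\alpha)\neq f(\alpha)$, so $\varphi_{\id_G,c}$ does not fix $C^*_r(H)$. There is no genuine obstacle here; the only points needing a word of care are that the copy of $C_c(H)$ really is norm-dense in the image of $C^*_r(H)$ inside $C^*_r(G)$—which is exactly how this inclusion is arranged, via \cite[Lemma 3.2]{BrownExelFuller2021}—and that pointwise evaluation of elements of $C^*_r(G)$ through $j$ is well behaved, which is Proposition \ref{prop evaluation}. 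Effectiveness of $G$ enters only as the standing hypothesis that makes Theorem \ref{thm: varphi is a group hom} available.
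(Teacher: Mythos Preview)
Your proof is correct and follows essentially the same approach as the paper: invoke Theorem \ref{thm: varphi is a group hom} to reduce to identifying which $c\in Z(G)$ give $\varphi_{\id_G,c}\in\Aut_{C^*_r(H)}C^*_r(G)$, then check both directions via pointwise evaluation and Urysohn's lemma (the paper merely calls your first direction ``straightforward'' and phrases your second direction as surjectivity rather than contraposition). One small expository slip: you label the implication $c\in Z_H(G)\Rightarrow\varphi_{\id_G,c}\in\Aut_{C^*_r(H)}C^*_r(G)$ as the ``forward implication'' of your displayed equivalence, but as written it is the reverse one.
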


\begin{proof}
	It follows that $c\in Z_H(G)$ implies $\varphi_{\id_G,c}\in\Aut_{C^*_r(H)}C^*_r(G)$ from straightforward calculations.
	It suffices to show that the map
	\[
	Z_H(G)\ni c\mapsto \varphi_{\id_G, c}\in \Aut_{C^*_r(H)}C^*_r(G)
	\]
	is surjective.
	Take $\varphi\in \Aut_{C^*_r(H)}C^*_r(G)$.
	Then there exists $c\in Z(G)$ such that $\varphi=\varphi_{\id_G,c}$ by Theorem \ref{thm: varphi is a group hom}.
	To show $c\in Z_H(G)$,
	take $\alpha\in H$.
	By Urysohn's lemma,
	there exists $f\in C_c(H)$ such that $f(\alpha)=1$.
	Since $f\in C_c(H)$ and $\varphi\in\Aut_{C^*_r(H)}C^*_r(G)$,
	we have
	\[
	1=f(\alpha)=\varphi_{\id_G, c}(f)(\alpha)=c(\alpha)f(\alpha)=c(\alpha).
	\]
	Hence we obtain $c\in Z_H(G)$.
	\qed
\end{proof}

\subsection{Restricted Weyl group (discrete group cocycle kernel case)}

In the previous subsection,
we considered an inclusion of \'etale groupoids $H\subset G$.
In this subsection,
we consider the kernel of a discrete group cocycle $\sigma\colon G\to \Gamma$ as a subgroupoid $H$.
Our purpose is to describe the groups $Z_{\ker\sigma}(G)$ and $\Aut(G;\ker\sigma)$ in terms of the underlying groupoid $G$ and cocycle $\sigma$.

First,
we prepare a key lemma in this subsection.

\begin{lem}\label{lemma: key lemma when groupoid hom factors discrete group cocycle }
	Let $G$ be a locally compact Hausdorff \'etale groupoid,
	$K$ be a Hausdorff topological group,
	$\Gamma$ be a discrete group and $\sigma\colon G\to \Gamma$ be a surjective continuous cocycle.
	Assume that a continuous groupoid homomorphism $\Phi\colon G\to K$ satisfies $\ker\sigma\subset \ker\Phi$ and $\ker \sigma$ is topologically transitive.
	Then there exists a group homomorphism $\tau\colon\Gamma\to K$ such that $\Phi=\tau\circ\sigma$.
\end{lem}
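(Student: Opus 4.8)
The plan is to construct $\tau$ by setting $\tau(g)\defeq\Phi(\alpha)$ for an arbitrary $\alpha\in G$ with $\sigma(\alpha)=g$ — such an $\alpha$ exists since $\sigma$ is surjective — and to spend essentially all the work on well-definedness, i.e.\ on showing that $\Phi$ is constant on each fibre $\sigma^{-1}(g)$, $g\in\Gamma$. The first (purely algebraic) reduction is that $\Phi|_{\sigma^{-1}(g)}$ depends only on the domain map: if $\sigma(\alpha)=\sigma(\beta)=g$ and $d(\alpha)=d(\beta)$, then $\alpha\beta^{-1}\in\ker\sigma\subset\ker\Phi$, whence $\Phi(\alpha)=\Phi(\beta)$. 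Since $\sigma^{-1}(g)$ is open ($\Gamma$ discrete, $\sigma$ continuous) and $d$ is an open map, this gives a well-defined function $\phi_g$ on the nonempty open set $V_g\defeq d(\sigma^{-1}(g))\subset G^{(0)}$ characterised by $\phi_g\circ d=\Phi$ on $\sigma^{-1}(g)$.

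Next I would establish the relevant invariance and continuity. Using local sections of the local homeomorphism $d$ restricted to $\sigma^{-1}(g)$, one sees at once that $\phi_g\colon V_g\to K$ is continuous. A short computation with the groupoid operations — given $\gamma\in\ker\sigma$ with $d(\gamma)\in V_g$, choose $\alpha\in\sigma^{-1}(g)$ with $d(\alpha)=d(\gamma)$ and consider $\alpha\gamma^{-1}\in\sigma^{-1}(g)$, which has domain $r(\gamma)$ — simultaneously shows that $V_g$ is $\ker\sigma$-invariant and that $\phi_g$ is $\ker\sigma$-invariant, i.e.\ $\phi_g(r(\gamma))=\phi_g(d(\gamma))$ whenever $\gamma\in\ker\sigma$ and $d(\gamma)\in V_g$ (here we use $\Phi(\gamma)=e_K$).

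Then comes the core step: $\phi_g$ is constant. Suppose $\phi_g(x)\neq\phi_g(y)$ for some $x,y\in V_g$. As $K$ is Hausdorff, choose disjoint open $A\ni\phi_g(x)$ and $B\ni\phi_g(y)$ in $K$; then $\phi_g^{-1}(A)$ and $\phi_g^{-1}(B)$ are disjoint, nonempty, and open in $G^{(0)}$, and by the $\ker\sigma$-invariance of $\phi_g$ (together with that of $V_g$) they are $\ker\sigma$-invariant. Topological transitivity of $\ker\sigma$ then forces $\phi_g^{-1}(A)$ to be dense in $G^{(0)}$, contradicting its disjointness from the nonempty open set $\phi_g^{-1}(B)$. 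Hence $\phi_g\equiv k_g$ for some $k_g\in K$, so $\tau(g)\defeq k_g$ is well defined and $\Phi=\tau\circ\sigma$ holds by construction. It remains to check $\tau$ is a homomorphism: for $g,h\in\Gamma$, the sets $d(\sigma^{-1}(g))$ and $r(\sigma^{-1}(h))$ are nonempty open $\ker\sigma$-invariant subsets of $G^{(0)}$, hence dense, hence they intersect; picking $\alpha\in\sigma^{-1}(g)$ and $\beta\in\sigma^{-1}(h)$ with $d(\alpha)=r(\beta)$ yields $\tau(gh)=\Phi(\alpha\beta)=\Phi(\alpha)\Phi(\beta)=\tau(g)\tau(h)$, and $\tau(e_\Gamma)=e_K$ since $G^{(0)}\subset\ker\sigma$.

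The \emph{main obstacle} is exactly the constancy of $\phi_g$. The naive hope that any two units can be joined by an arrow of $\ker\sigma$ is false — topological transitivity only asserts that invariant open sets are dense, not that $\ker\sigma$ acts transitively — so the argument must be the topological one above, trading the missing transitivity of the action for the combination of continuity of $\Phi$, Hausdorffness of $K$, and density of nonempty open invariant sets. Keeping the invariance bookkeeping for $V_g$ and $\phi_g$ straight (so that the topological-transitivity hypothesis really applies to $\phi_g^{-1}(A)\subset G^{(0)}$) is the delicate part.
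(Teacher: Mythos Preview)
Your proof is correct and uses the same core idea as the paper: constancy of $\Phi$ on each fibre $\sigma^{-1}(g)$ is forced by continuity of $\Phi$, Hausdorffness of $K$, and topological transitivity of $\ker\sigma$, after which $\tau$ is defined on fibre values. The only difference is organisational --- you first factor $\Phi|_{\sigma^{-1}(g)}$ through the domain map to obtain a continuous $\ker\sigma$-invariant function $\phi_g\colon V_g\to K$ and then invoke the ``nonempty invariant open sets are dense'' form of transitivity, whereas the paper argues directly by choosing $\alpha',\beta'\in\sigma^{-1}(\{s\})$ near given $\alpha,\beta$ together with a connecting $\gamma\in\ker\sigma$ so that $\alpha'\gamma\beta'^{-1}\in\ker\sigma$ forces $\Phi(\alpha')=\Phi(\beta')$; your verification that $\tau$ is a homomorphism (locating composable representatives via density of the invariant open sets $V_g$ and $r(\sigma^{-1}(h))$) is also more explicit than the paper's one-line assertion.
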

\begin{proof}
	First,
	we show that $\Phi|_{\sigma^{-1}(\{s\})}$ is a constant map for all $s\in \Gamma$.
	Take $\alpha,\beta\in \sigma^{-1}(\{s\})$ and suppose $\Phi(\alpha)\not=\Phi(\beta)$.
	Then,
	by the continuity of $\Phi$ and the assumption that $K$ is Hausdorff,
	there exist disjoint open sets $U,V\subset G$ such that $\alpha\in U$, $\beta\in V$ and $\Phi(U)\cap \Phi(V)=\emptyset$ hold.
	Since $\ker\sigma$ is topologically transitive,
	there exists $\gamma\in\ker\sigma$ such that
	\[
	r(\gamma)\in d(U\cap \sigma^{-1}(\{s\})) \text{ and } d(\gamma)\in d(V\cap \sigma^{-1}(\{s\})).
	\]
	Remark that open sets $d(U\cap \sigma^{-1}(\{s\}))$ and $d(V\cap \sigma^{-1}(\{s\}))$ are non-empty since they contain $d(\alpha)$ and $d(\beta)$ respectively.
	Then there exists $\alpha'\in U\cap\sigma^{-1}(\{s\})$ and $\beta'\in V\cap\sigma^{-1}(\{s\})$ such that $r(\gamma)=d(\alpha')$ and $d(\gamma)=d(\beta')$.
	Now,
	$\alpha',\gamma,\beta'^{-1}$ are composable and we have
	\[
	\sigma(\alpha'\gamma\beta'^{-1})=ses^{-1}=e.
	\]
	Since $\ker \sigma\subset \ker\Phi$,
	we obtain $\Phi(\alpha'\gamma\beta'^{-1})=1=\Phi(\gamma)$ and therefore $\Phi(\alpha')=\Phi(\beta')$.
	This contradicts to $\Phi(U)\cap \Phi(V)=\emptyset$.
	Hence,
	$\Phi|_{\sigma^{-1}(\{s\})}$ is constant for all $s\in\Gamma$.
	
	Now,
	by the previous argument,
	the map
	\[
	\tau\colon \sigma(G)\ni \sigma(\alpha)\mapsto \Phi(\alpha)\in K
	\]
	is actually well-defined.
	Since $\tau\circ\sigma=\Phi$ holds and we assume that $\sigma(G)=\Gamma$,
	$\tau\colon\Gamma\to K$ is a group homomorphism such that $\Phi=\tau\circ\sigma$.
	This completes the proof.
	\qed
	
\end{proof}

\begin{rem}\label{remark: remark about surjectiveness of cocycle}
	We give a remark about the assumption that $\sigma$ is surjective.
	In the above situation,
	since $\ker\sigma$ is topologically transitive,
	$\sigma(G)$ is a subgroup of $\Gamma$ by \cite[Lemma 2.2.2]{Komura+2024}.
	Hence,
	even if $\sigma$ is not necessarily surjective,
	we can construct a group homomorphism $\tau\colon \sigma(G)\to K$ so that $\Phi=\tau\circ\sigma$ holds.
	In this sense,
	replacing $\Gamma$ with $\sigma(G)$,
	we may assume that $\sigma$ is surjective without loss of generality.
\end{rem}

Let $G$ be a locally compact Hausdorff \'etale groupoid,
$\Gamma$ be a discrete group and $\sigma\colon G\to\Gamma$ be a continuous cocycle.
Now,
we analyse $Z_{\ker\sigma}(G)$ here.
For a discrete abelian group $\Lambda$,
we denote the Pontryagin dual of $\Lambda$ by $\widehat{\Lambda}$.
Namely,
we put \[\widehat{\Lambda}\defeq \{\chi\colon\Lambda\to\T\mid \text{ $\chi$ is a group homomorphism.}\}.\]
Note that $\widehat{\Lambda}$ is a compact abelian group with respect to the pointwise product and pointwise convergence topology.
Let $\Gamma^{\ab}\defeq\Gamma/[\Gamma,\Gamma]$ denote the abelianization of $\Gamma$ and $q\colon\Gamma\to\Gamma^{\ab}$ be the quotient map.
For $\chi\in\widehat{\Gamma^{\ab}}$,
one can see that $\chi\circ q\circ \sigma$ belongs to $Z_{\ker\sigma}(G)$ and the map 
\[\Psi\colon \widehat{\Gamma^{\ab}}\ni \chi\mapsto \chi\circ q\circ \sigma \in Z_{\ker\sigma}(G)\]
is a group homomorphism.
The following proposition asserts that $\Psi$ is an isomorphism as a topological group if $\sigma$ is surjective and $\ker\sigma$ is topologically transitive.

\begin{prop}\label{proposition: cocycle which =1 on ker factors abelianization of group}
	Let $G$ be a locally compact Hausdorff \'etale groupoid,
	$\Gamma$ be a discrete group and $\sigma\colon G\to\Gamma$ be a surjective continuous cocycle.
	Assume that $\ker\sigma\subset G$ is topologically transitive.
	Then
	\[
	\Psi\colon \widehat{\Gamma^{\ab}}\ni \chi\mapsto \chi\circ q\circ \sigma \in Z_{\ker\sigma}(G)
	\]
	is an isomorphism as a topological group,
	where $\widehat{\Gamma^{\ab}}$ denotes the Pontryagin dual of the abelianization $\Gamma^{\ab}\defeq \Gamma/[\Gamma,\Gamma]$ and $q\colon\Gamma\to \Gamma^{\ab}$ denotes the quotient map.
	In particular,
	\[
	Z_{\ker\sigma^{\ab}}(G)=Z_{\ker\sigma}(G)
	\]
	holds,
	where $\sigma^{\ab}\defeq q\circ \sigma$.
\end{prop}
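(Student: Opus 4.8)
The plan is to check injectivity, surjectivity, and bicontinuity of $\Psi$ in turn, the one substantive ingredient being Lemma~\ref{lemma: key lemma when groupoid hom factors discrete group cocycle } applied with target group $K=\T$.

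First I would verify injectivity. Suppose $\Psi(\chi)=\chi\circ q\circ\sigma$ is the trivial cocycle. Given $t\in\Gamma^{\ab}$, choose $\gamma\in\Gamma$ with $q(\gamma)=t$ (as $q$ is surjective) and then $\alpha\in\sigma^{-1}(\{\gamma\})$ (as $\sigma$ is surjective); then $\chi(t)=\chi(q(\sigma(\alpha)))=(\chi\circ q\circ\sigma)(\alpha)=1$, so $\chi$ is trivial. Next, surjectivity: let $c\in Z_{\ker\sigma}(G)$, i.e.\ a continuous cocycle $c\colon G\to\T$ with $c|_{\ker\sigma}=1$, equivalently $\ker\sigma\subset\ker c$. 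Applying Lemma~\ref{lemma: key lemma when groupoid hom factors discrete group cocycle } with $K=\T$ (Hausdorff), using that $\ker\sigma$ is topologically transitive and $\sigma$ is surjective, yields a group homomorphism $\tau\colon\Gamma\to\T$ with $c=\tau\circ\sigma$. Since $\T$ is abelian, $\tau$ is trivial on $[\Gamma,\Gamma]$ and hence descends to $\chi\in\widehat{\Gamma^{\ab}}$ with $\tau=\chi\circ q$; thus $c=\chi\circ q\circ\sigma=\Psi(\chi)$.

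For the topology, recall from Proposition~\ref{prop: topology of Z(G) is the compact open topology} that $Z_{\ker\sigma}(G)\subset Z(G)$ carries the topology of uniform convergence on compact subsets of $G$, while $\widehat{\Gamma^{\ab}}$ carries the topology of pointwise convergence. I would argue continuity of $\Psi$ as follows: for a compact set $K\subset G$, the set $\sigma(K)$ is a compact subset of the discrete group $\Gamma$, hence finite, so $q(\sigma(K))\subset\Gamma^{\ab}$ is finite; if $\chi_{\lambda}\to\chi$ pointwise then $\chi_{\lambda}$ converges to $\chi$ uniformly on the finite set $q(\sigma(K))$, and therefore $\chi_{\lambda}\circ q\circ\sigma\to\chi\circ q\circ\sigma$ uniformly on $K$. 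Hence $\Psi$ is continuous. Since $\widehat{\Gamma^{\ab}}$ is compact and $Z(G)$ (with the uniform-on-compacta topology) is Hausdorff, the continuous bijection $\Psi$ is automatically a homeomorphism, so it is an isomorphism of topological groups.

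Finally, for the ``in particular'' clause, write $\sigma^{\ab}=q\circ\sigma$, so that $\ker\sigma^{\ab}=\sigma^{-1}([\Gamma,\Gamma])\supset\ker\sigma$; the inclusion $Z_{\ker\sigma^{\ab}}(G)\subset Z_{\ker\sigma}(G)$ is then immediate, and conversely any $c\in Z_{\ker\sigma}(G)$ equals $\chi\circ q\circ\sigma$ for some $\chi\in\widehat{\Gamma^{\ab}}$ by the surjectivity just proved, and $\chi\circ q$ vanishes on $[\Gamma,\Gamma]$, so $c$ is trivial on $\ker\sigma^{\ab}$, giving the reverse inclusion. I do not expect a genuine obstacle here; the only points requiring care are matching the hypotheses of the key lemma (taking $K=\T$ and noting $\ker\sigma\subset\ker c$) and the elementary observation that $\sigma(K)$ is finite, which powers the comparison of topologies.
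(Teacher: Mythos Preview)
Your proof is correct and follows essentially the same approach as the paper: injectivity from surjectivity of $q\circ\sigma$, surjectivity via Lemma~\ref{lemma: key lemma when groupoid hom factors discrete group cocycle } with $K=\T$ followed by descent to $\Gamma^{\ab}$, continuity from the finiteness of $\sigma(K)$ for compact $K$, and the homeomorphism from the compact-to-Hausdorff argument. Your treatment of the ``in particular'' clause is slightly more explicit than the paper's, which simply declares it obvious.
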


\begin{proof}
	Since $q\circ\sigma$ is surjective,
	one can check that $\Psi$ is injective.
	We show that $\Psi$ is continuous.
	Note that $Z_{\ker\sigma}(G)$ is equipped with the topology of uniform convergence on compact sets by Proposition \ref{prop: topology of Z(G) is the compact open topology}.
	Take $\varepsilon>0$ and a compact set $K\subset G$ arbitrarily.
	Assume that a net $\{\chi_{\lambda}\}_{\lambda\in\Lambda}\subset \widehat{\Gamma^{\ab}}$ converges to $\chi\in\widehat{\Gamma^{\ab}}$.
	Since $K$ is compact,
	there exists a finite set $F\subset \Gamma$ such that $K\subset \bigcup_{s\in F}\sigma^{-1}(s)$ holds.
	Since $\{\chi_{\lambda}\}_{\lambda\in\Lambda}$ converges to $\chi$ pointwisely,
	there exists $\lambda_0\in\Lambda$ such that
	\[\lvert \chi_{\lambda}(q(s))-\chi(q(s))\rvert<\varepsilon \]
	holds for all $\lambda\geq \lambda_0$ and $s\in F$.
	Now one can see that
	\[
	\sup_{\alpha\in K}\lvert \chi_{\lambda}\circ q\circ \sigma(\alpha)-\chi\circ q\circ \sigma(\alpha)\rvert< \varepsilon.
	\]
	Hence,
	$\{\chi_{\lambda}\circ q \circ \sigma\}_{\lambda\in\Lambda}$ converges to $\chi\circ q\circ \sigma$ and $\Psi$ is continuous.
	
	Next,
	we show that $\Psi$ is surjective.
	Take $c\in Z_{\ker\sigma}(G)$.
	Applying Lemma \ref{lemma: key lemma when groupoid hom factors discrete group cocycle } to $c\colon G\to \T$,
	we obtain a group homomorphism $\chi\colon\Gamma\to \T$ such that $c=\chi\circ\sigma$.
	By the universality of $\Gamma^{\ab}$,
	there exists $\chi'\in\widehat{\Gamma^{\ab}}$ such that $\chi=\chi'\circ q$.
	Hence,
	we obtain $\chi'\in\widehat{\Gamma^{\ab}}$ such that $c=\chi'\circ q\circ \sigma$ and have proved that $\Psi$ is surjective.
	Since $\widehat{\Gamma^{\ab}}$ is compact and $Z_{\ker\sigma}(G)$ is Hausdorff,
	$\Psi$ becomes an isomorphism as a topological group.
	Now, the last assertion
	\[
	Z_{\ker\sigma}(G)=Z_{\ker\sigma^{\ab}}(G)=\{\chi\circ q\circ \sigma \mid \chi\in \widehat{\Gamma^{\ab}}\}.
	\]
	is obvious.
	This completes the proof.
	\qed
\end{proof}

\begin{cor}\label{cor: Galois group is dual of Gamma ab}
	Let $G$ be a locally compact Hausdorff groupoid,
	$\Gamma$ be a discrete group and $\sigma\colon G\to \Gamma$ be a surjective continuous cocycle.
	Assume that $G$ is effective and $\ker\sigma$ is topologically transitive.
	Then $\Aut_{C^*_r(\ker\sigma)}C^*_r(G)$ is isomorphic to $\widehat{\Gamma^{\ab}}$ as a topological group.
	In particular,
	\[
	\Aut_{C^*_r(\ker\sigma)}C^*_r(G)=\Aut_{C^*_r(\ker\sigma^{\ab})}C^*_r(G) (\simeq \widehat{\Gamma^{\ab}})
	\]
	holds,
	where $\sigma^{\ab}\defeq q\circ \sigma$ denotes the composition map of $\sigma$ and the quotient map $q\colon \Gamma\to\Gamma^{\ab}$.
\end{cor}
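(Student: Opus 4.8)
The plan is to deduce the corollary by composing two previous results: Proposition~\ref{proposition: automorphisms which fix C*r(H)}, which identifies $\Aut_{C^*_r(H)}C^*_r(G)$ with $Z_H(G)$ for effective $G$, and Proposition~\ref{proposition: cocycle which =1 on ker factors abelianization of group}, which identifies $Z_{\ker\sigma}(G)$ with $\widehat{\Gamma^{\ab}}$ under the present hypotheses. The only preliminary point is to record that $\ker\sigma=\sigma^{-1}(\{e\})$ is an open subgroupoid of $G$ containing $G^{(0)}$: it is open since $\Gamma$ is discrete and $\sigma$ continuous, it is a subgroupoid since $\sigma$ is a groupoid homomorphism, and it contains $G^{(0)}$ since groupoid homomorphisms send units to units. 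The same remarks apply to $\ker\sigma^{\ab}$, where $\sigma^{\ab}=q\circ\sigma\colon G\to\Gamma^{\ab}$ is again a continuous cocycle into a discrete group.

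Given this, I would argue as follows. Since $G$ is effective, Proposition~\ref{proposition: automorphisms which fix C*r(H)} applied to $H=\ker\sigma$ yields an isomorphism $Z_{\ker\sigma}(G)\simeq\Aut_{C^*_r(\ker\sigma)}C^*_r(G)$, implemented by $c\mapsto\varphi_{\id_G,c}$; this is an isomorphism of topological groups, being the restriction of the topological-group isomorphism $Z(G)\simeq\Aut_{C_0(G^{(0)})}C^*_r(G)$ coming from Theorem~\ref{thm: varphi is a group hom} and the definition of the topology on $\Aut(G)\ltimes Z(G)$. Since $\sigma$ is surjective and $\ker\sigma$ is topologically transitive, Proposition~\ref{proposition: cocycle which =1 on ker factors abelianization of group} provides a topological-group isomorphism $\widehat{\Gamma^{\ab}}\simeq Z_{\ker\sigma}(G)$, $\chi\mapsto\chi\circ q\circ\sigma$. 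Composing the two gives $\Aut_{C^*_r(\ker\sigma)}C^*_r(G)\simeq\widehat{\Gamma^{\ab}}$ as topological groups, which is the first assertion.

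For the ``in particular'' clause, I would apply Proposition~\ref{proposition: automorphisms which fix C*r(H)} once more, this time with $H=\ker\sigma^{\ab}$, to get $\Aut_{C^*_r(\ker\sigma^{\ab})}C^*_r(G)\simeq Z_{\ker\sigma^{\ab}}(G)$ via the same formula $c\mapsto\varphi_{\id_G,c}$. By the last assertion of Proposition~\ref{proposition: cocycle which =1 on ker factors abelianization of group} we have $Z_{\ker\sigma^{\ab}}(G)=Z_{\ker\sigma}(G)$ as subgroups of $Z(G)$, and since $c\mapsto\varphi_{\id_G,c}$ is injective on all of $Z(G)$ by Theorem~\ref{thm: varphi is a group hom}, the images $\Aut_{C^*_r(\ker\sigma)}C^*_r(G)$ and $\Aut_{C^*_r(\ker\sigma^{\ab})}C^*_r(G)$ coincide as subgroups of $\Aut(C^*_r(G))$; both are isomorphic to $\widehat{\Gamma^{\ab}}$ by the first part.

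I do not anticipate any real obstacle: all the substance was done in the two cited propositions (ultimately in Lemma~\ref{lemma: key lemma when groupoid hom factors discrete group cocycle } and, via Theorem~\ref{thm: varphi is a group hom}, in Proposition~\ref{prop: all we need is effectiveness}). The only place demanding a moment's care is verifying that $\ker\sigma$ and $\ker\sigma^{\ab}$ are open subgroupoids containing the unit space, so that Proposition~\ref{proposition: automorphisms which fix C*r(H)} genuinely applies.
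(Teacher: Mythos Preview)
Your proposal is correct and follows exactly the paper's approach: the paper's proof is the single sentence ``Just combine Proposition~\ref{proposition: automorphisms which fix C*r(H)} and Proposition~\ref{proposition: cocycle which =1 on ker factors abelianization of group}.'' Your write-up simply makes explicit the routine verifications (that $\ker\sigma$ and $\ker\sigma^{\ab}$ are open subgroupoids containing $G^{(0)}$, and that the isomorphisms are topological) that the paper leaves implicit.
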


\begin{proof}
	Just combine Proposition \ref{proposition: automorphisms which fix C*r(H)} and Proposition \ref{proposition: cocycle which =1 on ker factors abelianization of group}.
	\qed
\end{proof}

Next,
we investigate $\Aut(G;\ker\sigma)$.
We begin with the study of general groupoid homomorphisms rather than automorphisms.

\begin{prop}\label{prop; ker preserving groupoid homomorphism induce group hom}
	Let $G_i$ be locally compact Hausdorff \'etale groupoids,
	$\Gamma_i$ be discrete groups and $\sigma_i\colon G_i\to \Gamma_i$ be continuous cocycles for $i=1,2$.
	Assume that $\ker \sigma_1$ is topologically transitive and $\sigma_1$ is surjective.
	Then,
	if a continuous groupoid homomorphism $\Phi\colon G_1\to G_2$ satisfies $\Phi(\ker \sigma_1)\subset \ker \sigma_2$,
	there exists a unique group homomorphism $\tau\colon \Gamma_1\to \Gamma_2$ such that $\tau\circ \sigma_1=\sigma_2\circ \Phi$.
\end{prop}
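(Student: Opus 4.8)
The plan is to reduce the statement to Lemma~\ref{lemma: key lemma when groupoid hom factors discrete group cocycle } by applying it to the composite cocycle $\sigma_2\circ\Phi$. First I would set $\Psi\defeq\sigma_2\circ\Phi\colon G_1\to\Gamma_2$. Since $\Phi$ and $\sigma_2$ are continuous groupoid homomorphisms, so is $\Psi$, and $\Gamma_2$ is a Hausdorff topological group because it is discrete. Next I would verify that $\ker\sigma_1\subset\ker\Psi$: recalling that the kernel of a groupoid homomorphism into a group is the preimage of the unit, for $\gamma\in\ker\sigma_1$ we have $\Phi(\gamma)\in\ker\sigma_2$ by hypothesis, so $\Psi(\gamma)=\sigma_2(\Phi(\gamma))$ is the identity of $\Gamma_2$, i.e.\ $\gamma\in\ker\Psi$.

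Then I would apply Lemma~\ref{lemma: key lemma when groupoid hom factors discrete group cocycle } with $K=\Gamma_2$, the discrete group $\Gamma_1$, the surjective continuous cocycle $\sigma_1$, and the continuous groupoid homomorphism $\Psi$ in place of $\Phi$. Since $\ker\sigma_1$ is topologically transitive and $\ker\sigma_1\subset\ker\Psi$, the lemma yields a group homomorphism $\tau\colon\Gamma_1\to\Gamma_2$ with $\Psi=\tau\circ\sigma_1$, that is, $\tau\circ\sigma_1=\sigma_2\circ\Phi$. For uniqueness, note that $\sigma_1$ is surjective, so any $\tau$ satisfying $\tau\circ\sigma_1=\sigma_2\circ\Phi$ is forced to satisfy $\tau(s)=\sigma_2(\Phi(\alpha))$ for any $\alpha\in\sigma_1^{-1}(\{s\})$; hence $\tau$ is determined uniquely.

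The proposition is essentially a corollary of the lemma, so there is no serious obstacle; the only points requiring (routine) care are that the containment $\Phi(\ker\sigma_1)\subset\ker\sigma_2$ genuinely translates into $\ker\sigma_1\subset\ker(\sigma_2\circ\Phi)$ via the identification of the kernel with the preimage of the unit, and that discreteness of $\Gamma_2$ supplies the Hausdorffness hypothesis needed by the lemma. By Remark~\ref{remark: remark about surjectiveness of cocycle}, even surjectivity of $\sigma_1$ is inessential: one could instead work with $\sigma_1(G_1)$, which is a subgroup of $\Gamma_1$ because $\ker\sigma_1$ is topologically transitive.
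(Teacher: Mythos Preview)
Your proof is correct and follows essentially the same approach as the paper: both reduce to Lemma~\ref{lemma: key lemma when groupoid hom factors discrete group cocycle } by applying it to the composite $\sigma_2\circ\Phi$, after checking $\ker\sigma_1\subset\ker(\sigma_2\circ\Phi)$, and derive uniqueness from the surjectivity of $\sigma_1$. Your write-up is slightly more detailed (e.g.\ explicitly noting Hausdorffness of $\Gamma_2$), but the argument is the same.
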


\begin{proof}
	The uniqueness of $\tau$ is obvious since $\sigma_1$ is surjective.
	Using $\Phi(\ker \sigma_1)\subset \ker \sigma_2$,
	one can check $\ker\sigma_1\subset \ker(\sigma_2\circ\Phi)$.
	Then we may apply Lemma \ref{lemma: key lemma when groupoid hom factors discrete group cocycle } to $\sigma_2\circ\Phi$ and we obtain a group homomorphism $\tau\colon\Gamma_1\to \Gamma_2$ such that $\tau\circ \sigma_1=\sigma_2\circ \Phi$ holds.
	\qed
\end{proof}

\begin{rem}
	Even if $\sigma_1$ is not surjective in the above situation,
	we can construct a group homomorphism $\tau\colon \sigma_1(G_1)\to \Gamma_2$ so that $\tau\circ \sigma_1=\sigma_2\circ \Phi$ holds for the same reason as Remark \ref{remark: remark about surjectiveness of cocycle}.
\end{rem}

Applying Proposition \ref{prop; ker preserving groupoid homomorphism induce group hom} to a groupoid isomorphism,
one may obtain a group isomorphism as the following.

\begin{cor}\label{cor: ker preserving groupoid isom is groupoid equivalent isom}
	Let $G_i$ be locally compact Hausdorff \'etale groupoids,
	$\Gamma_i$ be discrete groups and $\sigma_i\colon G_i\to \Gamma_i$ be surjective continuous cocycles for $i=1,2$.
	Assume that $\ker \sigma_i$ are topologically transitive for $i=1,2$.
	Then,
	if a continuous groupoid isomorphism $\Phi\colon G_1\to G_2$ satisfies $\Phi(\ker \sigma_1)=\ker \sigma_2$,
	there exists a unique group isomorphism $\tau\colon \Gamma_1\to \Gamma_2$ such that $\tau\circ \sigma_1=\sigma_2\circ \Phi$.
\end{cor}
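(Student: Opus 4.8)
The plan is to apply Proposition \ref{prop; ker preserving groupoid homomorphism induce group hom} twice, once to $\Phi$ and once to $\Phi^{-1}$, and then to observe that the two resulting group homomorphisms are mutually inverse.

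First I would invoke Proposition \ref{prop; ker preserving groupoid homomorphism induce group hom} directly: since $\Phi\colon G_1\to G_2$ is a continuous groupoid homomorphism with $\Phi(\ker\sigma_1)\subset\ker\sigma_2$ (indeed equality holds by hypothesis), $\ker\sigma_1$ is topologically transitive, and $\sigma_1$ is surjective, there is a unique group homomorphism $\tau\colon\Gamma_1\to\Gamma_2$ with $\tau\circ\sigma_1=\sigma_2\circ\Phi$. Symmetrically, $\Phi^{-1}\colon G_2\to G_1$ is a continuous groupoid homomorphism, and from $\Phi(\ker\sigma_1)=\ker\sigma_2$ we obtain $\Phi^{-1}(\ker\sigma_2)=\ker\sigma_1\subset\ker\sigma_1$; together with the hypotheses that $\ker\sigma_2$ is topologically transitive and $\sigma_2$ is surjective, a second application of Proposition \ref{prop; ker preserving groupoid homomorphism induce group hom} yields a unique group homomorphism $\tau'\colon\Gamma_2\to\Gamma_1$ with $\tau'\circ\sigma_2=\sigma_1\circ\Phi^{-1}$.

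Then I would compose the two intertwining relations. We have
\[
\tau'\circ\tau\circ\sigma_1=\tau'\circ\sigma_2\circ\Phi=\sigma_1\circ\Phi^{-1}\circ\Phi=\sigma_1,
\]
and since $\sigma_1$ is surjective this forces $\tau'\circ\tau=\id_{\Gamma_1}$; the identical computation with the roles of the indices $1$ and $2$ exchanged gives $\tau\circ\tau'=\id_{\Gamma_2}$. Hence $\tau$ is a group isomorphism with inverse $\tau'$, and $\tau\circ\sigma_1=\sigma_2\circ\Phi$; its uniqueness is already contained in Proposition \ref{prop; ker preserving groupoid homomorphism induce group hom} (it follows from surjectivity of $\sigma_1$).

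There is essentially no obstacle here: the corollary is a formal consequence of the proposition applied to a map and its inverse. The only point deserving a (trivial) remark is that the equality $\Phi(\ker\sigma_1)=\ker\sigma_2$ is precisely what is needed to feed $\Phi^{-1}$ back into Proposition \ref{prop; ker preserving groupoid homomorphism induce group hom}, since it simultaneously supplies the inclusion $\Phi(\ker\sigma_1)\subset\ker\sigma_2$ used for $\Phi$ and the inclusion $\Phi^{-1}(\ker\sigma_2)\subset\ker\sigma_1$ used for $\Phi^{-1}$.
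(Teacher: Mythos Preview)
Your proof is correct and follows exactly the same approach as the paper: apply Proposition \ref{prop; ker preserving groupoid homomorphism induce group hom} to both $\Phi$ and $\Phi^{-1}$, then use surjectivity of the $\sigma_i$ to conclude that the resulting homomorphisms are mutual inverses. The paper's argument is identical in structure and detail.
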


\begin{proof}
	Applying Proposition \ref{prop; ker preserving groupoid homomorphism induce group hom} to $\Phi$ and $\Phi^{-1}$,
	we obtain group homomorphisms $\tau\colon \Gamma_1\to \Gamma_2$ and $\tau'\colon \Gamma_2\to \Gamma_1$ such that $\tau\circ \sigma_1=\sigma_2\circ\Phi$ and $\tau'\circ \sigma_2=\sigma_1\circ\Phi^{-1}$ hold.
	Then we have
	\[
	\tau'\circ\tau\circ\sigma_1=\tau'\circ\sigma_2\circ\Phi=\sigma_1\circ\Phi^{-1}\circ\Phi=\sigma_1.
	\]
	Since $\sigma_1$ is surjective,
	we obtain $\tau'\circ\tau=\id_{\Gamma_1}$.
	In the same way,
	we obtain $\tau\circ\tau'=\id_{\Gamma_2}$ and therefore $\tau\in\Aut\Gamma$.
	\qed
\end{proof}

We rewrite Corollary \ref{cor: ker preserving groupoid isom is groupoid equivalent isom} for a groupoid automorphism case.

\begin{cor}\label{cor: Aut(G;kersigma) is equivalent groupoid automorphisms}
	Let $G$ be a locally compact Hausdorff \'etale groupoid,
	$\Gamma$ be a discrete group and $\sigma\colon G\to\Gamma$ be a surjective continuous cocycle.
	Assume that $\ker \sigma$ is topologically transitive.
	Then
	\[
	\Aut(G;\ker \sigma)=\{\Phi\in\Aut(G)\mid \text{$\tau\circ \sigma=\sigma\circ \Phi$ for some $\tau\in\Aut\Gamma$}\}
	\]
	holds.
\end{cor}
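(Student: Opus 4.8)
The plan is to deduce this corollary directly from Corollary \ref{cor: ker preserving groupoid isom is groupoid equivalent isom} by taking $G_1=G_2=G$, $\Gamma_1=\Gamma_2=\Gamma$ and $\sigma_1=\sigma_2=\sigma$, and then to verify the reverse inclusion by an elementary computation. Concretely, I would argue the two inclusions separately.

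For the inclusion $\subseteq$, take $\Phi\in\Aut(G;\ker\sigma)$, so that $\Phi(\ker\sigma)=\ker\sigma$ by definition. The hypotheses of Corollary \ref{cor: ker preserving groupoid isom is groupoid equivalent isom} are satisfied: $\sigma$ is a surjective continuous cocycle and $\ker\sigma$ is topologically transitive (the statements for $i=1$ and $i=2$ coincide here). Hence there exists a unique $\tau\in\Aut\Gamma$ with $\tau\circ\sigma=\sigma\circ\Phi$, which is exactly what membership in the right-hand set requires.

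For the inclusion $\supseteq$, suppose $\Phi\in\Aut(G)$ and $\tau\circ\sigma=\sigma\circ\Phi$ for some $\tau\in\Aut\Gamma$. If $\alpha\in\ker\sigma$, then $\sigma(\Phi(\alpha))=\tau(\sigma(\alpha))=\tau(e)=e$, so $\Phi(\ker\sigma)\subset\ker\sigma$. Composing the relation $\tau\circ\sigma=\sigma\circ\Phi$ with $\tau^{-1}$ on the left and $\Phi^{-1}$ on the right yields $\tau^{-1}\circ\sigma=\sigma\circ\Phi^{-1}$, and the same reasoning applied to $\Phi^{-1}$ gives $\Phi^{-1}(\ker\sigma)\subset\ker\sigma$. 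Therefore $\Phi(\ker\sigma)=\ker\sigma$, i.e.\ $\Phi\in\Aut(G;\ker\sigma)$.

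There is no real obstacle here: the substantive work was already carried out in Lemma \ref{lemma: key lemma when groupoid hom factors discrete group cocycle } and Corollary \ref{cor: ker preserving groupoid isom is groupoid equivalent isom}, and the only point worth stating explicitly is that the hypotheses of the latter specialize verbatim to a single groupoid with a single cocycle, so no extra assumptions are needed. The reverse inclusion is purely formal manipulation of the intertwining identity.
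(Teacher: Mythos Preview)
Your proof is correct and follows exactly the paper's approach: the paper's proof consists of the single sentence ``Apply Corollary \ref{cor: ker preserving groupoid isom is groupoid equivalent isom} as $G_1=G_2$ and $\Gamma_1=\Gamma_2$,'' which is precisely your argument for the inclusion $\subseteq$, while the reverse inclusion you spell out is elementary and left implicit in the paper.
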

\begin{proof}
	Apply Corollary \ref{cor: ker preserving groupoid isom is groupoid equivalent isom} as $G_1=G_2$ and $\Gamma_1=\Gamma_2$.
	\qed
\end{proof}

\begin{cor}\label{cor: restricted Weyl isomorphic to restricted groupoid Weyl}
	Let $G$ be a locally compact Hausdorff \'etale groupoid,
	$\Gamma$ be a discrete group and $\sigma\colon G\to\Gamma$ be a surjective continuous cocycle.
	Assume that $G$ is effective and $\ker \sigma$ is topologically transitive.
	Then the restricted Weyl group $\mathfrak{RW}_{G,\ker\sigma}$ is isomorphic to
	\[
	\{\Phi\in\Aut(G)\mid \text{$\tau\circ \sigma=\sigma\circ \Phi$ for some $\tau\in\Aut\Gamma$}\}.
	\]
	as a topological group.
\end{cor}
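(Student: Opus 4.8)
The plan is to deduce this statement directly from the two corollaries already established in this section, by taking $H \defeq \ker\sigma$. First I would check that $\ker\sigma$ fits the standing hypotheses under which $\mathfrak{RW}_{G,H}$ is defined: since $\Gamma$ is discrete and $\sigma$ is continuous, $\ker\sigma = \sigma^{-1}(\{e_\Gamma\})$ is an open subset of $G$, and it is a subgroupoid because $\sigma$ is a groupoid homomorphism; moreover $\sigma(x) = \sigma(x)\sigma(x)$ for every $x \in G^{(0)}$ forces $\sigma(x) = e_\Gamma$, so $G^{(0)} \subset \ker\sigma$. Thus $\ker\sigma$ is an open subgroupoid containing the unit space, exactly as in the hypotheses of Definition \ref{definition: restricted weyl group} and the results following it.

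Next, since $G$ is effective, Corollary \ref{cor Restricted Weyl group is restricted groupoid automorphisms} applied with $H = \ker\sigma$ yields an isomorphism of topological groups $\mathfrak{RW}_{G,\ker\sigma} \simeq \Aut(G;\ker\sigma)$. Then, since $\ker\sigma$ is topologically transitive by assumption, Corollary \ref{cor: Aut(G;kersigma) is equivalent groupoid automorphisms} identifies $\Aut(G;\ker\sigma)$ with the set $\{\Phi\in\Aut(G)\mid \tau\circ\sigma = \sigma\circ\Phi\text{ for some }\tau\in\Aut\Gamma\}$; this identification is the identity on underlying sets, hence a homeomorphism for the subspace topology inherited from $\Aut(G)$. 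Composing the two identifications gives the desired topological-group isomorphism.

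There is essentially no obstacle here: the result is a formal consequence of the preceding two corollaries, and the only points needing care are verifying that $\ker\sigma$ is an open subgroupoid with $G^{(0)} \subset \ker\sigma$ (needed for the first corollary) and that $\ker\sigma$ is topologically transitive (needed for the second) --- both immediate from the hypotheses. If one wishes, one may also recall that the isomorphism in Corollary \ref{cor Restricted Weyl group is restricted groupoid automorphisms} is induced by the map $\Psi$ of Theorem \ref{thm: varphi is a group hom}, which makes the comparison of topologies completely transparent, but this level of detail is not strictly necessary.
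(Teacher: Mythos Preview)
Your proposal is correct and follows essentially the same approach as the paper: the paper's proof is simply ``Combine Corollary \ref{cor Restricted Weyl group is restricted groupoid automorphisms} and Corollary \ref{cor: Aut(G;kersigma) is equivalent groupoid automorphisms},'' and you do exactly this, with the added (harmless) verification that $\ker\sigma$ is an open subgroupoid containing $G^{(0)}$.
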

\begin{proof}
	Combine Corollary \ref{cor Restricted Weyl group is restricted groupoid automorphisms} and Corollary \ref{cor: Aut(G;kersigma) is equivalent groupoid automorphisms}.
	\qed
\end{proof}

\subsection{Compact abelian group actions}\label{subsection: Compact abelian group actions}

%compact abelian group actionはdiscrete abelian group coactionと対応すること．

%不動点環が大きいcompact abelian group actionの不動点環のガロア群が自分になること．

In this subsection,
we investigate a compact abelian group action $H\curvearrowright C^*_r(G)$ whose fixed point algebra contains $C_0(G^{(0)})$.
The gauge action on the Cuntz algebra is a typical example of such actions as in Example \ref{ex: Cuntz algebra}.

First,
we describe a relation between compact abelian group actions on groupoid C*-algebras and discrete abelian group valued cocycles on \'etale groupoids via Pontryagin duality. 
For a discrete abelian group $\Gamma$,
we denote its Pontryagin dual group by $\widehat{\Gamma}$.
Namely,
we put \[\widehat{\Gamma}\defeq \{\chi\colon\Gamma\to\T\mid \text{ $\chi$ is a continuous group homomorphism.}\}.\]
Note that $\widehat{\Gamma}$ is a compact abelian group with respect to the pointwise product and pointwise convergence topology.
It is well-known that there exists a one-to-one correspondence between compact abelian group actions and discrete abelian group coactions via Pontryagin duality (see \cite[Remark 2.7]{Echterhoff_Quigg_1999}).
In this paper,
we use a one-to-one correspondence between compact abelian group actions on groupoid C*-algebras and discrete abelian group valued cocycles of underlying \'etale groupoids (Proposition \ref{prop: cocycle corresponds to group action on abelian group}).
First,
we recall how to construct a compact abelian group action from a discrete abelian group cocycle.

\begin{defi}\label{defi: compact abelian group action induced from groupoid cocycle}
	Let $G$ be a locally compact Hausdorff \'etale groupoid,
	$\Gamma$ be a discrete abelian group and $\sigma\colon G\to \Gamma$ be a continuous cocycle.
	Then the action $\tau\colon \widehat{\Gamma}\curvearrowright C^*_r(G)$ is defined by
	\[
	\tau_{\chi}(f)(\alpha)\defeq \chi(\sigma(\alpha))f(\alpha)
	\]
	for $\chi\in\widehat{\Gamma}$,
	$f\in C_c(G)$ and $\alpha\in G$.
\end{defi}

The fixed point algebra of $\tau$ can be calculated as follows.

\begin{prop}\label{prop: fixed point algebra of associated action}
	Let $G$ be a locally compact Hausdorff \'etale groupoid,
	$\Gamma$ be a discrete abelian group and $\sigma\colon G\to \Gamma$ be a continuous cocycle.
	In addition,
	let $\tau\colon\widehat{\Gamma}\curvearrowright C^*_r(G)$ denote the action induced from $\sigma$ in Definition \ref{defi: compact abelian group action induced from groupoid cocycle}.
	Then the fixed point algebra $C^*_r(G)^{\tau}$ coincides with $C^*_r(\ker\sigma)$.
\end{prop}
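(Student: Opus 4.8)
The plan is to realise $C^*_r(G)^{\tau}$ as the range of the canonical averaging map attached to the compact group action $\tau$, and then to compute this map explicitly on $C_c(G)$ by means of the orthogonality relations for characters. I would first dispose of the easy inclusion $C^*_r(\ker\sigma)\subseteq C^*_r(G)^{\tau}$. Since $\Gamma$ is discrete, $\ker\sigma$ is a clopen subgroupoid of $G$ containing $G^{(0)}$, so $C^*_r(\ker\sigma)$ is a C*-subalgebra of $C^*_r(G)$ by \cite[Lemma 3.2]{BrownExelFuller2021}. If $f\in C_c(\ker\sigma)$, then $\sigma(\alpha)$ is the identity of $\Gamma$ for every $\alpha\in\supp(f)$, hence $\chi(\sigma(\alpha))=1$ and $\tau_{\chi}(f)=f$ for all $\chi\in\widehat{\Gamma}$; as $C_c(\ker\sigma)$ is dense in $C^*_r(\ker\sigma)$, this gives $C^*_r(\ker\sigma)\subseteq C^*_r(G)^{\tau}$.

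For the reverse inclusion, observe that in the notation of Definition \ref{defi: def of varphi_Phi,c} one has $\tau_{\chi}=\varphi_{\id_G,\chi\circ\sigma}$, and that $\sigma$, being locally constant, takes only finitely many values on any compact set; hence, arguing exactly as in the proof of Proposition \ref{prop: topology of Z(G) is the compact open topology}, $\tau\colon\widehat{\Gamma}\to\Aut(C^*_r(G))$ is strongly continuous. This lets me define the averaging map $\Phi\colon C^*_r(G)\to C^*_r(G)$ by the Bochner integral $\Phi(a)=\int_{\widehat{\Gamma}}\tau_{\chi}(a)\,d\chi$, where $d\chi$ is the normalised Haar measure on $\widehat{\Gamma}$. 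Then $\Phi$ is linear and contractive, and invariance of Haar measure gives $\tau_{\chi_0}\circ\Phi=\Phi$ for every $\chi_0\in\widehat{\Gamma}$, so $\Phi(C^*_r(G))\subseteq C^*_r(G)^{\tau}$, while $\Phi$ clearly restricts to the identity on $C^*_r(G)^{\tau}$. Therefore $C^*_r(G)^{\tau}=\Phi(C^*_r(G))$, and it remains only to check that $\Phi(C^*_r(G))\subseteq C^*_r(\ker\sigma)$.

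By continuity of $\Phi$ and density of $C_c(G)$ in $C^*_r(G)$, it suffices to compute $\Phi$ on $C_c(G)$. Using that the embedding $j\colon C^*_r(G)\to C_0(G)$ of Proposition \ref{prop evaluation} and the point-evaluations on $C_0(G)$ are norm-continuous (recall $\lVert a\rVert_{\infty}\le\lVert a\rVert_{r}$), for $f\in C_c(G)$ and $\alpha\in G$ one obtains $\Phi(f)(\alpha)=f(\alpha)\int_{\widehat{\Gamma}}\chi(\sigma(\alpha))\,d\chi$. The key point is the orthogonality relation: for $\gamma\in\Gamma$ the map $\chi\mapsto\chi(\gamma)$ is a continuous character of the compact group $\widehat{\Gamma}$, which by Pontryagin duality is trivial precisely when $\gamma$ is the identity of $\Gamma$, and the Haar integral of a character of a compact group equals $1$ if the character is trivial and $0$ otherwise. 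Hence $\int_{\widehat{\Gamma}}\chi(\sigma(\alpha))\,d\chi$ equals $1$ when $\alpha\in\ker\sigma$ and $0$ otherwise, so $\Phi(f)$ coincides (via $j$) with the extension of $f|_{\ker\sigma}$ by zero, which lies in $C_c(\ker\sigma)$ since $\ker\sigma$ is clopen. Thus $\Phi(C_c(G))\subseteq C_c(\ker\sigma)$ and therefore $\Phi(C^*_r(G))\subseteq C^*_r(\ker\sigma)$; combined with the first inclusion this yields $C^*_r(G)^{\tau}=C^*_r(\ker\sigma)$. The only step needing a little care is the justification for moving the Bochner integral past $j$ and past point-evaluations, but this is immediate from the boundedness of those maps, so I expect no genuine obstacle.
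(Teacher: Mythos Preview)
Your argument is correct. The paper's proof is considerably shorter but relies entirely on external references: it invokes the Echterhoff--Quigg dictionary \cite[Remark 2.7]{Echterhoff_Quigg_1999} to identify $C^*_r(G)^{\tau}$ with the fixed-point algebra of the associated $\Gamma$-coaction $\delta$, and then cites \cite[Lemma 6.3]{CARLSEN2021107923} for the equality $C^*_r(G)^{\delta}=C^*_r(\ker\sigma)$. Your route is instead a direct, self-contained computation: you build the Haar-averaging projection $\Phi$ onto the fixed-point algebra and evaluate it pointwise on $C_c(G)$ via the injective embedding $j\colon C^*_r(G)\hookrightarrow C_0(G)$, reducing everything to the orthogonality relation $\int_{\widehat{\Gamma}}\chi(\gamma)\,d\chi=\delta_{\gamma,e}$. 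This avoids the coaction formalism entirely and makes the mechanism transparent (the restriction map $f\mapsto f|_{\ker\sigma}$ is exactly the averaging projection), at the cost of writing out the strong-continuity and Bochner-integral details. Both approaches are standard; yours is more elementary and better suited to a reader unfamiliar with coactions, while the paper's is quicker if one is willing to import those two results.
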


\begin{proof}
	Let $\delta\colon C^*_r(G)\to C^*_r(G)\otimes C^*_r(\Gamma)$ denote the coaction associated with $\sigma$.
	Then the fixed point algebra $C^*_r(G)^{\delta}$ of $\delta$ coincides with $C^*_r(G)^{\tau}$ by \cite[Remark 2.7]{Echterhoff_Quigg_1999}.
	In addition,
	we have $C^*_r(G)^{\delta}=C^*_r(\ker\sigma)$ by \cite[Lemma 6.3]{CARLSEN2021107923}.
	Therefore,
	we obtain $C^*_r(\ker\sigma)=C^*_r(G)^{\tau}$.
	\qed
	
\end{proof}

We characterize the condition that the action $\tau\colon\widehat{\Gamma}\curvearrowright C^*_r(G)$ in Definition \ref{defi: compact abelian group action induced from groupoid cocycle} becomes faithful in terms of the cocycle $\sigma\colon G\to \Gamma$.

\begin{prop}\label{prop: surjective of cocycle and faithful action}
	Let $G$ be a locally compact Hausdorff \'etale groupoid,
	$\Gamma$ be a discrete abelian group and $\sigma\colon G\to\Gamma$ be a continuous cocycle.
	Let $\tau\colon \widehat{\Gamma}\curvearrowright C^*_r(G)$ denote the action associated with $\sigma$ (see Definition \ref{defi: compact abelian group action induced from groupoid cocycle}).
	If $\sigma\colon G\to\Gamma$ is surjective,
	then $\tau\colon \widehat{\Gamma}\curvearrowright C^*_r(G)$ is faithful.
	Conversely,
	if $\tau$ is faithful and $\ker\sigma$ is topologically transitive,
	then $\sigma\colon G\to\Gamma$ is surjective.
\end{prop}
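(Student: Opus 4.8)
The plan is to prove the two implications separately, in each case translating faithfulness of $\tau$ into a statement about the cocycle $\sigma$ via the description of the fixed point algebra in Proposition \ref{prop: fixed point algebra of associated action} and the conditional expectation onto $C_0(G^{(0)})$.

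First I would handle the direction ``$\sigma$ surjective $\Rightarrow$ $\tau$ faithful''. Suppose $\chi\in\widehat{\Gamma}$ satisfies $\tau_\chi=\id_{C^*_r(G)}$; I want to show $\chi$ is the trivial character. For any open bisection $U\subset G$ and $f\in C_c(U)$ with $f(\alpha)\neq 0$, the identity $\tau_\chi(f)(\alpha)=\chi(\sigma(\alpha))f(\alpha)=f(\alpha)$ forces $\chi(\sigma(\alpha))=1$. Since $\sigma$ is surjective, every $\gamma\in\Gamma$ is of the form $\sigma(\alpha)$ for some $\alpha\in G$, and choosing $f\in C_c(G)$ with $f(\alpha)\neq 0$ (Urysohn) gives $\chi(\gamma)=1$. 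Hence $\chi$ is trivial, so $\tau$ is faithful. This direction is routine.

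For the converse, assume $\tau$ is faithful and $\ker\sigma$ is topologically transitive, and suppose toward a contradiction that $\sigma$ is not surjective, i.e.\ $\Lambda\defeq \sigma(G)$ is a proper subgroup of $\Gamma$. (Recall that $\Lambda$ is indeed a subgroup by \cite[Lemma 2.2.2]{Komura+2024}, as noted in Remark \ref{remark: remark about surjectiveness of cocycle}, since $\ker\sigma$ is topologically transitive.) Since $\Lambda\subsetneq\Gamma$ and $\Gamma$ is a discrete abelian group, by Pontryagin duality the annihilator $\Lambda^{\perp}=\{\chi\in\widehat{\Gamma}\mid \chi|_{\Lambda}=1\}$ is a nontrivial closed subgroup of $\widehat{\Gamma}$; pick $\chi\in\Lambda^{\perp}$ with $\chi\neq 1$. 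Then for every $\alpha\in G$ we have $\sigma(\alpha)\in\Lambda$, so $\chi(\sigma(\alpha))=1$, and therefore $\tau_\chi(f)(\alpha)=\chi(\sigma(\alpha))f(\alpha)=f(\alpha)$ for all $f\in C_c(G)$ and all $\alpha\in G$. By density of $C_c(G)$ in $C^*_r(G)$ and continuity of $\tau_\chi$, this gives $\tau_\chi=\id_{C^*_r(G)}$ with $\chi\neq 1$, contradicting faithfulness of $\tau$.

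The main obstacle, such as it is, lies in making the annihilator argument clean: one needs the fact that for a proper subgroup $\Lambda$ of a discrete abelian group $\Gamma$ the annihilator $\Lambda^{\perp}\subset\widehat{\Gamma}$ is nontrivial, which is a standard consequence of Pontryagin duality (equivalently, the dual of the nontrivial group $\Gamma/\Lambda$ is nontrivial, and it embeds as $\Lambda^\perp$). With that in hand the rest is a direct computation on $C_c(G)$ followed by a density/continuity argument. Note also that the topological transitivity of $\ker\sigma$ is used only to guarantee $\sigma(G)$ is a subgroup; everything else goes through verbatim once that is known.
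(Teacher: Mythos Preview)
Your proof is correct and follows essentially the same argument as the paper: both directions proceed by evaluating $\tau_\chi(f)(\alpha)=\chi(\sigma(\alpha))f(\alpha)$ on $C_c(G)$, using surjectivity of $\sigma$ to kill a character in the forward direction and, for the converse, invoking \cite[Lemma 2.2.2]{Komura+2024} to see $\sigma(G)$ is a subgroup and then picking a nontrivial character in its annihilator. One small remark: your opening sentence advertises the fixed point algebra description and the conditional expectation onto $C_0(G^{(0)})$, but neither is actually used in the argument you give (nor in the paper's), so you may want to drop that from the preamble.
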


\begin{proof}
	First,
	assume that $\sigma\colon G\to\Gamma$ is surjective and $\chi\in\widehat{\Gamma}$ satisfies $\tau_{\chi}=\id_{C^*_r(G)}$.
	To show $\chi=1$,
	take $s\in\Gamma$.
	Since $\sigma$ is surjective,
	there exists $\alpha\in G$ such that $\sigma(\alpha)=s$.
	Take $f\in C_c(G)$ with $f(\alpha)=1$.
	Then we have
	\[
	\chi(s)=\chi(\sigma(\alpha))f(\alpha)=\tau_{\chi}(f)(\alpha)=f(\alpha)=1.
	\]
	Hence we obtain $\chi=1$ and $\tau$ is faithful.
	
	Next,
	assume that $\tau$ is faithful and $\ker\sigma$ is topologically transitive.
	Suppose that $\sigma$ is not surjective.
	Since $\ker\sigma$ is topologically transitive,
	the image $\sigma(\Gamma)$ is a subgroup of $\Gamma$ by \cite[Lemma 2.2.2]{Komura+2024}.
	Since $\sigma(\Gamma)$ is a proper subgroup of $\Gamma$,
	there exists $\chi\in\widehat{\Gamma}\setminus\{1\}$ such that $\chi|_{\sigma(\Gamma)}=1$.
	Then we have
	\[
	\tau_{\chi}(f)(\alpha)=\chi(\sigma(\alpha))f(\alpha)=f(\alpha)
	\]
	for all $\alpha\in G$ and $f\in C_c(G)$.
	Thus,
	we obtain $\tau_{\chi}=\id_{C^*_r(G)}$ and this contradicts to the condition that $\tau$ is faithful.
	Therefore $\sigma$ is surjective.
	\qed
\end{proof}

We have investigated the action $\tau\colon \widehat{\Gamma}\curvearrowright C^*_r(G)$ associated with a continuous cocycle $\sigma\colon G\to\Gamma$.
We give a characterization of actions induced by continuous cocycles here.
First,
if $\tau\colon \widehat{\Gamma}\curvearrowright C^*_r(G)$ is induced by a continuous cocycle $\sigma\colon G\to\Gamma$,
one can see that the fixed point algebra
\[
C^*_r(G)^{\tau}\defeq \bigcap_{\chi\in \widehat{\Gamma}}\{x\in C^*_r(G)\mid \tau_{\chi}(x)=x\}
\]
contains $C_0(G^{(0)})$.
Conversely,
if $G$ is effective,
such an action is obtained from a continuous cocycle as the following.

\begin{prop}\label{prop: cocycle corresponds to group action on abelian group}
	Let $G$ be a locally compact Hausdorff \'etale effective groupoid,
	$\Gamma$ be a discrete abelian group and $\tau\colon \widehat{\Gamma}\curvearrowright C^*_r(G)$ be a strongly continuous action.
	Assume that the fixed point algebra $C^*_r(G)^{\tau}$ contains $C_0(G^{(0)})$.
	Then there exists a continuous cocycle $\sigma\colon G\to\Gamma$ such that
	\[
	\tau_{\chi}(f)(\alpha)=\chi(\sigma(\alpha))f(\alpha)
	\]
	holds for all $f\in C_c(G)$, $\chi\in\widehat{\Gamma}$ and $\alpha\in G$.
\end{prop}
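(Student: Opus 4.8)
The plan is to convert the compact group action $\tau$ into a family of $\T$-valued cocycles by Theorem \ref{thm: varphi is a group hom}, and then glue these together into a single $\Gamma$-valued cocycle using Pontryagin duality; this is exactly the converse of the construction in Definition \ref{defi: compact abelian group action induced from groupoid cocycle}.

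First I would observe that, since $C_0(G^{(0)})\subset C^*_r(G)^{\tau}$, every $\tau_{\chi}$ lies in $\Aut_{C_0(G^{(0)})}C^*_r(G)$. As $G$ is effective, Theorem \ref{thm: varphi is a group hom} supplies, for each $\chi\in\widehat{\Gamma}$, a unique $c_{\chi}\in Z(G)$ with $\tau_{\chi}=\varphi_{\id_G,c_{\chi}}$, that is $\tau_{\chi}(f)(\alpha)=c_{\chi}(\alpha)f(\alpha)$ for $f\in C_c(G)$. Because the group isomorphism $Z(G)\simeq\Aut_{C_0(G^{(0)})}C^*_r(G)$ of Theorem \ref{thm: varphi is a group hom} is a homeomorphism for the topology on $Z(G)$ (which, by Proposition \ref{prop: topology of Z(G) is the compact open topology}, is that of uniform convergence on compacta) and $\tau$ is strongly continuous, the map $\widehat{\Gamma}\ni\chi\mapsto c_{\chi}\in Z(G)$ is a continuous group homomorphism. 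Fixing $\alpha\in G$ and composing with the evaluation $Z(G)\ni c\mapsto c(\alpha)\in\T$ — a continuous group homomorphism since the product on $Z(G)$ is pointwise and $\{\alpha\}$ is compact — yields a continuous group homomorphism $\widehat{\Gamma}\to\T$, i.e. an element of $\widehat{\widehat{\Gamma}}$. Pontryagin duality for the discrete abelian group $\Gamma$ then gives a unique $\sigma(\alpha)\in\Gamma$ with $c_{\chi}(\alpha)=\chi(\sigma(\alpha))$ for all $\chi$, and this defines $\sigma\colon G\to\Gamma$. It is a cocycle: for composable $\alpha,\beta$ one has $\chi(\sigma(\alpha\beta))=c_{\chi}(\alpha\beta)=c_{\chi}(\alpha)c_{\chi}(\beta)=\chi(\sigma(\alpha)\sigma(\beta))$ for every $\chi$, and characters separate the points of $\Gamma$, so $\sigma(\alpha\beta)=\sigma(\alpha)\sigma(\beta)$.

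The main obstacle will be showing that $\sigma$ is continuous, equivalently that each fibre $\sigma^{-1}(\{s\})$ is open. Here I would use the spectral projections of the compact abelian group action: for $s\in\Gamma$ put $E_s(a)\defeq\int_{\widehat{\Gamma}}\overline{\chi(s)}\,\tau_{\chi}(a)\,d\chi$ with respect to normalized Haar measure on $\widehat{\Gamma}$, a contractive linear map $C^*_r(G)\to C^*_r(G)$ (the Bochner integral exists since $\chi\mapsto\tau_{\chi}(a)$ is norm-continuous on the compact group $\widehat{\Gamma}$). Applying the bounded functional $a\mapsto a(\alpha)$ coming from the embedding $j\colon C^*_r(G)\to C_0(G)$ of Proposition \ref{prop evaluation}, and using $c_{\chi}(\alpha)=\chi(\sigma(\alpha))$ together with the orthogonality relation $\int_{\widehat{\Gamma}}\chi(t)\,d\chi=\delta_{t,e}$, one computes for $f\in C_c(G)$ that $E_s(f)(\alpha)=f(\alpha)\,\mathbf 1_{\sigma^{-1}(\{s\})}(\alpha)$. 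Since $E_s(f)\in C^*_r(G)$, this function lies in $C_0(G)$ and is therefore continuous. Given $\alpha_0\in\sigma^{-1}(\{s\})$, choosing $f\in C_c(G)$ with $f(\alpha_0)=1$ makes $\{\alpha\in G:|E_s(f)(\alpha)|>1/2\}$ an open neighbourhood of $\alpha_0$ contained in $\sigma^{-1}(\{s\})$, so each fibre is open and $\sigma$ is continuous. Finally $\tau_{\chi}(f)(\alpha)=c_{\chi}(\alpha)f(\alpha)=\chi(\sigma(\alpha))f(\alpha)$ holds by construction, which is the asserted formula. I expect the only genuinely delicate point to be checking that $E_s$ is well defined on all of $C^*_r(G)$ and commutes with the evaluation maps $j(\cdot)(\alpha)$; everything else is routine given Theorem \ref{thm: varphi is a group hom} and Pontryagin duality.
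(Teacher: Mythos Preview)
Your proof is correct and takes a genuinely different route from the paper. The paper's argument is short and outsources the work: it converts the $\widehat{\Gamma}$-action into a $\Gamma$-coaction $\delta\colon C^*_r(G)\to C^*_r(G)\otimes C^*_r(\Gamma)$ via \cite[Remark 2.7]{Echterhoff_Quigg_1999}, then invokes \cite[Corollary 2.1.9]{Komura+2024} to produce the cocycle $\sigma$ directly from $\delta$, and finally verifies the formula on each $C_c(\sigma^{-1}(s))$. Your approach stays entirely inside the machinery already developed in this paper: you resolve $\tau$ into the family $(c_{\chi})_{\chi}$ of $\T$-cocycles via Theorem \ref{thm: varphi is a group hom}, glue them into $\sigma$ pointwise by Pontryagin duality, and then establish continuity of $\sigma$ by showing that the spectral projection $E_s(f)$ lands in $C^*_r(G)\subset C_0(G)$ and equals $f\cdot\mathbf 1_{\sigma^{-1}(\{s\})}$. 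The paper's route is terser but leans on a black-box result about coactions on groupoid C*-algebras; your route avoids coactions altogether and makes explicit where each hypothesis (effectiveness for Theorem \ref{thm: varphi is a group hom}, strong continuity for the Bochner integral and for $\chi\mapsto c_{\chi}$, compactness of $\widehat{\Gamma}$ for duality and the orthogonality relation) is used. The one point you flag as delicate---that $E_s$ commutes with the evaluation functionals $a\mapsto j(a)(\alpha)$---is indeed routine, since Bochner integrals commute with bounded linear functionals and $\lvert j(a)(\alpha)\rvert\leq\lVert a\rVert_r$ by Proposition \ref{prop evaluation}.
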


\begin{proof}
	By \cite[Remark 2.7]{Echterhoff_Quigg_1999},
	there exists a coaction $\delta\colon C^*_r(G)\to C^*_r(G)\otimes C^*_r(\Gamma)$ such that
	\[
	\tau_{\chi}(a)=(\id\otimes \chi)(\delta(a))
	\]
	holds for all $a\in C^*_r(G)$ and $\chi\in\widehat{\Gamma}$.
	Note that we may extends $\chi$ to the *-homomorphism $\chi\colon C^*_r(\Gamma)\to \C$ since we assume that $\Gamma$ is abelian and therefore amenable.
	By \cite[Corollary 2.1.9]{Komura+2024},
	there exists a cocycle $\sigma\colon G\to \Gamma$ such that
	\[
	\delta(f)=f\otimes s
	\]
	holds for all $s\in\Gamma$ and $f\in C_c(\sigma^{-1}(s))$.
	Now,
	one can check that
	\[\tau_{\chi}(f)(\alpha)=\chi(\sigma(\alpha))f(\alpha)\]
	holds for all $f\in C_c(G)$, $\chi\in\widehat{\Gamma}$ and $\alpha\in G$.
	Indeed,
	for $f\in C_c(\sigma^{-1}(s))$,
	$\chi\in\widehat{\Gamma}$ and $\alpha\in G$,
	we have
	\begin{align*}
	\tau_{\chi}(f)(\alpha)&=(\id\otimes\chi)(\delta(f))(\alpha) \\
	&=(\id\otimes\chi)(f\otimes s)(\alpha) \\
	&=\chi(s)f(\alpha)\\
	&= \chi(\sigma(\alpha))f(\alpha).
	\end{align*}
	Note that the last equation $\chi(s)f(\alpha)= \chi(\sigma(\alpha))f(\alpha)$ holds since we assume $f\in C_c(\sigma^{-1}(s))$ and the both term is $0$ if $s\not=\sigma(\alpha)$.
	Since the linear span of $\bigcup_{s\in\Gamma}C_c(\sigma^{-1}(s))$ is dense in $C^*_r(G)$,
	we obtain 
	\[\tau_{\chi}(f)(\alpha)=\chi(\sigma(\alpha))f(\alpha)\]
	for all $f\in C_c(G)$, $\chi\in\widehat{\Gamma}$ and $\alpha\in G$.
	\qed
\end{proof}

\begin{cor}\label{cor: galois group of fixed point algebra of compact abelian group}
	Let $G$ be a locally compact Hausdorff \'etale groupoid,
	$H$ be a compact abelian group and $\tau\colon H\curvearrowright C^*_r(G)$ be a strongly continuous action.
	Assume that $G$ is effective.
	In addition,
	assume that the fixed point algebra $C^*_r(G)^{\tau}$ is prime and contains $C_0(G^{(0)})$.
	Then the map
	\[
	\tau'\colon H/\ker\tau\ni q(h)\mapsto \tau_h\in\Aut_{C^*_r(G)^{\tau}}C^*_r(G),
	\]
	is an isomorphism,
	where $q\colon H\to H/{\ker\tau}$ denotes the quotient map and $h\in H$.	
\end{cor}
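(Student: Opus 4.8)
Note first that $\ker\tau\defeq\{h\in H\mid\tau_h=\id_{C^*_r(G)}\}$ is closed by strong continuity, each $\tau_h$ fixes $C^*_r(G)^\tau$ pointwise, and $\tau'$ is a well-defined injective group homomorphism; the real content is surjectivity of $\tau'$ and the fact that it is open. The plan is to obtain all of this at once by identifying $\tau'$ with a composition of isomorphisms already established in the paper. Set $\Gamma\defeq\widehat{H}$, a discrete abelian group, and identify $H$ with $\widehat{\Gamma}$ through Pontryagin duality, writing $\chi_h\in\widehat{\Gamma}$ for the character corresponding to $h\in H$. Since $\tau$ is strongly continuous, $G$ is effective and $C^*_r(G)^\tau\supseteq C_0(G^{(0)})$, Proposition \ref{prop: cocycle corresponds to group action on abelian group} yields a continuous cocycle $\sigma\colon G\to\Gamma$ with $\tau_h(f)(\alpha)=\chi_h(\sigma(\alpha))f(\alpha)$ for all $f\in C_c(G)$ and $\alpha\in G$; in the notation of Definition \ref{defi: def of varphi_Phi,c} this reads $\tau_h=\varphi_{\id_G,\chi_h\circ\sigma}$. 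Then Proposition \ref{prop: fixed point algebra of associated action} gives $C^*_r(G)^\tau=C^*_r(\ker\sigma)$, which is prime by hypothesis, so $\ker\sigma$ is topologically transitive by Proposition \ref{prop: topologically transitive and prime}.

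Next I would reduce to a surjective cocycle. Since $\ker\sigma$ is topologically transitive, $\Gamma_0\defeq\sigma(G)$ is a subgroup of $\Gamma$ by \cite[Lemma 2.2.2]{Komura+2024}, so $\sigma$ may be regarded as a surjective continuous cocycle $\sigma_0\colon G\to\Gamma_0$ with $\ker\sigma_0=\ker\sigma$ still topologically transitive. As $\Gamma_0$ is abelian we have $\Gamma_0^{\ab}=\Gamma_0$, so Corollary \ref{cor: Galois group is dual of Gamma ab} applies; tracing through Propositions \ref{proposition: automorphisms which fix C*r(H)} and \ref{proposition: cocycle which =1 on ker factors abelianization of group}, the isomorphism it provides is the explicit map $\widehat{\Gamma_0}\ni\psi\mapsto\varphi_{\id_G,\psi\circ\sigma_0}\in\Aut_{C^*_r(\ker\sigma)}C^*_r(G)$, a topological group isomorphism.

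To finish, I would match this with $\tau'$. Restriction of characters $\rho\colon\widehat{\Gamma}\to\widehat{\Gamma_0}$, $\chi\mapsto\chi|_{\Gamma_0}$, is a continuous surjection (surjective because $\T$ is divisible) between compact Hausdorff groups, hence open; and $\chi_h\in\ker\rho$ if and only if $\chi_h(\sigma(\alpha))=1$ for all $\alpha\in G$, i.e.\ if and only if $\tau_h=\id_{C^*_r(G)}$, so $\ker\rho$ corresponds to $\ker\tau$ under $H\simeq\widehat{\Gamma}$ and $\rho$ descends to a topological group isomorphism $H/\ker\tau\simeq\widehat{\Gamma_0}$ sending $q(h)$ to $\chi_h|_{\Gamma_0}$. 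Composing with the isomorphism of the previous paragraph and using $\varphi_{\id_G,(\chi_h|_{\Gamma_0})\circ\sigma_0}=\varphi_{\id_G,\chi_h\circ\sigma}=\tau_h$, the composite $H/\ker\tau\to\Aut_{C^*_r(\ker\sigma)}C^*_r(G)=\Aut_{C^*_r(G)^\tau}C^*_r(G)$ is exactly $\tau'$; being a composition of topological group isomorphisms, $\tau'$ is one too.

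The obstacle I anticipate is purely organisational rather than mathematical: keeping the three Pontryagin-duality identifications ($H\simeq\widehat{\Gamma}$, $H/\ker\tau\simeq\widehat{\Gamma_0}$, and $\widehat{\Gamma_0}\simeq\Aut_{C^*_r(\ker\sigma)}C^*_r(G)$) mutually coherent, checking that the isomorphism extracted from Corollary \ref{cor: Galois group is dual of Gamma ab} is the concrete map $\psi\mapsto\varphi_{\id_G,\psi\circ\sigma_0}$ and not merely an abstract one, and handling the passage from a possibly non-surjective $\sigma$ to the surjective corestriction $\sigma_0$.
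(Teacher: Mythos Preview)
Your proof is correct and follows essentially the same approach as the paper: obtain a cocycle $\sigma$ from Proposition \ref{prop: cocycle corresponds to group action on abelian group}, use primeness to get topological transitivity of $\ker\sigma$, and invoke Corollary \ref{cor: Galois group is dual of Gamma ab}. The only organisational difference is that the paper first reduces to the faithful case (so that $\sigma$ is surjective by Proposition \ref{prop: surjective of cocycle and faithful action}) and then handles the general case by passing to $H/\ker\tau$, whereas you keep the original $H$ and instead corestrict $\sigma$ to $\Gamma_0=\sigma(G)$, identifying $H/\ker\tau$ with $\widehat{\Gamma_0}$ via the restriction-of-characters map; these are dual ways of making the same reduction.
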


\begin{proof}
	First,
	we show the assertion under the assumption that $\tau$ is faithful.
	Note that $\ker\tau$ is trivial in this case.
	We may assume $H=\widehat{\Gamma}$ for some discrete abelian group $\Gamma$ by Pontryagin duality.
	By Proposition \ref{prop: cocycle corresponds to group action on abelian group},
	there exists a continuous cocycle $\sigma\colon G\to\Gamma$ such that
	\[
	\tau_{\chi}(f)(\alpha)=\chi(\sigma(\alpha))f(\alpha)
	\]
	holds for all $f\in C_c(G)$,
	$\chi\in\widehat{\Gamma}$ and $\alpha\in G$.
	Since we have $C^*_r(\ker\sigma)=C^*_r(G)^{\tau}$ and $C^*_r(G)^{\tau}$ is prime,
	$\ker\sigma$ is topologically transitive by Proposition \ref{prop: topologically transitive and prime}.
	In addition,
	since $\tau$ is faithful,
	$\sigma\colon G\to \Gamma$ is surjective by Proposition \ref{prop: surjective of cocycle and faithful action}.
	By Corollary \ref{cor: Galois group is dual of Gamma ab},
	the map
	\[
	\Psi\colon \widehat{\Gamma}\ni \chi \mapsto \tau_{\chi} \in\Aut_{C^*_r(G)^{\tau}}C^*_r(G)
	\]
	is an isomorphism as a topological group.
	This completes the proof of the corollary in case that $\tau$ is faithful.
	
	Next,
	we show the assertion for the general case.
	Put $\tau'\defeq \tau\circ q$.
	Then $\tau'$ defines a faithful action $\tau'\colon H/\ker\tau\curvearrowright C^*_r(G)$.
	From the above argument,
	the map
	\[
	\tau'\colon H/\ker\tau \to \Aut_{C^*_r(G)^{\tau'}}C^*_r(G).
	\]
	turns out to be an isomorphism.
	\qed
\end{proof}

The following corollary allows us to compute the fixed point algebra of the canonical action $\Aut_{C^*_r(\ker\sigma)} C^*_r(G)\curvearrowright C^*_r(G)$.

\begin{cor}\label{cor: fixed point algebra of the action arsing from group cocycle}
	Let $G$ be a locally compact Hausdorff \'etale groupoid,
	$\Gamma$ be a discrete group and $\sigma\colon G\to \Gamma$ be a surjective continuous cocycle.
	Assume that $G$ is effective and $\ker\sigma$ is topologically transitive.
	In addition,
	let $q\colon \Gamma\to\Gamma^{\ab}$ denote the quotient map and put $\sigma^{\ab}\defeq q\circ\sigma$.
	Then the fixed point algebra $C^*_r(G)^{\Aut_{C^*_r(\ker\sigma)}C^*_r(G)}$ of the canonical action \[\Aut_{C^*_r(\ker\sigma)}C^*_r(G)\curvearrowright C^*_r(G)\] coincides with $C^*_r(\ker\sigma^{\ab})$.
\end{cor}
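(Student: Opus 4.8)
The plan is to identify the canonical action $\Aut_{C^*_r(\ker\sigma)}C^*_r(G)\curvearrowright C^*_r(G)$ with the compact abelian group action attached to the abelianized cocycle $\sigma^{\ab}=q\circ\sigma\colon G\to\Gamma^{\ab}$ in Definition \ref{defi: compact abelian group action induced from groupoid cocycle}, and then quote Proposition \ref{prop: fixed point algebra of associated action}. First I would describe the elements of $\Aut_{C^*_r(\ker\sigma)}C^*_r(G)$ explicitly. By Proposition \ref{proposition: automorphisms which fix C*r(H)} applied to $H=\ker\sigma$ this group equals $\{\varphi_{\id_G,c}\mid c\in Z_{\ker\sigma}(G)\}$, and since $\sigma$ is surjective and $\ker\sigma$ is topologically transitive, Proposition \ref{proposition: cocycle which =1 on ker factors abelianization of group} gives $Z_{\ker\sigma}(G)=\{\chi\circ q\circ\sigma\mid\chi\in\widehat{\Gamma^{\ab}}\}$. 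Thus every element of $\Aut_{C^*_r(\ker\sigma)}C^*_r(G)$ is of the form $\varphi_{\id_G,\chi\circ q\circ\sigma}$ for some $\chi\in\widehat{\Gamma^{\ab}}$ (this is just the isomorphism of Corollary \ref{cor: Galois group is dual of Gamma ab} unwound).

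Next I would compare the two formulas. For $f\in C_c(G)$, $\chi\in\widehat{\Gamma^{\ab}}$ and $\alpha\in G$, Definition \ref{defi: def of varphi_Phi,c} yields
\[
\varphi_{\id_G,\chi\circ q\circ\sigma}(f)(\alpha)=(\chi\circ q\circ\sigma)(\alpha)\,f(\alpha)=\chi(\sigma^{\ab}(\alpha))\,f(\alpha),
\]
which is precisely $\tau_\chi(f)(\alpha)$, where $\tau\colon\widehat{\Gamma^{\ab}}\curvearrowright C^*_r(G)$ is the action associated with the discrete abelian group valued cocycle $\sigma^{\ab}\colon G\to\Gamma^{\ab}$ in Definition \ref{defi: compact abelian group action induced from groupoid cocycle}. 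Since $C_c(G)$ is dense in $C^*_r(G)$, the automorphisms $\varphi_{\id_G,\chi\circ q\circ\sigma}$ and $\tau_\chi$ coincide on $C^*_r(G)$. Hence, as subsets of $\Aut(C^*_r(G))$,
\[
\Aut_{C^*_r(\ker\sigma)}C^*_r(G)=\{\tau_\chi\mid\chi\in\widehat{\Gamma^{\ab}}\},
\]
and therefore the two tautological fixed point algebras agree: $C^*_r(G)^{\Aut_{C^*_r(\ker\sigma)}C^*_r(G)}=C^*_r(G)^{\tau}$.

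Finally I would apply Proposition \ref{prop: fixed point algebra of associated action} to the cocycle $\sigma^{\ab}\colon G\to\Gamma^{\ab}$ — note that $\Gamma^{\ab}$ is discrete and abelian and that $\sigma^{\ab}=q\circ\sigma$ is continuous — to get $C^*_r(G)^{\tau}=C^*_r(\ker\sigma^{\ab})$, and combining with the previous identity the corollary follows. I do not anticipate any genuine obstacle: the only delicate point is the bookkeeping in the second step, namely reconciling the description of $\Aut_{C^*_r(\ker\sigma)}C^*_r(G)$ furnished by Corollary \ref{cor: Galois group is dual of Gamma ab} with the cocycle-action formalism of Definition \ref{defi: compact abelian group action induced from groupoid cocycle}; the effectiveness of $G$, surjectivity of $\sigma$, and topological transitivity of $\ker\sigma$ are all consumed there (through Propositions \ref{proposition: automorphisms which fix C*r(H)} and \ref{proposition: cocycle which =1 on ker factors abelianization of group}), while the last step uses only that $\Gamma^{\ab}$ is a discrete abelian group.
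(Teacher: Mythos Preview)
Your proof is correct and follows essentially the same route as the paper: identify $\Aut_{C^*_r(\ker\sigma)}C^*_r(G)$ with $\{\tau_\chi\mid\chi\in\widehat{\Gamma^{\ab}}\}$ for the action $\tau$ induced by $\sigma^{\ab}$, then apply Proposition~\ref{prop: fixed point algebra of associated action}. The only difference is cosmetic: the paper quotes Corollary~\ref{cor: Galois group is dual of Gamma ab} directly for the identification, whereas you unpack it into its constituent Propositions~\ref{proposition: automorphisms which fix C*r(H)} and~\ref{proposition: cocycle which =1 on ker factors abelianization of group} and spell out the formula comparison $\varphi_{\id_G,\chi\circ q\circ\sigma}=\tau_\chi$.
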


\begin{proof}
	Let $\tau\colon \widehat{\Gamma^{\ab}}\curvearrowright C^*_r(G)$ denote the action induced by $\sigma^{\ab}\colon G\to\Gamma^{\ab}$.
	Since we have
	\[
	\Aut_{C^*_r(\ker\sigma)}C^*_r(G)=\Aut_{C^*_r(\ker\sigma^{\ab})}C^*_r(G)=\{\tau_{\chi}\in\Aut(G)\mid \chi\in\widehat{\Gamma^{\ab}}\}
	\]
	by Corollary \ref{cor: Galois group is dual of Gamma ab},
	we obtain
	\[
	C^*_r(G)^{\Aut_{C^*_r(\ker\sigma)}C^*_r(G)}=C^*_r(G)^{\tau}.
	\]
	In addition,
	by Proposition \ref{prop: fixed point algebra of associated action},
	we have
	\[
	C^*_r(G)^{\tau}=C^*_r(\ker\sigma^{\ab})
	\]
	and this completes the proof.
	\qed
\end{proof}

\begin{rem}
	Let $B\subset A$ be an inclusion of C*-algebras.
	Remark that Corollary \ref{cor: fixed point algebra of the action arsing from group cocycle} indicates that the fixed point algebra of the canonical action $\Aut_BA\curvearrowright A$ becomes larger than $B$ in general.
	Indeed,
	we will give an example of $G$ and $\sigma$ such that $C^*_r(\ker\sigma)\subsetneq C^*_r(\ker\sigma^{\ab})$ in Subsection \ref{subsection : Examples of coactions on Cuntz algebras and graph algebras}.
	This example also provides us an example of an inclusion of C*-algebras $B\subset A$ such that the fixed point algebra of the canonical action $\Aut_BA\curvearrowright A$ becomes larger than $B$ by Corollary \ref{cor: fixed point algebra of the action arsing from group cocycle}.

\end{rem}

\section{Examples and applications}\label{section: Examples and applications}

In this section,
we apply the results in the previous sections to examples.
In the first two subsections,
we investigate the Cuntz algebras.
In the last subsection,
we investigate C*-algebras associated with Deaconu-Renault systems and graph algebras.

\subsection{Examples of coactions on the Cuntz algebras}\label{subsection : Examples of coactions on Cuntz algebras and graph algebras}

In this subsection,
we calculate a concrete example of $\Aut_B\mathcal{O}_n$ for some C*-subalgebra $B\subset \mathcal{O}_n$ of the Cuntz algebra.
Then we point out that a map $B\mapsto \Aut_B\mathcal{O}_n$ is not injective.
Namely,
we will show that there exist C*-subalgebras $B_1, B_2\subset \mathcal{O}_n$ such that $\Aut_{B_1}\mathcal{O}_n=\Aut_{B_2}\mathcal{O}_n$ and $B_1\not=B_2$.
First,
we recall the groupoid model of the Cuntz algebras $\mathcal{O}_n$.
See \cite{Paterson2002} for details.
\begin{ex}\label{ex: Cuntz algebra}
	For $n\in\N$ with $n\geq 2$,
	let $P_n$ denote the polycyclic monoid of degree $n$.
	Recall that $P_n$ is the universal inverse semigroup generated by
	\[
	s_1, s_2,\dots,s_n,0,1
	\]
	which satisfies
	\begin{align*}
	s_i^*s_j=
	\begin{cases}
	1 & (i=j), \\
	0 & (i\not=j)
	\end{cases}
	\end{align*}
	for $i,j=1,2,\dots,n$.
	Put
	\[
	\Sigma\defeq \{1,2,\dots,n\}
	\]
	and let $\Sigma^*\defeq \bigcup_{n=0}^{\infty}\Sigma^n$ denote the set of all finite words on $\Sigma$.
	For $\mu\in\Sigma^*$,
	let $\lvert \mu\rvert $ denote the length of $\mu$ and define
	\[
	s_{\mu}\defeq s_{\mu_1}s_{\mu_2}\cdots s_{\mu_{\lvert \mu\rvert}}\in P_n.
	\]
	Then one can check that
	\[
	P_n=\{s_{\mu}s_{\nu}^*\in P_n\mid \mu,\nu\in\Sigma^*\}\cup\{0\}
	\]
	holds.

	Let $\Sigma^{\N}$ denote the set of infinite sequences on $\Sigma$.
	Note that $\Sigma^{\N}$ is a compact Hausdorff space with respect to the product topology.
	Define $\rho\colon P_n\curvearrowright \Sigma^{\N}$ by
	\[
	\rho_{s_{\mu}s_{\nu}^*}(\nu x)=\mu x,
	\]
	where $s_{\mu}s_{\nu}^*\in P_n$,
	$x\in\Sigma^{\N}$ and $\nu x$ denotes the concatenation of $\nu$ and $x$. 
	Note that $\rho_{s_{\mu}s_{\nu}^*}$ is a homeomorphism from $\nu\Sigma^{\N}$ to $\mu\Sigma^{\N}$,
	where $\nu\Sigma^{\N}\subset \Sigma^{\N}$ denotes the set of all infinite sequences which begin with $\nu$ and this is a compact open subset of $\Sigma^{\N}$.
	Put $G\defeq P_n\ltimes_{\rho}\Sigma^{\N}$.
	Then the following facts are known.

	\begin{enumerate}
		
		\item $G$ is a locally compact Hausdorff groupoid.
		In addition,
		$G$ is minimal and topologically principal (see \cite[Theorem 3.5, Proposition 5.1, 5.2]{Paterson2002}).
		
		\item Put $n\defeq \lvert\Sigma\rvert$.
		Then $C^*_r(G)$ is isomorphic to the Cuntz algebra $\mathcal{O}_{n}$ (see \cite[Corollary 3.9]{Paterson2002}).
		Indeed,
		let $S_i\in C_c(G)$ be the characteristic function on $[s_i, i\Sigma^{\N}]\subset G$ for $i=1,2,\dots,n$.
		Then $\{S_i\}_{i=1}^{n}$ generates $C^*_r(G)$ and satisfies
		\[
		S_i^*S_j=\delta_{i,j}1,\,\,\, \sum_{k=1}^{n}S_kS_k^*=1
		\]
		for all $i,j=1,2,\dots,n$.
	
		\item Let $F_n$ denote the free group generated by $t_1,t_2,\dots,t_n$.
		Then one can see that there exists a partial homomorphism $\theta\colon P_n^{\times}\to F_n$ such that $\theta(s_i)=t_i$ holds for all $i=1,2,\dots,n$.
		Hence,
		for any group $\Gamma$ and $w_1,w_2,\dots,w_n\in\Gamma$,
		there exists a partial homomorphism $\theta'\colon P_n^{\times}\to\Gamma$ such that $\theta'(s_i)=w_i$ holds for all $i=1,2,\dots,n$.
	\end{enumerate}

	Consider the cocycle $\sigma\colon G\to \Z$ defined by
	\[
	\sigma([s_{\mu}s_{\nu}^*, x])=\lvert\mu\rvert-\lvert\nu\rvert
	\]
	for $[s_{\mu}s_{\nu}^*, x]\in G$.
	Then $\sigma$ is surjective.
	Let $\tau\colon \T\curvearrowright C^*_r(G)$ be the action induced by $\sigma$ (see Definition \ref{defi: compact abelian group action induced from groupoid cocycle}).
	Then
	\[
	\tau_z(S_i)=zS_i
	\]
	holds for all $z\in \T$ and $i=1,2,\dots,n$.
	Hence $\tau$ coincides with the canonical gauge action of the Cuntz algebra.
	Now,
	one can see that $\ker\sigma$ is minimal and therefore topologically transitive.
	Hence,
	by Corollary \ref{cor: Galois group is dual of Gamma ab},
	we obtain the following proposition.
	We remark that the following proposition is already known in case that $n\not=\infty$.
	
	\begin{prop}[cf.\ {\cite[Proposition 4.4]{CONTI20122529}}]\label{prop: AutO_n^TO_n=T}
		For any $n\in\N_{\geq 2}$,
		consider the Cuntz algebra $\mathcal{O}_n$ and the gauge action $\tau\colon\T\curvearrowright \mathcal{O}_n$.
		Then
		\[\Aut_{\mathcal{O}_n^{\tau}}\mathcal{O}_n=\{\tau_z\in z\in \T\}(\simeq \T)\]
		holds.
	\end{prop}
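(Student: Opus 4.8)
The plan is to pass to the groupoid model of Example~\ref{ex: Cuntz algebra} and then quote Corollary~\ref{cor: Galois group is dual of Gamma ab}. Write $G\defeq P_n\ltimes_{\rho}\Sigma^{\N}$ and let $\sigma\colon G\to\Z$ be the length cocycle $\sigma([s_{\mu}s_{\nu}^*,x])\defeq\lvert\mu\rvert-\lvert\nu\rvert$, which is surjective. Under the isomorphism $C^*_r(G)\cong\mathcal{O}_n$ of \cite{Paterson2002}, the action $\tau\colon\widehat{\Z}\curvearrowright C^*_r(G)$ induced by $\sigma$ (Definition~\ref{defi: compact abelian group action induced from groupoid cocycle}) satisfies $\tau_z(S_i)=zS_i$ and hence corresponds to the gauge action; moreover $C^*_r(G)^{\tau}=C^*_r(\ker\sigma)$ by Proposition~\ref{prop: fixed point algebra of associated action}, so $C^*_r(\ker\sigma)$ corresponds to $\mathcal{O}_n^{\tau}$. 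Since $\Gamma=\Z$ is already abelian, once the hypotheses are verified Corollary~\ref{cor: Galois group is dual of Gamma ab} will give a topological group isomorphism $\widehat{\Z}\ni\chi\mapsto\tau_{\chi}\in\Aut_{C^*_r(\ker\sigma)}C^*_r(G)$; unwinding $\widehat{\Z}\simeq\T$ together with $\tau_{\chi}(S_i)=\chi(1)S_i$ identifies the right-hand side with $\{\tau_z\mid z\in\T\}$, which is the assertion.

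It remains to check the three hypotheses of Corollary~\ref{cor: Galois group is dual of Gamma ab}. Surjectivity of $\sigma$ and effectiveness of $G$ are recorded in Example~\ref{ex: Cuntz algebra} (being topologically principal and Hausdorff, $G$ is effective). The only nontrivial point is that $\ker\sigma$ is topologically transitive, and here I would prove the stronger fact that $\ker\sigma$ is minimal. An arrow $[s_{\mu}s_{\nu}^*,w]\in\ker\sigma$ forces $\lvert\mu\rvert=\lvert\nu\rvert$ and $w=\nu x'$, and then sends $\nu x'$ to $\mu x'$; consequently the $\ker\sigma$-orbit of a point $x\in\Sigma^{\N}$ is exactly its tail-equivalence class $\{z\in\Sigma^{\N}\mid z_j=x_j\text{ for all sufficiently large }j\}$. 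This class is dense, since for any finite word $\eta$ the point $\eta_1\cdots\eta_{\lvert\eta\rvert}x_{\lvert\eta\rvert+1}x_{\lvert\eta\rvert+2}\cdots$ lies in both the cylinder $\eta\Sigma^{\N}$ and the tail class of $x$. Hence every orbit is dense, $\ker\sigma$ is minimal, and in particular topologically transitive, so Corollary~\ref{cor: Galois group is dual of Gamma ab} applies and finishes the case of finite $n$.

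The argument has no genuinely hard step: the content is entirely absorbed into Corollary~\ref{cor: Galois group is dual of Gamma ab}, and what is left is the minimality of $\ker\sigma$ plus bookkeeping of the Pontryagin-duality identifications. For $n=\infty$ the same scheme applies once $\mathcal{O}_{\infty}$ is realized as $C^*_r$ of an \'etale groupoid carrying an analogous $\Z$-valued length cocycle with topologically transitive kernel; this is carried out in Subsection~\ref{Subsection: Groupoid model of the Cuntz algebra of infinite degree}, and it is there that the proposition becomes genuinely new, since the classical proof via the Cuntz--Takesaki correspondence is not available for $\mathcal{O}_{\infty}$.
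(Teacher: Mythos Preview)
Your proof is correct and follows exactly the paper's approach: the paper also deduces the proposition from Corollary~\ref{cor: Galois group is dual of Gamma ab} after noting that $\ker\sigma$ is minimal (and hence topologically transitive). You supply more detail than the paper does---in particular the explicit tail-equivalence argument for minimality of $\ker\sigma$---but the strategy is identical.
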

	\begin{rem}\label{rem: corollary for Aut Oinfty is new}
		As already stated,
		Proposition \ref{prop: AutO_n^TO_n=T} is a known result.
		Indeed,
		one can deduce Proposition \ref{prop: AutO_n^TO_n=T} as a special case of \cite[Proposition 4.4]{CONTI20122529}.
		On the other hand,
		we will show
		\[
		\Aut_{\mathcal{O}_{\infty}^{\tau}}\mathcal{O}_{\infty}=\{\tau_z\in z\in \T\}(\simeq \T)
		\]
		in Corollary \ref{cor: autOinftyTOinfty is also T} and this seems to be a new result.
		We remark that we cannot immediately deduce
		\[
		\Aut_{\mathcal{O}_{\infty}^{\tau}}\mathcal{O}_{\infty}=\{\tau_z\in z\in \T\}(\simeq \T)
		\]
		from \cite[Proposition 4.4]{CONTI20122529}.
		Indeed,
		the proof of \cite[Proposition 4.4]{CONTI20122529} relies on a correspondence between unitary elements in graph algebras and certain *-endomorphisms on graph algebras,
		which holds under the assumption that the underlying graph is finite.
		This correspondence seems not to work for infinite graphs and therefore we cannot deduce
		\[
		\Aut_{\mathcal{O}_{\infty}^{\tau}}\mathcal{O}_{\infty}=\{\tau_z\in z\in \T\}(\simeq \T)
		\]
		from \cite[Proposition 4.4]{CONTI20122529},
		since the natural graph model of $\mathcal{O}_{\infty}$ is an infinite graph.
	\end{rem}
	
	Next,
	we consider another cocycle on $G$.
	Take a partial homomorphism $\theta\colon P_n^{\times}\to \mathfrak{S}_{n+1}$ such that
	\[
	\theta(s_i)=(i,i+1)
	\]
	holds for all $i=1,2,\cdots n$,
	where $\mathfrak{S}_{n+1}$ denotes the symmetric group of degree $n+1$ and $(i,i+1)$ denotes the adjacent transposition of $i$ and $i+1$ for $i=1,2,\cdots, n$.
	One can check that $\theta$ is surjective.
	Define a cocycle $\sigma\colon G\to \mathfrak{S}_{n+1}$ by
	\[
	\sigma([s_{\mu}s_{\nu}^*, x])\defeq\theta(s_{\mu}s_{\nu}^*).
	\]
	To investigate the inclusion $C^*_r(\ker\sigma)\subset C^*_r(G)$,
	we study the properties of $\sigma\colon G\to\mathfrak{S}_{n+1}$.
	%We investigate the coaction $\delta\colon C^*_r(G)\to C^*_r(G)\otimes C^*_r(\mathfrak{S}_{n+1})$ induced by $\sigma$.
	
	\begin{prop}\label{prop: kersigma is minimal, where symmetric group grading}
		In the above notation,
		$\ker \sigma$ is minimal.
	\end{prop}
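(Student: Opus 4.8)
The plan is to show directly that the unit space $\Sigma^{\N}$ of $\ker\sigma$ admits no nonempty proper open $\ker\sigma$-invariant subset. Since $G^{(0)}\subset\ker\sigma$ we have $(\ker\sigma)^{(0)}=G^{(0)}=\Sigma^{\N}$, and since the cylinders $\{\nu\Sigma^{\N}\}_{\nu\in\Sigma^*}$ form a basis of $\Sigma^{\N}$, it suffices to prove the following: if $U\subset\Sigma^{\N}$ is open and $\ker\sigma$-invariant and contains a cylinder $\nu\Sigma^{\N}$, then $\mu\Sigma^{\N}\subset U$ for every $\mu\in\Sigma^*$; taking $|\mu|=1$ then forces $U=\Sigma^{\N}$.

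To this end fix $\mu\in\Sigma^*$ and $w\in\Sigma^{\N}$; I will produce $g\in\ker\sigma$ with $d(g)\in\nu\Sigma^{\N}\subset U$ and $r(g)=\mu w$, so that invariance of $U$ gives $\mu w\in U$, and hence $\mu\Sigma^{\N}\subset U$ as $w$ was arbitrary. The only input needed is that each $\theta(s_i)=(i,i+1)$ is an involution: writing $\overline{\nu}$ for the word $\nu$ read backwards, this gives $\theta(s_{\overline{\nu}})=\theta(s_\nu)^{-1}$, so that the word $\eta\defeq\nu\,\overline{\nu}\,\mu$ satisfies
\[
\theta(s_\eta)=\theta(s_\nu)\,\theta(s_{\overline{\nu}})\,\theta(s_\mu)=\theta(s_\nu)\,\theta(s_\nu)^{-1}\,\theta(s_\mu)=\theta(s_\mu).
\]
Now set $g\defeq[s_\mu s_\eta^*,\,\eta w]\in G$. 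Using $(s_\mu s_\eta^*)^*(s_\mu s_\eta^*)=s_\eta s_\eta^*$ one checks that $\eta w$ lies in the domain of $g$, and then $d(g)=\eta w=\nu(\overline{\nu}\mu w)\in\nu\Sigma^{\N}$, while $r(g)=\rho_{s_\mu s_\eta^*}(\eta w)=\mu w$. Finally
\[
\sigma(g)=\theta(s_\mu s_\eta^*)=\theta(s_\mu)\,\theta(s_\eta)^{-1}=\theta(s_\mu)\,\theta(s_\mu)^{-1}=e,
\]
so $g\in\ker\sigma$, which completes the argument.

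I do not anticipate any real difficulty here: the whole proof rests on the trivial remark that the generators $\theta(s_i)$ are involutions, which lets one cancel the $\sigma$-value accumulated over the prefix $\nu$ by appending $\overline{\nu}$, together with the routine identities $\theta(s_{\alpha\beta})=\theta(s_\alpha)\theta(s_\beta)$ for nonzero products, $(s_\mu s_\eta^*)^*(s_\mu s_\eta^*)=s_\eta s_\eta^*$, and $\rho_{s_\mu s_\eta^*}(\eta z)=\mu z$, all immediate from the description of $G$ in Example \ref{ex: Cuntz algebra}. The only point deserving a line of care is that $s_\mu s_\eta^*\neq 0$ in $P_n$ for all $\mu,\eta\in\Sigma^*$, so that $g$ is a genuine (nonzero) arrow of $G$.
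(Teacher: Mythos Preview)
Your argument is correct. Both proofs rest on the same observation---that one can realize any needed group inverse as $\theta(s_\nu)$ for a suitable word $\nu$---and both construct an explicit arrow of $\ker\sigma$ witnessing that orbits meet every cylinder. The paper's version is a bit more streamlined: it works with the ``dense orbit'' formulation of minimality and uses only pure-shift elements $[s_{\mu\nu},x]$, choosing $\nu$ so that $\theta(s_\nu)=\theta(s_\mu)^{-1}$ (appealing to the fact that adjacent transpositions generate $\mathfrak{S}_{n+1}$), whereas you use the ``no proper invariant open set'' formulation and more general elements $[s_\mu s_\eta^*,\eta w]$ with $\eta=\nu\overline{\nu}\mu$. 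Your word-reversal trick $\theta(s_{\overline{\nu}})=\theta(s_\nu)^{-1}$ is just an explicit instance of the paper's existence statement, so the two arguments are really the same idea packaged differently; the paper's avoids the extra bookkeeping with $s_\eta^*$ and the cylinder $\nu\Sigma^{\N}$.
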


	\begin{proof}
		Take $x\in \Sigma^{\N}$ and $\mu\in\Sigma^*$.
		Then there exists $\nu\in\Sigma^*$ such that $\theta(s_{\nu})=\theta(s_{\mu}^*)$ since any elements in $\mathfrak{S}_{n+1}$ can be expressed as a product of adjacent transpositions.
		Now we have $[s_{\mu\nu}, x]\in\ker\sigma$ and $r([s_{\mu\nu}, x])=\mu\nu x$.
		Hence the orbit of $x$ by $\ker\sigma$ is dense in $\Sigma^\N$ and therefore $\ker\sigma$ is minimal.
		\qed
	\end{proof}
	
	By Proposition \ref{prop: kersigma is minimal, where symmetric group grading} and Corollary \ref{cor: Galois group is dual of Gamma ab},
	we obtain
	\[
	\Aut_{C^*_r(\ker\sigma)}C^*_r(G)\simeq \mathfrak{S}_{n+1}^{\ab}\simeq \Z/2\Z.
	\]
	For the same reason,
	we have
	\[
	\Aut_{C^*_r(\ker\sigma^{\ab})}C^*_r(G)=\Aut_{C^*_r(\ker\sigma)}C^*_r(G),
	\]
	where $\sigma^{\ab}\colon G\to\Z/2\Z$ denotes the composition of $\sigma$ and the quotient map $\mathfrak{S}_{n+1}\to \mathfrak{S}_{n+1}^{\ab}\simeq \Z/2\Z$.
	In addition,
	by \cite[Proposition 10.3.7]{asims},
	$C^*_r(\ker\sigma)$ and $C^*_r(\ker\sigma^{\ab})$ are simple since $\ker\sigma$ and $\ker\sigma^{\ab}$ are second-countable, effective and minimal.
	Therefore,
	we obtain an inclusion of simple C*-algebras $C^*_r(\ker\sigma)\subsetneq C^*_r(\ker\sigma^{\ab})\subsetneq C^*_r(G)$ such that
	\[
	\Aut_{C^*_r(\ker\sigma^{\ab})}C^*_r(G)=\Aut_{C^*_r(\ker\sigma)}C^*_r(G)
	\]
	holds.
	In particular,
	putting 
	\[
	\mathcal{B}\defeq \{B\subset C^*_r(G)\mid \text{$C_0(G^{(0)})\subset B$ and $B$ is a simple C*-subalgebra}\}
	\]
	and
	\[
	\mathcal{H}\defeq \{H\subset \Aut_{C_0(G^{(0)})} C^*_r(G)\mid \text{$H$ is a closed subgroup}\},
	\]
	we have observed that the map
	\[
	\Psi\colon\mathcal{B}\ni B\mapsto \Aut_BC^*_r(G)\in\mathcal{H}
	\]
	is not injective.

	Note that there exists conditional expectations $F\colon C^*_r(G)\to C^*_r(\ker\sigma)$ and $F'\colon C^*_r(G)\to C^*_r(\ker\sigma^{\ab})$
	defined by
	\[
	F(f)\defeq f|_{\ker\sigma},\,\,\, F'(f)\defeq f|_{\ker\sigma^{\ab}}
	\]
	for $f\in C_c(G)$ since $\ker\sigma, \ker\sigma^{\ab}\subset G$ are closed and we have \cite[Lemma 3.4]{BrownExelFuller2021}.
	In Proposition \ref{prop: Watatani index associated with coaction},
	we will observe $\mathrm{Ind}(F)=n!$ and $\mathrm{Ind}(F')=2$,
	where $\mathrm{Ind}(F)$ denotes the Watatani index of $F$.
	Therefore,
	the map
	\[
	\Psi\colon \mathcal{B}\ni B\mapsto \Aut_BC^*_r(G)\in\mathcal{H}
	\]
	is still not injective even if we restricts $\Psi$ to the set of C*-subalgebras of finite Watatani indices.
\end{ex}

In the last of this subsection,
we investigate the Watatani index of an inclusion $C^*_r(\ker\sigma)\subset C^*_r(G)$,
where $\sigma$ is a cocycle on $G$.
First,
we recall the definition of Watatani indices from \cite{watatani1990index}.

\begin{defi}
	Let $A$ be a C*-algebra,
	$B\subset A$ be a C*-subalgebra and $E\colon A\to B$ be a conditional expectation.
	A finite pairs $\{(u_i, v_i)\}_{i=1}^m\subset A\times A$ is called a quasi-basis if
	\[
	x=\sum_{i=1}^mu_iE(v_ix)=\sum_{i=1}^mE(xu_i)v_i
	\]
	holds for all $x\in A$.
	The Watatani index of $E$ is defined by
	\[
	\Ind E\defeq \sum_{i=1}^mu_iv_i.
	\]
\end{defi}

\begin{rem}
	The Watatani index $\Ind E$ does not depend on the choice of quasi-basis and is an element in the centre of $A$ by \cite[Proposition 1.2.8]{watatani1990index}.
\end{rem}
	
	Let $G$ be a locally compact Hausdorff \'etale groupoid and $G^{(0)}\subset H\subset G$ be a clopen subgroupoid.
	By \cite[Lemma 3.4]{BrownExelFuller2021},
	we have a conditional expectation $F\colon C^*_r(G)\to C^*_r(H)$ defined by
	\[
	F(f)\defeq f|_H
	\]
	for all $f\in C_c(G)$.
	The conditional expectation $F\colon C^*_r(G)\to C^*_r(\ker\sigma)$ in the following proposition is obtained in this way.
	
\begin{prop}\label{prop: Watatani index associated with coaction}
	Let $G$ be a locally compact Hausdorff \'etale groupoid,
	$\Gamma$ be a finite group and $\sigma\colon G\to \Gamma$ be a surjective continuous cocycle.
	In addition,
	let $F\colon C^*_r(G)\to C^*_r(\ker\sigma)$ denote the conditional expectation as above.
	If $G^{(0)}$ is compact and $\ker\sigma$ is minimal,
	then the Watatani index $\Ind F$ of $F$ is the order $\lvert \Gamma\lvert$ of $\Gamma$.
\end{prop}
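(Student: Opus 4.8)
The plan is to exhibit an explicit quasi-basis for $F$, organized so that for each fixed $s\in\Gamma$ the corresponding pieces sum to the identity of $C^*_r(G)$; then $\Ind F=\sum u_iv_i$ will be a sum of $|\Gamma|$ copies of $1$. The key preliminary fact, and the step I expect to be the main obstacle, is that minimality of $\ker\sigma$ upgrades surjectivity of $\sigma$ to the full equalities $d(\sigma^{-1}(s))=r(\sigma^{-1}(s))=G^{(0)}$ for every $s\in\Gamma$; without this one cannot cover the compact space $G^{(0)}$ by ranges of bisections lying inside a single fibre $\sigma^{-1}(s)$, which is exactly what makes the construction below work.

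First I would prove that claim. Since $\Gamma$ is discrete and $\sigma$ is continuous, $\sigma^{-1}(s)$ is clopen in $G$, and since the range and domain maps of an \'etale groupoid are open, $d(\sigma^{-1}(s))$ is an open subset of $G^{(0)}$; it is nonempty because $\sigma$ is surjective. Moreover it is $\ker\sigma$-invariant: if $\gamma\in\ker\sigma$ and $d(\gamma)=d(\beta)$ for some $\beta\in\sigma^{-1}(s)$, then $\beta\gamma^{-1}$ is composable, $\sigma(\beta\gamma^{-1})=s$, and $d(\beta\gamma^{-1})=r(\gamma)$, so $r(\gamma)\in d(\sigma^{-1}(s))$. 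Minimality of $\ker\sigma$ then forces $d(\sigma^{-1}(s))=G^{(0)}$, and applying this to $s^{-1}$ gives $r(\sigma^{-1}(s))=d(\sigma^{-1}(s^{-1}))=G^{(0)}$.

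Next I would build the quasi-basis. Fix $s\in\Gamma$. Since $r(\sigma^{-1}(s))=G^{(0)}$ and the clopen set $\sigma^{-1}(s)$ is covered by open bisections, compactness of $G^{(0)}$ yields finitely many open bisections $W_{s,1},\dots,W_{s,k_s}\subset\sigma^{-1}(s)$ with $\bigcup_j r(W_{s,j})=G^{(0)}$, together with a subordinate partition of unity $h_{s,1},\dots,h_{s,k_s}\in C(G^{(0)})$, i.e.\ $\supp h_{s,j}\subset r(W_{s,j})$ and $\sum_j h_{s,j}=1$. Define $u_{s,j}\in C_c(G)$ supported in $W_{s,j}$ by $u_{s,j}(\alpha)\defeq\sqrt{h_{s,j}(r(\alpha))}$ for $\alpha\in W_{s,j}$. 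A direct computation of the convolution along the bisection $W_{s,j}$ shows $u_{s,j}u_{s,j}^*=h_{s,j}$ as a function on $G^{(0)}$, hence $\sum_j u_{s,j}u_{s,j}^*=1_{G^{(0)}}$, which is the unit of $C^*_r(G)$ because $G^{(0)}$ is compact.

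Finally I would check that $\{(u_{s,j},u_{s,j}^*)\}_{s\in\Gamma,\,1\le j\le k_s}$ is a quasi-basis. By continuity of $F$, density of $C_c(G)$, and a partition of unity, it suffices to verify $x=\sum_{s,j}u_{s,j}F(u_{s,j}^*x)$ (and the symmetric identity) for $x\in C_c(U)$ with $U$ an open bisection contained in some $\sigma^{-1}(t)$. Then $u_{s,j}^*x$ is supported in $W_{s,j}^{-1}U\subset\sigma^{-1}(s^{-1}t)$, which meets $\ker\sigma$ only if $s=t$; for $s=t$ one has $u_{t,j}^*x\in C_c(\ker\sigma)$, so $F(u_{t,j}^*x)=u_{t,j}^*x$, and by associativity of convolution
\[
\sum_{s,j}u_{s,j}F(u_{s,j}^*x)=\sum_j u_{t,j}(u_{t,j}^*x)=\Bigl(\sum_j u_{t,j}u_{t,j}^*\Bigr)x=\Bigl(\sum_j h_{t,j}\Bigr)x=x.
\]
The identity $\sum_{s,j}F(xu_{s,j})u_{s,j}^*=x$ is handled the same way, using $xu_{s,j}\in C_c(\sigma^{-1}(ts))$ and the case $s=t^{-1}$. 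Therefore $\{(u_{s,j},u_{s,j}^*)\}$ is a quasi-basis, and
\[
\Ind F=\sum_{s\in\Gamma}\sum_{j=1}^{k_s}u_{s,j}u_{s,j}^*=\sum_{s\in\Gamma}\sum_j h_{s,j}=\sum_{s\in\Gamma}1=\lvert\Gamma\rvert\cdot 1_{C^*_r(G)},
\]
as claimed. The only non-formal input is the minimality argument in the second paragraph; everything afterward is bookkeeping with convolution products of functions supported on bisections.
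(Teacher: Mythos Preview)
Your proof is correct and follows essentially the same route as the paper's: the paper isolates your second paragraph as a separate lemma (for each $x\in G^{(0)}$ and each $s\in\Gamma$ there is a bisection in $\sigma^{-1}(\{s\})$ whose range contains $x$), proved via the dense-orbit formulation of minimality rather than your open-invariant-set formulation, and then constructs the same quasi-basis from a partition of unity subordinate to $\{r(W_{s,j})\}_j$ with $g_i^s=\sqrt{f_i^s\circ r|_{U_i^s}}$, verifying the quasi-basis identity on each $C_c(\sigma^{-1}(\{t\}))$ exactly as you do.
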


First,
we prepare the following lemma to show Proposition \ref{prop: Watatani index associated with coaction}.

\begin{lem}\label{lem: lemma for watatani index}
	Consider the situation in Proposition \ref{prop: Watatani index associated with coaction}.
	Fix $s\in\Gamma$.
	For each $x\in G^{(0)}$,
	there exists a bisection $U\subset\sigma^{-1}(\{s\})$ with $x\in r(U)$.
\end{lem}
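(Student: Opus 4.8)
\emph{Proof idea.} The plan is to reduce the statement to the assertion that $W_s\defeq r(\sigma^{-1}(\{s\}))$ equals all of $G^{(0)}$, and to deduce that from minimality of $\ker\sigma$. First I would note that the bisection is a cosmetic addition: since $\Gamma$ is discrete and $\sigma$ is continuous, $\sigma^{-1}(\{s\})\subset G$ is clopen, so once some $\alpha\in\sigma^{-1}(\{s\})$ with $r(\alpha)=x$ has been found, one may choose an open bisection $V\ni\alpha$ (possible since $G$ is \'etale) and set $U\defeq V\cap\sigma^{-1}(\{s\})$; this is an open bisection contained in $\sigma^{-1}(\{s\})$ with $\alpha\in U$, hence $x=r(\alpha)\in r(U)$.

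Next I would record three properties of $W_s$. It is open in $G^{(0)}$, because $\sigma^{-1}(\{s\})$ is open and the range map of an \'etale groupoid is open. It is non-empty, because $\sigma$ is surjective, so there is $\alpha_0\in G$ with $\sigma(\alpha_0)=s$ and then $r(\alpha_0)\in W_s$. And it is $\ker\sigma$-invariant: if $x=r(\alpha)$ with $\sigma(\alpha)=s$ and $\gamma\in\ker\sigma$ satisfies $d(\gamma)=x$, then $\gamma\alpha$ is composable, $\sigma(\gamma\alpha)=\sigma(\gamma)\sigma(\alpha)=s$, and $r(\gamma\alpha)=r(\gamma)$, so $r(\gamma)\in W_s$.

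To finish I would pass to the complement: $G^{(0)}\setminus W_s$ is closed, and since $\ker\sigma$ is a subgroupoid the complement of a $\ker\sigma$-invariant subset of $G^{(0)}$ is again $\ker\sigma$-invariant, so $G^{(0)}\setminus W_s$ is a closed $\ker\sigma$-invariant set. As $\ker\sigma$ is minimal and $W_s\neq\emptyset$, this forces $G^{(0)}\setminus W_s=\emptyset$, i.e.\ $W_s=G^{(0)}$, which together with the first paragraph yields the claim. I do not expect a genuine obstacle; the only subtlety worth care is that density of a single $\ker\sigma$-orbit would not suffice, so minimality must be invoked via the closed invariant complement rather than via dense orbits. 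Note also that the compactness of $G^{(0)}$ assumed in Proposition \ref{prop: Watatani index associated with coaction} plays no role in this lemma; it enters only later, when assembling a finite quasi-basis.
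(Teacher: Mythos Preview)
Your proof is correct and follows essentially the same strategy as the paper: both use surjectivity of $\sigma$ to see that $r(\sigma^{-1}(\{s\}))$ is non-empty, invoke minimality of $\ker\sigma$ to conclude that every $x\in G^{(0)}$ lies in this range, and then extract an open bisection inside the clopen set $\sigma^{-1}(\{s\})$. The only cosmetic difference is that the paper argues directly---given $x$, it picks $\alpha\in\ker\sigma$ with $r(\alpha)=x$ and $d(\alpha)\in r(\sigma^{-1}(\{s\}))$ (using that the $\ker\sigma$-orbit of $x$ is dense), then composes with some $\beta\in\sigma^{-1}(\{s\})$---rather than passing through the invariant-set characterization; in particular, contrary to your closing caveat, the dense-orbit formulation of minimality (applied to the orbit of the \emph{given} point $x$, not merely some orbit) works perfectly well here.
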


\begin{proof}
	Since $\sigma^{-1}(\{s\})$ is a non-empty open set and $\ker\sigma$ is minimal,
	there exists $\alpha\in\ker\sigma$ with $r(\alpha)=x$ and $d(\alpha)\in r(\sigma^{-1}(\{s\}))$.
	Take $\beta\in\sigma^{-1}(\{s\})$ with $d(\alpha)=r(\beta)$.
	Then we have $\alpha\beta\in\sigma^{-1}(\{s\})$ and $r(\alpha\beta)=x$.
	Now,
	take a bisection $U\subset G$ with $\alpha\beta\in U\subset \sigma^{-1}(\{s\})$ and this completes the proof.
	\qed
\end{proof}

\begin{proof}[\sc{Proof of Proposition \ref{prop: Watatani index associated with coaction}}]
		Fix $s\in\Gamma$.
		By Lemma \ref{lem: lemma for watatani index} and the compactness of $G^{(0)}$,
		there exists a family of finite bisections $\{U_i^s\}_{i\in F_s}$ such that $\bigcup_{i\in F_s}U_i^s=G^{(0)}$ and $U_i^s\subset \sigma^{-1}(\{s\})$ for each $i\in F_s$,
		where $F_s$ is a finite set.
		By the partition of unity,
		there exists $f_i^s\in C_c(r(U_i^s))$ for each $i\in F_s$ such that
		\[
		0\leq f_i^s\leq 1,\,\,\, \sum_{i\in F_s}f_i^s(x)=1
		\]
		for all $x\in G^{(0)}$.
		Put
		\[
		g_i^s\defeq  \sqrt{f_i^s\circ r|_{U_i^s}}.
		\]
		Then we have $g_i^s\in C_c(U_i^s)\subset C_c(\sigma^{-1}(\{s\}))$ for all $i\in F_s$.
		In addition,
		one can check that
		\[
		\sum_{i\in F_s}g_i^s {g_i^s}^*(x)=1
		\]
		holds for each $x\in G^{(0)}$ and $s\in\Gamma$.		
		Now,
		we show that $\{(g_i^s, {g_i^s}^*)\}_{s\in\Gamma, i\in F_s}$ is a quasi-basis for $F\colon C^*_r(G)\to C^*_r(\ker\sigma)$.
		Take $t\in\Gamma$ and $h\in C_c(\sigma^{-1}(\{t\}))$ arbitrarily.
		Then
		we have
		\begin{align*}
		\sum_{s\in \Gamma, i\in F_s} g_i^s F({g_i^s}^*h)=\sum_{i\in F_t}g_i^{t} {g_i^t}^*h=\bigg(\sum_{i\in F_t}g_i^{t} {g_i^t}^*\bigg)h=h.
		\end{align*}
		Since we have $C_c(G)=\bigoplus_{s\in \Gamma}C_c(\sigma^{-1}({t}))$ and $C_c(G)$ is dense in $C^*_r(G)$,
		we obtain
		\[
		\sum_{s\in\Gamma,i\in F_s}g_i^s F({g_i^s}^* a)=a
		\]
		for all $a\in C^*_r(G)$.
		Applying this formula to $a^*\in C^*_r(G)$ and taking the involution,
		we also obtain
		\[
		\sum_{s\in\Gamma,i\in F_s}F(a g_i^s){g_i^s}^*=a
		\]
		for all $a\in C^*_r(G)$.
		Hence $\{(g_i^s, {g_i^s}^*)\}_{s\in\Gamma, i\in F_s}$ is a quasi-basis and we obtain
		\[
		\Ind F=\sum_{s\in\Gamma,i\in F_s}g_i^s{g_i^s}^*=\sum_{s\in \Gamma} 1=\lvert \Gamma\rvert.
		\]
		This completes the proof.
		\qed
		
\end{proof}

\begin{rem}
	Under the assumptions in Proposition \ref{prop: Watatani index associated with coaction},
	$F\colon C^*_r(G)\to C^*_r(\ker\sigma)$ is the unique conditional expectation since the relative commutant $C^*_r(\ker\sigma)'\cap C^*_r(G)$ is trivial by Proposition \ref{prop: relative commutant is trivial if H is topo transitive} and we have \cite[Corollary 1.4.3]{watatani1990index}.
	In particular,
	although it is trivial,
	we have
	\[
	\Ind F=\min \{\Ind E\in [0,\infty)\mid E\in \mathcal{E}_0(C^*_r(G), C^*_r(\ker\sigma))\},
	\]
	where $\mathcal{E}_0(C^*_r(G), C^*_r(\ker\sigma))$ denotes the set of all conditional expectations $E\colon C^*_r(G)\to C^*_r(\ker\sigma)$ of index-finite type (in the present case, this is a singleton).
	We remark that the right hand side
	\[
	\min \{\Ind E\in [0,\infty)\mid E\in \mathcal{E}_0(C^*_r(G), C^*_r(\ker\sigma))\}
	\]
	is nothing but the index $[C^*_r(\ker\sigma),C^*_r(G)]_0$ of the inclusion $C^*_r(\ker\sigma)\subset C^*_r(G)$ defined in \cite[Definition 2.12.4]{watatani1990index}.
	
\end{rem}

\subsection{Groupoid model of the Cuntz algebra of infinite degree}\label{Subsection: Groupoid model of the Cuntz algebra of infinite degree}

We recall a groupoid model of the Cuntz algebra $\mathcal{O}_{\infty}$.
See \cite{Paterson2002} or \cite[Example 2.2.7]{Komura+2024} for details.

Put $\Sigma\defeq \N$ and $\Sigma^*\defeq \bigcup_{n\in\N}\Sigma^n$,
which is the set of all finite sequence on $\Sigma$.
Let $P_{\infty}$ denote the Polycyclic monoid of infinite degree.
Namely,
$P_{\infty}$ is a universal inverse semigroup defined by
\[
P_{\infty}\defeq \i<\{s_i\}_{i=1}^{\infty},0,1 \mid s_i^*s_j=\delta_{i,j}1>.
\]
Remark that
\[
P_{\infty}=\{s_{\mu}s_{\nu}^*\mid\mu,\nu\in\Sigma^*\}\cup\{0\}
\]
holds,
where $s_{\mu}\defeq s_{\mu_1}s_{\mu_2}\cdots s_{\mu_k}$ for $\mu\in\Sigma^*$ with $\lvert \mu\rvert=k$.
Let $X\defeq \Sigma^*\cup\Sigma^{\N}$ be the set of all finite or infinite sequences on $\Sigma$.
For $\mu\in\Sigma^*$ and a finite set $F\subset \Sigma^*$,
define $C_{\mu,F}\subset X$ to be the set of all sequences which begin with $\mu$ and do not begin with the elements in $F$.
Then a family of all $C_{\mu,F}$ forms an open basis of $X$ and $X$ is a compact Hausdorff space with respect to the topology generated by all $C_{\mu, F}$.
For $\mu,\nu\in\Sigma^*$,
define $\alpha_{s_{\mu}s_{\nu}^*}\colon C_{\nu,\emptyset}\to C_{\mu,{\emptyset}}$ by
\[
\alpha_{s_{\mu}s_{\nu}^*}(\nu x)=\mu x
\]
for $x\in X$.
Then we obtain the action $\alpha\colon P_{\infty}\curvearrowright X$.
Put $G\defeq P_{\infty}\ltimes_{\alpha}X$.
By \cite[Example 2.2.7]{Komura+2024},
the following properties hold:
\begin{enumerate}
	\item $G$ is a locally compact Hausdorff \'etale groupoid.
	In addition,
	$G$ is minimal and topologically principal.
	\item There exists a continuous cocycle $\sigma\colon G\to \Z$ defined by
	\[
	\sigma([s_{\mu}s_{\nu}^*, x])=\lvert \mu\rvert -\lvert \nu\rvert
	\]
	for $[s_{\mu}s_{\nu}^*, x]\in G$.
	\item $\ker\sigma\subset G$ is not minimal but topologically transitive.
	\item There exists a *-isomorphism $\varphi\colon C^*_r(G)\to \mathcal{O}_{\infty}$ such that
	\[
	\varphi(\chi_{[s_{\mu}s_{\nu}^*, C_{\nu,\emptyset}]})=S_{\mu}S_{\nu}^*
	\]
	holds.
	In particular,
	\begin{align*}
	&\varphi(C(G^{(0)}))=\overline{\Span}\{S_{\mu}S_{\mu}\in\mathcal{O}_{\infty}\mid \mu\in\Sigma^*\},\\
	&\varphi(C^*_r(\ker\sigma))=\overline{\Span}\{S_{\mu}S_{\nu}^*\in\mathcal{O}_{\infty}\mid \mu,\nu\in\Sigma*, \lvert\mu\rvert=\lvert\nu\rvert\}=\mathcal{O}_{\infty}^{\tau}
	\end{align*}
	holds,
	where $\mathcal{O}_{\infty}^{\tau}$ denotes the fixed point algebra of the gauge action $\tau\colon\T\curvearrowright \mathcal{O}_{\infty}$.
	\item Let $\tau'\colon \T\curvearrowright C^*_r(G)$ denotes the action induced by $\sigma$.
	Then $\varphi$ is a $\T$-equivariant *-isomorphism in the sense that $\varphi\circ \tau'_z=\tau_z\circ \varphi$ holds for all $z\in\T$.
\end{enumerate}
The above facts allow us to apply Corollary \ref{cor: galois group of fixed point algebra of compact abelian group} and we obtain the following corollary.

\begin{cor}\label{cor: autOinftyTOinfty is also T}
	Let $\tau\colon \T\curvearrowright \mathcal{O}_{\infty}$ denote the gauge action and $\mathcal{O}_{\infty}^{\T}$ denote the fixed point subalgebra of $\tau$.
	Then
	\[
	\Aut_{\mathcal{O}_{\infty}^{\T}}\mathcal{O}_{\infty}=\{\tau_z\in\Aut(\mathcal{O}_{\infty})\mid z\in\T\}(\simeq \T).
	\]
	holds.
\end{cor}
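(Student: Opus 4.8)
The plan is to transport the statement to the groupoid side and then invoke Corollary \ref{cor: galois group of fixed point algebra of compact abelian group}. Keep $G=P_{\infty}\ltimes_{\alpha}X$, $\sigma\colon G\to\Z$, $\varphi\colon C^*_r(G)\to\mathcal{O}_{\infty}$ and $\tau'\colon\T\curvearrowright C^*_r(G)$ as in the facts recalled above. Since $\varphi$ is a $\T$-equivariant *-isomorphism carrying $C^*_r(\ker\sigma)$ onto $\mathcal{O}_{\infty}^{\T}$, conjugation by $\varphi$ is a homeomorphism for the strong-norm topologies that restricts to a topological group isomorphism $\Aut_{C^*_r(\ker\sigma)}C^*_r(G)\xrightarrow{\ \sim\ }\Aut_{\mathcal{O}_{\infty}^{\T}}\mathcal{O}_{\infty}$ sending $\tau'_z$ to $\tau_z$ for every $z\in\T$. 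Hence it suffices to prove $\Aut_{C^*_r(\ker\sigma)}C^*_r(G)=\{\tau'_z\mid z\in\T\}$.

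Next I would check the hypotheses of Corollary \ref{cor: galois group of fixed point algebra of compact abelian group} for the action $\tau'\colon\T\curvearrowright C^*_r(G)$. The groupoid $G$ is topologically principal and Hausdorff, hence effective. By Proposition \ref{prop: fixed point algebra of associated action} the fixed point algebra $C^*_r(G)^{\tau'}$ equals $C^*_r(\ker\sigma)$, and it contains $C_0(G^{(0)})$ since $G^{(0)}\subset\ker\sigma$. It remains to see that $C^*_r(\ker\sigma)$ is prime: the open subgroupoid $\ker\sigma$ has unit space $G^{(0)}$ and its isotropy groups are contained in those of $G$, so topological principality of $G$ passes to $\ker\sigma$; since $\ker\sigma$ is topologically transitive by hypothesis, Proposition \ref{prop: topologically transitive and prime} gives that $C^*_r(\ker\sigma)$ is prime.

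Then Corollary \ref{cor: galois group of fixed point algebra of compact abelian group} applies and shows that $q(z)\mapsto\tau'_z$ is a topological group isomorphism $\T/\ker\tau'\xrightarrow{\ \sim\ }\Aut_{C^*_r(\ker\sigma)}C^*_r(G)$. Finally, since $\sigma$ is surjective, Proposition \ref{prop: surjective of cocycle and faithful action} shows $\tau'$ is faithful, so $\ker\tau'$ is trivial and the map becomes $\T\xrightarrow{\ \sim\ }\Aut_{C^*_r(\ker\sigma)}C^*_r(G)$, $z\mapsto\tau'_z$. Conjugating by $\varphi$ yields $\Aut_{\mathcal{O}_{\infty}^{\T}}\mathcal{O}_{\infty}=\{\tau_z\mid z\in\T\}\simeq\T$, as claimed.

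Essentially all the real content has been front-loaded into the preparatory results, so the proof is a verification of hypotheses; the only step requiring a genuine (if short) argument is the primeness of $C^*_r(\ker\sigma)$, which reduces to the remark that restricting to the open subgroupoid $\ker\sigma$ preserves topological principality because $(\ker\sigma)(x)\subset G(x)$ for all $x\in G^{(0)}$. I would also be careful to note that conjugation by $\varphi$ respects the strong-norm topologies, so that the final identification is an isomorphism of topological groups and not merely of abstract groups.
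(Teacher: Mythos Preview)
Your proof is correct and follows the same approach as the paper, which simply states that the listed facts about the groupoid model allow one to apply Corollary~\ref{cor: galois group of fixed point algebra of compact abelian group}. You have carefully filled in the verification of the hypotheses---effectiveness, primeness of the fixed point algebra via Proposition~\ref{prop: topologically transitive and prime}, and faithfulness of $\tau'$ via Proposition~\ref{prop: surjective of cocycle and faithful action}---which the paper leaves implicit; the only small remark is that surjectivity of $\sigma$ is not among the enumerated facts (1)--(5), but it is immediate from $\sigma([s_1,x])=1$.
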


\begin{rem}
We give a few remarks about Corollary \ref{cor: autOinftyTOinfty is also T}.
First,
we note that Corollary \ref{cor: autOinftyTOinfty is also T} seems a new result as mentioned in Remark \ref{rem: corollary for Aut Oinfty is new}.

Second,
although one may expect that the Galois correspondence result between $\T$ and the inclusion $\mathcal{O}_{\infty}^{\T}\subset \mathcal{O}_{\infty}$,
this is not the case.
Indeed,
there exists an intermediate C*-subalgebra $B$ between $\mathcal{O}_{\infty}^{\T}$ and $\mathcal{O}_{\infty}$ which does not become a fixed point subalgebra of restricted action $\tau|_H\colon H\curvearrowright \mathcal{O}_{\infty}$ for any closed subgroup $H\subset \T$ as mentioned in \cite[Example 2.2.10]{Komura+2024}.
Note that the Galois correspondence result for $\mathcal{O}_n^{\T}\subset \mathcal{O}_n$ holds by \cite[Example 5.11]{Rrdam2023}.
Finally,
we remark that,
while $\mathcal{O}_{n}^{\T}$ is a UHF-algebra and hence simple,
$\mathcal{O}_{\infty}^{\T}$ is not simple.
Indeed,
if $\mathcal{O}_{\infty}^{\T}$ was simple,
$\ker\sigma$ should be minimal but it is not the case.

\end{rem}

\subsection{Deaconu-Renault systems} \label{subsection: Deaconu-Renault systems}

We apply our main theorems to C*-algebras associated with Deaconu-Renault systems.
Our aim in this subsection is to introduce and investigate the notion of flip eventual conjugacy,
which is a equivalence relation between Deaconu-Renault systems.
In Theorem \ref{theorem: characterization of flip eventually conjugate},
we will characterize flip eventual conjugacy in terms of \'etale groupoids and C*-algebras.
Before discussing flip eventual conjugacy,
we prepare a general theory about Deaconu-Renault systems and associated \'etale groupoids.
See \cite[Example 2.3.7]{asims} for details of \'etale groupoid associated with Deaconu-Renault systems.

A Deaconu-Renault system $(X,T)$ consists of a locally compact Hausdorff space $X$ and a local homeomorphism $T\colon X\to X$.
Note that $T\colon X\to X$ is a local homeomorphism if for all $x\in X$,
there exists an open neighbourhood $U\subset X$ of $x$ such that $T(U)$ is open in $X$ and the restriction $T|_U$ is a homeomorphism onto $T(U)$.
For simplicity,
we assume that $T$ is defined on the whole space $X$.
For a Deaconu-Renault system $(X,T)$,
the Deaconu-Renault groupoid $G(X,T)$ is defined as follows.
Put 
\begin{align*}
&G(X,T)\defeq \{(y,n-m,x)\in X\times \Z\times X\mid T^ny=T^mx\},\\
&G(X,T)^{(0)}\defeq \{(x,0,x)\in G(X,T)\mid x\in X\}.
\end{align*}
We identify $G(X,T)^{(0)}$ with $X$ via the bijection
\[
X\ni x\mapsto (x,0,x)\in G(X,T)^{(0)}.
\]
The domain map and range map $d,r\colon G(X,T)\to X$ are defined by
\[
d(y,n,x)\defeq x,\,\,\, r(y,n,x)\defeq y
\]
for $(y,n,x)\in G(X,T)$.
The product and inverse of $G(X,T)$ is defined by
\[
(z,n,y)\cdot (y,m,x)\defeq (z,n+m, x),\,\,\, (y,m,x)^{-1}\defeq (x,-m,y)
\]
for $(z,n,y),(y,m,x)\in G(X,T)$.
For open sets $U,V\subset X$ and $n,m\in \N$,
define
\[
Z(U,n,m,V)\defeq \{(y,n-m,x)\in G(X,T)\mid T^ny=T^mx\}.
\]
Then $G(X,T)$ is a locally compact Hausdorff \'etale groupoid with respect to the topology generated by a family $\{Z(U,n,m,V)\}_{U,V\subset X, n,m\in\N}$,
where $U$ and $V$ are taken around all open subsets of $X$.
In addition,
$G(X,T)$ has the canonical $\Z$-valued continuous cocycle defined by
\[
\sigma_X\colon G(X,T)\ni (y,n,x)\mapsto n\in\Z.
\]

The following fact about cocycles on $G(X,T)$ is elementary.
See \cite[Section 4.1]{RenaultAF-equovalence} for more details.
\begin{prop}[{\cite[Section 4.1]{RenaultAF-equovalence}}]\label{prop: cocycles on Deaconu-Renault systems.}
	Let $(X,T)$ be a Deaconu-Renault system and $H$ be a topological abelian group.
	For a continuous function $f\colon X\to H$,
	define $\sigma_f\colon G(X,T)\to H$ by
	\[
	\sigma_f(y,n-m,x)\defeq \sum_{i=0}^{n-1}f(T^i(y))-\sum_{j=0}^{m-1}f(T^j(x))
	\]
	for $(y,n-m,x)\in G(X,T)$ with $T^n(y)=T^m(x)$.
	Then $\sigma_f\colon G(X,T)\to H$ is a continuous cocycle.
	In addition,
	for a continuous cocycle $\sigma\colon G(X,T)\to H$,
	define $f_{\sigma}\colon X\to H$ by
	\[
	f_{\sigma}(x)\defeq \sigma(x,1,T(x))
	\]
	for $x\in X$.
	Then the assignment $f\mapsto \sigma_f$ is a bijection from the set of continuous functions $f\colon X\to H$ to the set of continuous cocycles $\sigma\colon G(X,T)\to H$.
	The inverse map is given by $\sigma\mapsto f_{\sigma}$.
\end{prop}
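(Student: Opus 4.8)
The plan is to establish the proposition in two stages: first that $\sigma_f$ is a well-defined continuous cocycle for every continuous $f\colon X\to H$, and then that the assignments $f\mapsto\sigma_f$ and $\sigma\mapsto f_\sigma$ are mutually inverse. Once both stages are in place, $f\mapsto\sigma_f$ is automatically a bijection onto the set of continuous cocycles $G(X,T)\to H$ with inverse $\sigma\mapsto f_\sigma$.

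\emph{The cocycle $\sigma_f$.} First I would check that $\sigma_f(y,n-m,x)$ does not depend on the chosen $n,m\in\N$ with $T^n(y)=T^m(x)$. If $(n',m')$ is another such pair with the same difference, then $n'=n+k$ and $m'=m+k$ for some $k\ge 0$, and applying $T^k$ to $T^n(y)=T^m(x)$ gives $T^{n+l}(y)=T^{m+l}(x)$ for $0\le l\le k$; the two candidate values then differ by $\sum_{l=0}^{k-1}\bigl(f(T^l(T^n(y)))-f(T^l(T^m(x)))\bigr)=0$. For continuity, on each basic open set $Z(U,n,m,V)$ the fixed defining formula exhibits $\sigma_f$ as a continuous function of $(y,x)$, since $T$ and $f$ are continuous; as these sets cover $G(X,T)$ and the values agree by well-definedness, $\sigma_f$ is continuous. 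For the cocycle identity, take composable $\gamma=(z,a-b,y)$ and $\gamma'=(y,c-d,x)$ with $T^a(z)=T^b(y)$ and $T^c(y)=T^d(x)$; applying $T^c$ to the first relation and using the second gives $T^{a+c}(z)=T^{b+d}(x)$, so the pair $(a+c,b+d)$ represents $\gamma\gamma'$. Splitting the $z$-sum at index $a$ and the $x$-sum at index $d$, and rewriting the resulting tails via $T^a(z)=T^b(y)$ and $T^d(x)=T^c(y)$, reduces $\sigma_f(\gamma\gamma')=\sigma_f(\gamma)+\sigma_f(\gamma')$ to the identity $\sum_{l=0}^{c-1}f(T^{b+l}(y))+\sum_{j=0}^{b-1}f(T^j(y))=\sum_{i=0}^{c-1}f(T^i(y))+\sum_{l=0}^{b-1}f(T^{c+l}(y))$, both sides being $\sum_{k=0}^{b+c-1}f(T^k(y))$.

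\emph{Mutual inverses.} The identity $f_{\sigma_f}=f$ is immediate: $(x,1,T(x))$ is represented by $n=1$, $m=0$, so $f_{\sigma_f}(x)=\sigma_f(x,1,T(x))=f(x)$. For $\sigma_{f_\sigma}=\sigma$, fix $\gamma=(y,n-m,x)$ with $T^n(y)=T^m(x)$. Each triple $(T^i(y),1,T^{i+1}(y))$ lies in $G(X,T)$, and the chain $(y,1,T(y))(T(y),1,T^2(y))\cdots(T^{n-1}(y),1,T^n(y))$ equals $(y,n,T^n(y))$; since $\sigma$ is a cocycle into the abelian group $H$, one gets $\sum_{i=0}^{n-1}f_\sigma(T^i(y))=\sigma(y,n,T^n(y))$ and likewise $\sum_{j=0}^{m-1}f_\sigma(T^j(x))=\sigma(x,m,T^m(x))$, hence $\sigma_{f_\sigma}(\gamma)=\sigma(y,n,T^n(y))-\sigma(x,m,T^m(x))$. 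Finally $T^n(y)=T^m(x)$ gives the factorisation $\gamma=(y,n,T^n(y))\cdot(x,m,T^m(x))^{-1}$, and the cocycle property together with $\sigma(\delta^{-1})=-\sigma(\delta)$ yields $\sigma(\gamma)=\sigma(y,n,T^n(y))-\sigma(x,m,T^m(x))=\sigma_{f_\sigma}(\gamma)$.

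I expect the only real obstacle to be bookkeeping: keeping representatives $(n,m)$ of groupoid elements compatible under composition and inversion, and respecting the empty-sum convention when an exponent is $0$. Nothing here is conceptually deep; the one point needing care is that $\sigma_f$ is defined chart-by-chart, so its continuity (and the cocycle identity) must be verified locally rather than read off a single global formula.
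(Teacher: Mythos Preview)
Your proof is correct. The paper does not supply its own proof of this proposition; it simply cites \cite[Section 4.1]{RenaultAF-equovalence} and remarks that the fact is elementary, so there is nothing to compare against. Your argument fills in precisely the details one would expect: well-definedness via the telescoping of extra terms when shifting the representative $(n,m)$, local continuity on the basic open sets $Z(U,n,m,V)$, the cocycle identity via the regrouping of $y$-sums, and the two inverse computations using the factorisation $(y,n-m,x)=(y,n,T^n(y))\cdot(x,m,T^m(x))^{-1}$.
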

	
	For an \'etale groupoid associated with a Deaconu-Renault system,
	topological principality is equivalent to effectiveness.
	
\begin{prop} \label{prop: effective and topologically principal is equivalent for Deaconu-Renault}
	Let $(X,T)$ be a Deaconu-Renault system.
	Then $G(X,T)$ is effective if and only if $G(X,T)$ is topologically principal.
\end{prop}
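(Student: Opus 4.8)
The plan is to prove the nontrivial implication — that effectiveness forces topological principality — since the converse is the general fact that a Hausdorff topologically principal \'etale groupoid is effective, already recalled in the preliminaries. So assume $G(X,T)$ is effective; I must show that the set of units with trivial isotropy is dense in $X = G(X,T)^{(0)}$.

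First I would reduce the problem to a countable family of obstructions. For a point $x \in X$, the isotropy group $G(X,T)(x)$ consists of elements $(x,k,x)$ with $T^{n}x = T^{m}x$ for some $n,m \geq 0$ with $k = n-m$. So $x$ has nontrivial isotropy precisely when there exist $n > m \geq 0$ (or symmetrically) with $T^n x = T^m x$, i.e.\ $x$ is \emph{eventually periodic}: $T^m x$ is a periodic point of $T$. Hence the ``bad set'' is $\bigcup_{0 \leq m < n} \{x \in X : T^n x = T^m x\}$, a countable union. By the Baire category theorem it suffices to show that each set $E_{n,m} \defeq \{x \in X : T^n x = T^m x\}$ (for $m < n$) has empty interior in $X$; then the complement of the bad set contains a dense $G_\delta$, in particular is dense, and consists of points with trivial isotropy.

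Next I would derive a contradiction from $E_{n,m}$ having nonempty interior. Suppose $U \subset E_{n,m}$ is a nonempty open set with $0 \leq m < n$. Then for every $x \in U$ we have $T^n x = T^m x$, so the triple $(x,\, n-m,\, x)$ lies in $G(X,T)$, and in fact the set $Z \defeq \{(x, n-m, x) : x \in U\}$ is an open subset of $G(X,T)$: it is the intersection of the basic open set $Z(X, n, m, X)$ with the open preimage $d^{-1}(U)$, suitably cut down — more carefully, shrinking $U$ if necessary so that $T^m|_U$ and $T^n|_U$ are homeomorphisms onto open sets (possible since $T$ is a local homeomorphism, hence so are all its powers), $Z = Z(U, n, m, U)$ is a basic open bisection contained in $d^{-1}(U) \cap r^{-1}(U)$. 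Every element of $Z$ is of the form $(x, n-m, x)$, hence lies in $\Iso(G(X,T))$. Thus $Z \subset \Iso(G(X,T))$ is open, and since $G(X,T)$ is effective we must have $Z \subset G(X,T)^{(0)}$. But elements of $G(X,T)^{(0)}$ have cocycle value $\sigma_X = 0$, while elements of $Z$ have $\sigma_X = n - m > 0$, a contradiction. Therefore $E_{n,m}$ has empty interior for all $0 \le m < n$.

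The main obstacle — really the only delicate point — is the bookkeeping that makes $Z$ a genuine open subset of $G(X,T)$ rather than merely a subset of the underlying set; one must invoke the local homeomorphism property of $T$ (and of its iterates) to pass to a small enough $U$ on which $T^n$ and $T^m$ restrict to homeomorphisms onto open sets, so that $Z(U,n,m,U)$ is a basic open set in the defined topology on $G(X,T)$. Everything else is a routine combination of the Baire category theorem, the description of isotropy via eventual periodicity, and the use of the canonical cocycle $\sigma_X$ to separate $G(X,T)^{(0)}$ from the diagonal elements of positive degree. I would also remark that this argument does not need $X$ to be second countable in any essential way beyond having the Baire property; if one wants to be safe one can note that the Deaconu–Renault groupoid of interest is typically second countable, or simply observe that $X$ locally compact Hausdorff is a Baire space.
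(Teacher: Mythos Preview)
Your proof is correct and follows essentially the same route as the paper: both arguments define the closed sets $E_{n,m}=\{x:T^nx=T^mx\}$ (the paper works with their open complements $A_{n,m}$), show that effectiveness forces each of these to have empty interior by producing an open subset of $\Iso(G(X,T))$ with nonzero cocycle value, and then conclude by the Baire category theorem that points with trivial isotropy are dense. Your write-up is in fact more careful than the paper's on the one point you flag --- shrinking $U$ so that $T^n|_U$ is injective is exactly what is needed to ensure that every element of $Z(U,n,m,U)$ has the form $(x,n-m,x)$ (since $T^ny=T^mx=T^nx$ forces $y=x$), whereas the paper simply asserts that $(x,n-m,x)\in\Iso(G(X,T))^\circ$ without spelling this out.
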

\begin{proof}
	Since $G(X,T)$ is Hausdorff,
	$G(X,T)$ is effective if $G(X,T)$ is topologically principal by \cite[Proposition 3.6]{renault}.
	We show the converse and assume that $G(X,T)$ is effective.
	For $n,m\in\N$,
	put
	\[
	A_{n,m}\defeq \{x\in X\mid T^n(x)\not=T^m(x)\}.
	\]
	Then $A_{n,m}$ is open and dense in $X$ if $n\not=m$.
	Indeed,
	it is obvious that $A_{n,m}$ is open.
	Assume that there exists a non-empty open set $U\subset X$ such that $U\cap A_{n,m}=\emptyset$.
	Then, for $x\in U$,
	$(x,n-m,x)$ belongs to $\Iso(G(X,T))^{\circ}$.
	Since we assume that $G(X,T)$ is effective,
	we obtain $n=m$ and this is a contradiction.
	Therefore $A$ is dense in $X$.
	
	Now,
	put
	\[
	A\defeq \bigcap_{n\not=m} A_{n,m}.
	\]
	Then $A$ is dense in $X$ by Baire category theorem.
	In addition,
	one can check that
	\[
	d^{-1}(\{x\})\cap r^{-1}(\{x\})=\{x\}
	\]
	holds for all $x\in A$.
	Hence $G(X,T)$ is topologically principal.
	\qed
\end{proof}

\begin{defi}
	A Deaconu-Renault system $(X,T)$ is said to be topologically free if $G(X,T)$ is topologically principal.
	By Proposition \ref{prop: effective and topologically principal is equivalent for Deaconu-Renault},
	this is equivalent to the condition that $G(X,T)$ is effective.
\end{defi}

\subsubsection{Continuous orbit maps of Deaconu-Renault systems}

The notion of continuous orbit equivalence between Deaconu-Renault systems is introduced in \cite[Definition 8.1]{CARLSEN2021107923}.
This notion is a kind of equivalence relation between Deaconu-Renault systems.
In this subsection,
we introduce a slightly generalized notion which we call continuous orbit maps.
In this subsection,
 we explain how to characterize continuous orbit maps in terms of associated \'etale groupoids following  \cite[Section 8.1]{CARLSEN2021107923}.
We note that almost all of statements in this subsection is a special version in \cite[Section 8.1]{CARLSEN2021107923}.
In \cite[Section 8.1]{CARLSEN2021107923},
the authors treat general Deaconu-Renault systems.
In this subsection,
we mainly restrict our attention to topologically free Deaconu-Renault systems.
Instead of imposing this restriction,
we aim to simplify the proofs.
For this reason,
we give proofs for already known results in \cite[Section 8.1]{CARLSEN2021107923}.

\begin{defi}[{cf.\ \cite[Definition 8.1]{CARLSEN2021107923}}]
	Let $(X,T)$ and $(Y,S)$ be Deaconu-Renault systems.
	We say that a triplet $(l,k,h)$ is a continuos orbit map from $(X,T)$ to $(Y,S)$ if the following conditions hold:
	\begin{enumerate}
		\item $l,k\colon X\to \N$ and $h\colon X\to Y$ are continuous maps, and
		\item for all $x\in X$,
		\[
		S^{l(x)}(h(x))=S^{k(x)}(h(T(x)))
		\]
		holds.
	\end{enumerate}
	A 5-tuple $(l,k,l',k',h)$ is called a continuous orbit equivalence between $(X,T)$ and $(Y,S)$ if
	\begin{enumerate}
		\item $l,k\colon X\to \N$ and $l',k'\colon Y\to \N$ are continuous maps,
		\item $h\colon X\to Y$ is a homeomorphism,
		\item $(l,k,h)$ is a continuous orbit map from $(X,T)$ to $(Y,S)$, and
		\item $(l',k',h^{-1})$ is a continuous orbit map from $(Y,S)$ to $(X,T)$.
	\end{enumerate}
\end{defi}

The following proposition is a slight generalization of \cite[Lemma 8.8]{CARLSEN2021107923}.

\begin{prop}[{cf.\ \cite[Lemma 8.8]{CARLSEN2021107923}}]\label{prop: groupoid hom induced by continuous orbit equivalent}
	Let $(X,T)$ and $(Y,S)$ be Deaconu-Renault systems.
	Assume that $(l,k,h)$ is a continuous orbit map from $(X,T)$ to $(Y,S)$.
	Applying Proposition \ref{prop: cocycles on Deaconu-Renault systems.} to $l-k\colon X\to \Z$,
	define a continuous cocycle $\sigma_{l-k}\colon G(X,T)\to \Z$ by
	\[
	\sigma_{l-k}(y,n-m,x)\defeq \sum_{i=0}^{n-1}(l(T^i(y))-k(T^i(y)))-\sum_{j=0}^{m-1}(l(T^j(x))-k(T^j(x)))
	\]
	for $(y,n-m,x)\in G(X,T)$ with $T^n(y)=T^m(x)$.
	Then 
	\[(h(y), \sigma_{l-k}(y,n-m,x),h(x))\in G(Y,S)\]
	holds for all $(y,n-m,x)\in G(X,T)$ with $T^n(y)=T^m(x)$.
	In addition,
	the map
	\[
	\Phi\colon G(X,T)\ni (y,n-m,x)\mapsto (h(y), \sigma_{l-k}(y,n-m,x),h(x))\in G(Y,S)
	\]
	is a continuous groupoid homomorphism.
\end{prop}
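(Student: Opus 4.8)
The plan is to begin by establishing, by induction on $n\in\N$, the ``higher-order'' orbit identity
\[
S^{L_n(x)}(h(x))=S^{K_n(x)}(h(T^n(x)))\qquad(x\in X),
\]
where $L_n(x)\defeq\sum_{i=0}^{n-1}l(T^i(x))$ and $K_n(x)\defeq\sum_{i=0}^{n-1}k(T^i(x))$. The cases $n=0$ and $n=1$ are immediate, the latter being the defining relation of a continuous orbit map. For the inductive step I would apply $S^{K_n(x)}$ to the identity $S^{l(T^n(x))}(h(T^n(x)))=S^{k(T^n(x))}(h(T^{n+1}(x)))$ and rewrite the left-hand side $S^{K_n(x)+l(T^n(x))}(h(T^n(x)))=S^{l(T^n(x))}\bigl(S^{K_n(x)}(h(T^n(x)))\bigr)$ using the inductive hypothesis together with the commutativity of powers of $S$; a short rearrangement of exponents then yields the identity for $n+1$.

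Next I would verify that $\Phi$ is well defined, i.e.\ that $(h(y),\sigma_{l-k}(y,n-m,x),h(x))\in G(Y,S)$ whenever $T^n(y)=T^m(x)$. Setting $z\defeq T^n(y)=T^m(x)$, the identity above gives $S^{L_n(y)}(h(y))=S^{K_n(y)}(h(z))$ and $S^{L_m(x)}(h(x))=S^{K_m(x)}(h(z))$; applying $S^{K_m(x)}$ to the first and $S^{K_n(y)}$ to the second and comparing the two right-hand sides yields
\[
S^{L_n(y)+K_m(x)}(h(y))=S^{L_m(x)+K_n(y)}(h(x)),
\]
so the triple lies in $G(Y,S)$, and its middle coordinate $\bigl(L_n(y)+K_m(x)\bigr)-\bigl(L_m(x)+K_n(y)\bigr)=\bigl(L_n(y)-K_n(y)\bigr)-\bigl(L_m(x)-K_m(x)\bigr)$ is exactly $\sigma_{l-k}(y,n-m,x)$ by the formula in Proposition \ref{prop: cocycles on Deaconu-Renault systems.}. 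That $\sigma_{l-k}$ does not depend on the chosen representative $(n,m)$ and is a continuous $\Z$-valued cocycle on $G(X,T)$ is already granted by Proposition \ref{prop: cocycles on Deaconu-Renault systems.}, so no extra work is needed there.

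It then remains to check that $\Phi$ is a continuous groupoid homomorphism. For the homomorphism property, if $\gamma=(z,p,y)$ and $\gamma'=(y,q,x)$ are composable in $G(X,T)$, then $d(\Phi(\gamma))=h(y)=r(\Phi(\gamma'))$, so $\Phi(\gamma)$ and $\Phi(\gamma')$ are composable in $G(Y,S)$, and
\[
\Phi(\gamma)\Phi(\gamma')=\bigl(h(z),\sigma_{l-k}(\gamma)+\sigma_{l-k}(\gamma'),h(x)\bigr)=\bigl(h(z),\sigma_{l-k}(\gamma\gamma'),h(x)\bigr)=\Phi(\gamma\gamma'),
\]
using that $\sigma_{l-k}$ is a cocycle. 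For continuity, I would fix a basic open set $Z(U,n,m,V)\subset G(X,T)$ and note that on it the middle coordinate of $\Phi$ equals the continuous, hence (as $\Z$ is discrete) locally constant, function $\sigma_{l-k}$; thus near any point we may assume $\sigma_{l-k}\equiv N$ for some fixed $N$, on which set $\Phi$ has the form $(y,n-m,x)\mapsto(h(y),N,h(x))$ with $h$ continuous, and shrinking $U$ and $V$ so that $h(U)$ and $h(V)$ land in suitable basic sets of $G(Y,S)$ exhibits $\Phi$ as continuous there. The main obstacle I expect is the telescoping induction in the first step and the careful bookkeeping of the exponents when combining the two instances of the identity at $z=T^n(y)=T^m(x)$; once the exponent algebra is arranged correctly, well-definedness, multiplicativity, and continuity are all routine.
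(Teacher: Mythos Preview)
Your induction for the higher-order orbit identity, the well-definedness of $\Phi$, and the homomorphism check are correct and essentially identical to the paper's argument.

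The gap is in the continuity step. You argue that once $\sigma_{l-k}\equiv N$ locally, $\Phi$ looks like $(y,n-m,x)\mapsto(h(y),N,h(x))$ and that it suffices to shrink $U,V$ so that $h(U),h(V)$ land in the right places. This would be fine if $G(Y,S)$ carried the relative topology from $Y\times\Z\times Y$, but the groupoid topology is strictly finer: membership in a basic set $Z(U',p,q,V')$ requires not only $h(y)\in U'$, $h(x)\in V'$, and $p-q=N$, but also $S^p(h(y))=S^q(h(x))$ for those \emph{specific} exponents $p,q$. Your well-definedness computation yields $S^i(h(y))=S^j(h(x))$ with $i=L_n(y)+K_m(x)$ and $j=L_m(x)+K_n(y)$ (which one may take constant after shrinking), and $i-j=p-q$; when $i\le p$ one obtains $S^p(h(y))=S^q(h(x))$ by applying $S^{p-i}$, but when $i>p$ one must \emph{cancel} $S^{i-p}$, and $S$ is only a local homeomorphism. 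The paper addresses precisely this point: it chooses an open neighbourhood $W$ of $S^p(h(y_0))=S^q(h(x_0))$ on which $S^{i-p}$ is injective, and then shrinks the domain so that both $S^p(h(y))$ and $S^q(h(x))$ lie in $W$, making the cancellation legitimate. Without this local-injectivity argument your continuity proof is incomplete.
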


\begin{proof}
	Using the formula
	\[
	S^{l(x)}(h(x))=S^{k(x)}(h(Tx))
	\]
	repeatedly,
	one can check that
	\[
	S^{\sum_{i=0}^{n-1}l(T^i(x))}(h(x))=S^{\sum_{i=0}^{n-1}k(T^i(x))}(h(T^{n}(x)))
	\]
	holds for all $x\in X$ and $n\in\N$.
	Fix $(y,n-m,x)\in G(X,T)$ with $T^{n}(y)=T^m(x)$ arbitrarily.
	Then we have
	\begin{align*}
	S^{\sum_{i=0}^{n-1}l(T^i(y))+\sum_{j=0}^{m-1}k(T^j(x))}(h(x))&=S^{\sum_{i=0}^{n-1}k(T^i(y))+\sum_{j=0}^{m-1}k(T^j(x))}(h(T^n(y))) \\
	&=S^{\sum_{i=0}^{n-1}k(T^i(y))+\sum_{j=0}^{m-1}k(T^j(x))}(h(T^m(x)))\\
	&=S^{\sum_{i=0}^{n-1}k(T^i(y))+\sum_{j=0}^{m-1}l(T^j(x))}(h(x)).
	\end{align*}
	Hence we obtain
	\[
	(h(y),\sigma_{l-k}(y,n-m,x), h(x))\in G(Y,S).
	\]
	
	Next,
	we show that $\Phi$ is a groupoid homomorphism.
	Take $\alpha\defeq (z,n,y)$ and $\beta\defeq (y,m,x)\in G(X,T)$ arbitrarily.
	Then $\Phi(\alpha)$ and $\Phi(\beta)$ are composable and we have
	\begin{align*}
	\Phi(\alpha)\Phi(\beta)&=(h(z),\sigma_{l-k}(\alpha),h(y))(h(y),\sigma_{l-k}(\beta),h(x))\\
	&=(h(z),\sigma_{l-k}(\alpha)+\sigma_{l-k}(\beta),h(x)) \\
	&=(h(z),\sigma_{l-k}(\alpha\beta),h(x))=\Phi(\alpha\beta).
	\end{align*}
	Thus $\Phi$ is a groupoid homomorphism.
				
	Finally,
	we show that $\Phi$ is continuous.
	Fix $(y_0,n-m,x_0)\in G(X,T)$,
	open sets $U,V\subset Y$ and $p,q\in\N$ such that 
	\[
	\Phi(y_0,n-m,x_0)=(h(y_0),\sigma_{l-k}(y_0,n-m,x_0), h(x_0))\in Z(U,p,q,V)
	\]
	and $T^{n}(y_0)=T^m(x_0)$.
	Note that we have $S^{p}(h(y))=S^{q}(h(x))$.
	Put 
	\begin{align*}
	i&\defeq \sum_{i=0}^{n-1}l(T^i(y_0))+\sum_{j=0}^{m-1}k(T^j(x_0)),\\
	j&\defeq \sum_{i=0}^{n-1}k(T^i(y_0))+\sum_{j=0}^{m-1}l(T^j(x_0)).
	\end{align*}
	Then we have $S^i(h(y_0))=S^{j}(h(x_0))$ and \[\sigma_{l-k}(y_0,n-m,x_0)=p-q=i-j.\]
	In case that $i>p$ holds,
	there exists an open neighbourhood $W\subset Y$ of $S^{p}(h(y_0))$ such that $S^{i-p}|_W$ is injective since $S$ is a local homeomorphism.
	If $i\leq p$,
	put $W\defeq Y$.
	Now, take open subsets $\widetilde{U},\widetilde{V}\subset X$ such that the followings hold:
	\begin{itemize}
		\item $y_0\in\widetilde{U}$ and $x_0\in \widetilde{V}$,
		\item $\widetilde{U}\subset h^{-1}(U)\cap (S^i\circ h)^{-1}(W)$ and $\widetilde{V}\subset h^{-1}(V)\cap (S^j\circ h)^{-1}(W)$,
		\item the formulae
		\begin{align*}
		i&=\sum_{i=0}^{n-1}l(T^i(y))+\sum_{j=0}^{m-1}k(T^j(x)), \\
		j&=\sum_{i=0}^{n-1}k(T^i(y))+\sum_{j=0}^{m-1}l(T^j(x))
		\end{align*}
		holds for all $y\in\widetilde{U}$ and $x\in\widetilde{V}$.
	\end{itemize}
We observe
\[
\Phi(Z(\widetilde{U},n,m,\widetilde{V}))\subset Z(U,p,q,V).
\]
Indeed,
for $(y,n-m,x)\in Z(\widetilde{U},n,m,\widetilde{V})$,
we have $S^i(h(y))=S^{j}(h(x))$.
We claim $S^{p}(h(y))=S^{q}(h(x))$.
This is obvious if $i\leq p$.
In case that $i>p$,
since $S^{p}(h(y))$ and $S^{q}(h(x))$ are contained in $W$ and $S^{i-p}=S^{j-q}$ is injective on $W$,
we obtain $S^{p}(h(y))=S^{q}(h(x))$.
Thus we have shown
\[
\Phi(y,n-m,x)=(h(y),p-q,h(x))\in Z(U,p,q, V).
\]
Hence $\Phi$ is continuous.
\qed	
\end{proof}

The next lemma is a variant of Proposition \ref{prop: all we need is effectiveness}.
Remark that we do not assume that $\Phi$ is an automorphism while we assume that $G$ is topologically principal.

\begin{lem} \label{lemma: groupoid endo which is identity on unit space is idendity}
	Let $G$ be a locally compact Hausdorff \'etale topologically principal groupoid.
	Assume that $\Phi\colon G\to G$ is a continuous groupoid homomorphism such that $\Phi|_{G^{(0)}}=\id_{G^{(0)}}$.
	Then $\Phi=\id_G$ holds.
\end{lem}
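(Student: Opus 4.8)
The plan is to reduce the whole statement to the dense set of points with trivial isotropy, and then to close up by a continuity/Hausdorffness argument. The key observation is that, although $\Phi$ is only a groupoid homomorphism (not assumed injective or surjective), the hypothesis $\Phi|_{G^{(0)}}=\id_{G^{(0)}}$ already forces $\Phi$ to be compatible with the structure maps.

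First I would show that $d\circ\Phi=d$ and $r\circ\Phi=r$. Fix $\alpha\in G$. From $\alpha=\alpha\, d(\alpha)$ (which is a valid product since $d(\alpha)=r(d(\alpha))$) and $\Phi(d(\alpha))=d(\alpha)$, the homomorphism property says $\Phi(\alpha)$ and $\Phi(d(\alpha))=d(\alpha)$ are composable, so $d(\Phi(\alpha))=r(d(\alpha))=d(\alpha)$; symmetrically, from $\alpha=r(\alpha)\,\alpha$ we get $r(\Phi(\alpha))=r(\alpha)$. Next I would use this at a point $x\in G^{(0)}$ with $G(x)=\{x\}$: for any $\alpha$ with $d(\alpha)=x$, the element $\alpha^{-1}\Phi(\alpha)$ is defined (because $d(\alpha^{-1})=r(\alpha)=r(\Phi(\alpha))$) and satisfies $d(\alpha^{-1}\Phi(\alpha))=d(\Phi(\alpha))=x$ and $r(\alpha^{-1}\Phi(\alpha))=r(\alpha^{-1})=x$, hence lies in $G(x)=\{x\}$; therefore $\alpha^{-1}\Phi(\alpha)=x$ and $\Phi(\alpha)=\alpha\,(\alpha^{-1}\Phi(\alpha))=\alpha\,x=\alpha$. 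Thus $\Phi$ agrees with $\id_G$ on $d^{-1}(D)$, where $D\defeq\{x\in G^{(0)}\mid G(x)=\{x\}\}$.

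Finally I would pass to the closure. Since $G$ is topologically principal, $D$ is dense in $G^{(0)}$; since $d$ is a local homeomorphism, hence an open surjection, $d^{-1}(D)$ is dense in $G$ (any nonempty open $V\subset G$ has $d(V)$ open nonempty, so $d(V)\cap D\neq\emptyset$). On the other hand, $\{\alpha\in G\mid \Phi(\alpha)=\alpha\}$ is the equalizer of the two continuous maps $\Phi,\id_G\colon G\to G$, hence closed because $G$ is Hausdorff. A closed set containing a dense set equals the whole space, so $\Phi=\id_G$.

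I expect no serious obstacle; the only delicate point is the first step, namely that a bare groupoid homomorphism fixing the unit space automatically commutes with $d$ and $r$ — this is exactly what makes $\alpha^{-1}\Phi(\alpha)$ a legitimate element of the isotropy group. This lemma is the non-automorphism counterpart of Proposition \ref{prop: all we need is effectiveness}, but note that here one genuinely wants topological principality rather than mere effectiveness, since the argument proceeds pointwise over the dense set $D$ instead of through open bisections.
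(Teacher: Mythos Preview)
Your proof is correct and follows essentially the same approach as the paper: both arguments first observe that $d\circ\Phi=d$ and $r\circ\Phi=r$, then use trivial isotropy at points of the dense set $D$ to conclude $\Phi(\alpha)=\alpha$ on $d^{-1}(D)$ (via the element $\alpha^{-1}\Phi(\alpha)\in G(x)$, or equivalently $\Phi(\alpha)^{-1}\alpha$ in the paper), and finally close up using continuity and Hausdorffness. Your write-up is in fact slightly more explicit than the paper's about why $\Phi$ intertwines $d$ and $r$ and about the density of $d^{-1}(D)$, but the strategy is identical.
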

\begin{proof}
	Put
	\[
	A\defeq \{x\in G^{(0)}\mid d^{-1}(\{x\})\cap r^{-1}(\{x\})=\{x\}\}.
	\]
	Then $A$ is dense in $G^{(0)}$ since we assume that $G$ is topologically principal.
	Since $d\colon G\to G^{(0)}$ is an open map,
	$d^{-1}(A)$ is dense in $G$.
	Hence,
	to show $\Phi=\id_G$,
	it suffices to show $\Phi(\alpha)=\alpha$ for all $\alpha\in d^{-1}(A)$.
	Take $\alpha\in d^{-1}(A)$.
	Since we have
	\[d(\Phi(\alpha)^{-1})=r(\Phi(\alpha))=\Phi(r(\alpha))=r(\alpha),\]
	$\Phi(\alpha)^{-1}$ and $\alpha$ are composable.
	Since we have
	\[d(\Phi(\alpha)^{-1}\alpha)=r(\Phi(\alpha)^{-1}\alpha)=d(\alpha)\in A,
	\]
	we obtain $\Phi(\alpha)^{-1}\alpha=d(\alpha)$ and therefore $\Phi(\alpha)=\alpha$.
	\qed
\end{proof}

\begin{cor}[{\cite[Proposition 8.3]{CARLSEN2021107923}}]\label{Corollary: continuous orbit equivalence implies groupoid isomorphisms}
	Let $(X,T)$ and $(Y,S)$ be topologically free Deaconu-Renault systems.
	Assume that there exits a continuous orbit equivalence $(l,k,l',k',h)$ between $(X,T)$ and $(Y,S)$.
	Then the continuous groupoid homomorphisms
	\[
	\Phi\colon G(X,T)\ni (y,n-m,x)\mapsto (h(y), \sigma_{l-k}(y,n-m,x),h(x))\in G(Y,S)
	\]
	and 
	\[
	\Psi\colon G(Y,S)\ni (y,n-m,x)\mapsto (h^{-1}(y), \sigma_{l'-k'}(y,n-m,x),h^{-1}(x))\in G(Y,S).
	\] 
	induced by Proposition \ref{prop: groupoid hom induced by continuous orbit equivalent} are groupoid isomorphisms and $\Phi=\Psi^{-1}$ holds.
\end{cor}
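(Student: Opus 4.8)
The plan is to show that $\Psi\circ\Phi = \id_{G(X,T)}$ and $\Phi\circ\Psi = \id_{G(Y,S)}$, from which it follows immediately that $\Phi$ and $\Psi$ are mutually inverse bijective continuous groupoid homomorphisms, hence isomorphisms of topological groupoids, and in particular $\Phi = \Psi^{-1}$.

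First I would compute the restrictions of $\Phi$ and $\Psi$ to the unit spaces. By the construction in Proposition \ref{prop: groupoid hom induced by continuous orbit equivalent}, the cocycle $\sigma_{l-k}$ vanishes on units (the defining sums are empty when $n=m=0$), so $\Phi(x,0,x) = (h(x),0,h(x))$; thus, under the identifications $G(X,T)^{(0)}\cong X$ and $G(Y,S)^{(0)}\cong Y$, the map $\Phi$ restricts to $h$ on the unit space. Symmetrically, $\Psi$ restricts to $h^{-1}$. Since the composite of continuous groupoid homomorphisms is again a continuous groupoid homomorphism, $\Psi\circ\Phi\colon G(X,T)\to G(X,T)$ is a continuous groupoid homomorphism whose restriction to the unit space is $h^{-1}\circ h = \id_X$.

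Next, I would invoke Lemma \ref{lemma: groupoid endo which is identity on unit space is idendity}: since $(X,T)$ is topologically free, $G(X,T)$ is topologically principal, so a continuous groupoid homomorphism $G(X,T)\to G(X,T)$ that is the identity on the unit space must be the identity. Hence $\Psi\circ\Phi = \id_{G(X,T)}$. Running the symmetric argument with $(Y,S)$ in place of $(X,T)$ — using that $(Y,S)$ is topologically free and that $\Phi\circ\Psi$ restricts to $h\circ h^{-1} = \id_Y$ on $G(Y,S)^{(0)}$ — gives $\Phi\circ\Psi = \id_{G(Y,S)}$.

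Combining the two identities, $\Phi$ is a bijection with inverse $\Psi$, both $\Phi$ and $\Psi$ are continuous groupoid homomorphisms, so $\Phi$ is an isomorphism of topological groupoids with $\Phi=\Psi^{-1}$ (and likewise for $\Psi$). I do not expect a genuine obstacle here: the only points needing (routine) care are that $\Phi$ and $\Psi$ are well-defined continuous groupoid homomorphisms landing in the correct groupoids, which is precisely Proposition \ref{prop: groupoid hom induced by continuous orbit equivalent}, and that the composites $\Psi\circ\Phi$ and $\Phi\circ\Psi$ remain continuous groupoid homomorphisms so that Lemma \ref{lemma: groupoid endo which is identity on unit space is idendity} may be applied.
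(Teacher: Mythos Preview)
Your proposal is correct and follows essentially the same approach as the paper: verify that $\Psi\circ\Phi$ and $\Phi\circ\Psi$ restrict to the identity on the unit spaces, then apply Lemma \ref{lemma: groupoid endo which is identity on unit space is idendity} (using topological freeness) to conclude both composites are the identity. The paper's proof is just a terser version of exactly this argument.
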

\begin{proof}
	One can check that $\Psi\circ\Phi|_X=\id_X$ and $\Phi\circ\Psi|_Y=\id_Y$.
	Now we obtain $\Psi\circ\Phi=\id_{G(X,T)}$ and $\Phi\circ\Psi=\id_{G(Y,S)}$ by Lemma \ref{lemma: groupoid endo which is identity on unit space is idendity} and Proposition \ref{prop: effective and topologically principal is equivalent for Deaconu-Renault}.
	Hence we obtain $\Phi=\Psi^{-1}$ and,
	in particular,
	$\Phi$ is an isomorphism.
	\qed
\end{proof}

The following lemma is same as \cite[Lemma 8.4]{CARLSEN2021107923}.
We include a proof for readers' convenience.
\begin{lem}[{\cite[Lemma 8.4]{CARLSEN2021107923}}]\label{lemma: continuity of continuous orbit equivalence}
	Let $(X,T)$ be a Deaconu-Renault system.
	Define $\widetilde{l}\colon G(X,T)\to \N$ by
	\[
	\widetilde{l}(y,n,x)\defeq \min\{p\in\N\mid p\geq n, T^p(y)=T^{p-n}(x)\}.
	\]
	Then $\widetilde{l}\colon G(X,T)\to \N$ is a continuous function.
\end{lem}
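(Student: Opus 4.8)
Since $\N$ carries the discrete topology, the plan is to prove that $\widetilde{l}$ is locally constant. Fix a point $\gamma_0=(y_0,n,x_0)\in G(X,T)$ and set $p_0\defeq\widetilde{l}(\gamma_0)$; by definition this means $p_0\geq n$, $T^{p_0}(y_0)=T^{p_0-n}(x_0)$, and $T^{p}(y_0)\neq T^{p-n}(x_0)$ for every $p\in\N$ with $n\leq p<p_0$. (One should first note that $\widetilde{l}$ is well defined at all: if $(y,n,x)\in G(X,T)$, write $n=a-b$ with $a,b\in\N$ and $T^{a}(y)=T^{b}(x)$; then $p=a$ lies in the set defining $\widetilde{l}(y,n,x)$, which is therefore nonempty and bounded below.) The goal is to produce a basic open neighbourhood of $\gamma_0$ in $G(X,T)$ on which $\widetilde{l}$ is identically $p_0$.

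The neighbourhood will have the form $W\defeq Z(U',p_0,p_0-n,V')$ for suitable open sets $U'\ni y_0$ and $V'\ni x_0$. The inequality $\widetilde{l}\leq p_0$ on $W$ is automatic, since every element of $Z(U',p_0,p_0-n,V')$ has second coordinate $p_0-(p_0-n)=n$ and satisfies $T^{p_0}(y)=T^{p_0-n}(x)$. To force the reverse inequality $\widetilde{l}\geq p_0$ I would, for each of the finitely many $p\in\N$ with $n\leq p<p_0$, use that $T^{p}(y_0)\neq T^{p-n}(x_0)$ together with the Hausdorffness of $X$ and the continuity of $T^{p}$ and $T^{p-n}$ to choose open neighbourhoods $A_p\ni y_0$ and $B_p\ni x_0$ with $T^{p}(A_p)\cap T^{p-n}(B_p)=\emptyset$. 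Taking $U'\defeq\bigcap_{n\leq p<p_0}A_p$ and $V'\defeq\bigcap_{n\leq p<p_0}B_p$ (finite intersections, hence open and still containing $y_0$, resp.\ $x_0$), the set $W$ is open in $G(X,T)$ and contains $\gamma_0$ because $T^{p_0}(y_0)=T^{p_0-n}(x_0)$.

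It then remains to check $\widetilde{l}|_W\equiv p_0$: for $(y,n,x)\in W$ the defining relation of $Z(U',p_0,p_0-n,V')$ gives $T^{p_0}(y)=T^{p_0-n}(x)$, so $\widetilde{l}(y,n,x)\leq p_0$, while for every $p$ with $n\leq p<p_0$ one has $y\in A_p$ and $x\in B_p$, so $T^{p}(y)$ and $T^{p-n}(x)$ lie in disjoint open sets and hence differ, giving $\widetilde{l}(y,n,x)\geq p_0$. Thus $\widetilde{l}$ is locally constant and therefore continuous. The only point requiring care is the bookkeeping with exponents---keeping the second coordinate fixed equal to $n$ along $Z(U',p_0,p_0-n,V')$ and noting that the finitely many ``bad'' exponents to be separated are exactly those $p\in\N$ with $n\leq p<p_0$---but this presents no essential obstacle.
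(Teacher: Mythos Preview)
Your proof is correct and follows essentially the same route as the paper's: both show $\widetilde{l}$ is locally constant by exhibiting a basic open set $Z(U',p_0,p_0-n,V')$ around $\gamma_0$ on which $\widetilde{l}\equiv p_0$. The only difference is a small efficiency: the paper observes that it suffices to separate at the single exponent $p=p_0-1$, since $T^{p}(y)=T^{p-n}(x)$ for any smaller admissible $p$ would, after applying $T^{p_0-1-p}$, force $T^{p_0-1}(y)=T^{p_0-1-n}(x)$; you instead separate at every $p$ in the range $\max\{0,n\}\leq p<p_0$ and intersect, which is a bit more work but equally valid (and handles the boundary case $p_0=\max\{0,n\}$ uniformly via the empty intersection).
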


\begin{proof}
	Fix $(y_0, n, x_0)\in G(X,T)$ and put $l_0\defeq \widetilde{l}(y_0,n, x_0)$.
	First,
	assume $l_0=\max\{0,n\}$.
	Take open neighbourhoods $U\subset X$ (resp.\ $V\subset X$) of $y_0$ (resp.\ $x_0$) respectively.
	If $l_0=0$,
	then we have $n\leq 0$ and $y_0=T^{-n}(x_0)$.
	Then $Z(U,0,-n,V)$ is an open neighbourhood of $(y_0,n,x_0)$ and $\widetilde{l}|_{Z(U,0,-n,V)}=0$.
	If $l_0=n$,
	then $Z(U,l_0, 0, V)$ is an open neighbourhood of $(y_0,n,x_0)$ and $\widetilde{l}|_{Z(U,0,-n,V)}=l_0$
	
	Next,
	assume $l_0>\max\{0,n\}$.
	Then we have $T^{l_0-1}(y_0)\not=T^{l_0-1-n}(x_0)$.
	There exists open neighbourhoods $U\subset X$ (resp.\ $V\subset X$) of $y_0$ (resp.\ $x_0$) such that $T^{l_0-1}(y)\not=T^{l_0-1-n}(x)$ holds for all $y\in U$ and $x\in V$.
	Then one can check that $\widetilde{l}|_{Z(U,l_0, l_0-n, V)}=l_0$.
	Hence, $\widetilde{l}$ is a locally constant function and therefore continuous.
	\qed
\end{proof}

The following corollary is a slight generalization of \cite[$(1)\Rightarrow(2)$ in Proposition 8.3]{CARLSEN2021107923}.

\begin{cor}[{cf.\ \cite[$(1)\Rightarrow(2)$ in Proposition 8.3]{CARLSEN2021107923}}]\label{cor: isom of DR groupoid induces continuous orbit equivalent}
	Let $(X,T)$ and $(Y,S)$ be Deaconu-Renault systems.
	Assume that there exists a continuous groupoid homomorphism $\Phi\colon G(X,T)\to G(Y,S)$.
	Then there exists a continuous orbit map $(l,k,\Phi|_X)$ from $(X,T)$ to $(Y,S)$ such that
	\[
	l(x)-k(x)=\sigma_Y(\Phi(x,1,Tx))
	\]
	holds for all $x\in X$.
	If $\Phi\colon G(X,T)\to G(Y,S)$ is an isomorphism,
	then there exists a continuous orbit equivalence $(l,k,l',k',\Phi|_X)$ between $(X,T)$ and $(Y,S)$ such that
	\begin{align*}
	l(x)-k(x)&=\sigma_Y(\Phi(x,1,Tx)) \\
	l'(y)-k'(y)&=\sigma_X(\Phi^{-1}(y,1,Sy))
	\end{align*}
	holds for all $x\in X$ and $y\in Y$.
\end{cor}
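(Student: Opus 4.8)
The plan is to extract the orbit map directly from $\Phi$ and to invoke Lemma~\ref{lemma: continuity of continuous orbit equivalence} to obtain continuity for free. First I would note that a groupoid homomorphism carries idempotents to idempotents, so $\Phi$ maps the unit space $G(X,T)^{(0)}\cong X$ into $G(Y,S)^{(0)}\cong Y$; hence $h\defeq\Phi|_X\colon X\to Y$ is a well-defined continuous map. When $\Phi$ is an isomorphism, $\Phi^{-1}$ likewise restricts to a continuous map $Y\to X$, and since these restrictions are mutually inverse, $h$ is a homeomorphism.

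Next, for $x\in X$ the triple $(x,1,Tx)$ belongs to $G(X,T)$ because $T^1(x)=T^0(Tx)$, and the assignment $X\ni x\mapsto(x,1,Tx)\in G(X,T)$ is continuous: for open $U,V\subset X$ its value lies in the basic open set $Z(U,1,0,V)$ exactly when $x\in U\cap T^{-1}(V)$. Applying $\Phi$ and using $d(\Phi(x,1,Tx))=h(Tx)$ and $r(\Phi(x,1,Tx))=h(x)$, we obtain $\Phi(x,1,Tx)=(h(x),c(x),h(Tx))$, where $c(x)\defeq\sigma_Y(\Phi(x,1,Tx))$ defines a continuous $\Z$-valued function $c\colon X\to\Z$, being a composite of continuous maps. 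In particular $(h(x),c(x),h(Tx))\in G(Y,S)$, so there is some $p\in\N$ with $p\geq c(x)$ and $S^p(h(x))=S^{p-c(x)}(h(Tx))$.

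Now I would apply Lemma~\ref{lemma: continuity of continuous orbit equivalence} to the Deaconu--Renault system $(Y,S)$, obtaining a continuous function $\widetilde{l}\colon G(Y,S)\to\N$ with $\widetilde{l}(y,n,x)=\min\{p\in\N\mid p\geq n,\ S^p(y)=S^{p-n}(x)\}$. Define $l\colon X\to\N$ by $l(x)\defeq\widetilde{l}(\Phi(x,1,Tx))$; this is continuous as a composite of continuous maps, and by the definition of $\widetilde{l}$ we have $l(x)\geq c(x)$ together with $S^{l(x)}(h(x))=S^{l(x)-c(x)}(h(Tx))$. Setting $k\defeq l-c$ then produces a continuous map $X\to\N$, nonnegative since $l(x)\geq c(x)$, satisfying $S^{l(x)}(h(x))=S^{k(x)}(h(Tx))$ and $l(x)-k(x)=c(x)=\sigma_Y(\Phi(x,1,Tx))$; thus $(l,k,h)$ is the desired continuous orbit map. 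For the isomorphism case I would apply this same construction to the continuous groupoid homomorphism $\Phi^{-1}\colon G(Y,S)\to G(X,T)$, whose restriction to $Y$ is $h^{-1}$, obtaining continuous maps $l',k'\colon Y\to\N$ for which $(l',k',h^{-1})$ is a continuous orbit map from $(Y,S)$ to $(X,T)$ with $l'(y)-k'(y)=\sigma_X(\Phi^{-1}(y,1,Sy))$. Since $h$ is a homeomorphism, the $5$-tuple $(l,k,l',k',h)$ is then a continuous orbit equivalence.

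There is no substantial obstacle here: the entire content is the book-keeping that makes $l$ and $k$ simultaneously continuous, $\N$-valued, and compatible with the cocycle $c$, which is precisely what Lemma~\ref{lemma: continuity of continuous orbit equivalence} is designed to deliver. The only mildly delicate verifications are the continuity of $x\mapsto(x,1,Tx)$ and the inequality $l(x)\geq c(x)$ guaranteeing that $k$ takes values in $\N$ rather than merely in $\Z$.
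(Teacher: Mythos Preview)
Your proof is correct and follows essentially the same approach as the paper: both define $l(x)\defeq\widetilde{l}(\Phi(x,1,Tx))$ using Lemma~\ref{lemma: continuity of continuous orbit equivalence} (applied to $(Y,S)$) and $k\defeq l-\sigma_Y\circ\Phi(\cdot,1,T\cdot)$, then apply the same construction to $\Phi^{-1}$ in the isomorphism case. Your version simply spells out in more detail what the paper leaves as ``one can check,'' including the continuity of $x\mapsto(x,1,Tx)$ and the verification that $k$ is $\N$-valued.
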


\begin{proof}
	Let $\widetilde{l}\colon G(X,T)\to\N$ denotes the continuous function in Lemma \ref{lemma: continuity of continuous orbit equivalence}.
	Define $l,k\colon X\to\N$ by
	\[l(x)\defeq \widetilde{l}(\Phi(x,1,T(x))), k(x)\defeq l(x)-\sigma(\Phi(x,1,Tx)).\]
	Note that $l$ and $k$ take values in $\N$ by the definition of $\widetilde{l}$ in Lemma \ref{lemma: continuity of continuous orbit equivalence}.
	Then one can check that $(l,k, \Phi|_X)$ is a continuous orbit map from $(X,T)$ to $(Y,S)$.
	If $\Phi$ is an isomorphism,
	apply the above argument to $\Phi^{-1}$ and then we obtain a continuous orbit equivalence between $(X,T)$ and $(Y,S)$.
	\qed
\end{proof}

\subsubsection{Flip eventual conjugacy of Deaconu-Renault systems}

In this subsection,
we introduce a flip eventual conjugacy,
which is a equivalence relation between Deaconu-Renault systems.
Then we characterize a flip eventual conjugacy in terms of \'etale groupoid and C*-algebras in Theorem \ref{theorem: characterization of flip eventually conjugate}.

\begin{defi}
	Let $(X,T)$ and $(Y,S)$ be Deaconu-Renault systems.
	Then $(X,T)$ and $(Y,S)$ are said to be flip eventually conjugate if there exists a continuous orbit equivalence $(l,k,l',k',h)$ between $(X,T)$ and $(Y,S)$ such that
	\[l-k=l'-k'=1 \text{ or } l-k=l'-k'=-1\]
	holds.
	In addition,
	$(X,T)$ and $(Y,S)$ are said to be eventually conjugate if there exists a continuous orbit equivalence $(l,k,l',k',h)$
	such that
	\[
	l-k=l'-k'=1
	\]
	holds.
\end{defi}
Obviously,
eventually conjugate Deaconu-Renault systems are flip eventually conjugate.
If the underlying space $X$ and $Y$ are compact,
we may take $l,l',k,k'$ as constant functions as the following:

\begin{prop}\label{prop: if X and Y are compact, cocycles are constant}
	Let $(X,T)$ and $(Y,S)$ be Deaconu-Renault systems such that $X$ and $Y$ are compact.
	Then $(X,T)$ and $(Y,S)$ are eventually conjugate if and only if there exists $m\in \N$ and a homeomorphism $h\colon X\to Y$
	such that
	\[
	S^{m+1}\circ h=S^{m}\circ h\circ T \text{ and } T^{m+1}\circ h^{-1}=T^{m}\circ h^{-1}\circ S
	\]
	holds.
	
	In addition,
	$(X,T)$ and $(Y,S)$ are flip eventually conjugate if and only if there exists $m\in\N$ and a homeomorphism $h\colon X\to Y$ such that
	\[
	S^{m+1}\circ h=S^{m}\circ h\circ T \text{ and } T^{m+1}\circ h^{-1}=T^{m}\circ h^{-1}\circ S
	\]
	or
	\[
	S^{m}\circ h=S^{m+1}\circ h\circ T \text{ and } T^{m}\circ h^{-1}=T^{m+1}\circ h^{-1}\circ S
	\]
	hold.
\end{prop}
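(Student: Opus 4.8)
The plan is to reduce the statement to the defining relation of a continuous orbit map together with the compactness of $X$ and $Y$; no groupoid machinery from the earlier sections is needed. Recall that a continuous orbit equivalence $(l,k,l',k',h)$ with $l-k=l'-k'=1$ is the same data as a homeomorphism $h\colon X\to Y$ and continuous maps $k\colon X\to\N$, $k'\colon Y\to\N$ satisfying $S^{k(x)+1}(h(x))=S^{k(x)}(h(T(x)))$ for all $x\in X$ and $T^{k'(y)+1}(h^{-1}(y))=T^{k'(y)}(h^{-1}(S(y)))$ for all $y\in Y$.

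First I would treat the eventual conjugacy equivalence. For the forward direction, I would use that the continuous function $k\colon X\to\N$ is bounded on the compact space $X$, say by $m_1$; composing the relation $S^{k(x)+1}(h(x))=S^{k(x)}(h(T(x)))$ with $S^{m_1-k(x)}$ gives $S^{m_1+1}\circ h=S^{m_1}\circ h\circ T$. Symmetrically, boundedness of $k'$ on $Y$ by some $m_2$ gives $T^{m_2+1}\circ h^{-1}=T^{m_2}\circ h^{-1}\circ S$; taking $m=\max\{m_1,m_2\}$ and composing with the missing powers of $S$, resp.\ $T$, makes both identities hold for this single $m$. For the reverse direction, given $m$ and $h$ as in the statement, I would take $l=l'\equiv m+1$ and $k=k'\equiv m$ as constant functions: the two hypothesised identities say exactly that $(l,k,h)$ and $(l',k',h^{-1})$ are continuous orbit maps, and $l-k=l'-k'=1$, so $(X,T)$ and $(Y,S)$ are eventually conjugate.

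The flip statement is the same argument carried out in two cases. If the witnessing orbit equivalence has $l-k=l'-k'=1$ we land in the previous case and obtain the first pair of identities. If instead $l-k=l'-k'=-1$, then $k\geq 1$ and the orbit-map relation becomes $S^{k(x)-1}(h(x))=S^{k(x)}(h(T(x)))$; bounding $k$ by $m_1$ and composing with $S^{m_1-k(x)+1}$ yields $S^{m_1}\circ h=S^{m_1+1}\circ h\circ T$, and likewise $T^{m_2}\circ h^{-1}=T^{m_2+1}\circ h^{-1}\circ S$, after which one again passes to $m=\max\{m_1,m_2\}$. Conversely, the constant choices $l=l'\equiv m$, $k=k'\equiv m+1$ turn the second pair of identities into a continuous orbit equivalence with $l-k=l'-k'=-1$.

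The computation is essentially bookkeeping; the only point needing attention — and hence the ``main obstacle'', modest as it is — is ensuring that a single integer $m$ works simultaneously for the $X$-side and the $Y$-side identities, which is why one passes to the maximum of the two bounds and recomposes with powers of $S$ and $T$. I would also note the convention $0\in\N$ in force throughout, so that the constant maps used in the reverse directions legitimately take values in $\N$.
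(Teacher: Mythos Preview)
Your proof is correct and follows the same approach as the paper: use compactness to bound the cocycles and pass to a single maximum, then conversely plug in constant functions. Your write-up is in fact slightly more careful than the paper's, which contains a typo in the reverse direction (it sets $l=l'=m$, $k=k'=m+1$, giving $l-k=-1$ rather than the required $l-k=1$; your choice $l=l'\equiv m+1$, $k=k'\equiv m$ is the correct one).
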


\begin{proof}
	Assume that there exists $m\in \N$ and a homeomorphism $h\colon X\to Y$
	such that
	\[
	S^{m+1}\circ h=S^{m}\circ h\circ T \text{ and } T^{m+1}\circ h^{-1}=T^{m}\circ h^{-1}\circ S
	\]
	holds.
	Then,
	putting
	\[
	l=l'=m, k=k'=m+1,
	\]
	we obtain an eventual conjugacy $(l,k,l',k',h)$ between $(X,T)$ and $(Y,S)$.
	Conversely,
	assume that $(X,T)$ and $(Y,S)$ are eventually conjugate and let $(l,k,l',k',h)$ be a continuous orbit equivalence such that $l-k=l'-k'=1$.
	Since $X$ and $Y$ are compact,
	we may put
	\[
	m\defeq \max \{l(x)\in\N\mid x\in X\}\cup \{l'(y)\in\N\mid y\in Y\}.
	\]
	Then one can check
	\[
	S^{m+1}(h(x))=S^{m}(h(T(x))) \text{ and } T^{m+1}(h^{-1}(y))=T^{m}(h^{-1}(S(y)))
	\]
	holds for all $x\in X$ and $y\in Y$.
	
	The statement for flip eventual conjugacy is shown in the same way.
	\qed
\end{proof}

We give a characterisation of flip eventual conjugacy.

\begin{thm}\label{theorem: characterization of flip eventually conjugate}
	Let $(X,T)$ and $(Y,S)$ be topologically free Deaconu-Renault systems.
	Consider the following conditions.
	\begin{enumerate}
		\item $(X,T)$ and $(Y,S)$ are flip eventually conjugate,
		\item there exists an isomorphism $\Phi\colon G(X,T)\to G(Y,S)$ and $\tau\in\Aut(\Z)$ such that $\sigma_Y\circ \Phi=\tau\circ\sigma_X$ holds,
		\item there exists an isomorphism $\Phi\colon G(X,T)\to G(Y,S)$ such that $\Phi(\ker\sigma_X)=\ker\sigma_Y$,
		and
		\item there exists a *-isomorphism $\varphi\colon C^*_r(G(X,T))\to C^*_r(G(Y,S))$ such that $\varphi(C^*_r(\ker\sigma_X))=C^*_r(\ker\sigma_Y)$ and $\varphi(C_0(X))=C_0(Y)$.		
	\end{enumerate}
	Then (1)$\Leftrightarrow$(2)$\Rightarrow$(3)$\Leftrightarrow$(4) hold.
	If $\ker\sigma_X$ and $\ker\sigma_Y$ are topologically transitive,
	then (3)$\Rightarrow$(2) holds and hence all conditions are equivalent.
\end{thm}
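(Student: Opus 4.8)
The plan is to establish the cycle of implications $(1)\Leftrightarrow(2)\Rightarrow(3)\Leftrightarrow(4)$, and then the extra implication $(3)\Rightarrow(2)$ under the topological transitivity hypothesis. I would organize the argument around the bridge between Deaconu–Renault systems and their groupoids provided by Proposition \ref{prop: groupoid hom induced by continuous orbit equivalent}, Corollary \ref{Corollary: continuous orbit equivalence implies groupoid isomorphisms} and Corollary \ref{cor: isom of DR groupoid induces continuous orbit equivalent}, together with the dictionary between groupoid automorphisms and C*-algebra automorphisms from Theorem \ref{thm: varphi is a group hom}.

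For $(1)\Rightarrow(2)$: suppose $(l,k,l',k',h)$ is a continuous orbit equivalence with $l-k=l'-k'=\varepsilon$ for $\varepsilon\in\{1,-1\}$. By Corollary \ref{Corollary: continuous orbit equivalence implies groupoid isomorphisms}, the induced map $\Phi\colon G(X,T)\to G(Y,S)$, $\Phi(y,n-m,x)=(h(y),\sigma_{l-k}(y,n-m,x),h(x))$, is a groupoid isomorphism. Since $l-k\equiv \varepsilon$, the cocycle $\sigma_{l-k}$ reduces to $\sigma_{l-k}(y,p,x)=\varepsilon p=\varepsilon\,\sigma_X(y,p,x)$, so $\sigma_Y\circ\Phi=\varepsilon\cdot\sigma_X$; taking $\tau=\mathrm{id}_{\Z}$ if $\varepsilon=1$ and $\tau=-\mathrm{id}_{\Z}$ if $\varepsilon=-1$ (these being exactly the two elements of $\Aut(\Z)$) gives (2). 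For $(2)\Rightarrow(1)$: given $\Phi$ and $\tau\in\Aut(\Z)$ with $\sigma_Y\circ\Phi=\tau\circ\sigma_X$, apply Corollary \ref{cor: isom of DR groupoid induces continuous orbit equivalent} to $\Phi$ to obtain a continuous orbit equivalence $(l,k,l',k',h)$ with $h=\Phi|_X$, $l(x)-k(x)=\sigma_Y(\Phi(x,1,Tx))=\tau(\sigma_X(x,1,Tx))=\tau(1)$ and likewise $l'(y)-k'(y)=\sigma_X(\Phi^{-1}(y,1,Sy))=\tau^{-1}(1)$. Since $\tau(1)\in\{1,-1\}$ and $\tau^{-1}(1)=\tau(1)$, we get $l-k=l'-k'\in\{1,-1\}$, i.e.\ (1).

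The implication $(2)\Rightarrow(3)$ is immediate: if $\sigma_Y\circ\Phi=\tau\circ\sigma_X$ with $\tau\in\Aut(\Z)$, then $\Phi$ maps $\sigma_X^{-1}(0)$ onto $\sigma_Y^{-1}(\tau(0))=\sigma_Y^{-1}(0)$, that is $\Phi(\ker\sigma_X)=\ker\sigma_Y$. For $(3)\Leftrightarrow(4)$ I would use Theorem \ref{thm: varphi is a group hom} (in the form applicable to isomorphisms between two effective groupoids, noting topological freeness $\Rightarrow$ effectiveness by Proposition \ref{prop: effective and topologically principal is equivalent for Deaconu-Renault}): a groupoid isomorphism $\Phi\colon G(X,T)\to G(Y,S)$ induces a $*$-isomorphism $\varphi_\Phi\colon C^*_r(G(X,T))\to C^*_r(G(Y,S))$ carrying $C_0(X)$ onto $C_0(Y)$, and since $\Phi(\ker\sigma_X)=\ker\sigma_Y$ one checks directly on $C_c$ that $\varphi_\Phi(C^*_r(\ker\sigma_X))=C^*_r(\ker\sigma_Y)$; this gives (4). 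Conversely, given $\varphi$ as in (4), the dictionary (again via Theorem \ref{thm: varphi is a group hom}, or its isomorphism-version) together with effectiveness produces $\Phi\in\mathrm{Iso}(G(X,T),G(Y,S))$ and a cocycle, and the identity-on-unit-space argument of Proposition \ref{prop: all we need is effectiveness} forces the cocycle to be trivial (as in the proof of Theorem \ref{thm: varphi is a group hom}); the condition $\varphi(C^*_r(\ker\sigma_X))=C^*_r(\ker\sigma_Y)$ then translates, by the analogue of Proposition \ref{proposition: automorphisms which fix C*r(H)} for subgroupoids rather than subalgebras, into $\Phi(\ker\sigma_X)=\ker\sigma_Y$, yielding (3).

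Finally, $(3)\Rightarrow(2)$ under the additional hypothesis: given $\Phi\colon G(X,T)\to G(Y,S)$ with $\Phi(\ker\sigma_X)=\ker\sigma_Y$, apply Corollary \ref{cor: ker preserving groupoid isom is groupoid equivalent isom} with $G_1=G(X,T)$, $G_2=G(Y,S)$, $\Gamma_1=\Gamma_2=\Z$, $\sigma_i=\sigma_X,\sigma_Y$ (surjective since $\sigma_X(x,1,Tx)=1$ and $\Z$ is generated by $1$, and $\ker\sigma_X,\ker\sigma_Y$ topologically transitive by assumption): this yields $\tau\in\Aut(\Z)$ with $\tau\circ\sigma_X=\sigma_Y\circ\Phi$, which is exactly (2). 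The main obstacle I anticipate is the $(4)\Rightarrow(3)$ step, where one must recover from the $*$-isomorphism both an underlying groupoid isomorphism \emph{and} the fact that it preserves the kernels; this requires being careful that Theorem \ref{thm: varphi is a group hom}'s correspondence applies to isomorphisms between two distinct (effective) groupoids rather than automorphisms of one, and that the condition $\varphi(C^*_r(\ker\sigma_X))=C^*_r(\ker\sigma_Y)$ pins down the subgroupoid correspondence (this is where an argument in the spirit of Proposition \ref{proposition: automorphisms which fix C*r(H)}, using Urysohn's lemma to detect membership in $\ker\sigma$, is needed).
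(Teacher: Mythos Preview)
Your proposal is essentially correct and follows the same route as the paper: $(1)\Leftrightarrow(2)$ via Corollaries \ref{Corollary: continuous orbit equivalence implies groupoid isomorphisms} and \ref{cor: isom of DR groupoid induces continuous orbit equivalent}, $(3)\Rightarrow(2)$ via Corollary \ref{cor: ker preserving groupoid isom is groupoid equivalent isom}, and $(3)\Leftrightarrow(4)$ via the groupoid/C*-algebra dictionary.

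One correction is needed in your $(4)\Rightarrow(3)$ step. You write that the identity-on-unit-space argument of Proposition \ref{prop: all we need is effectiveness} ``forces the cocycle to be trivial''. This is false: the hypothesis $\varphi(C_0(X))=C_0(Y)$ only says $\varphi$ preserves the diagonal \emph{setwise}, not pointwise, so the induced $\Phi$ need not restrict to the identity on unit spaces and the cocycle $c$ can be arbitrary. Fortunately this does not matter. The paper invokes \cite[Theorem 2.1.1]{komura2023homomorphisms} (the two-groupoid version of Theorem \ref{thm: varphi is a group hom}) to obtain $\Phi$ and $c\in Z(G(X,T))$ with $\varphi(f)(\delta)=c(\Phi^{-1}(\delta))f(\Phi^{-1}(\delta))$, and since $\lvert c\rvert\equiv 1$, the function $\varphi(f)$ has the same support as $f\circ\Phi^{-1}$. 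The Urysohn-type argument you mention (exactly as in Proposition \ref{prop: restricted Weyl group is restricted Weyl groupoid automorphisms}) then gives $\Phi(\ker\sigma_X)=\ker\sigma_Y$ without any need for $c$ to vanish. So drop the claim about the cocycle being trivial and the rest of your argument goes through unchanged.
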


\begin{proof}
	It is straightforward to check (3)$\Rightarrow$(4).
	We show (4)$\Rightarrow$(3).
	By \cite[Theorem 2.1.1]{komura2023homomorphisms},
	we obtain a groupoid isomorphism $\Phi\colon G(X,T)\to G(Y,S)$ and $c\in Z(G(X,T))$ such that
	\[
	\varphi(f)(\delta)=c(\Phi^{-1}(\delta))f(\Phi^{-1}(\delta))
	\]
	holds for all $\delta\in G(Y,S)$.
	Then one can check that $\Phi(\ker\sigma_X)=\ker\sigma_Y$ in the same way as the proof of Proposition \ref{prop: restricted Weyl group is restricted Weyl groupoid automorphisms}.
	
	Next,
	we show (1)$\Rightarrow$ (2).
	Let $(l,k,l',k',h)$ be a continuous orbit equivalence between $(X,T)$ and $(Y,S)$ such that
	\[l-k=l'-k'=1 \text{ or } l-k=l'-k'=-1.\]
	First,
	suppose $l-k=l'-k'=1$.
	Let $\Phi\colon G(X,T)\to G(Y,S)$ and $\Psi\colon G(Y,S)\to G(X,T)$ be the groupoid isomorphisms in Corollary \ref{Corollary: continuous orbit equivalence implies groupoid isomorphisms}.
	Namely,
	we have
	\begin{align*}
	\Phi(y,n,x) &=(h(y),\sigma_{l-k}(y,n,x), h(x))\\
	\Psi(y',m,x')&=(h^{-1}(y),\sigma_{l'-k'}(y',m,x'), h^{-1}(x))
	\end{align*}
	for all $(y,n,x)\in G(X,T)$ and $(y',m,x')\in G(Y,S)$.
	Note that we have $\Psi=\Phi^{-1}$ by Corollary \ref{Corollary: continuous orbit equivalence implies groupoid isomorphisms}.
	Then it follows that
	\[\sigma_Y(\Phi(y,n,x))=\sigma_{l-k}(y,n,x)=n=\sigma_X(y,n,x)\]
	for all $(y,n,x)\in G(X,T)$ from the definition of $\sigma_{l-k}$ in Proposition \ref{prop: groupoid hom induced by continuous orbit equivalent}.
	Hence we obtain $\sigma_Y\circ\Phi=\id_{\Z}\circ \sigma_X$.
	In case that $l-k=l'-k'=-1$,
	we obtain 
	\[\sigma_Y(\Phi(y,n,x))=\sigma_{l-k}(y,n,x)=-n=-\sigma_X(y,n,x).\]
	Hence we obtain $\sigma_Y\circ\Phi=(-\id_{\Z})\circ \sigma_X$ and have shown (1)$\Rightarrow$(2).
	
	We show (2)$\Rightarrow$(1).
	Assume that an isomorphism $\Phi\colon G(X,T)\to G(Y,S)$ satisfies $\sigma_Y\circ \Phi=\tau\circ \sigma_X$ for some $\tau\in\Aut(\Z)$.
	Note that $\tau=\pm \id_{\Z}$ since we have $\Aut(\Z)=\{\id_{\Z}, -\id_{\Z}\}$.
	By Corollary \ref{cor: isom of DR groupoid induces continuous orbit equivalent},
	we obtain a continuous orbit equivalence $(l,k,l',k',\Phi|_X)$ between $(X,T)$ and $(Y,S)$ such that
	\begin{align*}
	l(x)-k(x)&=\sigma_Y(\Phi(x,1,T(x))) \\
	l'(y)-k'(y)&=\sigma_X(\Phi^{-1}(y,1,S(y)))
	\end{align*}
	for all $x\in X$ and $y\in Y$.
	Now,
	we have
	\begin{align*}
	l(x)-k(x)&=\sigma_Y(\Phi(x,1,T(x)))=\tau(\sigma_X(x,1,T(x)))=\tau(1) \\
	l'(y)-k'(y)&=\sigma_X(\Phi^{-1}(y,1,S(y)))=\tau^{-1}(\sigma_Y(y,1,S(y)))=\tau^{-1}(1)
	\end{align*}
	for all $x\in X$ and $y\in Y$.
	If $\tau=\id_{\Z}$,
	we obtain $l-k=l'-k'=1$.
	If $\tau=-\id_{\Z}$,
	we obtain $l-k=l'-k'=-1$.
	Therefore,
	$(X,T)$ and $(Y,S)$ are flip eventually conjugate.
	
	Now,
	it remains to show (3)$\Rightarrow$(2) under the assumption that $\ker\sigma_X$ and $\ker\sigma_Y$ are topologically transitive.
	This follows from Corollary \ref{cor: ker preserving groupoid isom is groupoid equivalent isom}.
	\qed
\end{proof}

\begin{ex}
	We observe that (3)$\Rightarrow$(2) in Theorem \ref{theorem: characterization of flip eventually conjugate} does not hold in general.
	Put $X\colon \N$ and define $T\colon \N\to \N$ by $T(n)=n+1$ for $n\in\N$.
	Similarly,
	put $Y=\Z$ and define $S\colon \Z\to \Z$ by $S(n)=n+1$ for $n\in \Z$.
	Then $G(X,T)$ and $G(Y,S)$ are isomorphic.
	Indeed, they are isomorphic to the discrete equivalence relation $\N\times \N$.
	Since $\ker\sigma_X=X$ and $\ker\sigma_Y=Y$,
	$(X,T)$ and $(Y,S)$ satisfies (3) in Theorem \ref{theorem: characterization of flip eventually conjugate}.
	However,
	$(X,T)$ and $(Y,S)$ satisfy neither (1) nor (2).
	Indeed,
	if $(X,T)$ and $(Y,S)$ are flip eventually conjugate,
	then there exists a bijection $h\colon \N\to\Z$ such that
	\[
	S^{l(x)}(h(x))=S^{k(x)}(h(T(x)))
	\]
	holds for all $x\in X$.
	Then we have
	\[
	T(x)=h^{-1}(S^{l(x)-k(x)}(h(x)))=h^{-1}(S^{\pm 1}(h(x)))
	\]
	for all $x\in X$ since $S$ is invertible and $l-k=\pm 1$.
	Hence we obtain $T=h^{-1}\circ S^{\pm 1}\circ h$,
	which is a contradiction since $h^{-1}\circ S^{\pm 1}\circ h$ is a bijection and $T$ is not.
	
\end{ex}

\subsection{Restricted Weyl group of Deaconu-Renault system}\label{subsection: Restricted Weyl group of Deaconu-Renault system}

In this subsection,
we investigate the restricted Weyl group $\mathfrak{RW}_{G(X,T),\ker\sigma_X}$ associated with a topologically free Deaconu-Renault system $(X,T)$.
Our aim is to show that $\mathfrak{RW}_{G(X,T),\ker\sigma_X}$ is isomorphic to the group of the eventually conjugate automorphisms on $(X,T)$ under some assumptions (Corollary \ref{cor: eventually conjugate automorphism is isomorphic to restricted Weyl of DR system}).
It is worth to note that ``flip'' cannot occur if a Deaconu-Renault system $(X,T)$ is far from injective (Proposition \ref{prop: flip cannot occur if TU=X}).

Following \cite[Section 3]{ContiHongSzymaski} and \cite[Section 4]{CONTI20122529},
we define property (P) and the group of eventually conjugate automorphisms on a Deaconu-Renault system.

\begin{defi}
	Let $(X,T)$ be a Deaconu-Renault system such that $X$ is compact.
	We say that $h\in \Aut(X)$ has property (P) if there exists $m\in\N$ such that
	\[
	T^{m+1}\circ h=T^m\circ h\circ T
	\]
	holds.
	We say that $h\in \Aut(X)$ is an eventually conjugate automorphism on $(X,T)$ if the both of $h$ and $h^{-1}$ have property (P).
	Define the group $\mathfrak{A}_{(X,T)}$ of the eventually conjugate automorphisms on $(X,T)$ as
	\[
	\mathfrak{A}_{(X,T)}\defeq \{h\in\Aut(X)\mid \text{$h$ and $h^{-1}$ have property (P)}\}.
	\]
\end{defi}

\begin{rem}
	One can check that $\mathfrak{A}_{(X,T)}$ is actually a subgroup of $\Aut(X)$.
	Note that $h\in\mathfrak{A}_{(X,T)}$ holds if and only if there exists $n,m\in\N$ such that $(n,n-1,m,m-1,h)$ is a self-eventual conjugacy on $(X,T)$.
	We use this identification to apply Theorem \ref{theorem: characterization of flip eventually conjugate} to $h\in\mathfrak{A}_{(X,T)}$.
	In addition,
	if there exists a self-eventual conjugacy $(l,k,l',k',h)$ on $(X,T)$,
	we have $h\in\mathfrak{A}_{(X,T)}$ by Proposition \ref{prop: if X and Y are compact, cocycles are constant}. 
	Indeed,
	since we assume that $X$ is compact,
	we may put $n\defeq \max_{x\in X}l(x)$ and $m\defeq \max_{x\in X}l'(x)$.
	Then $(n,n-1,m,m-1,h)$ is also a self-eventual conjugacy on $(X,T)$.	 
\end{rem}

\begin{prop}\label{prop: G has no contractive bisection if d^1(x) is finite}
	Let $G$ be a locally compact Hausdorff \'etale groupoid.
	Assume that $d^{-1}(\{x\})$ is a finite set for all $x\in G^{(0)}$.
	Then there exists no compact open bisection $W\subset G$ such that $r(W)\subsetneq d(W)$.
\end{prop}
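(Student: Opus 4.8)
The plan is to argue by contradiction. Suppose $W\subseteq G$ is a compact open bisection with $r(W)\subsetneq d(W)$; in particular $r(W)\subseteq d(W)$, so I may fix a point $x_{0}\in d(W)\setminus r(W)$. Since $W$ is a bisection, $d|_{W}$ is a bijection onto $d(W)$, so for each $y\in d(W)$ there is a unique $\alpha_{y}\in W$ with $d(\alpha_{y})=y$. Using this, I define recursively $x_{n+1}\defeq r(\alpha_{x_{n}})$. This is legitimate by induction: $x_{0}\in d(W)$ by construction, and whenever $x_{n}\in d(W)$ we get $x_{n+1}\in r(W)\subseteq d(W)$, so each $\alpha_{x_{n}}$ is defined; moreover $x_{n}\in r(W)$ for every $n\geq 1$.

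The key step, and the only place the hypothesis $x_{0}\notin r(W)$ enters, is to show that $x_{0},x_{1},x_{2},\dots$ are pairwise distinct. Suppose $x_{m}=x_{n}$ for some $m<n$. If $m\geq 1$, then $r(\alpha_{x_{m-1}})=x_{m}=x_{n}=r(\alpha_{x_{n-1}})$, and injectivity of $r|_{W}$ forces $\alpha_{x_{m-1}}=\alpha_{x_{n-1}}$, hence $x_{m-1}=d(\alpha_{x_{m-1}})=d(\alpha_{x_{n-1}})=x_{n-1}$. Iterating this descent reduces to the case $m=0$, where $x_{0}=x_{n}$ with $n\geq 1$ contradicts $x_{n}\in r(W)$ together with $x_{0}\notin r(W)$. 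So no orbit point repeats: the reason is exactly that the orbit starts outside the range of the partial homeomorphism of $G^{(0)}$ carried by $W$.

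Finally, I form the products $\beta_{n}\defeq\alpha_{x_{n}}\alpha_{x_{n-1}}\cdots\alpha_{x_{0}}$ for $n\geq 0$; consecutive factors are composable because $d(\alpha_{x_{k}})=x_{k}=r(\alpha_{x_{k-1}})$, and $\beta_{n}$ lies in the iterated compact open bisection $W^{n+1}$. Each $\beta_{n}$ satisfies $d(\beta_{n})=d(\alpha_{x_{0}})=x_{0}$ and $r(\beta_{n})=r(\alpha_{x_{n}})=x_{n+1}$; since the ranges $x_{n+1}$ are pairwise distinct, the $\beta_{n}$ are pairwise distinct. Hence $\{\beta_{n}:n\geq 0\}$ is an infinite subset of $d^{-1}(\{x_{0}\})$, contradicting the hypothesis that this fibre is finite. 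I do not anticipate a genuine obstacle here; the one point deserving care is the distinctness argument just described, together with the routine verification that the recursively defined orbit stays inside $d(W)$ so that the elements $\alpha_{x_{n}}$ exist.
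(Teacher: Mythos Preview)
Your argument is correct and gives a different, more elementary proof than the paper's. The paper works inside $C^*_r(G)$: it sets $S\defeq\chi_W\in C_c(G)$, notes that $SS^*=\chi_{r(W)}\lneq\chi_{d(W)}=S^*S$, and then uses that each left regular representation $\lambda_x$ acts on the finite-dimensional space $\ell^2(G_x)$. Since $\lambda_x(SS^*)\leq\lambda_x(S^*S)$ are projections with the same trace, they coincide for every $x$, forcing $SS^*=S^*S$ and hence $r(W)=d(W)$, a contradiction.

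Your route bypasses the C*-algebra entirely: you iterate the partial homeomorphism $d(W)\to r(W)$, $y\mapsto r(\alpha_y)$, starting at a point $x_0\in d(W)\setminus r(W)$, and use injectivity of $r|_W$ to show the orbit never repeats, so the products $\beta_n$ give infinitely many elements of $G_{x_0}$. This is purely groupoid-theoretic and in fact does not use that $W$ is compact or open, only that it is a bisection with $r(W)\subsetneq d(W)$; so you actually prove a slightly stronger statement. The paper's approach, on the other hand, is natural in its operator-algebraic context and makes transparent the analogy with the fact that a proper isometry cannot exist on a finite-dimensional Hilbert space.
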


\begin{proof}
	Assume that there exists a compact open bisection $W\subset G$ such that $r(W)\subsetneq d(W)$.
	Let $S\defeq \chi_W\in C_c(G)$ denote the characteristic function on $W$.
	Then we have 
	\[SS^*=\chi_{r(W)}\lneq \chi_{d(W)}=S^*S.\]
	Since the each left regular representation $\lambda_x$ at $x\in G^{(0)}$ is a finite dimensional representation,
	we obtain $\lambda_x(SS^*)=\lambda_x(S^*S)$.
	Hence we obtain $SS^*=S^*S$ and hence $d(W)=r(W)$.
	This contradicts to $r(W)\subsetneq d(W)$.
	\qed
\end{proof}

\begin{lem}\label{lemma property of R_n in Deaconu-Renault}
	Let $(X,T)$ be a Deaconu-Renault system.
	For $n\in\N$,
	define
	\[
	R_n\defeq \{(y,0,x)\in G(X,T)\mid T^n(x)=T^n(x)\}.
	\]
	Then the followings hold:
	\begin{enumerate}
		\item $\ker\sigma_X=\bigcup_{n\in\N}R_n$,
		\item for all $x\in X$, $R_n$ is an open subgroupoid of $G(X,T)$ such that $X\subset R_n$, and
		\item if $T\colon X\to X$ is a proper map (i.e.\ $T^{-1}(K)$ is compact for all compact set $K\subset X$),
		then $d|_{R_n}^{-1}(\{x\})$ is a finite set for all $x\in X$.
	\end{enumerate}
\end{lem}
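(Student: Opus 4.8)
The plan is to derive all three assertions by unwinding the definitions of $G(X,T)$, of its topology, and of the cocycle $\sigma_X$, using that $T$ is a local homeomorphism and (for the last part) proper.

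For (1), since $\sigma_X(y,m,x)=m$ we have $\ker\sigma_X=\{(y,0,x)\in G(X,T)\}$. By the definition of $G(X,T)$, a triple $(y,0,x)$ with $x,y\in X$ lies in $G(X,T)$ if and only if there are $p,q\in\N$ with $p-q=0$ and $T^p(y)=T^q(x)$, that is, if and only if $T^n(y)=T^n(x)$ for some $n\in\N$. This is exactly membership in $\bigcup_{n\in\N}R_n$.

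For (2), the inclusion $X\subset R_n$ is immediate, as $(x,0,x)$ satisfies $T^n(x)=T^n(x)$. The subgroupoid property follows by noting that the inverse $(y,0,x)^{-1}=(x,0,y)$ and the product $(z,0,y)\cdot(y,0,x)=(z,0,x)$ both preserve the relation $T^n(y)=T^n(x)$ defining $R_n$. For openness, given $(y,0,x)\in R_n$ I would pick open neighbourhoods $U\ni y$ and $V\ni x$ on which $T^n$ restricts to a homeomorphism onto its image (possible since $T^n$ is again a local homeomorphism); then $Z(U,n,n,V)$ is an open subset of $G(X,T)$ containing $(y,0,x)$, and each of its elements satisfies the defining relation of $R_n$, so $Z(U,n,n,V)\subset R_n$.

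For (3), the assignment $(y,0,x)\mapsto y$ identifies $d|_{R_n}^{-1}(\{x\})$ with the set $(T^n)^{-1}(\{T^n(x)\})\subset X$, so it is enough to show this fibre is finite. I would argue that it is compact, because a finite composition of the proper map $T$ is proper and $\{T^n(x)\}$ is compact, and that it is discrete, because $T^n$ is a local homeomorphism and therefore has discrete point-fibres; a compact discrete space is finite. The only point requiring a line of care is that properness is inherited by the iterates $T^n$, so that the $(T^n)$-preimage of a compact set is compact; the remainder is a routine translation of the definitions, so I do not anticipate a genuine obstacle.
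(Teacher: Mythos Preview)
Your proof is correct and follows essentially the same approach as the paper's: for (2) the paper simply notes $R_n=\bigcup_{U,V}Z(U,n,n,V)$ over all open $U,V$, and for (3) it observes $d|_{R_n}^{-1}(\{x\})\cong T^{-n}(\{T^n(x)\})$ is finite by properness and local injectivity, exactly as you argue. Your only superfluous step is requiring $T^n|_U,T^n|_V$ to be injective in (2); any open neighbourhoods $U\ni y$, $V\ni x$ already give $Z(U,n,n,V)\subset R_n$, so that restriction is unnecessary though harmless.
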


\begin{proof}
	(1) is straightforward.
	To show (2),
	check
	\[
	R_n=\bigcup_{U,V} Z(U,n,n,V),
	\]
	where the union of the right hand side is taken over all open sets $U,V\subset X$.
	To show (3),
	observe
	\[
	d|_{R_n}^{-1}(\{x\})=\{(y,0,x)\in G(X,T)\mid y\in T^{-n}(\{T^n(x)\})\},
	\]
	which is a finite set since we assume that $T$ is proper and locally homeomorphic.
	\qed
\end{proof}

\begin{prop}\label{prop: flip cannot occur if TU=X}
	Let $(X,T)$ be a Deaconu-Renault system.
	Assume that there exists a compact open set $U\subsetneq X$ such that $T(U)=X$ and $T|_U$ is injective (and hence $X$ is compact and $T$ is proper).
	Then there is no automorphism $\Phi\colon G(X,T)\to G(X,T)$ such that $\sigma_X\circ \Phi= (-\id_{\Z})\circ\sigma_X$.
\end{prop}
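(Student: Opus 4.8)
The plan is to argue by contradiction. Suppose there is an automorphism $\Phi\colon G(X,T)\to G(X,T)$ with $\sigma_X\circ\Phi=(-\id_{\Z})\circ\sigma_X$. From the hypothesis on $U$ I will build a compact open bisection of $\ker\sigma_X$ which strictly contracts the unit space, which is forbidden by Proposition \ref{prop: G has no contractive bisection if d^1(x) is finite} once we localize to a suitable open subgroupoid with finite $d$-fibres.

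The first step is to produce a ``contracting'' bisection of $\sigma_X$-degree $1$. Since $T|_U\colon U\to T(U)=X$ is a continuous bijection from a compact space onto a Hausdorff space, it is a homeomorphism; hence $W\defeq Z(U,1,0,X)=\{(y,1,T(y))\mid y\in U\}$ is a compact open bisection with $r(W)=U$, $d(W)=T(U)=X$, and $W\subset\sigma_X^{-1}(\{1\})$. In particular $r(W)\subsetneq d(W)$ because $U\subsetneq X$.

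Next I transport $W$ through $\Phi$ and take a product. As $\Phi$ is a topological groupoid automorphism it is a homeomorphism carrying the unit space $X$ onto itself, so $\Phi(W)$ is a compact open bisection with $r(\Phi(W))=\Phi(U)\subsetneq X=d(\Phi(W))$, and the flip condition gives $\Phi(W)\subset\sigma_X^{-1}(\{-1\})$. Form $B\defeq\Phi(W)\cdot W\in\Bis^c(G(X,T))$. A routine check gives: $B$ is nonempty (since $d(\Phi(W))\cap r(W)=U\neq\emptyset$); every element of $B$ has $\sigma_X$-value $(-1)+1=0$, so $B\subset\ker\sigma_X$; every $\beta\in W$ satisfies $r(\beta)\in U\subset X=d(\Phi(W))$, so $d(B)=d(W)=X$; and $r(B)\subset r(\Phi(W))=\Phi(U)\subsetneq X$. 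Thus $B$ is a compact open bisection contained in $\ker\sigma_X$ with $r(B)\subsetneq d(B)$.

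It remains to reach a contradiction. By Lemma \ref{lemma property of R_n in Deaconu-Renault} we have $\ker\sigma_X=\bigcup_n R_n$ with each $R_n$ an open subgroupoid and $R_n\subset R_{n+1}$, so compactness of $B$ yields $B\subset R_N$ for some $N$. Since $X$ is compact, $T$ is proper, hence $d|_{R_N}^{-1}(\{x\})$ is finite for every $x$, again by Lemma \ref{lemma property of R_n in Deaconu-Renault}. Applying Proposition \ref{prop: G has no contractive bisection if d^1(x) is finite} to the locally compact Hausdorff \'etale groupoid $R_N$ shows that no compact open bisection of $R_N$ can strictly contract the unit space; this contradicts the properties of $B$, so no such $\Phi$ exists. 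The only genuinely delicate point is confining the degree-$0$ product bisection $B$ inside a single $R_N$ with finite $d$-fibres, which is exactly where properness of $T$ (forced by the compactness of $X$) and the exhaustion $\ker\sigma_X=\bigcup_n R_n$ are used; everything else is bookkeeping with open bisections and the cocycle $\sigma_X$.
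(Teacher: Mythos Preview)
Your proof is correct and follows essentially the same approach as the paper: both construct the bisection $Z(U,1,0,X)$, multiply it with its image under $\Phi$ to obtain a compact open bisection in $\ker\sigma_X$ that strictly contracts the unit space, confine it to some $R_N$ by compactness, and invoke Proposition~\ref{prop: G has no contractive bisection if d^1(x) is finite}. The only cosmetic difference is the order of the product---you take $\Phi(W)\cdot W$ and land inside $\Phi(U)$, while the paper takes $W\cdot\Phi(W)$ and lands inside $U$---but both versions give $d=X$ and $r\subsetneq X$, so the argument is the same.
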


\begin{proof}
	Assume that there exists an automorphism $\Phi\colon G(X,T)\to G(X,T)$ such that $\sigma_X\circ \Phi= (-\id_{\Z})\circ\sigma_X$.
	Put
	\[
	\widetilde{W}\defeq Z(U,1,0, X)\subset G(X,T).
	\]
	Then $\widetilde{W}$ is a compact open bisection with $d(\widetilde{W})=X$,
	$r(\widetilde{W})=U$ and $\sigma_X(\widetilde{W})=1$.
	Put
	\[
	W\defeq \widetilde{W}\Phi(\widetilde{W}).
	\]
	Then $W\subset G(X,T)$ is a compact open bisection with $d(W)=X$ and $r(W)=U$.
	In addition,
	we have $W\subset \ker\sigma_X$ and hence $W\subset R_n$ for some $n\in \N$ by the compactness of $W$ and Lemma \ref{lemma property of R_n in Deaconu-Renault}.
	Now,
	Proposition \ref{prop: G has no contractive bisection if d^1(x) is finite} yields a contradiction since $d|_{R_n}^{-1}(x)$ is a finite set for all $x\in X$ by Lemma \ref{lemma property of R_n in Deaconu-Renault}.
	\qed
\end{proof}

\begin{prop}\label{prop: eventually conj auto isomorphic to sigma preserving groupoid auto}
	Let $(X,T)$ be a topologically free Deaconu-Renault system.
	Define a subgroup $\mathfrak{G}$ of $\Aut(G(X,T))$ by
	\[
	\mathfrak{G}\defeq \{\Phi\in\Aut(G(X,T))\mid \sigma_X\circ\Phi=\sigma_X\}.
	\]
	Then $\Phi|_X\in\mathfrak{A}_{(X,T)}$ holds for all $\Phi\in\mathfrak{G}$.
	In addition,
	\[\Psi\colon \mathfrak{G}\ni \Phi\mapsto \Phi|_{X}\in \mathfrak{A}_{(X,T)}\]
	is a group isomorphism.
\end{prop}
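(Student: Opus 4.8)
The plan is to establish three things: first, that $\Psi$ is well-defined (i.e.\ $\Phi|_X \in \mathfrak{A}_{(X,T)}$ for $\Phi \in \mathfrak{G}$); second, that $\Psi$ is injective; and third, that $\Psi$ is surjective. For well-definedness, given $\Phi \in \mathfrak{G}$, note that $\Phi$ is a groupoid automorphism with $\sigma_X \circ \Phi = \sigma_X$, so in particular $\Phi(\ker\sigma_X) = \ker\sigma_X$ and $\sigma_X \circ \Phi = \tau \circ \sigma_X$ with $\tau = \id_\Z$. This realizes the self-map $(\Phi, \Phi^{-1})$ as a situation covered by the implication (2)$\Rightarrow$(1) in Theorem \ref{theorem: characterization of flip eventually conjugate} applied to $(X,T) = (Y,S)$: there is a continuous orbit equivalence $(l,k,l',k',\Phi|_X)$ with $l - k = l' - k' = 1$. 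Since $X$ is compact (which follows from the hypotheses wherever $\mathfrak{A}_{(X,T)}$ is considered), Proposition \ref{prop: if X and Y are compact, cocycles are constant} lets us replace $l,k,l',k'$ by constants, giving $m \in \N$ with $T^{m+1}\circ \Phi|_X = T^m \circ \Phi|_X \circ T$ and the analogous identity for $(\Phi|_X)^{-1} = \Phi^{-1}|_X$; hence both $\Phi|_X$ and its inverse have property (P), so $\Phi|_X \in \mathfrak{A}_{(X,T)}$. That $\Psi$ is a group homomorphism is immediate from $(\Phi_1 \circ \Phi_2)|_X = \Phi_1|_X \circ \Phi_2|_X$.

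For injectivity, suppose $\Phi \in \mathfrak{G}$ with $\Phi|_X = \id_X$. Since $(X,T)$ is topologically free, $G(X,T)$ is topologically principal, and Lemma \ref{lemma: groupoid endo which is identity on unit space is idendity} (or Proposition \ref{prop: all we need is effectiveness}) forces $\Phi = \id_{G(X,T)}$. Thus $\ker\Psi$ is trivial.

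For surjectivity, take $h \in \mathfrak{A}_{(X,T)}$. Then $h$ and $h^{-1}$ have property (P), so there are $n, m \in \N$ with $T^{n+1}\circ h = T^n \circ h \circ T$ and $T^{m+1}\circ h^{-1} = T^m \circ h^{-1}\circ T$; equivalently $(n,n-1,m,m-1,h)$ is a self-eventual conjugacy on $(X,T)$, realizing $h$ via the implication (1)$\Rightarrow$(2) of Theorem \ref{theorem: characterization of flip eventually conjugate} (with $l - k = l' - k' = 1$, using the $\tau = \id_\Z$ branch). This produces a groupoid isomorphism $\Phi \colon G(X,T) \to G(X,T)$ with $\Phi|_X = h$ and $\sigma_X \circ \Phi = \sigma_X$, i.e.\ $\Phi \in \mathfrak{G}$ and $\Psi(\Phi) = h$. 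Concretely, $\Phi$ is the map $(y, p, x) \mapsto (h(y), \sigma_{l-k}(y,p,x), h(x))$ from Corollary \ref{Corollary: continuous orbit equivalence implies groupoid isomorphisms} with $l - k \equiv 1$, so that $\sigma_X \circ \Phi = \sigma_{l-k} = \sigma_X$ as desired.

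The main obstacle is bookkeeping rather than conceptual: one must carefully check that the constructions going back and forth between self-eventual conjugacies $(l,k,l',k',h)$ and $\sigma$-preserving groupoid automorphisms $\Phi$ are mutually inverse, so that $\Psi$ is genuinely a bijection and not merely surjective with a section. The cleanest route is to observe that the $\Phi$ produced in the surjectivity step restricts to $h$ on $X$, and then invoke injectivity (via topological freeness and Lemma \ref{lemma: groupoid endo which is identity on unit space is idendity}) to conclude that this $\Phi$ is the unique preimage of $h$; this sidesteps the need to track the dependence of $\Phi$ on the particular choice of constants $n, m$. Everything else reduces to the already-established equivalences in Theorem \ref{theorem: characterization of flip eventually conjugate} and the compactness reduction in Proposition \ref{prop: if X and Y are compact, cocycles are constant}.
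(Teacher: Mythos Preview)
Your proposal is correct and follows essentially the same approach as the paper: well-definedness via the $(2)\Rightarrow(1)$ direction of Theorem \ref{theorem: characterization of flip eventually conjugate}, surjectivity via Corollary \ref{Corollary: continuous orbit equivalence implies groupoid isomorphisms} together with the $(1)\Rightarrow(2)$ direction, and injectivity via effectiveness (Proposition \ref{prop: all we need is effectiveness} or equivalently Lemma \ref{lemma: groupoid endo which is identity on unit space is idendity}). Your explicit invocation of Proposition \ref{prop: if X and Y are compact, cocycles are constant} to pass to constant $l,k,l',k'$ is a detail the paper leaves implicit (it is absorbed into the remark following the definition of $\mathfrak{A}_{(X,T)}$), but otherwise the arguments coincide.
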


\begin{proof}
	Take $\Phi\in\mathfrak{G}$.
	Then we have $\Phi|_X\in\mathfrak{A}_{(X,T)}$ as shown in the proof of (2)$\Rightarrow$(1) in Theorem \ref{theorem: characterization of flip eventually conjugate}.
	Now,
	one chan check that $\Psi$ is a group homomorphism.
	To show that $\Psi$ is surjective,
	take $h\in\mathfrak{A}_{(X,T)}$.
	By Corollary \ref{Corollary: continuous orbit equivalence implies groupoid isomorphisms},
	there exists $\Phi\in\Aut(G(X,T))$ such that $\Phi|_X=h$.
	One can check that $\Phi\in\mathfrak{G}$ in the same way as the proof of (1)$\Rightarrow$(2) in Theorem \ref{theorem: characterization of flip eventually conjugate}.
	Hence $\Psi$ is surjective.
	It follows that $\Psi$ is injective from Proposition \ref{prop: all we need is effectiveness}.
	Therefore $\Psi$ is an isomorphism.
	\qed
\end{proof}

\begin{cor}\label{cor: eventually conjugate automorphism is isomorphic to restricted Weyl of DR system}
	Let $(X,T)$ be a topologically principal Deaconu-Renault system such that $\ker\sigma_X$ is topologically transitive.
	Assume that there exists a compact open set $U\subsetneq X$ such that $T(U)=X$ and $T|_U$ is injective (and hence $X$ is compact and $T$ is proper).
	Then
	\[
	\Aut(G(X,T);\ker\sigma_X)=\{\Phi\in\Aut(G(X,T))\mid \sigma_X\circ \Phi=\sigma_X\}
	\]
	holds.
	In particular,
	the groups $\mathfrak{RW}_{G(X,T),\ker\sigma_X}$, $\Aut(G(X,T);\ker\sigma_X)$ and $\mathfrak{A}_{(X,T)}$ are isomorphic to each others.
\end{cor}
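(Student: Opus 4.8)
The plan is to combine Corollary~\ref{cor: Aut(G;kersigma) is equivalent groupoid automorphisms} with Proposition~\ref{prop: flip cannot occur if TU=X}; the substantive content is already contained in the latter, so what remains is assembling earlier results. To apply Corollary~\ref{cor: Aut(G;kersigma) is equivalent groupoid automorphisms} with $G=G(X,T)$, $\Gamma=\Z$ and $\sigma=\sigma_X$, I would first check its hypotheses. The groupoid $G(X,T)$ is locally compact Hausdorff \'etale, $\ker\sigma_X$ is topologically transitive by assumption, and $\sigma_X$ is surjective: for every $x\in X$ and $n\in\N$ one has $(x,n,T^n(x))\in G(X,T)$ with $\sigma_X(x,n,T^n(x))=n$, and passing to inverses covers the negative integers. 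Corollary~\ref{cor: Aut(G;kersigma) is equivalent groupoid automorphisms} then yields
\[
\Aut(G(X,T);\ker\sigma_X)=\{\Phi\in\Aut(G(X,T))\mid \tau\circ\sigma_X=\sigma_X\circ\Phi\text{ for some }\tau\in\Aut\Z\}.
\]

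Next I would use that $\Aut(\Z)=\{\id_\Z,-\id_\Z\}$, so every $\Phi$ in this set satisfies either $\sigma_X\circ\Phi=\sigma_X$ or $\sigma_X\circ\Phi=(-\id_\Z)\circ\sigma_X$. The standing hypothesis that there is a compact open set $U\subsetneq X$ with $T(U)=X$ and $T|_U$ injective is precisely the hypothesis of Proposition~\ref{prop: flip cannot occur if TU=X}, which excludes the second alternative. Hence $\sigma_X\circ\Phi=\sigma_X$ for all such $\Phi$, and since the reverse inclusion is immediate from the displayed description, this gives the asserted equality
\[
\Aut(G(X,T);\ker\sigma_X)=\{\Phi\in\Aut(G(X,T))\mid \sigma_X\circ\Phi=\sigma_X\}=\mathfrak{G},
\]
where $\mathfrak{G}$ is as in Proposition~\ref{prop: eventually conj auto isomorphic to sigma preserving groupoid auto}.

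For the final clause I would argue as follows. Since $G(X,T)$ is topologically principal, Proposition~\ref{prop: effective and topologically principal is equivalent for Deaconu-Renault} shows it is effective, and $\ker\sigma_X=\sigma_X^{-1}(\{0\})$ is an open subgroupoid containing $G(X,T)^{(0)}=X$; hence Corollary~\ref{cor Restricted Weyl group is restricted groupoid automorphisms} gives $\mathfrak{RW}_{G(X,T),\ker\sigma_X}\simeq\Aut(G(X,T);\ker\sigma_X)$ as topological groups. Combining this with the equality $\Aut(G(X,T);\ker\sigma_X)=\mathfrak{G}$ just obtained and with the isomorphism $\mathfrak{G}\simeq\mathfrak{A}_{(X,T)}$ of Proposition~\ref{prop: eventually conj auto isomorphic to sigma preserving groupoid auto} produces the three-way isomorphism. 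I do not expect a genuine obstacle here: the only non-formal ingredient is Proposition~\ref{prop: flip cannot occur if TU=X} (the ``no-flip'' phenomenon forced by non-injectivity of $T$), and the remainder of the argument merely chains together Corollaries~\ref{cor: Aut(G;kersigma) is equivalent groupoid automorphisms} and~\ref{cor Restricted Weyl group is restricted groupoid automorphisms} with Propositions~\ref{prop: effective and topologically principal is equivalent for Deaconu-Renault} and~\ref{prop: eventually conj auto isomorphic to sigma preserving groupoid auto}.
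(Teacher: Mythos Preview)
Your proof is correct and follows essentially the same approach as the paper. The only cosmetic difference is that the paper cites (3)$\Rightarrow$(2) of Theorem~\ref{theorem: characterization of flip eventually conjugate} and Corollary~\ref{cor: restricted Weyl isomorphic to restricted groupoid Weyl} in place of your Corollary~\ref{cor: Aut(G;kersigma) is equivalent groupoid automorphisms} and Corollary~\ref{cor Restricted Weyl group is restricted groupoid automorphisms}, but these are the same statements packaged slightly differently (indeed, the former are proved via the latter), so the logical content is identical.
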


\begin{proof}
	By (3)$\Rightarrow$(2) in Theorem \ref{theorem: characterization of flip eventually conjugate} and Proposition \ref{prop: flip cannot occur if TU=X},
	we have
	\[
	\Aut(G(X,T);\ker\sigma_X)=\{\Phi\in\Aut(G)\mid \sigma_X\circ \Phi=\sigma_X\}.
	\]
	Now,
	by Proposition \ref{prop: eventually conj auto isomorphic to sigma preserving groupoid auto},
	we obtain a group isomorphism
	\[
	\Psi\colon \Aut(G(X,T),\ker\sigma_X)\ni \Phi\to \Phi|_X\in \mathfrak{A}_{(X,T)}.
	\]
	By Corollary \ref{cor: restricted Weyl isomorphic to restricted groupoid Weyl},
	$\mathfrak{RW}_{G(X,T),\ker\sigma_X}$ and $\Aut(G(X,T);\ker\sigma_X)$ are isomorphic.
	Hence we complete the proof.
	\qed
\end{proof}

\subsection{Restricted Weyl group of graph algebras}\label{subsection: Restricted Weyl group of graph algebras}

	In this subsection,
	we aim to apply the results in the previous subsections to graph algebras.
	As a consequence,
	in Corollary \ref{cor: answer to open problem of Conti},
	we obtain an affirmative answer of the open problem mentioned under \cite[Theorem 4.13]{CONTI20122529}.
	Below \cite[Theorem 4.13]{CONTI20122529},
	the authors ask if the restricted Weyl group of graph algebras $C^*(E)$ is isomorphic to $\mathfrak{A}_{(E^{(\infty)},T_E)}$ under some assumptions,
	where $E^{(\infty)}$ denotes the infinite path space on a directed graph $E$ and $T_E$ denotes the shift on $E^{(\infty)}$.
	After observing that we may apply Corollary \ref{cor: eventually conjugate automorphism is isomorphic to restricted Weyl of DR system},
	we give an affirmative answer to this question in Corollary \ref{cor: answer to open problem of Conti}.
	
	First,
	we recall fundamental definitions and properties of graph algebras.
	See \cite{raeburn2005graph} and \cite{Paterson2002} for more details about graph algebras and groupoid models of graph algebras.
	We also refer to \cite{BROWNLOWE_CARLSEN_WHITTAKER_2017} for groupoid models of graph algebras.
	
	Let $E=(V,E,o,t)$ be a finite directed graph.
	Namely,
	$V$ and $E$ are finite sets and $o,t\colon E\to V$ are origin and target maps respectively.
	An element in $V$ and $E$ are called a vertex and edge respectively.
	For $l\in\N$,
	the set of all finite paths on $E$ of length $l$ is denoted by
	\[E^{(l)}\defeq \{(\mu_1,\mu_2,\cdots,\mu_l)\in E^l\mid t(\mu_i)=o(\mu_{i+1}) \text{ for all $i=1,\dots,l-1$} \}.\]
	The set of all finite paths on $E$ is denoted by $E^*\defeq \bigcup_{l\in\N} E^{(n)}$.
	For $\mu\in E^*$,
	$\lvert\mu\rvert\in\N$ denotes the length of $\mu$.
	Then the origin and target map are extended to the maps on $E^*$ by $o(\mu)\defeq o(\mu_1)$ and $t(\mu)\defeq t(\mu_{\lvert\mu\rvert})$ for $\mu\in E^*$. 
	A vertex $v\in V$ is called a sink if $o^{-1}(\{v\})=\emptyset$ holds.
	For simplicity,
	we treat a finite directed graph with no sink.
	See \cite{BROWNLOWE_CARLSEN_WHITTAKER_2017} for a general case.
	
	For a finite directed graph $E$ with no sink,
	the graph algebra $C^*(E)$ is defined to be the universal C*-algebra which is generated by mutually orthogonal projections $\{P_v\}_{v\in V}$ and partial isometries $\{S_e\}_{e\in E}$ such that
	\begin{enumerate}
		\item $S_e^*S_e=P_{t(e)}$ for all $e\in E$, and
		\item $P_v=\sum_{e\in o^{-1}(\{v\})}S_eS_e^*$ for all $v\in V$.
	\end{enumerate}
	The gauge action on $C^*(E)$ is an action $\tau\colon \T\curvearrowright C^*(E)$ defined by
	\[
	 \tau_z(P_v)=P_v, \tau_z(S_e)=zS_e
	\]
	for all $z\in \T$,
	$e\in E$ and $v\in V$.
	We define
	\[
	D_E\defeq \overline{\Span}\{S_{\mu}S_{\mu}^* \mid \mu\in E^*\}\subset C^*(E),
	\]
	where we put $S_{\mu}\defeq S_{\mu_1}S_{\mu_2}\cdots S_{\mu_{\lvert\mu\rvert}}$ for a finite path $\mu\in E^*$.
	Then $D_E$ is a commutative C*-subalgebra of $C^*(E)$ and $D_E\subset C^*(E)^{\tau}$ holds,
	where $C^*(E)^{\tau}$ denotes the invariant subalgebra of the gauge action.
	
	Now,
	we introduce a groupoid model of graph algebras.
	Let $E^{(\infty)}$ denote the set of infinite path on $E$, namely,
	\[
	E^{(\infty)}\defeq \{\{x_i\}_{i=0}^{\infty}\in E^{\N}\mid t(x_i)=o(x_{i+1})\text{ for all $i\in\N$}\}.
	\]
	Then $E^{(\infty)}$ is a compact Hausdorff space with respect to the relative topology of the product topology.
	For $\mu\in E^*$,
	let $C(\mu)$ denote the set of all infinite paths which begin with $\mu$ :
	\[
	C(\mu)\defeq \{\mu x\in E^{(\infty)}\mid x\in E^{(\infty)}, t(\mu)=o(x_0)\}.
	\]
	Note that $C(\mu)$ is a compact open set in $E^{(\infty)}$ and $\{C(\mu)\}_{\mu\in E^*}$ is an open basis of $E^{(\infty)}$.
	The shift map on $E^{(\infty)}$ is denoted by
	\[
	T_E\colon E^{(\infty)}\ni \{x_i\}_{i=0}^{\infty}\mapsto \{x_{i+1}\}_{i=0}^{\infty}\in E^{(\infty)}.
	\]
	Then $(E^{(\infty)}, T_E)$ is a Deaconu-Renault system.	
	By \cite[Proposition 2.2]{BROWNLOWE_CARLSEN_WHITTAKER_2017},
	there exists a *-isomorphism \[\pi\colon C^*(E)\to C^*_r(G(E^{(\infty)},T_E))\] such that $\pi(D_E)=C(E^{(\infty)})$ and $\pi(C^*(E)^{\tau})=C^*_r(\ker\sigma_{E^{(\infty)}})$.
		
	Following \cite[Proposition 2.3]{BROWNLOWE_CARLSEN_WHITTAKER_2017},
	we characterize the topological principality of $G(E^{(\infty)}, T_E)$ in terms of a directed graph $E$.
	A path $\mu\in E^*$ is called a cycle if $\lvert\mu\rvert\geq 1$ and $o(\mu)=t(\mu)$.
	An edge $e\in E$ is called an exit of a cycle $\mu$ if there exists $i\in\{1,2,\dots,\lvert \mu\rvert\}$ such that $o(e)=o(\mu_i)$ and $e\not=\mu_i$.
	A directed graph $E$ is said to have condition (L) if every cycle has an exit.
	We have the following characterization of topological principality of $G(E^{(\infty)},T_E)$.
	
	\begin{prop}[{\cite[Proposition 2.3]{BROWNLOWE_CARLSEN_WHITTAKER_2017}}] \label{prop: characterization that graph groupoid is topologically principal}
		Let $E$ be a finite directed graph with no sink.
		Then $G(E^{(\infty)}, T_E)$ is topologically principal if and only if $E$ has condition (L).
	\end{prop}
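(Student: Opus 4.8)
The plan is to rephrase topological principality of $G(E^{(\infty)},T_E)$ purely in terms of the infinite path space and then to analyse which infinite paths have trivial isotropy. First I would compute the isotropy group at $x\in E^{(\infty)}$: by the description of $G(X,T)$ in the previous subsection, it consists of the elements $(x,n-m,x)$ with $n,m\in\N$ and $T_E^n(x)=T_E^m(x)$. I claim this group equals $\{x\}$ if and only if $x$ is not eventually periodic, i.e.\ $x$ is not of the form $\alpha\beta^{\infty}$ for some $\alpha\in E^*$ and some cycle $\beta$. For one implication, if $x=\alpha\beta^{\infty}$ with $|\alpha|=a$ and $|\beta|=b\geq 1$, then $T_E^{a}(x)=T_E^{a+b}(x)$ produces a nontrivial isotropy element; for the converse, if $T_E^n(x)=T_E^m(x)$ with $n>m$, then $z:=T_E^m(x)$ satisfies $T_E^{n-m}(z)=z$, so $z$ is periodic and $x=x_0\cdots x_{m-1}z$ is eventually periodic. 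Hence $G(E^{(\infty)},T_E)$ is topologically principal precisely when the set of non-eventually-periodic infinite paths is dense, equivalently when every cylinder $C(\mu)$ ($\mu\in E^*$) contains such a path; here I use that $\{C(\mu)\}_{\mu\in E^*}$ is an open basis and, since $E$ has no sink, each $C(\mu)$ is nonempty.

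Next I would prove the ``only if'' direction by contraposition. Assume $E$ fails condition (L), so there is a cycle $\nu=\nu_1\cdots\nu_{|\nu|}$ with no exit. By the meaning of ``no exit'', each vertex $o(\nu_i)$ has $\nu_i$ as its only outgoing edge, so (as $E$ has no sink) the unique infinite path starting at $o(\nu_1)$ is $\nu^{\infty}$, whence $C(\nu)=\{\nu^{\infty}\}$ is a nonempty open set consisting of a single periodic path. Thus the non-eventually-periodic paths miss a nonempty open set and $G(E^{(\infty)},T_E)$ is not topologically principal.

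For the ``if'' direction, assume $E$ has condition (L) and fix $\mu\in E^*$; I must exhibit a non-eventually-periodic path in $C(\mu)$. Set $v_0:=t(\mu)$ and let $R$ be the set of vertices reachable from $v_0$. The decisive observation is that for every $w\in R$ the \emph{forced path} from $w$ --- obtained by following the unique outgoing edge so long as the current vertex has out-degree one --- reaches a vertex of out-degree at least two after finitely many steps; otherwise this forced path is an infinite path all of whose vertices have out-degree one, which, $E$ being finite, eventually runs around a cycle all of whose vertices have out-degree one, i.e.\ a cycle with no exit, contradicting (L). Now, since $E$ is finite, $E^*$ and hence the set of eventually periodic infinite paths starting at $v_0$ is countable; enumerate the latter as $y^{(1)},y^{(2)},\dots$. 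I would then build finite paths $\mu=\mu'_0\subsetneq\mu'_1\subsetneq\cdots$ from $v_0$ recursively: given $\mu'_{k-1}$, extend it along the forced path from its endpoint until it arrives at a vertex $b$ of out-degree at least two, and then append an outgoing edge at $b$ different from the one that $y^{(k)}$ uses at $b$ (any outgoing edge will do if $\mu'_{k-1}$ together with the forced segment already disagrees with $y^{(k)}$). Each stage strictly lengthens the path, so the $\mu'_k$ converge to an infinite path $x$ from $v_0$; by construction no infinite extension of $\mu'_k$ equals $y^{(k)}$, so $x\neq y^{(k)}$ for all $k$, i.e.\ $x$ is not eventually periodic, and then $\mu x\in C(\mu)$ is not eventually periodic either. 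Letting $\mu$ vary gives density and completes the proof.

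I expect the identification of isotropy groups and the cylinder-set bookkeeping to be routine, and the heart of the argument --- the main obstacle --- to be the ``if'' direction: both the forced-path observation that converts failure of aperiodicity at a vertex into an exit-free cycle, and the staged diagonal construction of an aperiodic path. I would also note that this proposition is already recorded as \cite[Proposition 2.3]{BROWNLOWE_CARLSEN_WHITTAKER_2017}; the argument above is included for the reader's convenience and to keep the present paper self-contained.
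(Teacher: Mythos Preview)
The paper does not give its own proof of this proposition; it simply quotes \cite[Proposition~2.3]{BROWNLOWE_CARLSEN_WHITTAKER_2017} and moves on. Your argument is correct and supplies a self-contained proof that the paper omits. The isotropy computation and the ``only if'' direction are routine and fine. In the ``if'' direction, your forced-path observation---that under condition (L) any walk from a reachable vertex meets a vertex of out-degree at least two after finitely many steps, else one manufactures an exit-free cycle---is exactly the right mechanism, and the diagonal construction against an enumeration of the (countably many) eventually periodic paths cleanly produces the required aperiodic path in each cylinder.

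One small notational slip: you write $\mu=\mu'_0$ but describe the $\mu'_k$ as paths \emph{from} $v_0=t(\mu)$ and later form $\mu x$; you evidently mean to start the recursion with the empty path at $v_0$ and prepend $\mu$ only at the end. You might also streamline the ``if'' step by observing that the forced-path argument shows every $C(\mu)$ contains a full binary tree of infinite paths and is therefore uncountable, while the eventually periodic paths are countable---this yields the non-eventually-periodic path without the explicit diagonalisation, though your version has the virtue of being constructive.
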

	
	Next,
	we characterize the topological transitivity of $\ker\sigma_{E^{(\infty)}}$.	
	
	\begin{prop}\label{prop: characterization that ker of graph groupoid is topologically transitive}
		Let $E$ be a finite directed graph with no sink.
		Then $\ker\sigma_{E^{(\infty)}}\subset G(E^{(\infty)},T_E)$ is topologically transitive if and only if the following condition holds : for all $v,w\in V$,
		there exists $\mu,\nu\in E^*$ with $\lvert\mu\rvert=\lvert\nu\rvert$ such that $v=o(\mu)$, $w=o(\nu)$ and $t(\mu)=t(\nu)$ hold.
	\end{prop}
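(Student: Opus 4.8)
The plan is to unwind the definition of topological transitivity for $\ker\sigma_{E^{(\infty)}}$ and translate it, via the cylinder‑set basis of $E^{(\infty)}$, into a purely combinatorial condition on $E$. Recall that $\ker\sigma_{E^{(\infty)}}$ is topologically transitive if and only if for every pair of nonempty open sets $U,V\subseteq E^{(\infty)}$ there is an element $\gamma\in\ker\sigma_{E^{(\infty)}}$ with $d(\gamma)\in U$ and $r(\gamma)\in V$. Since $\{C(\mu)\}_{\mu\in E^*}$ is an open basis and each $C(\mu)$ is nonempty (because $E$ has no sinks), it suffices to take $U=C(\mu)$ and $V=C(\nu)$. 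Moreover, by Lemma \ref{lemma property of R_n in Deaconu-Renault}, an element of $\ker\sigma_{E^{(\infty)}}$ is precisely a triple $(y,0,x)$ with $T_E^n(y)=T_E^n(x)$ for some $n\in\N$, and such an $n$ may always be increased.

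The first step is to establish the reformulation that $\ker\sigma_{E^{(\infty)}}$ is topologically transitive if and only if for all $\mu,\nu\in E^*$ there exist $n\geq\lvert\mu\rvert,\lvert\nu\rvert$ with $n\geq 1$ and paths $\mu',\nu'\in E^{(n)}$ such that $\mu'$ begins with $\mu$, $\nu'$ begins with $\nu$, and $t(\mu')=t(\nu')$. For the forward implication I would apply topological transitivity to $U=C(\mu)$ and $V=C(\nu)$ to obtain $(y,0,x)\in\ker\sigma_{E^{(\infty)}}$ with $x\in C(\mu)$, $y\in C(\nu)$ and $T_E^n(x)=T_E^n(y)$ for some $n$, which I enlarge so that $n\geq\lvert\mu\rvert,\lvert\nu\rvert$ and $n\geq 1$; then $\mu'\defeq x_0x_1\cdots x_{n-1}$ and $\nu'\defeq y_0y_1\cdots y_{n-1}$ are paths of length $n$ beginning with $\mu$ and $\nu$ respectively, and $t(\mu')=o\big((T_E^n x)_0\big)=o\big((T_E^n y)_0\big)=t(\nu')$. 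For the converse, given such $\mu'$ and $\nu'$, I would choose an infinite path $z$ with $o(z_0)=t(\mu')=t(\nu')$ (possible since $E$ has no sinks) and set $x\defeq\mu' z$, $y\defeq\nu' z$; then $T_E^n(x)=z=T_E^n(y)$, so $(y,0,x)\in\ker\sigma_{E^{(\infty)}}$ with $d((y,0,x))=x\in C(\mu)$ and $r((y,0,x))=y\in C(\nu)$.

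The second step is to show that this reformulated condition is equivalent to the graph condition in the statement. Assuming the graph condition, let $\mu,\nu\in E^*$ be given with, say, $\lvert\mu\rvert\leq\lvert\nu\rvert$; since $E$ has no sinks I may extend $\mu$ to a path $\bar\mu\in E^{(\lvert\nu\rvert)}$ beginning with $\mu$, and then apply the graph condition to the vertices $t(\bar\mu)$ and $t(\nu)$ to obtain paths $p,q\in E^*$ with $\lvert p\rvert=\lvert q\rvert$, $o(p)=t(\bar\mu)$, $o(q)=t(\nu)$ and $t(p)=t(q)$; then $\mu'\defeq\bar\mu p$ and $\nu'\defeq\nu q$ have the same length, begin with $\mu$ and $\nu$, and satisfy $t(\mu')=t(\nu')$. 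Conversely, given vertices $v,w\in V$, I would choose edges $e,f\in E$ with $o(e)=v$ and $o(f)=w$ and apply the reformulated condition to $\mu\defeq e$ and $\nu\defeq f$; the resulting $\mu',\nu'\in E^{(n)}$ begin with $e$ and $f$, so $o(\mu')=v$, $o(\nu')=w$, $\lvert\mu'\rvert=\lvert\nu'\rvert$ and $t(\mu')=t(\nu')$, which is the graph condition for $v,w$. Combining the two equivalences proves the proposition.

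The argument is essentially bookkeeping, so I do not expect a serious obstacle; the step that needs the most care is the length‑matching in the second step — arranging $\lvert\mu'\rvert=\lvert\nu'\rvert$ forces one to pad the shorter of $\mu,\nu$ before invoking the graph condition — together with the systematic use of the no‑sinks hypothesis, which is exactly what guarantees the relevant cylinder sets are nonempty and that finite paths can be prolonged to the required lengths.
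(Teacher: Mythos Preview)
Your proof is correct and follows essentially the same approach as the paper's: both directions reduce topological transitivity to the cylinder basis $\{C(\mu)\}_{\mu\in E^*}$, pad the shorter path using the no-sink hypothesis, apply the graph condition to the terminal vertices, and then concatenate with an infinite tail to produce the required element of $\ker\sigma_{E^{(\infty)}}$. The only cosmetic difference is that you isolate an explicit intermediate ``reformulated condition'' on finite paths, whereas the paper performs the same manipulations directly without naming that intermediate step.
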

	
	\begin{proof}
	First,
		we assume that $\ker\sigma_{E^{(\infty)}}\subset G(E^{(\infty)},T_E)$ is topologically transitive and take $v,w\in V$ arbitrarily.
		Since we assume that $E$ has no sink,
		$C(v)$ and $C(w)$ are non-empty open subsets of  $E^{(\infty)}$.
		Hence there exists $(y,0,x)\in \ker\sigma_{E^{(\infty)}}$ such that $y\in C(w)$ and $x\in C(v)$.
		Then there exists $l\in\N$ such that $T_E^l(y)=T_E^l(x)$.
		Putting $\mu\defeq x_0x_1x_2\cdots x_l$ and $\nu\defeq y_0y_1y_2\cdots y_l$,
		we obtain $\mu,\nu\in E^*$ with $\lvert\mu\rvert=\lvert\nu\rvert$ such that $v=o(\mu)$, $w=o(\nu)$ and $t(\mu)=t(\nu)$ hold.
		
		Next,
		we show the converse.
		To show that $\ker\sigma_{E^{(\infty)}}$ is topologically transitive,
		take $\mu,\nu\in E^*$ arbitrarily and show that there exists $\alpha\in \ker\sigma_{E^{(\infty)}}$ such that $r(\alpha)\in C(\nu)$ and $d(\alpha)\in C(\mu)$.
		We may assume $\lvert\nu\rvert \geq \lvert \mu\rvert$ without loss of generality.
		Then there exists $\eta\in E^*$ with $t(\mu)=o(\eta)$ and $\lvert\nu \rvert=\lvert\mu\eta\rvert$ since we assume that $E$ has no sink.
		Apply the assumption to $w\defeq t(\nu)$ and $v\defeq t(\eta)$,
		we obtain $\mu',\nu'\in E^*$ with $\lvert\mu'\rvert=\lvert\nu'\rvert$ such that $v=o(\mu')$, $w=o(\nu')$ and $t(\mu')=t(\nu')$ hold.
		In addition,
		take $x\in E^{(\infty)}$ with $t(\mu')=o(x_0)$.
		Put
		\[
		\alpha\defeq (\nu\nu'x,\lvert \nu\nu'\rvert-\lvert\mu\eta\mu'\rvert ,\mu\eta\mu'x)\in G(E^{(\infty)},T_E).
		\]
		Then one can check that $\alpha\in\ker\sigma_{E^{(\infty)}}$,
		$r(\alpha)\in C(\nu)$ and $d(\alpha)\in C(\mu)$.
		This completes the proof.
		\qed
	\end{proof}
	
	Now,
	we are ready to apply Corollary \ref{cor: eventually conjugate automorphism is isomorphic to restricted Weyl of DR system} to graph algebras.
	
	\begin{cor}\label{cor: restricted Weyl group of graph algebras}
		Let $E$ be a finite directed graph with no sink.
		Assume that $E$ satisfies condition (L) and the following condition : for all $v,w\in V$,
		there exists $\mu,\nu\in E^*$ with $\lvert\mu\rvert=\lvert\nu\rvert$ such that $v=o(\mu)$, $w=o(\nu)$ and $t(\mu)=t(\nu)$ hold.
		In addition,
		assume that $E$ has no source (i.e.\ $t^{-1}(v)$ is non-empty for all $v\in V$).
		Then the groups $\mathfrak{RW}_E=\mathfrak{RW}_{G(E^{(\infty)},T_E),\ker\sigma_{E^{(\infty)}}}$, $\Aut(G(E^{(\infty)},T_E);\ker\sigma_{E^{(\infty)}})$ and $\mathfrak{A}_{(E^{(\infty)},T_{E})}$ are isomorphic to each others.
	\end{cor}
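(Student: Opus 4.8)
The plan is to deduce the corollary from Corollary \ref{cor: eventually conjugate automorphism is isomorphic to restricted Weyl of DR system}, applied to the Deaconu--Renault system $(E^{(\infty)},T_E)$, and then to transport the resulting isomorphisms along the $*$-isomorphism $\pi\colon C^*(E)\to C^*_r(G(E^{(\infty)},T_E))$ recalled above.

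First I would translate the standing hypotheses into the hypotheses of Corollary \ref{cor: eventually conjugate automorphism is isomorphic to restricted Weyl of DR system}. By Proposition \ref{prop: characterization that graph groupoid is topologically principal}, condition (L) is equivalent to $G(E^{(\infty)},T_E)$ being topologically principal, i.e.\ to $(E^{(\infty)},T_E)$ being topologically free; in particular $G(E^{(\infty)},T_E)$ is effective. By Proposition \ref{prop: characterization that ker of graph groupoid is topologically transitive}, the vertex condition is equivalent to $\ker\sigma_{E^{(\infty)}}$ being topologically transitive. It remains to produce a compact open $U\subsetneq E^{(\infty)}$ with $T_E(U)=E^{(\infty)}$ and $T_E|_U$ injective.

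For this I would use the ``no source'' assumption: for each $w\in V$ choose an edge $\phi(w)\in t^{-1}(\{w\})$ and put $U\defeq \bigcup_{w\in V}C(\phi(w))$, a finite union of pairwise disjoint compact open cylinders, hence compact open. Given $y=y_0y_1\cdots\in E^{(\infty)}$, the path $\phi(o(y_0))y$ lies in $U$ and $T_E(\phi(o(y_0))y)=y$, so $T_E(U)=E^{(\infty)}$; and if $\phi(w)x,\phi(w')x'\in U$ satisfy $T_E(\phi(w)x)=T_E(\phi(w')x')$, then $x=x'$, hence $w=o(x_0)=w'$ and the two paths coincide, so $T_E|_U$ is injective. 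The only genuine point is the strictness $U\neq E^{(\infty)}$: if $U=E^{(\infty)}$, then (since $E$ has no sink) every edge $e$ is the first edge of some infinite path, forcing $e=\phi(t(e))$; thus every vertex has exactly one incoming edge, so $\lvert E\rvert=\lvert V\rvert$ and, as $E$ has no sink, every vertex has out-degree one, whence $E$ is a disjoint union of cycles, contradicting condition (L). Hence $U\subsetneq E^{(\infty)}$. I expect this small combinatorial argument for the strictness of $U$ to be the only non-formal step.

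With all hypotheses verified, Corollary \ref{cor: eventually conjugate automorphism is isomorphic to restricted Weyl of DR system} yields that $\mathfrak{RW}_{G(E^{(\infty)},T_E),\ker\sigma_{E^{(\infty)}}}$, $\Aut(G(E^{(\infty)},T_E);\ker\sigma_{E^{(\infty)}})$ and $\mathfrak{A}_{(E^{(\infty)},T_E)}$ are mutually isomorphic as topological groups. Finally, since $\pi$ carries $D_E$ onto $C_0(G(E^{(\infty)},T_E)^{(0)})$ and $C^*(E)^{\tau}$ onto $C^*_r(\ker\sigma_{E^{(\infty)}})$, conjugation by $\pi$ identifies $\Aut(C^*(E);C^*(E)^{\tau},D_E)$ with $\Aut(C^*_r(G(E^{(\infty)},T_E));C^*_r(\ker\sigma_{E^{(\infty)}}),C_0(G(E^{(\infty)},T_E)^{(0)}))$ and $\Aut_{D_E}C^*(E)$ with $\Aut_{C_0(G(E^{(\infty)},T_E)^{(0)})}C^*_r(G(E^{(\infty)},T_E))$ (both quotients being legitimate since $E$ has no sinks and satisfies condition (L), by \cite[Proposition 3.2]{CONTI20122529} and Remark \ref{remark: restricted Weyl is generalization of existing one}). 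Hence $\mathfrak{RW}_E\simeq\mathfrak{RW}_{G(E^{(\infty)},T_E),\ker\sigma_{E^{(\infty)}}}$, and combining this with the previous display completes the proof.
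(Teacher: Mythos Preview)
Your proof is correct and follows the same strategy as the paper: verify the hypotheses of Corollary~\ref{cor: eventually conjugate automorphism is isomorphic to restricted Weyl of DR system} via Propositions~\ref{prop: characterization that graph groupoid is topologically principal} and~\ref{prop: characterization that ker of graph groupoid is topologically transitive}, build $U$ by choosing one incoming edge per vertex, and argue strictness. The only substantive difference is in the strictness step: the paper proves a lemma (some vertex has at least two incoming edges) by combining condition~(L) with the vertex-connecting hypothesis---taking two distinct edges $e,f$ with $o(e)=o(f)$ from an exit, then paths $\mu,\nu$ of equal length from $t(e),t(f)$ to a common vertex, and backtracking via unique incoming edges to force $e\mu=f\nu$. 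Your counting argument (unique incoming edges force $\lvert E\rvert=\lvert V\rvert$, hence out-degree one everywhere, hence a disjoint union of exitless cycles) is cleaner and uses only condition~(L), not the vertex condition; this is a mild improvement.
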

	\begin{proof}
	By Proposition \ref{prop: characterization that graph groupoid is topologically principal} and Proposition \ref{prop: characterization that ker of graph groupoid is topologically transitive},
	$G(E^{(\infty)},T_E)$ is topologically principal and $\ker\sigma_{E^{(\infty)}}$ is topologically transitive.
	We check that there exists a compact open set $U\subsetneq E^{(\infty)}$ such that $T_E|_U$ is injective and $T_E(U)=E^{(\infty)}$.
	For each $v\in V$,
	take $e_v\in E$ with $t(e_v)=v$.
	Put $U\defeq \bigcup_{v\in V} C(e_v)\subset E^{(\infty)}$.
	Then $U$ is a compact open subset of $E^{(\infty)}$.
	In addition,
	one can check that $T_E(U)=E^{(\infty)}$ and $T_E(U)=E^{(\infty)}$.
	To show $U\subsetneq E^{(\infty)}$,
	we prepare the following lemma.
	
	\begin{lem*}
		There exists $w\in V$ such that $t^{-1}(\{w\})$ has at least two elements.
	\end{lem*}
	\begin{proof}
	Assume that $t^{-1}(\{w\})$ is a singleton for all $w\in V$.
	Since $E$ is finite and with no sink,
	there exists a cycle in $E$.
	Since we assume that $E$ satisfies condition (L),
	there exists $e,f\in E$ such that $o(e)=o(f)$ and $e\not=f$.
	By the assumption,
	there exists $\mu,\nu\in E^*$ such that $\lvert \mu\rvert=\lvert\nu\rvert$, $o(\mu)=t(e)$, $o(\nu)=t(f)$ and $t(\mu)=t(\nu)$.
	Applying the assumption that $t^{-1}(\{w\})$ is a singleton for all $w\in V$ recursively,
	we obtain $e\mu=f\nu$ and this contradicts to $e\not=f$.\qed
	\end{proof}

	Now,
	$U\subsetneq E^{(\infty)}$ follows from the previous lemma.
	Hence,
	we may apply Corollary \ref{cor: eventually conjugate automorphism is isomorphic to restricted Weyl of DR system} and this completes the proof of Corollary \ref{cor: restricted Weyl group of graph algebras}.
	\qed
	\end{proof}
	
	The assumptions in Corollary \ref{cor: restricted Weyl group of graph algebras} looks stronger than those in the open problem mentioned under \cite[Theorem 4.13]{CONTI20122529}.
	Indeed,
	in \cite[Theorem 4.13]{CONTI20122529},
	the authors assumed that 
	\begin{enumerate}
	\item $E$ is a finite directed graph without sinks and sources,
	\item $E$ satisfies condition (L), and
	\item the centre of $C^*(E)^{\tau}$ is trivial. \label{enumerate: assumption}
	\end{enumerate}
	Since,
	by Proposition \ref{prop: relative commutant is trivial if H is topo transitive},
	Proposition \ref{prop: characterization that graph groupoid is topologically principal} and Proposition \ref{prop: characterization that ker of graph groupoid is topologically transitive},
	one can deduce the above assumption (\ref{enumerate: assumption}) by the assumptions in Corollary \ref{cor: restricted Weyl group of graph algebras}.
	Remark that the other assumptions are common to ours.
	Hence our assumptions in Corollary \ref{cor: restricted Weyl group of graph algebras} implies those in \cite[Theorem 4.13]{CONTI20122529}.
	Note that the authors proved that the natural map $\mathfrak{RW}_E\to \mathfrak{A}_{(E^{(\infty)},T_{E})}$ yields an injective group homomorphism and the surjectivity is an open problem as mentioned under \cite[Theorem 4.13]{CONTI20122529}.

	In the rest of this subsection,
	we show that our assumption in Corollary \ref{cor: restricted Weyl group of graph algebras} can be relaxed to the above original assumptions in \cite[Theorem 4.13]{CONTI20122529}.
	As a result,
	we solve the open problem mentioned under \cite[Theorem 4.13]{CONTI20122529}.
	To do that,
	we prepare the following proposition about graph algebras.
	
	\begin{prop}\label{prop: in graph algebra, irreducible fixed subalgebra implies topological transitivity}
		Let $E$ be a finite directed graph with no sink.
		Assume that the relative commutant of $C^*(E)^{\tau}$ in $C^*(E)$ is trivial.
		Then,
		for all $v,w\in V$,
		there exists $\mu,\nu\in E^*$ with $\lvert\mu\rvert=\lvert\nu\rvert$ such that $v=o(\mu)$, $w=o(\nu)$ and $t(\mu)=t(\nu)$ hold.
	\end{prop}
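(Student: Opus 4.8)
Work through the groupoid model. Write $X\defeq E^{(\infty)}$, $T\defeq T_E$, $G\defeq G(X,T)$ and $\sigma\defeq \sigma_{E^{(\infty)}}\colon G\to\Z$; since $E$ is finite, $X$ is compact and totally disconnected, so $G$ is ample. Under the $*$-isomorphism $\pi\colon C^*(E)\to C^*_r(G)$ carrying $D_E$ to $C(X)$ and $C^*(E)^{\tau}$ to $C^*_r(\ker\sigma)$, and in view of Proposition~\ref{prop: characterization that ker of graph groupoid is topologically transitive}, the assertion is equivalent to: \emph{if $C^*_r(\ker\sigma)'\cap C^*_r(G)=\C1$, then $\ker\sigma$ is topologically transitive.} The plan is to prove the contrapositive. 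So suppose $\ker\sigma$ is not topologically transitive; by Proposition~\ref{prop: characterization that ker of graph groupoid is topologically transitive} there are $v,w\in V$ with $A_n\cap B_n=\emptyset$ for every $n\ge0$, where $A_n\defeq\{t(\mu):\mu\in E^{(n)},\,o(\mu)=v\}$ and $B_n$ is defined symmetrically.

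The first reduction is that it is enough to produce a non-empty clopen $\ker\sigma$-invariant subset $W\subsetneq X$. Indeed, then $\chi_W\in C(X)=C_0(G^{(0)})\subseteq C^*_r(G)$, and $\ker\sigma$-invariance of $W$ means $\chi_W$ is constant on $\ker\sigma$-orbits, so for $f\in C_c(\ker\sigma)$ the two products $\chi_W f$ and $f\chi_W$ agree because $\chi_W(r(\alpha))=\chi_W(d(\alpha))$ whenever $\alpha\in\ker\sigma$; hence $\chi_W\in C^*_r(\ker\sigma)'\cap C^*_r(G)$, and $\chi_W\notin\C1$ since $\emptyset\ne W\ne X$. (This is, in spirit, the converse of the mechanism used in the proof of Proposition~\ref{prop: relative commutant is trivial if H is topo transitive}.)

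The second, harder, step is to build such a $W$ out of the combinatorics of $(A_n)$ and $(B_n)$. I would first translate ``clopen $\ker\sigma$-invariant set'' into vertex data: using the cylinder basis of $X$ and the description of $\ker\sigma$-equivalence as ``$x$ and $y$ agree as edge sequences from some common position on'', one checks that every clopen $\ker\sigma$-invariant set has the form $W_{N,S}\defeq\{x\in X:o(x_N)\in S\}$ for some level $N$ and some set $S$ of endpoints of length-$N$ paths which is \emph{saturated}, meaning $\{u:u^{\to k}\cap S^{\to k}\ne\emptyset\}=S$ for all $k\ge1$, where $u^{\to k}$ denotes the set of vertices reachable from $u$ by a path of length exactly $k$. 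Since $V$ is finite, $(A_n)$ and $(B_n)$ are eventually periodic and (as $E$ has no sink) never empty; I would pass to a level $N$ in their common periodic range, where the endpoint set $V_N$ has stabilised and the subgraph it spans has no sinks and no sources, and then saturate the descendant sets: the relation ``$u\sim u'$ iff $u^{\to k}\cap u'^{\to k}\ne\emptyset$ for some $k\ge1$'' on $V_N$ has the property that each class of its transitive closure $\bar\sim$ is saturated, and one wants the stabilised descendants of $v$ and of $w$ to lie in distinct $\bar\sim$-classes; choosing $S$ to be one such class (or a union of classes) then yields a non-trivial $W_{N,S}$. The main obstacle is exactly this last point — showing that the transitive closure does not merge the $v$-class and the $w$-class — and this is precisely where one must exploit that $A_n\cap B_n=\emptyset$ for \emph{every} $n$, not merely that the forward-reachable sets from $v$ and $w$ are disjoint at all matching levels.
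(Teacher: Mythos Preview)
Your route differs substantially from the paper's. The paper works directly in $C^*(E)$: writing $v\sim w$ for the relation in the conclusion, it picks $v_0\not\sim w_0$, sets $F\defeq\{v:v_0\sim v\}$, and asserts that $P\defeq\sum_{v\in F}P_v$ commutes with every $S_\mu S_\nu^*$ with $|\mu|=|\nu|$ and $t(\mu)=t(\nu)$. That assertion unwinds to ``$o(\mu)\in F\Leftrightarrow o(\nu)\in F$'', which --- since $o(\mu)\sim o(\nu)$ automatically --- is precisely transitivity of $\sim$, used without proof. Your groupoid translation and your identification of clopen $\ker\sigma$-invariant sets with the $W_{N,S}$ for saturated $S$ are both correct, so the two approaches face the same wall from opposite sides: the paper needs $\sim$ transitive, you need $\bar\sim$ on $V_N$ to have more than one class.

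That wall is real; the proposition is \emph{false} as stated. Take $V=\{a,b,c,d,e\}$ with edges
\[
a\to a,\ a\to d,\ d\to d,\ d\to a,\quad c\to c,\ c\to e,\ e\to e,\ e\to c,\quad b\to b,\ b\to d,\ b\to e.
\]
This finite graph has no sinks (indeed no sources) and satisfies condition~(L). Paths from $a$ stay in $\{a,d\}$ and paths from $c$ stay in $\{c,e\}$, so $a\not\sim c$ and the conclusion fails. Yet $a\sim b$ (they meet at $d$) and $b\sim c$ (they meet at $e$), so $\bar\sim$ is the full relation on $V=V_N$; and one checks that every $\ker\sigma$-orbit has $b^\infty$ in its closure (for $x\ne b^\infty$ the vertex $o(x_n)$ lies in $\{a,d,c,e\}$ for all large $n$, each reachable from $b$ in at most two steps, so arbitrarily long $b$-prefixes can be attached to tails of $x$). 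Hence every continuous $\ker\sigma$-invariant function on $E^{(\infty)}$ is constant, and since $G(E^{(\infty)},T_E)$ is effective the relative commutant of $C^*(E)^\tau$ sits in $C(E^{(\infty)})$ and equals $\C1$. Thus the hypothesis holds but the conclusion does not: the ``main obstacle'' you flagged cannot be overcome, and the paper's $P$ fails for the same reason (test it against $S_{b\to e}S_{c\to e}^*$, where $o(b\to e)=b\in F$ but $o(c\to e)=c\notin F$).
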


	\begin{proof}
		We show the contraposition.
		For $v,w\in V$,
		we write $v\sim w$ if there exists $\mu,\nu\in E^*$ with $\lvert\mu\rvert=\lvert\nu\rvert$ such that $v=o(\mu)$, $w=o(\nu)$ and $t(\mu)=t(\nu)$ hold.
		By the assumption,
		there exists $v_0,w_0\in V$ such that $v_0\not\sim w_0$.
		Put
		\[
		F\defeq \{v\in V\mid v_0\sim v\}\subset V
		\]
		and $P\defeq \sum_{v\in F}P_v\in C^*(E)$.
		Then $P$ is contained in the relative commutant of $C^*(E)^{\tau}$.
		Indeed,
		take $\mu,\nu\in E^*$ with $\lvert \mu\rvert=\lvert\nu\rvert$ and $t(\mu)=t(\nu)$.
		If $o(\mu)\in F$,
		then one can check that
		\[
		PS_{\mu}S_{\nu}^*=S_{\mu}S_{\nu}^*P=S_{\mu}S_{\nu}^*
		\]
		holds.
		If $o(\mu)\not\in F$,
		then 
		\[PS_{\mu}S_{\nu}^*=S_{\mu}S_{\nu}^*P=0\]
		holds.
		Since $C^*(E)^{\tau}$ is the closed linear span of
		\[
		\{S_{\mu}S_{\nu}^*\mid \mu,\nu\in E^*, \lvert\mu\rvert=\lvert\nu\rvert\},
		\]
		we obtain $P\in (C^*(E)^{\tau})'$.
		Since we have $v_0\in F$ and $w_0\not\in F$,
		we obtain $P\not=0,1$ and $(C^*(E)^{\tau})'$ is non-trivial.
		This completes the proof.
		\qed
	\end{proof}
	
	Now,
	we relax our assumptions in Corollary \ref{cor: restricted Weyl group of graph algebras} to the original assumptions in the open problem mentioned under \cite[Theorem 4.13]{CONTI20122529}.
	
	\begin{cor}[{cf.\ \cite[Theorem 4.13]{CONTI20122529}}] \label{cor: answer to open problem of Conti}
		Let $E$ be a finite directed graph with no sink.
		Assume that $E$ satisfies condition (L) and the centre of $C^*(E)^{\tau}$ is trivial.
		In addition,
		assume that $E$ has no source (i.e.\ $t^{-1}(v)$ is non-empty for all $v\in V$).
		Then the groups 
		$\mathfrak{RW}_E=\mathfrak{RW}_{G(E^{(\infty)},T_E),\ker\sigma_{E^{(\infty)}}}$, $\Aut(G(E^{(\infty)},T_E);\ker\sigma_{E^{(\infty)}})$ and $\mathfrak{A}_{(E^{(\infty)},T_{E})}$ are isomorphic to each others.
	\end{cor}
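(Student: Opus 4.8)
The plan is to deduce Corollary \ref{cor: answer to open problem of Conti} directly from Corollary \ref{cor: restricted Weyl group of graph algebras}. The two statements share all hypotheses except that the combinatorial connectivity condition assumed in Corollary \ref{cor: restricted Weyl group of graph algebras} (for all $v,w\in V$ there exist $\mu,\nu\in E^*$ with $\lvert\mu\rvert=\lvert\nu\rvert$, $o(\mu)=v$, $o(\nu)=w$ and $t(\mu)=t(\nu)$) is here replaced by triviality of the centre of $C^*(E)^{\tau}$. So it suffices to show that triviality of $Z(C^*(E)^{\tau})$ forces the connectivity condition; then Corollary \ref{cor: restricted Weyl group of graph algebras} yields verbatim that $\mathfrak{RW}_E=\mathfrak{RW}_{G(E^{(\infty)},T_E),\ker\sigma_{E^{(\infty)}}}$, $\Aut(G(E^{(\infty)},T_E);\ker\sigma_{E^{(\infty)}})$ and $\mathfrak{A}_{(E^{(\infty)},T_{E})}$ are mutually isomorphic, which in particular upgrades the injective homomorphism $\mathfrak{RW}_E\to\mathfrak{A}_{(E^{(\infty)},T_E)}$ of \cite{CONTI20122529} to an isomorphism.

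The bridge is Proposition \ref{prop: in graph algebra, irreducible fixed subalgebra implies topological transitivity}, which gives the connectivity condition once one knows the \emph{relative commutant} $(C^*(E)^{\tau})'\cap C^*(E)$ is trivial; so the remaining point is that triviality of the centre implies triviality of this relative commutant. I would argue this using the groupoid model $C^*(E)\cong C^*_r(G(E^{(\infty)},T_E))$, $C^*(E)^{\tau}\cong C^*_r(\ker\sigma_{E^{(\infty)}})$ together with $\pi(D_E)=C_0(E^{(\infty)})$: condition (L) makes $G(E^{(\infty)},T_E)$ topologically principal, hence effective, by Proposition \ref{prop: characterization that graph groupoid is topologically principal}, and $\ker\sigma_{E^{(\infty)}}$ is a clopen subgroupoid of an effective groupoid, hence itself effective, so by \cite[Proposition 11.1.14]{asims} the algebra $C_0(E^{(\infty)})$ is maximal abelian both in $C^*_r(\ker\sigma_{E^{(\infty)}})$ and in $C^*_r(G(E^{(\infty)},T_E))$. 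Since $C_0(E^{(\infty)})\subset C^*_r(\ker\sigma_{E^{(\infty)}})$, both $Z(C^*(E)^{\tau})$ and $(C^*(E)^{\tau})'\cap C^*(E)$ are contained in $C_0(E^{(\infty)})$, and each of them is exactly the set of elements of $C_0(E^{(\infty)})$ commuting with $C^*_r(\ker\sigma_{E^{(\infty)}})$; hence the two coincide. (Alternatively, one can bypass this by re-reading the proof of Proposition \ref{prop: in graph algebra, irreducible fixed subalgebra implies topological transitivity}: the obstruction produced there is the projection $P=\sum_{v\in F}P_v$, which is gauge invariant and therefore already lies in $C^*(E)^{\tau}$, so it is a nontrivial \emph{central} projection of $C^*(E)^{\tau}$; thus triviality of the centre alone already runs the argument.)

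The main obstacle is precisely this bridge: a priori one only has the inclusion $Z(C^*(E)^{\tau})\subset (C^*(E)^{\tau})'\cap C^*(E)$, and the reverse inclusion is what needs the effectiveness of $\ker\sigma_{E^{(\infty)}}$ — so that $C_0(E^{(\infty)})$ is a MASA and the relative commutant is forced inside $C_0(E^{(\infty)})\subset C^*(E)^{\tau}$. Once that is in place, everything is a direct appeal to Proposition \ref{prop: characterization that graph groupoid is topologically principal}, Proposition \ref{prop: in graph algebra, irreducible fixed subalgebra implies topological transitivity} and Corollary \ref{cor: restricted Weyl group of graph algebras}, with no new computation.
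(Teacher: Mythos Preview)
Your proposal is correct and follows essentially the same route as the paper: reduce to Corollary \ref{cor: restricted Weyl group of graph algebras} by showing that triviality of the centre of $C^*(E)^{\tau}$ forces the connectivity hypothesis, via Proposition \ref{prop: in graph algebra, irreducible fixed subalgebra implies topological transitivity}. The only difference is cosmetic: the paper establishes that $D_E$ is a MASA in $C^*(E)$ by citing \cite[Theorem 5.2]{MR2188247} directly, then observes $(C^*(E)^{\tau})'\subset D_E'\cap C^*(E)=D_E\subset C^*(E)^{\tau}$, so the relative commutant equals the centre; you instead pass through the groupoid picture and \cite[Proposition 11.1.14]{asims}, and additionally invoke effectiveness of $\ker\sigma_{E^{(\infty)}}$, which is correct but unnecessary (only the MASA property in the ambient algebra is needed, since the inclusion $Z(C^*(E)^{\tau})\subset (C^*(E)^{\tau})'\cap C^*(E)$ is automatic). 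Your parenthetical alternative---observing that the obstructing projection $P=\sum_{v\in F}P_v$ is gauge-invariant and hence already central in $C^*(E)^{\tau}$---is a clean shortcut the paper does not take.
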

	
	\begin{proof}
		Since we assume condition (L),
		$D_E$ is a masa in $C^*(E)$ by \cite[Theorem 5.2]{MR2188247}.
		Combining with $D_E\subset C^*(E)^{\tau}$,
		we obtain $(C^*(E)^{\tau})'\subset D_E\subset C^*(E)^{\tau}$,
		where $(C^*(E)^{\tau})'$ denotes the relative commutant of $C^*(E)^{\tau}$ in $C^*(E)$.
		Hence we obtain $(C^*(E)^{\tau})'=(C^*(E)^{\tau})'\cap C^*(E)^{\tau}$.
		Since we assume that the centre of $C^*(E)^{\tau}$ is trivial,
		the relative commutant $(C^*(E)^{\tau})'$ is also trivial.
		By Proposition \ref{prop: in graph algebra, irreducible fixed subalgebra implies topological transitivity},
		we may apply Corollary \ref{cor: restricted Weyl group of graph algebras} and this completes the proof.
		\qed
	\end{proof}

\bibliographystyle{plain}
\bibliography{bunken}

\end{document}